\documentclass[11pt]{amsart}

\usepackage{amssymb, amsmath, amscd, amsthm, graphicx, psfrag,rotating}
\usepackage[matrix,arrow,curve]{xy}
\xyoption{dvips}              

\addtolength{\textwidth}{1.4in}
\oddsidemargin=0in
\evensidemargin=0in









\newcommand{\R}{{\mathbb R}}

\newcommand{\Q}{{\mathbb Q}}

\def\tr{{\rm tr}\,}
\newcommand{\calO}{{\mathcal{O}}}

\newcommand{\calE}{{\mathcal{E}}}
\newcommand{\calL}{{\mathcal{L}}}

\newcommand\rth{\refstepcounter{equation}}
\newcommand\numb{\rth{\rm \theequation}}
\numberwithin{equation}{section}

\DeclareMathOperator{\Tot}{Tot}


\newcommand{\BC}{{\mathbb C}}
\newcommand{\BN}{{\mathbb N}}
\newcommand{\BK}{{\mathbb K}}
\newcommand{\BR}{{\mathbb R}}
\newcommand{\BZ}{{\mathbb Z}}
\newcommand{\BQ}{{\mathbb Q}}
\newcommand{\ASS}{{\mathcal{ASSOC}}}

\newcommand{\COMM}{{\mathcal{COMM}}}

\newcommand{\Poiss}{{\mathcal{P}oiss}}

\newcommand{\rank}{\mathrm{rank}}

\newcommand{\calK}{{\mathcal{K}}}

\newcommand{\hemb}{\overline{Emb}_d}
\newcommand{\rhhemb}{H_*(\hemb,\Q)}

\theoremstyle{plain}
\newtheorem{thm}{Theorem}[section]
\newtheorem{prop}[thm]{Proposition}
\newtheorem{proposition}[thm]{Proposition}
\newtheorem{lemma}[thm]{Lemma}
\newtheorem{cor}[thm]{Corollary}
\newtheorem{conj}[thm]{Conjecture}
\newtheorem{cor-def}[thm]{Corollary-Definition}

\newtheorem*{theorem*}{Theorem}

\theoremstyle{definition}
\newtheorem{definition}[thm]{Definition}
\newtheorem{remark}[thm]{Remark}
\newtheorem{notation}[thm]{Notation}

\theoremstyle{remark}

\newtheorem*{notation*}{Notation}

\begin{document}


\title{Hodge decomposition in the homology of long knots}

\author{Victor Turchin}
\address{Kansas State University, USA.}
\email{turchin@ksu.edu} \urladdr{http://www.math.ksu.edu/\~{}turchin/}
\subjclass{Primary: 57Q45; Secondary: 55P62, 57R40}
\keywords{knot spaces, embedding calculus, Bousfield-Kan spectral sequence,
graph-complexes, Hodge decomposition, Hochschild complexes, operads, bialgebra of chord diagrams, gamma function}

%


\begin{abstract}
The paper describes a natural splitting in the rational homology and homotopy of the spaces of long knots. This decomposition presumably arises from the cabling maps in the same way as a natural decomposition in the homology of loop spaces arises from power maps. The generating function for the Euler characteristics of the terms of this splitting is presented. Based on this generating function we show that both the homology and homotopy ranks of the spaces in question grow at least exponentially. Using natural graph-complexes we show that this splitting on the level of the bialgebra of chord diagrams is exactly the splitting defined earlier by Dr.~Bar-Natan. In the Appendix we present tables of computer calculations of the Euler characteristics. These computations give a certain optimism that the Vassiliev invariants of order $> 20$ can  distinguish knots from their inverses.
\end{abstract}

\maketitle

\sloppy

\tableofcontents

\section*{Plan of the paper. Main results}
The paper is divided into 3 parts. The first part is introductive. We describe there some general facts and constructions about the knot spaces we study. We also define there a convenient terminology for the sequel. The only new thing in this part is the definition of the Hodge decomposition in the rational homology and homotopy of knot spaces. In the second part we introduce natural graph-complexes computing the rational homology and homotopy of knot spaces together with their Hodge splitting. This construction generalizes an earlier construction of Bar-Natan that describes the primitives of the bialgebra of chord diagrams. To recall the latter bialgebra is intimately related to the Vassiliev knot invariants, which are also called invariants of finite type. The main result of this part is Theorem~\ref{t82}. We also formulate there Conjecture~\ref{c121} that gives one more motivation for the study of the Hodge decomposition. It says that the Hodge decomposition of one-dimensional knots encodes information about the homology and homotopy of higher dimensional knots. Part~3 is more computational. Its main result is Theorem~\ref{t131} that describes the generating function of the Euler characteristics of the terms of the Hodge splitting. Based on the formula for the generating function we show that the ranks of the rational homology and homotopy of the spaces of long knots grow at least exponentially, see Theorems~\ref{th:exp_compl_homol}, \ref{th:exp_compl_homot}, \ref{th:cumulative}.

\part{Introduction}\label{p1}

\section{Spaces of long knots modulo immersions}\label{s1}

Denote by $Emb_d$, $d\geq 3$, the space of long knots, i.e. the space of smooth non-singular embeddings $\BR^1\hookrightarrow\BR^d$ coinciding with a fixed linear embedding $t\mapsto (t,0,0,\ldots,0)$ outside  a compact subset of $\BR^1$. Similarly $Imm_d$ is the space of long immersions. The homotopy fiber of the inclusion $Emb_d\hookrightarrow Imm_d$ will be denoted by $\hemb$. This {\it space $\hemb$ of long knots modulo immersion} and its rational homology and homotopy will be the object of our study. Its homology has a more natural interpretation then that of $Emb_d$.\footnote{The homology $\rhhemb$, $d\geq 4$, is the Hochschild homology of the Poisson algebras operad~\cite{LTV,T-HLN,T-OS}.} On the other hand $\hemb$ is homotopy equivalent to the product~\cite[Proposition~5.17]{Sinha-OKS}:
$$
\hemb\simeq Emb_d\times\Omega^2S^{d-1}.
$$
So, there is no much difference the homology and homotopy of $\hemb$ or of $Emb_d$ is studied.

\begin{proposition}\label{p11}
The spaces $\hemb$, $d\geq 3$, are acted on by the operad $C_2$ of little squares.
\end{proposition}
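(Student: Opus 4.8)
The plan is to construct the action geometrically, following Budney's work on little cubes and long knots. First normalize a point of $\hemb$ to a long knot which is the standard line $t\mapsto(t,0,\dots,0)$ outside $[0,1]$, together with a correspondingly normalized path of long immersions from its underlying immersion to the trivial one. Given a little-squares configuration $c=(s_1,\dots,s_k)\in C_2(k)$, i.e.\ $k$ axis-parallel squares $s_i\subset(0,1)^2$ with pairwise disjoint interiors, and $k$ such normalized points $g_1,\dots,g_k$, one forms $c(g_1,\dots,g_k)$ by placing a suitably rescaled copy of the $i$-th knot inside the infinite column $s_i\times\BR^{d-2}$ lying over the square $s_i$ --- so that the first coordinate of $s_i$ governs the longitudinal scale and the second its transverse position and scale --- and then stringing the resulting $k$ knotted beads to one another, and onto the two ends at infinity, by a single otherwise trivial arc. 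Disjointness of the squares keeps the beads pairwise disjoint, and the remaining $d-2\ge1$ ambient coordinates give this connecting arc enough room to thread each bead in turn without self-intersections, even when two squares have overlapping first-coordinate projections. The regular homotopy data are spliced together in exactly the same way, so the operation is defined on $\hemb$ itself and not merely on $Emb_d$.

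One then checks that this assignment is a genuine strict action of $C_2$: it depends continuously on $c$ and on the $g_i$, is compatible with the $\Sigma_k$-action that relabels the squares, is associative with respect to operadic composition of little squares, and is unital for the generator of $C_2(1)$. With the reparametrization conventions fixed at the outset these verifications are routine but laborious bookkeeping --- essentially the computations carried out by Budney for $d=3$ --- and nothing in them is sensitive to the ambient dimension as long as $d-2\ge1$, i.e.\ $d\ge3$. I expect this bookkeeping, in particular arranging the conventions so that composition is associative \emph{on the nose} and the symmetric-group action needs no correction, to be the only real obstacle.

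An alternative, more homotopy-theoretic route is available at least when $d\ge4$. By Sinha's theorem $\hemb$ is then weakly equivalent to the homotopy totalization ($\hoTot$) of the cosimplicial space built from (a suitable version of) the Kontsevich operad $\Kd$, which is a multiplicative non-$\Sigma$ operad: it receives an operad map from $\ASS$. By the McClure-Smith form of the Deligne conjecture, the homotopy totalization of the cosimplicial space attached to such an operad carries a natural action of an operad weakly equivalent to $C_2$, and one transports this action along Sinha's equivalence. The drawbacks of this route are that for $d=3$ the cosimplicial model is not known to compute $\hemb$, and that it produces a $C_2$-action only after a zigzag of weak equivalences; for both reasons the geometric construction is the safer choice here, since it treats all $d\ge3$ uniformly and yields an honest action of the little squares operad.
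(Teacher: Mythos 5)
Your first (geometric) route has a genuine gap, and it is exactly the one the paper's sketch is designed to sidestep. The paper does not build the action directly on $\hemb$: it invokes Budney's $C_2$-action on the space $Emb_d^{framed}$ of \emph{framed} long knots (Budney's model $EC(1,D^{d-1})$ of self-embeddings of the solid cylinder $\BR\times D^{d-1}$), notes that this construction extends to the space $Imm_d^{framed}$ of framed long immersions, observes that the inclusion $Emb_d^{framed}\hookrightarrow Imm_d^{framed}$ is then a map of $C_2$-algebras, and concludes because the homotopy fiber of this inclusion is both a $C_2$-algebra and homotopy equivalent to $\hemb$. The framing is not a convenience but the point --- the paper itself remarks later, when discussing cabling, that framing is what makes such insertion constructions work on the level of spaces. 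In the framed model the ``beads'' are self-embeddings of the solid cylinder supported over the little squares; they literally compose, and self-embeddings with disjoint supports commute strictly, which is what yields a continuous, strictly associative and equivariant action.

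In your unframed version both of these mechanisms are missing. Inserting a rescaled copy of a knot $\BR\to\BR^d$ into a bead threaded on a connecting arc requires a tubular-neighborhood (i.e.\ framing) choice along that arc, made continuously in all the data; and the order in which your single string visits the $k$ beads cannot be chosen continuously in $c\in C_2(k)$ --- it must jump when two squares slide past one another in the vertical direction, and that jump is precisely the nontrivial homotopy commutativity that the $C_2$-structure encodes, so it cannot be dismissed as routine bookkeeping. You also need to say why the path-of-immersions component can be acted on compatibly; the paper handles this by performing the whole construction on framed immersions and passing to homotopy fibers, not by splicing paths by hand. Your alternative route is Sinha's construction via McClure--Smith, which the paper explicitly records as a \emph{different} action, available only for $d\geq 4$ and not known to agree with Budney's; since the proposition is asserted for all $d\geq 3$, that route alone does not suffice. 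The repair is to replace your direct construction by the framed one: quote Budney for $Emb_d^{framed}$, extend to $Imm_d^{framed}$, check the inclusion is a $C_2$-map, and take homotopy fibers.
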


\begin{proof}[Sketch of the proof]
R.~Budney constructed a model for the spaces $Emb_d^{framed}$, $d\geq 3$, of framed long knots that have a natural $C_2$-action~\cite{Budney}. His construction can be easily generalized to the space $Imm_d^{framed}$ of framed long immersions. The inclusion $Emb_d^{framed}\hookrightarrow Imm_d^{framed}$ turns out to be a map of $C_2$-algebras. But the homotopy fiber of this inclusion is homotopy equivalent to $\hemb$, which implies the result.
\end{proof}

In case $d\geq 4$, there is a different construction of such an action due to D.~Sinha~\cite{Sinha-OKS}. It is an open question whether these two $C_2$-actions are equivalent.

\begin{cor}\label{c12}
For any field of coefficients $\BK$ the homology $H_*(\hemb,\BK)$ is a graded bicommutative bialgebra.
\end{cor}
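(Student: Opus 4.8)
The plan is to deduce the statement from the $C_2$-action of Proposition~\ref{p11} by the classical argument that the homology of a homotopy-associative, homotopy-commutative, homotopy-unital $H$-space is a bicommutative bialgebra, the coproduct coming from the diagonal. First I would fix a point $c\in C_2(2)$, giving a multiplication $\mu\colon\hemb\times\hemb\to\hemb$. Because $C_2(2)\simeq S^1$ is path-connected, $c$ and its image $c\cdot\sigma$ under the transposition $\sigma\in\Sigma_2$ lie in the same component, hence $\mu\simeq\mu\circ\mathrm{sw}$ where $\mathrm{sw}$ is the factor swap of $\hemb\times\hemb$; path-connectedness of $C_2(3)$ gives $\mu\circ(\mu\times\id)\simeq\mu\circ(\id\times\mu)$, and the nullary operation $C_2(0)\to\hemb$ (together with contractibility of $C_2(1)$) provides a homotopy unit $e$. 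Applying $H_*(-;\BK)$ and the Künneth isomorphism (an isomorphism because $\BK$ is a field), $\mu_*$ becomes an associative, unital, graded-commutative product on $H:=H_*(\hemb;\BK)$, the Koszul sign in commutativity being precisely the one in the Künneth symmetry $\mathrm{sw}_*=\tau$, $\tau(x\otimes y)=(-1)^{|x||y|}y\otimes x$.

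Next I would put on $H$ the coalgebra structure carried by the homology of any CW space: the diagonal $\delta\colon\hemb\to\hemb\times\hemb$ followed by the inverse Künneth map gives a coproduct $\Delta\colon H\to H\otimes H$, coassociative since $\delta$ is, graded-cocommutative since $\mathrm{sw}\circ\delta=\delta$, and counital with counit induced by $\hemb\to\{*\}$. Finally I would check the Hopf compatibility, i.e.\ that $\Delta$ is an algebra map; this reduces to the on-the-nose equality of continuous maps
\[
\delta\circ\mu=(\mu\times\mu)\circ(\id\times\mathrm{sw}\times\id)\circ(\delta\times\delta)\colon\hemb\times\hemb\to\hemb\times\hemb,
\]
which, after applying $H_*(-;\BK)$ and Künneth, reads $\Delta\circ\mu_*=(\mu_*\otimes\mu_*)\circ(\id\otimes\tau\otimes\id)\circ(\Delta\otimes\Delta)$; the compatibilities of $\mu$ and $\delta$ with $e$ and with $\hemb\to\{*\}$ are immediate. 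Together with the previous paragraph this exhibits $H_*(\hemb;\BK)$ as a graded bicommutative bialgebra.

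The only non-formal input is Proposition~\ref{p11}; everything after it is standard $H$-space homology, so the proof itself is short. The points requiring care are purely bookkeeping: keeping track of Koszul signs throughout, and confirming that the operad action of Budney (respectively Sinha) genuinely includes a nullary operation, so that $H$ is unital — in its absence one simply adjoins a unit by hand using the basepoint of $\hemb$ given by the trivial long knot.
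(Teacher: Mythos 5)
Your argument is correct and is exactly the standard one the paper intends: the corollary is stated without proof as an immediate consequence of the $C_2$-action of Proposition~\ref{p11}, via the classical fact that the field-coefficient homology of a homotopy-commutative, homotopy-associative, homotopy-unital $H$-space is a graded bicommutative bialgebra (product from the operad action and connectivity of $C_2(2)$, coproduct from the diagonal). Your bookkeeping of the Künneth isomorphism, Koszul signs, and the unit is the right way to fill in the details the paper omits.
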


\begin{cor}\label{c13}
Since the spaces $\hemb$, $d\geq 4$, are connected,
then for a field $\BK$ of characteristics zero the homology  $H_*(\hemb,\BK)$ is a graded polynomial bialgebra generated by $\pi_*(\hemb)\otimes \BK$.
\end{cor}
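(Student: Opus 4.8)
The plan is to combine Corollary~\ref{c12} with the classical structure theory of connected graded bicommutative Hopf algebras over a field of characteristic zero, namely the Milnor--Moore and Leray--Hopf (Hopf--Borel) theorems. By Corollary~\ref{c12}, $H_*(\hemb,\BK)$ is a graded bicommutative bialgebra; since $\hemb$ is connected for $d\geq 4$, it has a unit and counit concentrated in degree zero, and the connectedness gives $H_0=\BK$, so the bialgebra is connected in the sense that makes it automatically a Hopf algebra (the antipode is constructed recursively by degree). Thus $H_*(\hemb,\BK)$ is a connected graded commutative and cocommutative Hopf algebra over a field of characteristic zero.

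First I would invoke the Milnor--Moore theorem: a connected graded cocommutative Hopf algebra over a field of characteristic zero is the universal enveloping algebra of its Lie algebra of primitives, $H_*(\hemb,\BK)\cong U(P)$ where $P=\mathrm{Prim}\,H_*(\hemb,\BK)$. Because the Hopf algebra is also \emph{commutative}, the Lie algebra $P$ must be abelian (in $U(\mathfrak g)$ the commutator of two primitives is their Lie bracket, and commutativity forces this to vanish), so $U(P)$ is just the free graded-commutative algebra on the graded vector space $P$ — i.e. a polynomial algebra (with the usual sign conventions: polynomial on even-degree generators, exterior on odd-degree ones, which in characteristic zero is what "graded polynomial" means). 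Equivalently one can cite the Leray--Hopf/Hopf--Borel theorem directly: a connected graded-commutative Hopf algebra over a characteristic-zero field is free graded-commutative.

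Next I would identify the generating space $P$ with $\pi_*(\hemb)\otimes\BK$. Here I would use the rational Hurewicz-type identification of the primitives of the Pontryagin ring (homology Hopf algebra) of a connected H-space (or, more precisely here, $C_2$-algebra, hence in particular an H-space) with its rational homotopy: for a connected H-space $X$, the Hurewicz map induces an isomorphism $\pi_*(X)\otimes\BQ\xrightarrow{\ \cong\ }\mathrm{Prim}\,H_*(X,\BQ)$, and this extends over any characteristic-zero field $\BK$ by base change. Since by Proposition~\ref{p11} the space $\hemb$ is a $C_2$-algebra, it is in particular a connected homotopy-associative H-space, so this applies. Combining the two isomorphisms gives $H_*(\hemb,\BK)$ as the graded polynomial bialgebra generated by $\pi_*(\hemb)\otimes\BK$, with each generator primitive.

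The main obstacle — really the only point needing care rather than citation — is making sure the hypotheses of these classical theorems are genuinely met: that the bialgebra is honestly connected (degree-zero part equal to the ground field), hence a Hopf algebra, and that the H-space structure from the $C_2$-action is good enough (homotopy-associative, which the little-squares operad certainly provides) for the primitives-equal-rational-homotopy statement. Both are straightforward given connectedness of $\hemb$ for $d\geq 4$ and Proposition~\ref{p11}, so the corollary follows essentially formally; no delicate computation is involved, only correctly stitching together Milnor--Moore, Hopf--Borel, and the rational Hurewicz theorem for H-spaces.
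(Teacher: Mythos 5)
Your argument is correct and is exactly the standard one the paper implicitly relies on: the corollary is stated without proof as an immediate consequence of Corollary~\ref{c12}, connectedness, and the classical Milnor--Moore/Hopf--Leray and Cartan--Serre theorems for connected homotopy-associative H-spaces over a field of characteristic zero. Your stitching together of these results (bicommutativity forcing the primitives to be abelian, hence a free graded-commutative algebra, with primitives identified with $\pi_*(\hemb)\otimes\BK$) is precisely the intended reasoning.
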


\section{Cosimplicial model for $\hemb$}\label{s2}

For any $d\geq 4$, D.~Sinha defined a cosimplicial model ${C_d}^\bullet =\{ {C_d}^n\, |\, n\geq 0\}$, whose homotopy totalization is weakly homotopy
equivalent to $\hemb$~\cite{Sinha-TKS}. The $n$-th stage ${C_d}^n$ of this model is an appropriate compactification of the configuration space of $n$
distinct points labeled by $1,2,\ldots,n$ in $I\times\BR^{d-1}=[0,1]\times\BR^{d-1}$. The space ${C_d}^n$ can also be viewed as the space of (bipointed) embeddings $\{0,1,\ldots,n,n+1\}\hookrightarrow I\times\BR^{d-1}$ sending 0 to $(0,\bar 0)$ and $n+1$ to $(1,\bar 0)$. The codegeneracy $s_i\colon {C_d}^n\to{C_d}^{n-1}$, $i=1\ldots n$, is the forgetting of the $i$-th point in configuration, the coface $d_i\colon {C_d}^n\to {C_d}^{n+1}$, $i=0\ldots n+1$, is the doubling of the $i$-th point in the direction of the vector $(1,\bar 0)$.

The homology $\{H_*({C_d}^n,\BK),\, n\geq 0\}$, and homotopy $\{\pi_*({C_d}^n)\otimes\BK,\, n\geq 0\}$ form respectively a cosimplicial coalgebra that we will denote by ${A_d}^\bullet$:
$$
{A_d}^\bullet=\{{A_d}^n,\, n\geq 0\}=\{H_*({C_d}^n,\BK),\, n\geq 0\},
$$
and a cosimplicial Lie algebra
$$
{L_d}^\bullet=\{{C_d}^n,\, n\geq 0\}=\{\pi_*({C_d}^n)\otimes\BK,\, n\geq 0\}.
$$
We will usually assume that $\BK=\BQ$.

The cosimplicial model ${C_d}^\bullet$ for $\hemb$ defines the Bousfield-Kan homology and homotopy spectral sequences, whose first term in the homological case is
$$
E^1_{-p,*}=H_*^{Norm}({C_d}^p,\BK)=N{A_d}^p,
$$
and in the homotopy case:
$$
{\calE}^1_{-p,*}=\pi_*^{Norm}({C_d}^p)\otimes\BK=N{L_d}^p.
$$
Where the normalized part $H_*^{Norm}({C_d}^p,\BK)=N{A_d}^p$, $\pi_*^{Norm}({C_d}^n)\otimes\BK=N{L_d}^p$ is the intersection of kernels of degeneracies:
$\bigcap_{i=1}^p\ker s_{i*}$.

\begin{thm}[\cite{ALTV-coformal,LT,LTV}]\label{t21}
For a field $\BK$  of characteristics zero and for $d\geq 4$, both the homology and homotopy Bousfield-Kan spectral sequences associated with ${C_d}^\bullet$ collapse at the second term:
$$
E^2_{*,*}=E^\infty_{*,*}=H_*(\hemb,\BK), \qquad \calE^2_{*,*}=\calE^\infty_{*,*}=\pi_*(\hemb)\otimes \BK.
$$
\end{thm}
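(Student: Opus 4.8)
The plan is to derive the collapse from the \emph{formality} of the little $d$-disks operad, using an extra weight grading to show that no room is left for higher differentials. Work over $\BQ$; the case of a general field of characteristic zero follows by extension of scalars. Since $\hoTot C_d^\bullet\simeq\hemb$ is already known \cite{Sinha-TKS}, and the $E^1$-pages of the two Bousfield--Kan spectral sequences are by construction the normalized complexes $NA_d^\bullet$ and $NL_d^\bullet$ of the cosimplicial graded vector spaces $A_d^\bullet=\{H_*(C_d^n;\BQ)\}$ and $L_d^\bullet=\{\pi_*(C_d^n)\otimes\BQ\}$, what has to be proved is that $d_r=0$ for all $r\ge 2$ and that both spectral sequences converge strongly.

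The first point is that Sinha's cosimplicial space $C_d^\bullet$ is built functorially from the little $d$-disks operad $\Dd$ — its terms are compactified configuration spaces and its structure maps are induced by the operad structure — so any good algebraic model of $\Dd$ propagates to it. The second point is Kontsevich's formality theorem for $\Dd$, established in full by Lambrechts and Vol\'ic: over $\BQ$ there is a zig-zag of quasi-isomorphisms of operads connecting $C_*(\Dd;\BQ)$ to its homology $\Poiss_d=H_*(\Dd;\BQ)$, the $d$-Poisson operad; dually, $\Dd$ is coformal \cite{ALTV-coformal}, which governs the homotopy side. I would then check that this zig-zag can be chosen compatibly with the structure assembling $C_d^\bullet$ and with the equivalence $\hoTot C_d^\bullet\simeq\hemb$; this identifies the homology Bousfield--Kan spectral sequence of $C_d^\bullet$, from $E^1$ onwards, with the spectral sequence of the purely algebraic cosimplicial object $A_d^\bullet$ equipped with its intrinsic (Hochschild) differential coming from $\Poiss_d$, and similarly for the homotopy spectral sequence and $L_d^\bullet$.

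The collapse is then a bidegree count. The operad $\Poiss_d$ is generated by a commutative product of homological degree $0$ and a Lie bracket of homological degree $d-1$; hence it carries a second grading, the weight $w=$ number of bracket generators, with the rigid relation $q=(d-1)\,w$ on a homogeneous element of homological degree $q$. On the homotopy side the corresponding rational object is built from degree-$(d-1)$ classes of weight one, which gives the analogous relation $q=(d-2)\,w+1$. In both cases the weight is preserved by the cosimplicial operators — doubling or forgetting a point creates or destroys no bracket generators — so it splits the entire spectral sequence as a direct sum over $w\ge 0$, and in a fixed weight $w$ everything that survives sits in a single homological degree. But $d_r$ has bidegree $(-r,r-1)$ and preserves the weight, so for $r\ge 2$ (and $d\ge 4$, so $r-1\ge 1$) its target in weight $w$ lies in a different homological degree and must vanish. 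Hence $d_r=0$ for all $r\ge 2$, so $E^2=E^\infty$ in both spectral sequences; strong convergence, and thus the identification of $E^\infty$ with $H_*(\hemb,\BK)$, resp. $\pi_*(\hemb)\otimes\BK$, follows from the growth of the connectivity of $C_d^n$ with $n$ together with the high connectivity of $\hemb$ for $d\ge 4$.

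I expect the real work to be in the middle step: one must upgrade the formality, resp. coformality, zig-zag of $\Dd$ to one that retains enough structure — compatibility with the operations assembling $C_d^\bullet$ and with Sinha's equivalence $\hoTot C_d^\bullet\simeq\hemb$ — to conclude that it induces an isomorphism of Bousfield--Kan spectral sequences from the $E^1$-page on. Once that is granted, the vanishing of the higher differentials is the elementary bidegree bookkeeping above.
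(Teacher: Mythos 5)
The paper does not prove Theorem~\ref{t21}: it is imported from \cite{LTV} (homology), \cite{ALTV-coformal} (homotopy) and \cite{LT}. Your overall strategy is the one those references actually follow --- Kontsevich/Lambrechts--Voli\'c formality, resp.\ coformality, of the little $d$-disks operad, upgraded to a version compatible with the cosimplicial structure of $C_d^\bullet$ (in \cite{LTV} this is the \emph{relative} formality of the inclusion of the little intervals into the little $d$-disks operad, and it is indeed the hard step you isolate). Once that identification is made, the spectral sequence from $E^1$ onwards is that of the cosimplicial graded vector space $A_d^\bullet$ with zero internal differential, and collapse at $E^2$ is automatic because the total differential then has pure cosimplicial bidegree: no weight argument is needed.

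The two places where you actually claim to close the argument, however, do not hold up as written. First, the ``bidegree count'' is circular: the weight $w$ on $E^1$ is a \emph{function} of the internal degree $q$ (namely $q=(d-1)w$ on the homology side, $q=(d-2)w+1$ on the homotopy side), so asserting that $d_r$ preserves $w$ while shifting $q$ by $r-1$ is exactly the assertion $d_r=0$ that you are trying to prove. What the degree count alone yields is the vanishing of $d_r$ for $r\not\equiv 1\pmod{d-1}$ (resp.\ $\pmod{d-2}$); for $d=4$ this leaves $d_4,d_7,\dots$ a priori nonzero, and the normalized range $j+1\le p\le 2j$ does not exclude them either. The collapse genuinely comes from the formality identification, not from degrees. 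Second, the convergence: the connectivity of $C_d^n$ is $d-2$ for every $n$ and does not grow with $n$; what converges is the Goodwillie--Weiss tower, via the Goodwillie--Klein estimate that $\hemb\to T_n\hemb\simeq\hoTot_n C_d^\bullet$ is roughly $n(d-3)$-connected, and this (together with nontrivial extra work to pass from the homotopy tower to the homology spectral sequence) is where $d\ge 4$ enters. So the skeleton of your proof matches the cited sources, but both of your concluding paragraphs would have to be replaced by the arguments above.
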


This theorem means that rationally the homology and homotopy of $\hemb$ are computed by the normalized complexes:
$$
\Tot {A_d}^\bullet =\left(\oplus_{n\geq 0} N{A_d}^n,d\right), \qquad\qquad
\Tot {C_d}^\bullet =\left(\oplus_{n\geq 0} N{C_d}^n,d\right),
$$
where the differential $d$ is as usual the alternated sum of cofaces $d=\sum_{i=0}^{n+1}(-1)^id_{i*}$.

\section{Hodge decomposition}\label{s3}
As usual we assume that the main field of coefficients $\BK$ is of characteristics zero. Gerstenhaber and Schak~\cite{GS}
 defined the so called Hodge decomposition in the Hochschild (co)homology of  commutative algebras with coefficients in a symmetric  bimodule $M$ over $A$.
$$
HH_*(A,M)=\oplus_i HH_*^{(i)}(A,M),\qquad HH^*(A,M)=\oplus_i HH_{(i)}^*(A,M).
\eqno(\numb)\label{eq31}
$$
The construction of Gerstenhaber and Schak~\cite{GS} used the $S_n$-action on the components of Hochschild complexes:
$$
(\oplus_{n\geq 0} M\otimes A^{\otimes n},\delta), \qquad (\oplus_{n\geq 0} Hom(A^{\otimes n}, M), d).
$$
They defined the families of orthogonal projectors $e_n^{(i)}\in \BK[S_n]$, $1\leq i\leq n$ if $n>0$, and $i=0$ if $n=0$, satisfying the following properties:

$$
\text{
\begin{tabular}{l}
$\bullet$ $e_n^{(i)}\cdot e_n^{(j)}=\delta_{ij}e_n^{(i)}$ (they are orthogonal projectors);\\
$\bullet$ $\sum_{i=0}^n e_n^{(i)}=1$ (the family of projectors $e_n^{(i)}$, $i=0\ldots n$, is complete);\\
$\bullet$ $\delta e_n^{(i)}=e_{n-1}^{(i)} \delta$, $de_n^{(i)}=e_{n+1}^{(i)}d$, $0=1\ldots n$.
\end{tabular}
}\eqno(\numb)\label{eq_properties}
$$
In the above formula $e_n^{(i)}=0$ if $i$ is not in the range $1\ldots n$ (in case $n=0$ one has $e_0^{(i)}=0$ if $i\neq 0$).

By the last property the complexes~\eqref{eq31} split into a direct sum of complexes with the $i$-th complex being the image of the projection $e^{(i)}=\sum_{n=0}^\infty e_n^{(i)}$. This splitting induces splitting in Hodge (co)homology which is called \lq\lq Hodge decomposition". Notice that the same construction works equally well in the differential graded setting~\cite{BFG_AdamsOp,Vigue}.

Later on J.-L.~Loday gave a  more general set up for this splitting~\cite{Loday}. He noticed that it takes place for any (co)simplicial complex that can be factored through the category $\Gamma$ (resp. $\Gamma^{op}$) of finite pointed sets. Recall that a simplicial (resp. cosimplicial) vector space is a functor from the category $\Delta^{op}$ (resp. $\Delta$) to the category of vector spaces:
$$
X_\bullet\colon\Delta^{op}\longrightarrow Vect, \qquad X^\bullet\colon\Delta\longrightarrow Vect.
$$
We will also consider cosimplicial dg-vector spaces. In that case the target category is $dg-Vect$ differential graded vector spaces. The objects of $\Delta^{op}$ are sets
$$
[n]=\{0,1,\ldots,n+1\}, \,\, n=0,1,2,\ldots
$$
The morphisms are the bipointed ordered maps $[n]\to [m]$ preserving the linear order. By \lq\lq bipointed" we mean maps sending 0 to 0, and $n+1$ to $m+1$. The morphisms are generated by the so called {\it face maps}:

$$
d_i\colon [n]\to [n-1], \quad i=0\ldots n,
$$
$$
d_i(j)=\begin{cases} j,& \text{if $j\leq i$;}\\
                     j-1,& \text{if $j>i$}
       \end{cases}
$$
(the preimage $d_i^{-1}(i)$ has two points $i$ and $i+1$), and {\it degeneracies}:
$$
s_i\colon [n]\to [n+1], \quad i=1\ldots n+1,
$$
$$
s_i(j)=\begin{cases} j,& \text{if $j<i$;}\\
j+1,& \text{if $j\geq i$}
\end{cases}
$$
(the preimage $d_i^{-1}(i)$ is empty).

The simplicial and cosimplicial complexes are defined as follows:
$$
(\oplus_{n\geq 0} X_n,\partial), \qquad (\oplus_{n\geq 0} X^n, d),
\eqno(\numb)\label{eq33}
$$
where the differential $\partial$ (respectively $d$) is the alternated sum of (co)faces $d_i$ plus (in the differential graded case) the inner differential of each $X_n$ (respectively $X^n$).

It is usually convenient to consider the normalized complexes:
$$
\Tot X_\bullet = (\oplus_{n\geq 0} NX_n,\partial), \qquad \Tot X^\bullet=(\oplus_{n\geq 0} NX^n, d),
$$
which are quasi-isomorphic to the initial ones~\eqref{eq33}. The normalized part is defined as follows
$$
NX_n=X_n\left/+_{i=1}^n \mathrm{Im}\, s_i,\right.
\qquad
NX^n=\cap_{i=1}^n \mathrm{ker}\, s_i.
$$

The category $\Gamma$ has objects
$$
\underline{n}=\{*,1,2,\ldots,n\},\,\, n=0,1,2,\ldots
\eqno(\numb)\label{eq_gamma_elements}
$$
The morphisms $Mor_\Gamma(\underline{m},\underline{n})$ are the pointed maps $\underline{m}\to\underline{n}$ (we consider each set~\eqref{eq_gamma_elements} to be pointed in~$*$). One has a natural functor $\rho\colon\Delta^{op}\longrightarrow\Gamma$ induced by the set maps:

$$
[n]\stackrel\rho\longrightarrow{\underline n},
$$
$$
\rho(i)=
\begin{cases}
i,& 1\leq i\leq n;\\
*,& \text{$i=0$ or $n+1$}.
\end{cases}
$$
By abuse of the language $\rho(d_i)$, $\rho(s_i)$ will still be denoted by $d_i$, $s_i$ and called faces and degeneracies respectively. The morphisms of $\Gamma$ are generated by faces, degeneracies, and also by the isomorphisms of each object $\underline{n}$ (that are given by the $S_n$-group action).

\begin{remark}\label{r31}
Let $\Gamma_{cycl}$ denote the subcategory of $\Gamma$, whose objects are the same and morphisms are the pointed maps $\underline{m}\to\underline{n}$ preserving the cyclic order. It is easy to see, that $\Gamma_{cycl}$ is isomorphic to $\Delta^{op}$ via the functor $\rho$:
$$
\rho\colon\Delta^{op}\stackrel\simeq\longrightarrow\Gamma_{cycl}.
$$
\end{remark}

J.-L.~Loday noticed~\cite{Loday} that if a simplicial (resp. cosimplicial) differential graded vector space $X_\bullet$ (resp. $X^\bullet$) can be factored through the category $\Gamma$ (resp. $\Gamma^{op}$):
$$
\xymatrix{
\Delta^{op}\ar[r]^-{X_\bullet}\ar[d]_\rho&dg{-}Vect\\
\Gamma\ar[ru]&
}
\qquad
\xymatrix{
\Delta\ar[r]^-{X^\bullet}\ar[d]_\rho&dg{-}Vect\\
\Gamma^{op}\ar[ru]&
}
\eqno(\numb)\label{eq_gamma_factor}
$$
then each $X_n$ (resp. $X^n$) admits an $S_n$-action, and moreover the projections $e_n^{(i)}\in\BK[S_n]$ satisfy the properties~\eqref{eq_properties}, and therefore define the Hodge splitting
$$
\Tot X_\bullet=\oplus_{i\geq 0} \Tot^{(i)} X_\bullet, \qquad
\Tot X^\bullet=\oplus_{i\geq 0} \Tot^{(i)} X^\bullet.
$$

\section{Operadic point of view. $\Sigma$-cosimplicial spaces, and commutative $\Sigma$-cosimplicial spaces}\label{s4}
Notice that the Hodge splitting for Hochschild complexes was possible only for {\it commutative} algebras with coefficients in a {\it symmetric} bimodule. Only in this case the symmetric group action is \lq\lq nicely" compatible with the differential which is built out of the product. In this section we define a natural formalism that generalizes this idea and will be helpful to detect $\Gamma$ and $\Gamma^{op}$-modules.

Let $\calO$ be any of the following three operads:
\begin{itemize}
\item non-$\Sigma$ operad  of associative algebras $\ASS=\{\ASS(n),\, n\geq 0\}$ whith $\ASS(n)=\BK$ for all $n$.

\item operad of commutative algebras $\COMM=\{\COMM(n),\, n\geq 0\}$ with $\COMM(n)=\BK$ being the trivial representation of $S_n$.

\item operad of associative algebras which will be also denoted by $\ASS=\{\ASS(n),\, n\geq 0\}$, but in this $\Sigma$-case $\ASS(n)=\BK[S_n]$. To make a difference between non-$\Sigma$ et $\Sigma$ cases we will always specify which one is considered.
\end{itemize}

\begin{definition}\label{weak_bimodule}
$M=\{M(n), n\geq 0\}$ is a {\it weak bimodule} over the operad $\calO$ if it is endowed with a series of composition maps:
$$
\overline{\circ}_i\colon \calO(n)\otimes M(k)\to M(n+k-1),\,\, i=1\ldots n, \text{ (left action)};
\eqno(\numb)\label{eq_left_action}
$$
$$
\underline{\circ}_i\colon M(k)\otimes \calO(n)\to M(k+n-1),\,\, i=1\ldots k, \text{ (right action)},
\eqno(\numb)\label{eq_right_action}
$$
satisfying natural associativity property, and (in the $\Sigma$-case) compatibility with the $S_n$-group action (in the $\Sigma$-case we assume that $M(n)$ are $S_n$-modules).\footnote{We say \lq\lq{\it weak}" because the left action \lq\lq is weak". The usual left action is given by a series of maps
$\calO(k)\otimes \bigl(M(k_1)\otimes M(k_2)\otimes\ldots M(k_n)\bigr)\to M(k_1+\ldots +k_n)$.}
\end{definition}

The result of composition $\overline{\circ}_i(o,m)$, and $\underline{\circ}_i(m,o)$, for $o\in\calO(n)$, and $m\in M(k)$, will be denoted by $o\circ_i m$, and $m\circ_i o$.

\vspace{.3cm}

\begin{figure}[h]
\psfrag{OM}[0][0][1][0]{$o\circ_3 m$}
\psfrag{MO}[0][0][1][0]{$m\circ_2 o$}
\psfrag{o}[0][0][1][0]{$o$}
\psfrag{m}[0][0][1][0]{$m$}
\includegraphics[width=16cm]{composition.eps}
\caption{}\label{fig1}
\end{figure}


We will also adopt notation using formal variables. For example, $o\circ_3 m$ from the above picture will be denoted as $o(x_1,x_2,m(x_3,x_4,x_5),x_6)$, and $m\circ_2o$ as $m(x_1,o(x_2,x_3,x_4,x_5),x_6)$.

\begin{lemma}\label{l41}
The structure of a cosimplicial vector space is equivalent to the structure of a weak non-$\Sigma$ bimodule over (non-$\Sigma$) $\ASS$.
\end{lemma}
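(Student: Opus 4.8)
The plan is to exhibit a dictionary between the generators-and-relations presentation of $\Delta$ (as recalled in Section~\ref{s3}: bipointed order-preserving maps $[n]\to[m]$, generated by faces $d_i$ and degeneracies $s_i$) and the weak-bimodule structure maps $\overline{\circ}_i$, $\underline{\circ}_i$ over the non-$\Sigma$ operad $\ASS$ with all $\ASS(n)=\BK$. Since $\ASS(n)$ is one-dimensional, a weak non-$\Sigma$ bimodule $M$ is just a sequence of vector spaces $M(n)$ together with, for each $n$ and each $1\le i\le n$, a \emph{left} operation $M(k)\to M(n+k-1)$ (inserting the unique generator $\mu_n\in\ASS(n)$ in the $i$-th slot of an element of $M(k)$) and a \emph{right} operation $M(k)\to M(k+n-1)$ (inserting $\mu_n$ into the $i$-th slot of $M(k)$). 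Using the formal-variable notation from the text, the left action sends $m(x_1,\dots,x_k)$ to $\mu_n(x_1,\dots,x_{i-1},m(\ldots),x_{i+n-1},\dots)$ and the right action sends it to $m(x_1,\dots,x_{i-1},\mu_n(x_i,\dots,x_{i+n-1}),\dots)$.

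First I would match the objects: set $X^n := M(n)$ and check that this is forced, since a cosimplicial vector space is a functor $\Delta\to Vect$ and the objects of $\Delta$ are the $[n]$. Next I would identify the generating morphisms. The coface $d_i\colon[n]\to[n+1]$ doubles the $i$-th point; under $\rho$ this is the pointed map $\underline{n}\to\underline{n+1}$ that is injective except $i$ and $i{+}1$ hitting the same element — equivalently, the right action by $\mu_2\in\ASS(2)$ in the $i$-th slot, and $d_0$, $d_{n+1}$ (which send the basepoint to a non-basepoint) correspond to left action by $\mu_2$ in the first resp.\ last slot. Thus all cofaces $d_0,\dots,d_{n+1}$ come from the binary operations of $\ASS$. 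Similarly the codegeneracy $s_i\colon[n]\to[n-1]$ forgets the $i$-th point; this corresponds to right action by the nullary operation $\mu_0\in\ASS(0)=\BK$ in the $i$-th slot (inserting a "phantom point", i.e.\ deleting a variable). One must also account for the identity $\mu_1\in\ASS(1)$, which acts as the identity on each $M(n)$. So: the faces and degeneracies of a cosimplicial space are exactly the left/right $\overline{\circ}_i,\underline{\circ}_i$ against the generating operations $\mu_0,\mu_1,\mu_2$ of $\ASS$.

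Then I would verify that the two sets of relations coincide. On the cosimplicial side these are the standard cosimplicial identities among the $d_i$'s and $s_i$'s. On the weak-bimodule side these are the "natural associativity properties" of Definition~\ref{weak_bimodule}: associativity of the left action with itself, of the right action with itself, commutation of left with right actions in disjoint slots, and compatibility of $\mu_2,\mu_1,\mu_0$ (i.e.\ associativity and unitality of the product $\mu_2$, which is exactly what makes $\ASS$ an operad). A bookkeeping check — best organized by the position of the inserted operation relative to the slots being acted on — shows that, e.g., $d_id_j = d_{j+1}d_i$ for $i\le j$ is precisely associativity of $\mu_2$ composed in two disjoint or nested slots of $M$, that $s_is_j=s_{j+1}s_i$ reflects commutation of two $\mu_0$-insertions, and that the mixed $d_i s_j$ relations encode the $\mu_2$–$\mu_0$ compatibility (inserting $\mu_2$ then $\mu_0$ into one of its inputs gives back $\mu_1 = \id$). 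Conversely, a weak non-$\Sigma$ $\ASS$-bimodule structure is generated by these binary/nullary/unary insertions and their relations are generated by the operadic ones, so nothing is lost in either direction; this gives an equivalence of categories (indeed an isomorphism of the relevant algebraic theories).

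\textbf{Main obstacle.} The conceptual content is small — $\ASS$ is terminal among non-$\Sigma$ operads and its weak-bimodule notion "is" the cosimplicial structure — but the genuine work, and the place where sign or indexing errors creep in, is the careful translation of \emph{every} cosimplicial identity into the correctly-indexed associativity/commutativity relation for the insertions $\overline{\circ}_i,\underline{\circ}_i$, paying attention to the two "boundary" cofaces $d_0,d_{n+1}$ being \emph{left} actions while $d_1,\dots,d_n$ are \emph{right} actions, and to the off-by-one shifts in slot indices when an operation of arity $\geq 2$ is inserted. I expect this case analysis (a handful of families of relations, each with sub-cases according to the relative position of the two insertions) to be the bulk of the argument; everything else is a direct unwinding of definitions.
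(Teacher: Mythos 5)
Your proposal is correct and is essentially the dictionary the paper itself uses: the paper's proof of Lemmas~\ref{l41}--\ref{l42} is explicitly only a sketch by example, exhibiting exactly your correspondence (inner cofaces $d_1,\dots,d_n$ as right insertion of $\mu_2$, the outer cofaces $d_0,d_{n+1}$ as left insertions, codegeneracies as right insertion of $1\in\ASS(0)$), and leaves the verification of the cosimplicial identities against the weak-bimodule associativity axioms implicit, which is the bookkeeping you propose to carry out. The only caution is the one you already flag yourself: the off-by-one and left/right conventions (e.g.\ $d_0(m)=x_1\cdot m(\dots)$ is insertion of $m$ into the \emph{second} slot of $\mu_2$, and $d_{n+1}$ into the first) need care, but this does not affect the correctness of the approach.
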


\begin{lemma}\label{l42}
The structure of a $\Gamma^{op}$-module is equivalent to the structure of a weak bimodule over $\COMM$.
\end{lemma}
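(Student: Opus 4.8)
The plan is to unwind both structures into the same combinatorial data and check that the axioms match. First I would recall that a $\Gamma^{op}$-module is a covariant functor $M\colon\Gamma^{op}\to Vect$, equivalently a contravariant functor on $\Gamma$; so for each pointed set $\underline{n}$ we get a vector space $M(n)$, for each pointed map $f\colon\underline{m}\to\underline{n}$ a linear map $f^*\colon M(n)\to M(m)$, functorially. The key observation is that the morphisms of $\Gamma$ are generated by the faces $d_i$, the degeneracies $s_i$ (the images under $\rho$ of the generators of $\Delta^{op}$), and the symmetric group isomorphisms $\sigma\in S_n$ of each object, subject to the relations among these generators in $\Gamma$. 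So a $\Gamma^{op}$-module is exactly: a sequence of $S_n$-modules $M(n)$ together with operations dual to $d_i$ and $s_i$ compatible with the $S_n$-actions and with the $\Gamma$-relations.

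Next I would identify these dual operations with the weak-bimodule composition maps over $\COMM$. Since $\COMM(n)=\BK$ for all $n$, a weak bimodule structure over $\COMM$ amounts to a single left operation $o\circ_i m\colon M(k)\to M(k+n-1)$ for each $n\geq 0$ and $1\leq i\leq n$ (where $o$ is the canonical generator of $\COMM(n)$) and a single right operation $m\circ_i o\colon M(k)\to M(k+n-1)$ for each $n\geq 0$ and $1\leq i\leq k$, together with $S$-equivariance and the associativity axioms of Definition~\ref{weak_bimodule}. I claim the right operations correspond to the maps $s_i^*$ (the map $\underline{\circ}_i$ with $o\in\COMM(0)$, i.e. inserting a point, is dual to the pointed surjection $\underline{m}\twoheadrightarrow\underline{m-1}$; with $o\in\COMM(2)$ it is dual to a map folding two points to one), and using the $S_n$-action one generates from these all maps $f^*$ for $f$ in the image of $\rho$ together with permutations. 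Concretely: every pointed map $\underline{m}\to\underline{n}$ factors as a permutation, followed by an order-preserving surjection onto its image, followed by an order-preserving injection; surjections are built from the $s_i$'s and injections from the $d_i$'s, so once we know the $S_n$-action and the operations corresponding to $d_i,s_i$ we recover the whole functor. Conversely the $\COMM$-associativity axioms are precisely the statement that these assignments respect composition in $\Gamma$, i.e. are functorial.

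The construction is manifestly inverse in both directions: given a $\Gamma^{op}$-module, restrict along the generators to produce the composition maps and check the associativity axioms from functoriality; given a weak $\COMM$-bimodule, extend along the factorization of an arbitrary pointed map and check well-definedness using the axioms. I expect the main obstacle to be bookkeeping: verifying that the associativity diagrams in Definition~\ref{weak_bimodule} (left-left, right-right, and the left-right mixed ones) correspond bijectively, under this dictionary, to the defining relations among $d_i$, $s_i$ and $S_n$ in the category $\Gamma$ — in particular that no relation is missed and none is redundant. This is the same kind of calculation that proves Lemma~\ref{l41} (the non-$\Sigma$, cyclic-order case of Remark~\ref{r31}), but with the symmetric group actions now genuinely present, so the matching of the $S$-equivariance clause of Definition~\ref{weak_bimodule} with the isomorphisms in $\Gamma$ must be tracked carefully rather than ignored.
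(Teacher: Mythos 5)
Your proposal follows essentially the same route as the paper, which itself declines to give a formal proof and only illustrates the dictionary on examples: morphisms of $\Gamma$ are generated by faces, degeneracies and the permutations of each object, these correspond to the operadic compositions with the generators of $\COMM$, and functoriality corresponds to the associativity and equivariance axioms of Definition~\ref{weak_bimodule}. One small correction to your dictionary: the right composition $\underline{\circ}_i$ with $o\in\COMM(0)$ (plugging the unit into the $i$-th slot, a map $M(k)\to M(k-1)$) is contravariantly induced by the order-preserving pointed \emph{injection} $\underline{k-1}\hookrightarrow\underline{k}$ missing $i$ (the degeneracy of $\Gamma$), not by a surjection $\underline{m}\twoheadrightarrow\underline{m-1}$; the folding surjections are dual to right composition with $\COMM(2)$, as you say, and the maps sending non-basepoint elements to the basepoint are dual to the left action. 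This variance slip does not affect the viability of the argument.
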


The same is true in the differential graded case in which the acting operads $\ASS$ and $\COMM$ are considered with trivial differential.

\begin{proof}[Proof of Lemmas~\ref{l41}, \ref{l42}] Instead of giving a formal proof let us consider a few examples how  cosimplicial and $\Gamma^{op}$ structure maps correspond to the composition operations~(\ref{eq_left_action}-\ref{eq_right_action}). Let $M=\{ M(n),\, n\geq 0\}$
be a non-$\Sigma$ weak bimodule over (non-$\Sigma$) $\ASS$. Let us pick $m=6$, $n=3$, and some map $\alpha\in Mor_{\Delta^{op}}([6],[3])$:


\begin{center}
\includegraphics[width=4cm]{morphism.eps}
\end{center}

\vspace{0cm}

By Remark~\ref{r31} this morphism corresponds to the following morphism in $Mor_{\Gamma_{cycl}}(\underline{6},\underline{3})$.

\vspace{.2cm}

\begin{center}
\includegraphics[width=5cm]{morphism_new.eps}
\end{center}


 We define the map
$\alpha_M\colon M(3)\to M(6)$ using the compositions~\eqref{eq_left_action},~\eqref{eq_right_action}:

\vspace{.2cm}

\begin{center}
\psfrag{Alp}[0][0][1][0]{$\alpha_M(m)$}
\psfrag{m}[0][0][1][0]{$m$}
\includegraphics[width=10cm]{alpha_M.eps}
\end{center}

Or equivalently
$$
\alpha_M(m)(x_1,x_2,x_3,x_4,x_5,x_6)=x_1m(x_2x_3,1,x_4x_5x_6).
$$
By
 \begin{picture}(10,15)
 \put(5,8){\circle*{3}}
 \put(5,8){\line(0,1){10}}
 \end{picture},
 \begin{picture}(10,15)
 \put(5,8){\circle*{3}}
 \put(5,8){\line(0,1){10}}
 \put(5,8){\line(0,-1){10}}
 \end{picture},
  \begin{picture}(10,15)
 \put(5,8){\circle*{3}}
 \put(5,8){\line(0,1){10}}
  \put(5,8){\line(-1,-2){5}}
 \put(5,8){\line(1,-2){5}}
 \end{picture},
  \begin{picture}(10,15)
 \put(5,8){\circle*{3}}
 \put(5,8){\line(0,1){10}}
  \put(5,8){\line(-1,-2){5}}
 \put(5,8){\line(1,-2){5}}
  \put(5,8){\line(0,-1){10}}
 \end{picture},
 $\ldots$
 in the above picture we represent respectively $1\in\ASS(0)$, $x_1\in\ASS(1)$, $x_1x_2\in\ASS(2)$, $x_1x_2x_3\in\ASS(3)$, $\ldots$. The idea is to denote the output of $m$ and of $\alpha(m)$ by $*$, and their inputs by $1,2,3,\ldots$, and then to see how the output and inputs of $\alpha_M(m)$ are connected to the output and inputs of $M$.

\vspace{.5cm}

Similarly if $M$ is a weak bimodule over $\COMM$, then any morphism $\alpha\colon\underline{m}\to\underline{n}$ defines a map $\alpha_M\colon M(n)\to M(m)$. For example, the map $\alpha\colon \underline{4}\to\underline{2}$:

\begin{center}
\includegraphics[width=3cm]{morphism_G.eps}
\end{center}

defines a map $\alpha_M\colon M(2)\to M(4)$ constructed as follows:

\vspace{.2cm}

\begin{center}
\psfrag{Alp}[0][0][1][0]{$\alpha_M(m)$}
\psfrag{m}[0][0][1][0]{$m$}
\includegraphics[width=10cm]{alpha_M_G.eps}
\end{center}

Equivalently $\alpha_M(m)(x_1,x_2,x_3,x_4)=x_1 x_3 m(1,x_2x_4)$.
\end{proof}

We have seen that weak bimodules over $\COMM$ and weak non-$\Sigma$ bimodules over $\ASS$ have a very natural interpretation. Actually weak bimodules over $\ASS$ are also very common objects. As example the Hochschild complex of any (not necessarily symmetric) algebra $A$ with coefficients in its any bimodule has this structure. The category that encodes this structure is also well-known it is the category of finite pointed non-commutative sets~\cite{PR}.

The following definition is not standard, but will be convenient for the language of the paper.

\begin{definition}\label{d43}
(a) A weak bimodule over ($\Sigma$ operad) $\ASS$ will be called {\it $\Sigma$-cosimplicial space}.

(b) A weak bimodule over $\COMM$ or equivalently $\Gamma^{op}$-module will be called {\it commutative $\Sigma$-cosimplicial space}.
\end{definition}

 Roughly speaking $\Sigma$-cosimplicial spaces are cosimplicial spaces with $S_n$ action on each component, and   commutative $\Sigma$-cosimplicial spaces  are $\Sigma$-cosimplicial spaces for which this $S_n$ action is nicely compatible with the face maps.

\subsection{In the category of topological spaces}\label{ss41}
Similar constructions (Lemmas~\ref{l41}-\ref{l42}, Definition~\ref{d43}) work well for any symmetric monoidal category with associative coproducts. Again abusing the notation $\COMM=\{\COMM(n),\, n\geq 0\}=\{*,\, n\geq 0\}$ will denote the topological commutative operad and $\ASS$ will denote both $\Sigma$ and non-$\Sigma$ associative operads:
\begin{gather*}
\text{non-$\Sigma$ case: } \ASS=\{\ASS(n)=*,\, n\geq 0\}.\\
\text{$\Sigma$ case: }  \ASS=\{\ASS(n)=S_n,\, n\geq 0\}.
\end{gather*}

\subsection{Subcomplex of alternative multiderivations}\label{ss42}
There is one important case when the Hochschild cohomology is easy to compute. This is when $A$ is a smooth algebra, and $M$ is its flat symmetric bimodule. In this case the cohomology $HH^*(A,M)$ is described by the space of alternative multiderivations.

\begin{definition}\label{d_multiderivations} Let $Y^\bullet$ be a commutative $\Sigma$-cosimplicial $dg$-vector space.

(a) An element $y\in Y^\bullet$ is called alternative if for any $\sigma\in S_n$
$$
y(x_{\sigma_1},x_{\sigma_2},\ldots,x_{\sigma_n})=(-1)^{|\sigma|} y(x_1,x_2,\ldots,x_n).
\eqno(\numb)\label{eq_alternative}
$$

(b) An element $y\in Y^n$ is called a multiderivation if for any $i=1\ldots n$ one has
\begin{multline}
y(x_1,\ldots,x_{i-1},x_ix_{i+1},x_{i+2},\ldots,x_{n+1})=\\
x_iy(x_1,\ldots,\hat x_i,\ldots,x_{n+1})+y(x_1,\ldots,\hat x_{i+1},\ldots,x_{n+1})x_{i+1}.
\label{eq_multi}
\end{multline}
\end{definition}

Let $AM(Y^n)$ denote the subspace of alternative multiderivations in $Y^n$.

\begin{lemma}\label{l45}
 The space $AM(Y^n)$, $n\geq 0$, is a subcomplex of $\Tot Y^\bullet$.
\end{lemma}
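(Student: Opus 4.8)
The plan is to show that $AM(Y^\bullet)$ is closed under the total differential, i.e.\ under each coface map $d_i$ and under the internal differential of $Y^\bullet$. The internal differential commutes with the $S_n$-action and with the composition operations $\overline{\circ}_i,\underline{\circ}_i$ out of which the cosimplicial structure is built, so it preserves both conditions~\eqref{eq_alternative} and~\eqref{eq_multi}; hence the only real content is to verify that if $y\in AM(Y^n)$ then $d(y)=\sum_{i=0}^{n+1}(-1)^i d_i(y)\in AM(Y^{n+1})$. Using the formal-variable notation of Section~\ref{s4}, the cofaces are $d_i(y)(x_1,\dots,x_{n+1}) = y(x_1,\dots,x_i x_{i+1},\dots,x_{n+1})$ for $i=1\ldots n$ (the ``doubling'' of the $i$-th input, recorded via the binary multiplication in $\COMM$), while $d_0$ and $d_{n+1}$ insert a formal unit $1\in\COMM(0)$ at the appropriate end, i.e.\ $d_0(y)(x_1,\dots,x_{n+1}) = y(1,x_2,\dots,x_{n+1})$ and $d_{n+1}(y)(x_1,\dots,x_{n+1}) = y(x_1,\dots,x_n,1)$, reflecting that the output of $y$ absorbs one extra variable at the boundary.

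The key observation is that the multiderivation condition~\eqref{eq_multi} is exactly the statement that, for each $i$, $d_i(y)$ equals the $i$-th term of the right/left boundary expression $x_i\, y(x_1,\dots,\hat x_i,\dots,x_{n+1}) + y(x_1,\dots,\hat x_{i+1},\dots,x_{n+1})\, x_{i+1}$. In other words, $y$ being a multiderivation says precisely that the ``interior'' cofaces $d_1(y),\dots,d_n(y)$ are determined by $y$ together with the two boundary cofaces $d_0(y)$, $d_{n+1}(y)$ (the multiplications by $x_i$ on the left and right being themselves instances of the weak-bimodule left/right actions $\overline{\circ}_1$ and $\underline{\circ}_{n+1}$ composing $y$ with $\ASS(2)$-type elements, hence preserving the alternative property up to the sign bookkeeping below). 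So the first step is to rewrite $d(y)$ using~\eqref{eq_multi}; one then checks, by the same kind of cancellation argument that proves $d^2=0$ in the Hochschild complex of a commutative algebra, that $d(y)$ again satisfies the multiderivation identity~\eqref{eq_multi} in arity $n+1$ — the relevant relations being the cosimplicial identities among the $d_i$'s combined with the associativity/commutativity of the $\COMM$-action. The second step is the alternative property: one must check $d(y)(x_{\sigma_1},\dots,x_{\sigma_{n+1}}) = (-1)^{|\sigma|} d(y)(x_1,\dots,x_{n+1})$ for $\sigma\in S_{n+1}$. Since $S_{n+1}$ is generated by adjacent transpositions, it suffices to treat $\sigma = (j,j+1)$. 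For $j\notin\{i,i+1\}$ this is immediate from the alternativity of $y$ together with naturality of $d_i$ with respect to the $S_n$-action (here one uses the $\Gamma^{op}$-module structure, i.e.\ that the symmetric group acts compatibly with the face maps — exactly the ``nice compatibility'' singled out in Definition~\ref{d43}(b)); the remaining case $j=i$ is where the two summands $(-1)^i d_i(y)$ and $(-1)^{i+1} d_{i+1}(y)$ get swapped, and the comparison $d_{i+1} = (i,i+1)\cdot d_i \cdot (\text{appropriate }s\text{-type map})$ in $\Gamma^{op}$, combined with the alternating signs, makes the two contributions pair up with the correct sign so that $d(y)$ is again alternative.

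The main obstacle I expect is the bookkeeping in the second step: making precise the identity expressing $d_{i+1}(y)$ in terms of $d_i(y)$ and a transposition inside $\Gamma^{op}$, and tracking how the sign $(-1)^{|\sigma|}$ demanded by~\eqref{eq_alternative} interacts with the alternating sign $(-1)^i$ in the definition of $d$. Concretely, one needs the relation $d_{i}\circ\tau_{i} = \tau'\circ d_{i+1}$ in $\Gamma^{op}$ for the appropriate transpositions $\tau_i\in S_{n+1}$, $\tau'\in S_n$, which follows from Remark~\ref{r31} and the explicit description of faces and the $S_n$-action as generators of $\Gamma$; granting that, the alternativity of $d(y)$ reduces to a finite sign computation. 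The multiderivation part of the first step is more routine — it is the ``commutative-algebra'' analogue of the standard verification, and no genuinely new input beyond the weak-bimodule axioms of Definition~\ref{weak_bimodule} and the cosimplicial identities is required.
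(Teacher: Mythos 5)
There are two problems with your plan. First, you never check that $AM(Y^n)$ actually lands inside $\Tot Y^\bullet$: by definition $\Tot Y^\bullet=\bigl(\oplus_n NY^n,d\bigr)$ with $NY^n=\cap_i\ker s_i$, so before discussing the differential you must show that every multiderivation is killed by all degeneracies. This is the first half of the paper's proof and it is a one-line trick: $s_iy=y(\ldots,1,\ldots)=y(\ldots,1\cdot 1,\ldots)$, and applying \eqref{eq_multi} to the product $1\cdot 1$ gives $s_iy=2s_iy$, hence $s_iy=0$. Without this, ``$AM(Y^n)$ is a subcomplex'' does not even parse, since $AM(Y^n)$ would not be a subspace of the normalized totalization.

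Second, your treatment of the cofaces stops exactly one step short of the real point and then veers into an unnecessary (and unfinished) verification. Once you substitute \eqref{eq_multi} into $\sum_{i=0}^{n+1}(-1)^id_i(y)$ — writing $d_i(y)=x_i\,y(\ldots,\hat x_i,\ldots)+y(\ldots,\hat x_{i+1},\ldots)\,x_{i+1}$ for $1\le i\le n$, with $d_0(y)=x_1\,y(x_2,\ldots,x_{n+1})$ and $d_{n+1}(y)=y(x_1,\ldots,x_n)\,x_{n+1}$ anchoring the two ends — the sum telescopes and the external differential is \emph{identically zero} on multiderivations (this is the polyvector-field phenomenon from the smooth-algebra example motivating Section~\ref{ss42}). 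Consequently there is no multiderivation identity in arity $n+1$ to verify for $d(y)$, no alternativity of $d(y)$ to check, and none of the $\Gamma^{op}$ sign bookkeeping you defer to the end; the ``main obstacle'' you identify simply does not arise. Your remark that the internal differential preserves both conditions because it commutes with the $\Gamma^{op}$-structure maps is correct and is exactly what the paper says. As written, though, the proposal is not a proof: it omits the degeneracy check entirely and leaves the coface step as a sketch whose honest completion would just be the vanishing computation above.
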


\begin{proof} Let $y\in AM(Y^n)$. For any $1\leq i\le n$ one has
\begin{multline*}
s_iy=y(x_1,\ldots,x_{i-1},1,x_{i},\ldots,x_{n-1})=y(x_1,\ldots,x_{i-1},1\cdot 1,x_{i},\ldots,x_{n-1})=\\
1\cdot y(x_1,\ldots,x_{i-1},1,x_{i},\ldots,x_{n-1})+y(x_1,\ldots,x_{i-1},1,x_{i},\ldots,x_{n-1})\cdot 1= 2s_iy.
\end{multline*}
Therefore $s_iy=0$.

On the other hand the external part of the differential, which is the alternative sum of cofaces $d_i$, takes any multiderivation to zero, which can be checked by similar computations. One has also that the internal part of the differential preserves the alternative and multiderivation properties since it commutes with $\Gamma^{op}$ structure maps.
\end{proof}

The complex $\oplus_{n\geq 0}AM(Y^n)$ will be also denoted by $AM(Y^\bullet)$. The following definition is inspired by the example from the beginning of the section.

\begin{definition}\label{d_smooth} A commutative $\Sigma$-cosimplicial $dg$-vector space will be called {\it smooth} if the inclusion
$$
AM(Y^\bullet)\hookrightarrow \Tot Y^\bullet
$$
is a quasi-isomorphism.\footnote{This definition is a weaker version of~\cite[Definition~4.3]{Pirashvili} given by Pirashvili. \cite[Theorem~4.6]{Pirashvili} implies that smooth $\Gamma$-modules in the sense of Pirashvili are always smooth in our definition.}
\end{definition}

\begin{remark}\label{r46}
For a smooth commutative $\Sigma$-cosimplicial $dg$-vectror space, the Hodge decomposition in the homology can be easily understood:
$$
H_*^{(i)}(\Tot Y^\bullet)=H_*(AM(Y^i)).
$$
This follows from the fact that $AM(Y^i)$ lies in the image of $e_i^{(i)}=\frac 1{i!}\sum_{\sigma\in S_i}(-1)^{|\sigma|}\sigma.$
\end{remark}

\section{Hodge decomposition in the homology and homotopy of long knots}\label{s5}
Let $Top$ denote the category of topological spaces, and $hoTop$ denote the category with the same objects $Ob(hoTop)=Ob(Top)$, but with the morphisms being the homotopy classes of maps. One has a forgetful functor:
$$
Top \stackrel h\longrightarrow hoTop.
\eqno(\numb)\label{eq51}
$$

\begin{proposition}\label{p51}
The cosimplicial space ${C_d}^\bullet$ (see Section~\ref{s2}) has the property that $h\circ {C_d}^\bullet$ factors through $\Gamma^{op}$:
$$
\xymatrix{
\Delta\ar[r]^-{{C_d}^\bullet}\ar[rd]_\rho&Top\ar[r]^-h&hoTop\\
&\Gamma^{op}\ar[ru]&
}
\eqno(\numb)\label{eq:homot_factor}
$$
or in other words ${C_d}^\bullet$ is a commutative $\Sigma$-cosimplicial space in $hoTop$.
\end{proposition}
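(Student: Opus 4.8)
The plan is to build the factorization ``by hand'' on morphisms, using the concrete description of $C_d^\bullet$ as a (bipointed) configuration/embedding space. By Lemma~\ref{l42} (or Definition~\ref{d43}(b)), it suffices to equip $\{h\circ C_d^n\}_{n\ge 0}$ with the structure of a weak bimodule over $\COMM$ in $hoTop$; equivalently, to exhibit an $S_n$-action on each $C_d^n$ up to homotopy and compatible (up to homotopy) left/right composition maps $\overline\circ_i,\underline\circ_i$ with the one-point topological operad, satisfying the weak-bimodule associativity relations in $hoTop$. The point of passing to $hoTop$ is precisely that these relations, and the defining relations of $\Gamma^{op}$, need only hold up to homotopy rather than on the nose.

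First I would describe the three families of maps on $C_d^n=\{$ bipointed embeddings $\{0,1,\dots,n,n+1\}\hookrightarrow I\times\BR^{d-1}\}$. (1) The $S_n$-action: a permutation $\sigma\in S_n$ acts by relabeling the interior points $1,\dots,n$. This is strictly associative and strictly an action on the nose, so it is no trouble. (2) The right action $\underline\circ_i$ of the one-point operad $\ASS(k)=*$: this should be the operad structure already in play in the cosimplicial model, i.e. it is built from the cofaces $d_i$ (doubling of the $i$-th point in the direction $(1,\bar 0)$) and the final coface $d_{n+1}$. Concretely $m\mapsto m\circ_i(\,)$ for the generator of $\COMM(0)$ is a degeneracy $s_i$, and for higher arities it is an iterated coface; these are honest continuous maps. (3) The left action $\overline\circ_i$, which is the ``weak'' one: here the generator of $\COMM(0)$ acting on the left, $\calO(1)\otimes M(k)\to M(k)$ when $n=1$, or more relevantly $\calO(2)\otimes M(k)\to M(k+1)$, corresponds to $d_0$ (doubling the $0$-th boundary point), while higher-arity left compositions come from iterating $d_0$ together with insertions. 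The key structural input is that all of these maps are the ones Sinha already uses to make $C_d^\bullet$ cosimplicial, reorganized via the functor $\rho\colon\Delta\to\Gamma^{op}$ of Remark~\ref{r31} and the extra symmetry.

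Next I would check the compatibilities. The cosimplicial (i.e. $\ASS$-non-$\Sigma$ weak bimodule) relations hold strictly, by Sinha's construction. What must be verified is that the $S_n$-action is compatible with the cofaces and codegeneracies \emph{in $hoTop$}: e.g. $d_i\circ\sigma\simeq \sigma'\circ d_i$ for the appropriate $\sigma'$. Geometrically, relabeling points and then doubling a point is homotopic to doubling the point and then relabeling, because ``doubling in the direction $(1,\bar 0)$'' is a canonical operation that does not interact with the labels except through their indices, and the compactification of configuration space is built symmetrically. Here one uses that $C_d^n$ is an $S_n$-space on the nose and that the coface maps are $S_n$-equivariant up to a specified homotopy — or, cleaner, one observes that the doubling map followed by any relabeling is literally equal to the corresponding relabeling followed by the doubling map, so in many cases the homotopy is trivial and only the bookkeeping of indices needs care. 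The general $\Gamma^{op}$-relations then follow from Loday's observation \cite{Loday} once the weak-$\COMM$-bimodule axioms are in place up to homotopy.

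The main obstacle I expect is not any single deep fact but the coherence bookkeeping: one is asserting the existence of a functor out of $\Gamma^{op}$, and a priori that requires checking all composites of generating morphisms map consistently. Working in $hoTop$ collapses this to checking finitely many relations up to homotopy rather than an infinite coherence tower, but one still has to be careful that (a) the little bipointed interval at the ends ($0\mapsto(0,\bar 0)$, $n+1\mapsto(1,\bar 0)$) really does make the left action ``weak'' in exactly the sense of Definition~\ref{weak_bimodule}, and (b) the two boundary cofaces $d_0$ and $d_{n+1}$ play the asymmetric roles of left-unit-insertion and right-unit-insertion correctly. I would organize the verification exactly as in the proof of Lemmas~\ref{l41}--\ref{l42}: pick a typical morphism $\alpha\in Mor_{\Gamma^{op}}(\underline n,\underline m)$, decompose it into a permutation followed by a $\Gamma_{cycl}$-morphism (Remark~\ref{r31}), realize the latter as a composite of cofaces/codegeneracies as in Sinha's model, and observe that the resulting map $\alpha_{C_d}\colon C_d^n\to C_d^m$ is well-defined up to homotopy independent of the decomposition — the independence being where (and the only place where) one genuinely invokes ``in $hoTop$''.
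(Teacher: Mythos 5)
Your overall strategy---verify the weak-$\COMM$-bimodule axioms directly on the configuration-space model, relation by relation, using that in $hoTop$ they need only hold up to homotopy---is viable, but the one step that carries all of the content of the proposition is asserted rather than proved, and is at one point misstated. The $\Sigma$-cosimplicial structure (the strict $S_n$-action by relabeling and the equivariance relations $d_i\circ\sigma=\sigma'\circ d_i$) is indeed essentially free. What makes the structure \emph{commutative}, i.e.\ what lets the functor descend from the category of finite pointed noncommutative sets to $\Gamma^{op}$, is the extra relation that a doubling coface is invariant under swapping the two points it creates, $\tau_{i,i+1}\circ d_i\simeq d_i$ (together with a relation identifying $d_0$ with $d_{n+1}$ composed with a permutation). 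This is never an equality on the nose: the configuration ``point $i$ infinitesimally followed by point $i+1$ in the direction $(1,\bar 0)$'' and the same configuration with the two labels swapped are distinct points of ${C_d}^{n+1}$, so your remark that ``the doubling map followed by any relabeling is literally equal to the corresponding relabeling followed by the doubling map'' fails precisely for the relabeling that matters. The required homotopy exists because one can rotate the infinitesimal pair around one another, i.e.\ because the sphere $S^{d-1}$ of possible doubling directions is connected; without naming this geometric input the argument does not explain why the $\Gamma^{op}$-structure exists at all (it is not a formal consequence of working in $hoTop$). Your closing appeal to ``well-definedness independent of the decomposition'' is exactly this issue in disguise and cannot be discharged by index bookkeeping.

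For comparison, the paper sidesteps the hand verification entirely by passing to Sinha's operadic model: the cosimplicial structure on ${\calK_d}^\bullet$ is induced by a morphism of $\Sigma$-operads $\ASS\hookrightarrow\calK_d$, and since every component $\calK_d(n)$ is connected this morphism factors through $\COMM$ in $hoTop$, which immediately yields the weak $\COMM$-bimodule structure. The connectivity of the components is the same geometric fact your argument needs; the operadic packaging just makes the single relevant homotopy appear once and for all. If you want to keep the direct approach, the fix is to state and prove the homotopies $\tau_{i,i+1}\circ d_i\simeq d_i$ and the $d_0$/$d_{n+1}$ analogue explicitly, and then invoke the presentation of $\Gamma$ by faces, degeneracies and permutations to conclude.
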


\begin{proof}
D.~Sinha constructed another cosimplicial model for $\overline{Emb}_d$, which is homotopy equivalent to the one that we briefly described in Section~2. The $n$-th stage of this model is the $n$-th component of some operad $\calK_d$ which is homotopy equivalent to the operad of little $d$-cubes. Using a natural inclusion
$$
\ASS\hookrightarrow \calK_d,
\eqno(\numb)\label{eq53}
$$
$\calK_d$ becomes a cosimplicial space (being a bimodule over $\ASS$, see Lemma~\ref{l41}). But~\eqref{eq53} is a morphism of $\Sigma$-operads. Therefore ${\calK_d}^\bullet$ is a $\Sigma$-cosimplicial space. But notice that all the components $K_d(n),$ $n\geq 0$ are connected, therefore in the category $hoTop$ the morphism~\eqref{eq53} factors though $\COMM$:
$$
\xymatrix{
\ASS\ar[r]\ar[d]&K_d\\
\COMM\ar@{.>}[ru]&
}
\eqno(\numb)\label{eq54}
$$
As a consequence in $hoTop$ the operad $\calK_d$ is a weak bimodule over $\COMM$, or equivalently is a commutative $\Sigma$-cosimplicial space.
\end{proof}

\begin{cor}\label{c52} Since the homology and homotopy functors factor through $hoTop$, both the homology ${A_d}^\bullet=H_*({C_d}^\bullet,\BK)$ and homotopy ${L_d}^\bullet=\pi_*({C_d}^\bullet)\otimes\BK$ of ${C_d}^\bullet$ are commutative $\Sigma$-cosimplicial graded spaces.
\end{cor}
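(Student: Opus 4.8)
\emph{Proof plan.} The plan is to equip ${A_d}^\bullet$ and ${L_d}^\bullet$ with a $\Gamma^{op}$-module structure --- which by Lemma~\ref{l42} is the same thing as a commutative $\Sigma$-cosimplicial structure --- simply by post-composing the factorization~\eqref{eq:homot_factor} of Proposition~\ref{p51} with the homology, resp.\ rational homotopy, functor. All of the genuine content is already contained in Proposition~\ref{p51}; what remains is a short diagram chase.

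First I would record that homology and rational homotopy are homotopy invariant. The functor $H_*(-;\BK)$, with values in graded $\BK$-coalgebras, factors as $\overline{H}_*\circ h$ through the projection $h\colon Top\to hoTop$ of~\eqref{eq51}; and --- using that every space ${C_d}^n$, as well as every space $K_d(n)$ occurring in the proof of Proposition~\ref{p51}, is connected --- the assignment $X\mapsto\pi_*(X)\otimes\BK$ likewise factors as $\overline{\pi}_*\circ h$, with values in graded Lie $\BK$-algebras. Now let $G\colon\Gamma^{op}\to hoTop$ be the functor supplied by Proposition~\ref{p51}, so that $h\circ{C_d}^\bullet=G\circ\rho$ with $\rho\colon\Delta\to\Gamma^{op}$. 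I would form the composites $\overline{H}_*\circ G$ and $\overline{\pi}_*\circ G$; these are $\Gamma^{op}$-modules, and restricting along $\rho$ gives
$$
(\overline{H}_*\circ G)\circ\rho=\overline{H}_*\circ h\circ{C_d}^\bullet=H_*({C_d}^\bullet,\BK)={A_d}^\bullet,
$$
and likewise $(\overline{\pi}_*\circ G)\circ\rho={L_d}^\bullet$. Thus ${A_d}^\bullet$ and ${L_d}^\bullet$ factor through $\Gamma^{op}$, the underlying cosimplicial objects being the original ones of Section~\ref{s2}.

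It then remains only to translate the language. By Lemma~\ref{l42} a $\Gamma^{op}$-module is exactly a weak bimodule over $\COMM$, that is --- by Definition~\ref{d43}(b) --- a commutative $\Sigma$-cosimplicial space, and this correspondence is valid verbatim in the graded setting (with $\COMM$ carrying the trivial differential). Hence ${A_d}^\bullet$ and ${L_d}^\bullet$ are commutative $\Sigma$-cosimplicial graded spaces, as claimed. Concretely, this endows each ${A_d}^n$ and ${L_d}^n$ with an $S_n$-action compatible with the cofaces, which is precisely what is needed to run the Hodge splitting of Section~\ref{s3} on $\Tot{A_d}^\bullet$ and $\Tot{L_d}^\bullet$. (If one wishes, one checks in passing that the coalgebra, resp.\ Lie, structure is $\Gamma^{op}$-equivariant as well, though this is not required for the statement.)

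The only step that is not purely formal is the claim that $\pi_*(-)\otimes\BK$ really descends to $hoTop$: in contrast with $H_*$, homotopy groups are \emph{a priori} functorial only on pointed spaces. This is exactly where the connectedness hypothesis invoked in the statement is used --- for connected spaces (indeed, for $d\geq4$ the relevant spaces are simply connected) the homotopy groups are canonically basepoint-free and functorial on $hoTop$, so the $S_n$-actions obtained above cannot be incoherent with the cofaces. Granting this, the argument is the formal composition above and I foresee no further obstacle.
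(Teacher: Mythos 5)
Your argument is correct and is exactly the paper's: the corollary is obtained by post-composing the $\Gamma^{op}$-factorization of Proposition~\ref{p51} with the homology and rational homotopy functors, which descend to $hoTop$. The extra remark on why $\pi_*(-)\otimes\BK$ is well defined on $hoTop$ (connectedness/basepoint independence) is a sensible elaboration of the same one-line justification the paper gives.
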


There is another but similar way to see that ${A_d}^\bullet$ is a weak bimodule over $\COMM$. The vector spaces $\{{A_d}^n,\, n\geq 0\}$ form an operad, which is the homology operad of the little $d$-cubes. By~\cite{Coh} it is the operad of $(d-1)$-Poisson algebras: graded commutative algebras with a Lie bracket of degree $(d-1)$ compatible with the product:
$$
[x_1,x_2x_3]=[x_1,x_2]x_3+(-1)^{|x_2|+(d-1)}x_2[x_1,x_3].
$$
This operad contains $\COMM$ and therefore is a weak bimodule over it.

\begin{cor-def}\label{c53} The complexes $\Tot {A_d}^\bullet$, $\Tot {L_d}^\bullet$ admit Hodge splitting
$$
\Tot {A_d}^\bullet=\oplus_i \Tot^{(i)}{A_d}^\bullet.
\eqno(\numb)\label{eq55}
$$
$$
\Tot {L_d}^\bullet=\oplus_i \Tot^{(i)}{L_d}^\bullet.
\eqno(\numb)\label{eq56}
$$
Since the above complexes compute the rational homology and homotopy of $\overline{Emb}_d$, this gives the Hodge decomposition in its homology and homotopy:
$$
H_*(\overline{Emb}_d,\Q)=\oplus_i H_*^{(i)}(\overline{Emb}_d,\Q):=\oplus_i H_*(\Tot^{(i)}{A_d}^\bullet),
\eqno(\numb)\label{eq57}
$$
$$
\pi_*(\overline{Emb}_d)\otimes\Q=\oplus_i\pi_*^{(i)}(\overline{Emb}_d,\Q):=\oplus_i H_*(\Tot^{(i)}{L_d}^\bullet).
\eqno(\numb)\label{eq58}
$$
\end{cor-def}

\section{Geometric interpretation of the Hodge decomposition. Cabling maps}\label{s6}
Studying knot spaces is in many aspects similar to studying loop spaces. Recall that the real cohomology of the loop space $\Omega M$ of a 1-connected variety $M$ is naturally isomorphic to the Hochschild homology of the De Rham algebra $\Omega^*(M)$ (of differential forms on $M$) with trivial coefficients:
$$
H^*(\Omega M,\R)\simeq HH_*(\Omega^*(M),\R).
\eqno(\numb)\label{eq61}
$$
Similarly the cohomology of the free loop space $\Lambda M$ can be expressed as the Hochschild cohomology of $\Omega^*(M)$ with coefficients in itself:
$$
H^*(\Lambda M,\R)\simeq HH_*(\Omega^*(M),\Omega^*(M)).
\eqno(\numb)\label{eq62}
$$
The differential algebra $\Omega^*(M)$ is graded commutative, and both bimodules $\R$, and $\Omega^*(M)$ are graded commutative over it. Hence it makes sense to consider the Hodge decomposition of~(\ref{eq61}-\ref{eq62}). Let $\Phi_n$ denote the power maps
$$
\Omega M \stackrel{\Phi_n}\longrightarrow\Omega M, \qquad \Lambda M\stackrel{\Phi_n}\longrightarrow\Lambda M.
$$
induced by a degree $n$ map of a circle into itself $S^1\stackrel{n}\longrightarrow S^1$. It was shown in~\cite{BFG_AdamsOp} that the Hodge decomposition in $HH_*(\Omega^*(M),\R)$ (resp. $HH_*(\Omega^*(M),\Omega^*(M))$) corresponds via isomorphisms~\eqref{eq61}, \eqref{eq62} to the decomposition into eigenspaces of the power maps in cohomology:
$$
H^*(\Omega M,\R)\stackrel{\Phi_n^*}\longrightarrow H^*(\Omega M,\R), \qquad H^*(\Lambda M,\R)\stackrel{\Phi_n^*}\longrightarrow H^*(\Lambda M,\R).
$$
More precisely the $HH_*^{(i)}$ term is always the eigenspace of $\Phi_n^*$ with eigenvalue $n^i$.

We believe that a similar assertion holds for the homotopy and homology of $\overline{Emb}_d$. But instead of power maps one has to use cabling maps (abusing notation we will denote them similarly):
$$
\Phi_n\colon \overline{Emb}_d\to\overline{Emb}_d.
$$
The construction of $\Phi_n$ is similar to the proof of Proposition~\ref{p11} and goes in 2 steps. First one constructs such maps for the spaces of long framed embeddings $Emb_d^{framed}$, and long framed immersions $Imm_d^{framed}$:
$$
\xymatrix{
Emb_d^{framed}\ar[r]^{\Phi_n}\ar@{^{(}->}[d]&Emb_d^{framed}\ar@{^{(}->}[d]\\
Imm_d^{framed}\ar[r]^{\Phi_n}&Imm_d^{framed}.
}
\eqno(\numb)\label{eq63}
$$
Below we show how to construct a 2-cabling of a framed long knot (framing is necessary to make the construction work
on the level of spaces):

\vspace{.2cm}

\begin{center}
\psfrag{Phi}[0][0][1][0]{$\Phi_2$}
\includegraphics[width=15cm]{cabling.eps}
\end{center}

\vspace{.2cm}

This construction works nicely if one uses Budney's model for the space of framed long knots~\cite{Budney} (the space denoted by $EC(1,D^{d-1})$), and its analogue for the space of framed immersions. Since the diagram~\eqref{eq63} commutes, one obtains the induced \lq\lq cabling" map $\Phi_n$ on the homotopy fiber of the inclusion $Emb_d^{framed}\hookrightarrow Imm_d^{framed}$. But this homotopy fiber is exactly $\overline{Emb}_d$.

\section{Relation between the Hodge decomposition in the homology and homotopy}\label{s7}
Rational homotopy of $\overline{Emb}_d$ is isomorphic to the primitive part of the rational homology, see Corollary~\ref{c12}. By Theorem~\ref{t21},
$$
H_*(\overline{Emb}_d,\Q)=H_*(\Tot {A_d}^\bullet), \qquad \pi_*(\overline{Emb}_d)\otimes \Q =H_*(\Tot {L_d}^\bullet).
$$
Therefore the space of primitives of $H_*(\Tot {A_d}^\bullet)$ is isomorphic to $H_*(\Tot {L_d}^\bullet)$:
$$
Prim(H_*(\Tot {A_d}^\bullet))\simeq H_*(\Tot {L_d}^\bullet)
\eqno(\numb)\label{eq72}
$$
A pure algebraic proof for this isomorphism is given in~\cite{LT}.

\begin{proposition}\label{p71}
(i) The coproduct structure of $\Tot A^\bullet$ respects the Hodge degree:
$$
\Delta\Tot^{(k)}{A_d}^\bullet\subset\bigoplus_{i=0}^k\Tot^{(i)}{A_d}^\bullet\otimes\Tot^{(k-i)}{A_d}^\bullet.
$$
(ii) The isomorphism~\eqref{eq72} respects the Hodge decomposition.
\end{proposition}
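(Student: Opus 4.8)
The plan is to reduce everything to a single structural fact: the Hodge projectors $e^{(i)} = \sum_n e_n^{(i)}$ are built entirely out of the $S_n$-actions on the components ${A_d}^n$, and the coproduct on $\Tot {A_d}^\bullet$ is built out of the $\Gamma^{op}$-structure maps (the weak-bimodule operations over $\COMM$). So part (i) should follow from a compatibility between these two ingredients, and part (ii) will then be essentially formal once (i) is in place. Concretely, I would first recall how the coproduct on $\Tot {A_d}^\bullet$ arises: the bicommutative bialgebra structure on $H_*(\hemb,\BK)$ coming from the $C_2$-action (Corollary~\ref{c12}) is realized on the $\Tot$-level through the operad structure on $\{{A_d}^n\}$, and the coproduct of a normalized chain $m \in N{A_d}^n$ is a sum over ways of splitting the $n$ inputs into an ordered pair of blocks, applying the degeneracy/unit maps appropriately — i.e.\ it is governed by the maps $\underline{n} \to \underline{p} \sqcup \underline{q}$ in (an enriched version of) $\Gamma^{op}$. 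The key point is that such a block-splitting map is equivariant: if $\sigma \in S_n$ and we split into blocks of sizes $p$ and $q$, we may first permute and then split, which corresponds on the target to $(\sigma_1,\sigma_2)$-permutations together with a shuffle.

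The heart of part (i) is then the Eulerian-idempotent identity that $e_n^{(i)}$ decomposes compatibly under the coalgebra (equivalently, under the "sum" operation $\underline{n}\cong\underline{p}\ast\underline{q}$ in the symmetric-monoidal structure of $\Gamma$). This is a classical fact about the Eulerian idempotents / Hodge projectors: under the shuffle coproduct on the free associative (or the symmetric) setting, $e_n^{(k)}$ corresponds to $\sum_{i+j=k} e_p^{(i)} \otimes e_q^{(j)}$ appropriately symmetrized; see \cite{Loday,GS}. So I would state and cite (or sketch via the generating-function identity $\lambda^{(x)} \cdot \lambda^{(y)} = \lambda^{(x+y)}$ for the eigen-projections of the shuffle/Adams operations) the fact that applying the coproduct and then projecting to Hodge degree $k$ equals projecting each tensor factor and summing over $i+j=k$. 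Combined with the equivariance of the block-splitting maps from the previous paragraph, this gives exactly $\Delta\, \Tot^{(k)}{A_d}^\bullet \subset \bigoplus_{i=0}^k \Tot^{(i)}{A_d}^\bullet \otimes \Tot^{(k-i)}{A_d}^\bullet$.

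For part (ii): by Corollary~\ref{c13} and the discussion in Section~\ref{s7}, the isomorphism \eqref{eq72} is the identification of $\pi_*(\hemb)\otimes\Q = H_*(\Tot{L_d}^\bullet)$ with the primitives of the bialgebra $H_*(\Tot{A_d}^\bullet)$, and it is compatible with the maps ${L_d}^\bullet \to {A_d}^\bullet$ of commutative $\Sigma$-cosimplicial spaces induced by the Hurewicz map $\pi_*({C_d}^n) \to H_*({C_d}^n)$ (this is a map of $\Gamma^{op}$-modules, hence commutes with each $e_n^{(i)}$, hence with $e^{(i)}$). Therefore the Hodge grading on the homotopy side is carried to the Hodge grading on the homology side; by part (i) the primitives inherit a Hodge grading (a primitive element of Hodge degree $k$ that is a sum of pieces of Hodge degree $\neq k$ would have nonzero $\Delta$-components of the wrong bidegree), and \eqref{eq72} respects these gradings. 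The main obstacle I expect is making the first paragraph precise: one must pin down the exact combinatorial description of the $\Tot$-level coproduct in terms of $\Gamma^{op}$/operad maps and verify its equivariance — once that description is nailed down, the Eulerian-idempotent identity is off-the-shelf and the rest is bookkeeping. A clean way around the obstacle is to work with the $AM(Y^\bullet)$-model of Remark~\ref{r46} when available, where the Hodge piece is literally the alternative-multiderivation subcomplex and the coproduct's interaction with antisymmetrization in disjoint blocks of variables is transparent.
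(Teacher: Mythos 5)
Part (i) of your proposal is essentially the paper's argument. The paper factors the coproduct as the degree\-/wise diagonal $\Tot {A_d}^\bullet\to\Tot({A_d}^\bullet\otimes {A_d}^\bullet)$ (a morphism of commutative $\Sigma$-cosimplicial spaces) followed by the Eilenberg--Mac\,Lane map, and isolates exactly your ``Eulerian-idempotent identity'' as Lemma~\ref{l72} (that the EM map respects Hodge degree), citing Patras's work on Adams operations. Your description of the coproduct as ``splitting the inputs into blocks'' conflates the two steps and omits the degree-wise diagonal in $H_*({C_d}^n)$, but the mathematical content you invoke is the right one and is the same as the paper's.

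Part (ii) is where you diverge, and where there is a gap. The isomorphism~\eqref{eq72} is not defined via the Hurewicz map: it is constructed (in~\cite{LT}, recalled in the paper as the zig-zag~\eqref{eq_zig-zag}) by passing to the dual simplicial commutative algebra ${A_d}_\bullet$, taking the cobar construction $\calL(-)$ into free dg-Lie coalgebras, and comparing with $\Tot({L_d}_\bullet)$. The paper's proof of (ii) checks that each arrow of that specific zig-zag respects the Hodge degree (two of them by Lemma~\ref{l72}, the third because it is a morphism of $\Gamma$-modules). Your argument instead establishes that the map $\Tot {L_d}^\bullet\to\Tot {A_d}^\bullet$ induced by the Hurewicz natural transformation respects the Hodge splitting --- which is correct and easy, since the Hurewicz map is a natural transformation of functors on $hoTop$ and hence yields a morphism of $\Gamma^{op}$-modules --- but you then need two further facts that you do not supply: that this Hurewicz-induced map on homology is injective with image exactly $Prim(H_*(\Tot {A_d}^\bullet))$ (a Milnor--Moore-type statement at the level of totalizations, not automatic from Corollary~\ref{c13}), and that the resulting isomorphism coincides with~\eqref{eq72} as actually constructed. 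Without one of these, you have shown that \emph{some} isomorphism between the two sides respects the Hodge decomposition, not that \emph{the} isomorphism~\eqref{eq72} does. The fix is either to supply that identification or to do what the paper does and run the grading check through the zig-zag itself.
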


The above proposition means that the study of the Hodge decomposition in the homology or in the homotopy are two equivalent problems, since the homology $H_*(\Tot {A_d}^\bullet)$ is a free graded cocommutative coalgebra over $Prim(H_*(\Tot {A_d}^\bullet))\simeq H_*(\Tot {L_d}^\bullet)$.

\begin{proof}
The proof of (i) follows from the following lemma:

\begin{lemma}\label{l72}
Let $B^\bullet$ and $C^\bullet$ be two commutative $\Sigma$-cosimplicial spaces, then $B^\bullet\otimes C^\bullet$ is also a commutative $\Sigma$-cosimplicial space and moreover  the Eilenberg-Mac Lane map
$$
\Tot(B^\bullet\otimes C^\bullet)\to \Tot B^\bullet\otimes\Tot C^\bullet
$$
respects the Hodge degree.
\end{lemma}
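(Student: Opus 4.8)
The plan is to reduce everything to the combinatorics of the category $\Gamma^{op}$, which is exactly what encodes the commutative $\Sigma$-cosimplicial structure (Lemma~\ref{l42}). First I would check that $B^\bullet\otimes C^\bullet$ is a commutative $\Sigma$-cosimplicial space: given $\Gamma^{op}$-module structures on $B^\bullet$ and $C^\bullet$, the levelwise tensor product $\{B^n\otimes C^n\}$ inherits one by letting a morphism $\alpha\colon\underline{m}\to\underline{n}$ act as $\alpha_B\otimes\alpha_C$ (the diagonal action); functoriality and $S_n$-equivariance are immediate since they hold factorwise. In the language of weak bimodules over $\COMM$ of Section~\ref{s4}, the composition maps $\overline\circ_i$ and $\underline\circ_i$ on the tensor product are the factorwise ones composed with the canonical isomorphism rearranging the tensor factors, and the associativity and $S_n$-compatibility of Definition~\ref{weak_bimodule} are inherited directly. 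This is the routine part.

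The heart of the lemma is that the Eilenberg--Mac Lane shuffle map $\mathrm{EM}\colon \Tot(B^\bullet\otimes C^\bullet)\to\Tot B^\bullet\otimes\Tot C^\bullet$ carries $\Tot^{(k)}$ into $\bigoplus_{i+j=k}\Tot^{(i)}B^\bullet\otimes\Tot^{(j)}C^\bullet$. I would argue this at the level of the idempotents $e_n^{(i)}\in\BK[S_n]$. The key algebraic fact (due to Gerstenhaber--Schack and reformulated by Loday, see the discussion around~\eqref{eq_properties}) is that the total Eulerian idempotents are compatible with the coproduct on the symmetric algebra / the shuffle coalgebra structure: concretely, under the canonical map $\BK[S_{p+q}]\to\BK[S_p]\otimes\BK[S_q]$ induced by the shuffle decomposition of $S_{p+q}$ into $(p,q)$-shuffles times $S_p\times S_q$, one has the identity
$$
e_{p+q}^{(k)}\ \longmapsto\ \sum_{i+j=k} e_p^{(i)}\otimes e_q^{(j)}
\eqno(\numb)\label{eq:EM_idempotent}
$$
after restricting along the shuffle inclusion. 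This is precisely the statement that the Eulerian idempotents are the components of the grading by "number of cycles/exponent of the Adams operation", and that the Adams-type operations $\psi^\lambda$ are algebra maps for the shuffle product — equivalently that $e^{(\bullet)}$ defines a coalgebra filtration. Granting~\eqref{eq:EM_idempotent}, the lemma follows: the shuffle map $\mathrm{EM}$ in simplicial degree $(p,q)$ is built out of exactly the shuffle permutations appearing in that decomposition, so applying $e^{(k)}=\sum_n e_n^{(k)}$ on the source and chasing through $\mathrm{EM}$ produces $\sum_{i+j=k}e^{(i)}\otimes e^{(j)}$ on the target, which is the projection onto $\bigoplus_{i+j=k}\Tot^{(i)}B^\bullet\otimes\Tot^{(j)}C^\bullet$.

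The step I expect to be the main obstacle is establishing~\eqref{eq:EM_idempotent} cleanly, i.e.\ matching the combinatorics of the Eulerian idempotents with the shuffle map. The cleanest route is probably not to manipulate the $e_n^{(i)}$ directly but to use their generating-function description via the "$\lambda$-operations": recall $e_n^{(i)}$ is the coefficient of $x^i$ in the idempotent-valued polynomial $\prod$ built from the first Eulerian idempotent (equivalently $\psi^p = \sum_i p^i e^{(i)}$ where $\psi^p$ is the $p$-th power operation on the free object). Then~\eqref{eq:EM_idempotent} reduces to the multiplicativity of $\psi^p$ with respect to the tensor/shuffle structure, $\psi^p(b\otimes c)=\psi^p(b)\otimes\psi^p(c)$, which is standard (it is the statement in~\cite{BFG_AdamsOp,Loday} that the Adams operations are natural transformations of $\otimes$). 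Comparing coefficients of $p^k$ then gives~\eqref{eq:EM_idempotent}. I would spell out the compatibility of $\psi^p$ with $\mathrm{EM}$ carefully, since it is the one place where one genuinely uses that $B^\bullet$ and $C^\bullet$ factor through $\Gamma^{op}$ (so that the $S_n$-actions are compatible with the faces and the idempotents commute with the differential, by~\eqref{eq_properties}); everything else is formal.
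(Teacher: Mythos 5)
Your proposal is correct and follows essentially the same route as the paper, which simply declares the fact well known and cites Patras's geometric description of the Adams operations (Proposition~1.3 of~\cite{Patras2}); the multiplicativity $\psi^p(b\otimes c)=\psi^p(b)\otimes\psi^p(c)$ of the Adams operations under the shuffle product, which you use to deduce the idempotent identity, is precisely the content of that reference. Your write-up just makes explicit the argument the paper leaves to the citation.
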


\begin{proof}
This fact is well known. For example, one can see it from the geometric approach of F.~Patras to describe Adams operations~\cite{Patras1,Patras2} (see in particular Proposition~1.3 of~\cite{Patras2} and the remark that follows.)
\end{proof}

To prove (i) we notice that the coproduct on $\Tot {A_d}^\bullet$ is obtained as a composition of two maps:
$$
\Tot {A_d}^\bullet\stackrel{\Delta^\bullet}\longrightarrow\Tot({A_d}^\bullet\otimes {A_d}^\bullet)\stackrel{EM}\longrightarrow
\Tot {A_d}^\bullet\otimes \Tot {A_d}^\bullet.
$$
The first map $\Delta^\bullet$ is induced by a degree-wise coproduct (in the homology of configuration spaces), which is a morphism of commutative $\Sigma$-cosimplicial spaces. The second one respects the Hodge degree by Lemma~\ref{l72}.

(ii) Recall that the proof of~\eqref{eq72} was given by the following sequence of quasi-isomorphisms:
$$
 \xymatrix{
 \Tot {A_d}_\bullet/(\Tot {A_d}_\bullet)^2&\calL(\Tot
 {A_d}_\bullet)\ar@{->>}[l]_-\simeq^-\alpha\ar[r]^-\simeq_-{EM_\calL}&
 \Tot\calL({A_d}_\bullet)&\,\Tot({L_d}_\bullet),\ar@{_{(}->}[l]_-\simeq
 }\eqno(\numb)\label{eq_zig-zag}
 $$
where ${A_d}_\bullet$ is the simplicial graded commutative algebra dual to ${A_d}^\bullet$  (i.e. ${A_d}_n=H^*({C_d}^n,\Q)$), and ${L_d}_\bullet$ is
the simplicial graded Lie coalgebra dual to ${L_d}^\bullet$ (i.e. ${L_d}_n=Mor(\pi_*({C_d}^n),\Q)$). The algebra $\Tot {A_d}_\bullet$ is polynomial,
$\Tot {A_d}_\bullet/(\Tot {A_d}_\bullet)^2$ describes the quotient complex by all the non-generators. The homology of $\Tot {A_d}_\bullet/(\Tot {A_d}_\bullet)^2$ is
$Prim(H_*(\Tot {A_d}_\bullet))$. The functor $\calL(-)$ is the cobar construction that assigns to any commutative $dg$-algebra a free $dg$-Lie coalgebra. The first, and the second quasi-isomorphisms, $\alpha$, and $EM_\calL$, respect the Hodge decomposition by Lemma~\ref{l72}. The last one is induced by a morphism of differential graded $\Gamma$-modules and therefore  respects the Hodge degree.
\end{proof}

\part{Graph-complexes}\label{p2}

\section{Bialgebra of chord diagrams. Generalizing a result of Bar-Natan}\label{s8}
The bialgebra of chord diagrams is a well-known object in Low Dimensional Topology which encodes the so called Vassiliev invariants of knots. Bar-Natan has shown that this bialgebra is isomorphic to the bialgebra of unitrivalent graphs attached to a line modulo $STU$, $IHX$, and $AS$ relations~\cite{BarNatan}. Theorem~8.6 of~\cite{LT} generalizes and also gives another proof of this result using graph-complexes. In this paper we will go a step further and will generalize the following result of Bar-Natan:

\begin{thm}[Bar-Natan~\cite{BarNatan}]\label{t81}
The space of primitive elements of the bialgebra of chord diagrams is isomorphic to the space of connected unitrivalent graphs (with at least one univalent vertex) modulo $IHX$, and $AS$ relations.
\end{thm}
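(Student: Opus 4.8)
The plan is to construct mutually inverse maps between the space $P$ of primitive chord diagrams and the space $\calG$ of connected unitrivalent graphs with at least one univalent leg, modulo $IHX$ and $AS$. First I would recall that a chord diagram is precisely a unitrivalent graph all of whose $2n$ univalent vertices lie on the oriented line and whose internal part consists of $n$ disjoint chords; so there is an obvious inclusion $\iota$ of chord diagrams into the span of unitrivalent graphs, sending a chord diagram to itself viewed as a trivalent-free graph. Passing to the quotient by $STU$, $IHX$, $AS$, this induces a map $\bar\iota$ from the bialgebra $\calA$ of chord diagrams to the bialgebra $\calC$ of unitrivalent graphs; the content of Bar-Natan's earlier theorem (the $STU$ theorem, which I would invoke) is that $\bar\iota$ is an isomorphism of bialgebras. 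Both $\calA$ and $\calC$ are connected, graded, commutative and cocommutative Hopf algebras over $\BQ$, so by Milnor--Moore each is the free commutative algebra on its primitives and $\bar\iota$ restricts to an isomorphism $Prim(\calA)\xrightarrow{\ \simeq\ }Prim(\calC)$. Thus the theorem reduces to identifying $Prim(\calC)$ with $\calG$.

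The second and main step is to show that, modulo $IHX$ and $AS$ alone (i.e. without using $STU$), the space $\calC$ of unitrivalent graphs is the free polynomial algebra on the subspace $\calG$ of \emph{connected} such graphs, the product being disjoint union. One inclusion is clear: a connected unitrivalent graph is primitive for the coproduct that splits a graph into its connected components, since $\Delta(\Gamma)=\Gamma\otimes\varnothing+\varnothing\otimes\Gamma$ on connected $\Gamma$. For the converse I would argue that the $IHX$ and $AS$ relations are ``local'', involving only graphs that differ inside a small disk, hence they never mix different numbers of connected components in an essential way; more precisely, one shows that the span of all unitrivalent graphs modulo $IHX,AS$ is, as a bialgebra, the symmetric algebra $S(\calG)$, where $\calG$ is the quotient of the span of connected graphs by the connected instances of $IHX$ and $AS$. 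The key point here is that every $IHX$ or $AS$ relator is supported on a connected subgraph, so a relator applied to a disconnected graph is (the disjoint union of) a relator on one component with the rest left untouched; this gives a direct-sum decomposition of the relations compatible with the symmetric-algebra grading by number of components. I expect this ``locality of the relations respects the coalgebra filtration by components'' argument to be the technical heart of the proof, and the main obstacle is to do it carefully enough that one really gets $S(\calG)$ and not merely a surjection $S(\calG)\twoheadrightarrow \calC$ --- one must check there are no ``extra'' relations created by interaction between components, which follows because the full relation ideal is generated by relators each living in a single component.

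Combining the two steps: $Prim(\calC)=\calG$ by the primitives of a free commutative algebra being exactly its indecomposables, and $Prim(\calA)\simeq Prim(\calC)$ by the $STU$ isomorphism, so $Prim(\calA)\simeq\calG$. Finally I would note the minor point that one must restrict to connected unitrivalent graphs \emph{with at least one univalent vertex}: a connected unitrivalent graph with no univalent vertex is trivalent, but such a graph has no legs to attach to the line and in fact the only relevant ones are killed or irrelevant in the chord-diagram bialgebra attached to a line --- equivalently, $\calC$ is generated over $\calG$ (legged part) by disjoint unions, and the legless trivalent graphs form a separate tensor factor that does not appear in the image of $\bar\iota$. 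Tracking this bookkeeping about which graphs carry legs is routine but must be stated explicitly; everything else is an application of Milnor--Moore together with the locality of the graph relations.
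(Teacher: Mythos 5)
First, note that the paper does not actually prove this statement: it is imported from Bar-Natan, and its generalization Theorem~\ref{t82} is established by a separate graph-complex argument in Sections~\ref{s9}--\ref{s10}. So your proposal must stand on its own, and it has a genuine gap at what you yourself call the ``technical heart''. Your step 2 is correct for the algebra $\mathcal{B}$ of \emph{free-floating} unitrivalent graphs modulo $IHX$ and $AS$ only, with disjoint union as product: there every relator is supported in a single component, $\mathcal{B}=S(\calG)$, and $Prim(\mathcal{B})=\calG$. But that is the easy half, and it is not about the object you produced in step 1. The bialgebra $\calC$ coming out of step 1 consists of unitrivalent graphs \emph{attached to the line}, with concatenation as product and with $STU$ imposed, and for it the locality argument fails outright: an $STU$ relator equates the one-component diagram $S$ (the one with the internal trivalent vertex) with the difference $T-U$ of diagrams in which the two resolved legs may belong to \emph{different} connected components of the dashed part, so the relations do change the number of components and only preserve the filtration by component number, not a grading. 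Consequently you cannot conclude $\calC=S(\text{connected line-attached diagrams})$ by locality; and even if you could, the connected line-attached diagrams modulo $STU$, $IHX$, $AS$ are a priori a different space from $\calG$, which carries no line and no $STU$ relation.

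The missing ingredient --- which is the actual content of Bar-Natan's theorem --- is the symmetrization map $\chi\colon\mathcal{B}\to\calC$ sending a free-floating graph to the average over all orderings of its legs along the line, together with the proof that $\chi$ is well defined and is an isomorphism of coalgebras. This is a diagrammatic analogue of the Poincar\'e--Birkhoff--Witt theorem; constructing the inverse $\sigma$ by induction on the number of legs is where essentially all the work lies. Once $\chi$ is in place, your Milnor--Moore reduction and your computation of $Prim(\mathcal{B})$ do finish the argument, and your closing remark about excluding legless trivalent components is the right bookkeeping. As written, however, the proposal silently identifies $\mathcal{B}$ with $\calC$, and that identification is precisely the theorem being proved.
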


\begin{figure}[h]
\includegraphics[width=6cm]{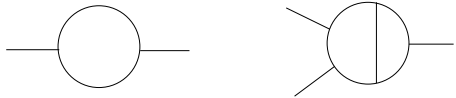}
\caption{Examples of uni-trivalent graphs}\label{fig2}
\end{figure}

The bialgebra of chord diagrams is naturally graded by {\it complexity} - number of chords. The complexity of a unitrivalent graph is the Betti number of the graph obtained by gluing together all univalent vertices, see Figure~\ref{f:gluing}.

\begin{figure}[h]
\includegraphics[width=8cm]{glue_external.eps}
\caption{}\label{f:gluing}
\end{figure}

From this theorem we see that the space of primitives has another grading which is the number of univalent vertices.

\begin{thm}\label{t82}
The homotopy $\pi_*(Emb_d)\otimes\Q$, resp. the homology $H_*(\overline{Emb}_d,\Q)$ is computed by the graph-complex $AM({P_d}^\bullet)$, resp. $AM({D_d}^\bullet)$ (that we define below) of connected, resp. possibly disconnected or empty uni-$\geq 3$-valent graphs. Moreover via this isomorphism the complexity grading is the first Betti number of the graph obtained by gluing together all the univalent vertices, and the Hodge degree is the number of univalent vertices.
\end{thm}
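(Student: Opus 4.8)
The plan is to realize each side of each claimed isomorphism as the totalization of a commutative $\Sigma$-cosimplicial $dg$-vector space — a graph complex — and then to read off the Hodge grading from the cosimplicial degree by means of Remark~\ref{r46}. So I would proceed as follows. First, define the graph complexes ${D_d}^\bullet$ and ${P_d}^\bullet$ (this is carried out in the sections below): ${D_d}^n$ is spanned by possibly disconnected or empty graphs carrying $n$ labelled external vertices together with internal vertices of valence $\geq 3$, equipped with a vertex-expansion differential and with the $\Gamma^{op}$-action in which a pointed map $\underline m\to\underline n$ relabels and merges external vertices and deletes the ones hitting the basepoint; this is a weak bimodule over $\COMM$ in the sense of Definition~\ref{weak_bimodule}, i.e.\ a commutative $\Sigma$-cosimplicial space, and ${P_d}^\bullet$ is its connected part. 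The only point to check here is that the face and degeneracy operations used in~\cite{LT} are the restriction along $\rho$ of this $\Gamma^{op}$-action.

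Second, I would establish a zig-zag of quasi-isomorphisms of commutative $\Sigma$-cosimplicial $dg$-vector spaces relating ${D_d}^\bullet$ to ${A_d}^\bullet=H_*({C_d}^\bullet,\Q)$, and ${P_d}^\bullet$ to ${L_d}^\bullet=\pi_*({C_d}^\bullet)\otimes\Q$. The essential input is the formality of the little $d$-cubes operad, which upgrades the chains on Sinha's cosimplicial space ${C_d}^\bullet$ to a $\Sigma$-cosimplicial space quasi-isomorphic to ${A_d}^\bullet$; by Proposition~\ref{p51} everything lives over $\COMM$, so the zig-zag can be taken through commutative $\Sigma$-cosimplicial spaces, and ${D_d}^\bullet$ is then a cofibrant (graph) model of ${A_d}^\bullet$ in this category. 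Combined with Theorem~\ref{t21} this gives $\Tot {D_d}^\bullet\simeq H_*(\overline{Emb}_d,\Q)$; the homotopy statement $\Tot {P_d}^\bullet\simeq\pi_*(\overline{Emb}_d)\otimes\Q$, together with its refinement for $Emb_d$, follows by passing to primitives, using that this is compatible with the Hodge grading (Proposition~\ref{p71}(ii)) and with the connected/full decomposition of the graph complex.

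Third — and this is the heart of the matter — I would prove that ${D_d}^\bullet$ and ${P_d}^\bullet$ are smooth in the sense of Definition~\ref{d_smooth}: the inclusions $AM({D_d}^\bullet)\hookrightarrow\Tot {D_d}^\bullet$ and $AM({P_d}^\bullet)\hookrightarrow\Tot {P_d}^\bullet$ are quasi-isomorphisms. I expect to do this by constructing an explicit contracting homotopy deformation-retracting an arbitrary graph onto the span of graphs whose external vertices are univalent, using the multiderivation identity~\eqref{eq_multi} to peel edges off a higher-valent external vertex onto fresh univalent ones; this is precisely the $d$-dimensional incarnation of the reduction $STU\Rightarrow$ unitrivalent in Bar-Natan's proof of Theorem~\ref{t81}. (Alternatively one may verify Pirashvili's hypotheses and invoke the footnote to Definition~\ref{d_smooth}.) Granting smoothness, Remark~\ref{r46} yields $H_*^{(i)}(\overline{Emb}_d,\Q)=H_*(AM({D_d}^i))$ and the analogous identity in homotopy.

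Finally, I would identify $AM({D_d}^i)$ combinatorially: alternative multiderivations in a graph-type commutative $\Sigma$-cosimplicial space are exactly the antisymmetrised hairy graphs, so $AM({D_d}^i)$ is the subcomplex of graphs with precisely $i$ univalent vertices, modulo $AS$ and with $IHX$ absorbed into the differential, and $AM({P_d}^i)$ is the corresponding complex of connected graphs, generalising the unitrivalent-graph model of Theorem~\ref{t81}. The two remaining bidegree claims are then bookkeeping: the number of univalent vertices equals the cosimplicial, hence Hodge, degree by the previous paragraph, and matching the internal degree conventions of~\cite{LT} with the classical grading of the bialgebra of chord diagrams identifies the complexity with the first Betti number of the graph obtained by gluing the univalent vertices together, as in Figure~\ref{f:gluing}. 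The main obstacle is the smoothness assertion of the third step — one must show that the multiderivation relation is strong enough to reduce every external vertex to valence one without altering homology — while everything else is a repackaging of~\cite{LT,LTV} or routine degree-counting.
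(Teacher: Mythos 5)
Your overall architecture coincides with the paper's: define ${D_d}^\bullet$, ${P_d}^\bullet$ as commutative $\Sigma$-cosimplicial graph complexes, relate them to ${A_d}^\bullet$ and ${L_d}^\bullet$ by formality of the little $d$-discs operad (Proposition~\ref{p91}, Corollary~\ref{c92}, Proposition~\ref{p94}), prove smoothness, and then read off the Hodge grading from Remark~\ref{r46} and the complexity/Betti-number identification by degree counting. The one place where your proposal is under-specified, and where the stated mechanism is not quite right, is the smoothness step. The multiderivation identity~\eqref{eq_multi} is the \emph{defining condition} of the subspace $AM(Y^n)$; it is not a relation holding in $\Tot {D_d}^\bullet$ that you can apply to an arbitrary graph to ``peel'' edges off a higher-valent external vertex. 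The reduction move actually available in $\Tot {D_d}^\bullet$ is the external part of the differential (the alternating sum of cofaces, i.e.\ doublings of external vertices) --- this is the incarnation of $STU$ here --- and what must be shown is that modulo its image every class is represented by graphs with univalent external vertices, antisymmetrized.

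The paper does this by filtering $\Tot {D_d}^\bullet$ by the number of internal vertices, so that $d_0$ is exactly the external differential, and identifying the associated graded in each stratum with $\bigoplus_n K_n^\vee\otimes_{S_n}M({D_d}^n)$, where $K_n$ is the multilinear part of the normalized Hochschild complex of a polynomial algebra on $n$ generators. The Hochschild--Kostant--Rosenberg computation $H_*(K_n^\vee)\simeq \mathrm{sign}_n$ concentrated in top degree then gives $E_1\simeq\bigoplus_n M({D_d}^n)\otimes_{S_n}\mathrm{sign}_n=AM({D_d}^\bullet)$, and the inclusion induces an isomorphism of spectral sequences from $E_1$ on. Your ``explicit contracting homotopy'' can indeed be built --- one exists for $K_n^\vee$ onto its antisymmetric part --- but it only contracts the external differential, so you would still need the filtration argument to pass from the strata to the total complex; in other words, properly executed, your route collapses onto the paper's. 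If you keep the homotopy formulation, replace the appeal to~\eqref{eq_multi} by an appeal to the coface maps, and make the filtration by internal vertices explicit (note also Remark~\ref{r102}: one must retain graphs with loops, or the identification of $E_0$ fails in complexity~$1$). Everything else in your proposal --- the zig-zag via formality, the passage to primitives for the homotopy statement, and the bidegree bookkeeping --- matches the paper.
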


This theorem is proved in Section~\ref{s10}.

Below we describe explicitly the complexes $AM({P_d}^\bullet)$, $AM({D_d}^\bullet)$.

\subsection{Definition of $AM({P_d}^\bullet)$, $AM({D_d}^\bullet)$}\label{ss81}
Let us first define the complex $AM({P_d}^\bullet)$. By a connected uni-$\geq 3$-valent graph we mean a connected graph with a non-empty set of univalent vertices (which are called external) and some (possibly empty) set of vertices of valence at least 3 (those vertices are called external). The graphs are allowed to have both multiple edges and loops --- edges connecting a vertex to itself:

\begin{figure}[h]
\includegraphics[width=12cm]{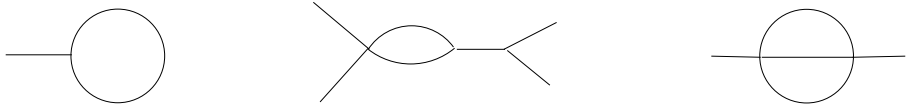}
\caption{Examples of connected uni-$\geq 3$-valent graphs}\label{fig3}
\end{figure}

Neither external, nor internal vertices are labeled. Two graphs are considered to be equivalent if there is a bijection between their sets of vertices and edges respecting the adjacency structure of the graphs.

The orientation set  of a graph is the set of its external (univalent) vertices (those vertices are considered to have degree $-1$), its internal vertices (which are considered to have degree $-d$), and its edges (considered to have degree $(d-1)$). The total grading of a graph is the sum of gradings of its edges, and vertices. An {\it orientation} of a graph is an ordering of its orientation set together with fixing of orientation of each edge.

\begin{definition}\label{d83}
The space of $AM({P_d}^\bullet)$ is a graded vector space over $\BK$ spanned by the above connected uni-$\geq 3$-valent graphs modulo the orientation relations:

(1) If $\Gamma_1$ and $\Gamma_2$ differ only by an orientation of an edge, then
$$
\Gamma_1=(-1)^d\Gamma_2.
$$

(2) If $\Gamma_1$ and $\Gamma_2$ differ only by a permutation of an orientation set, then
$$
\Gamma_1=\pm\Gamma_2,
$$
where the sign is a Koszul sign of permutation (taking into account the degrees of elements).

The differential in $AM({P_d}^\bullet)$ is a sum of expansions of internal vertices, see example below.
\end{definition}

\vspace{.1cm}

\begin{center}
\psfrag{d}[0][0][1][0]{$\partial$}
\psfrag{p}[0][0][1][0]{$\pm$}
\includegraphics[width=14cm]{differential.eps}
\end{center}

\vspace{.2cm}

Together with the main grading which is the sum of gradings of edges and vertices, we will consider two additional gradings. The first one, {\it complexity}, is defined as the first Betti number of the graph obtained by gluing all univalent vertices together, see Figure~\ref{f:gluing}. The second one, {\it Hodge degree}, is defined as the number of external vertices (we will later on see that indeed it corresponds to to the Hodge degree of some Hochschild complex). The differential in $AM({P_d}^\bullet)$ preserves both the complexity and the Hodge degree, therefore the complex $AM({P_d}^\bullet)$ splits into a direct sum of complexes:
$$
AM({P_d}^\bullet)=\oplus_{i,j}AM_j({P_d}^i)
$$
by complexity $j$, and Hodge degree $i$.

\begin{remark}\label{r83}
Similar graph-complexes appear in the study of the homology of the outer spaces~\cite{CV:ThKon,CullerVogtmann,Kon:FSG,LazVor}. The difference is that in our case the graphs  have univalent vertices.
\end{remark}

The second complex $AM({D_d}^\bullet)$ can be defined as a free differential graded cocommutative coalgebra cogenerated by the complex $AM({P_d}^\bullet)$. This complex can also be viewed as a graph-complex of possibly empty or disconnected uni-$\geq 3$-valent graphs, each connected component being from the space $AM({P_d}^\bullet)$.

\vspace{0.3cm}

\begin{figure}[h]
\includegraphics[width=5cm]{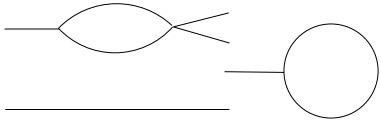}
\caption{Example of a graph from $AM({D_d}^\bullet)$. It has 3 connected components and has in total 6 external, and 3 internal vertices.}\label{fig4}
\end{figure}

The notation $AM(-)$ comes from \lq\lq Alternative Multiderivations" (see Section~\ref{ss42}). We will see in Section~\ref{s10} that they are indeed  isomorphic to the complexes of alternative multiderivations of the commutative $\Sigma$-cosimplicial $dg$-vector spaces ${P_d}^\bullet$, and ${D_d}^\bullet$.

\section{Operadic graph-complexes}\label{s9}
To recall the components of the cosimplicial space ${A_d}^\bullet=\{{A_d}^n,\, n\geq 0\}=\{H_*({C_d}^n),\, n\geq 0\}$ form the operad of $(d-1)$-Poisson algebras, that we also denote by $A_d$.\footnote{In~\cite{LTV,T-HLN,T-OS} this operad was denoted by $\Poiss_{d-1}$. We switched the notation for a convenience of presentation.} In this section we define a series of graph-complexes $D_d=\{D_d(n),\, n\geq 0\}$, that form an operad quasi-isomorphic to the operad $A_d$. Moreover the quasi-isomorphism is given by a natural inclusion. The construction of $D_d$ takes roots in~\cite{Kon} and had a number of remakes~\cite{LT,LambVol-FLBO}. The difference of our construction is that we obtain an operad, and not a cooperad like in~\cite{Kon,LT,LambVol-FLBO}. So our construction is dual to the previous ones.  Another difference is that we also allow a little bit more admissible graphs: we permit graphs to have multiple edges (contrary to~\cite{Kon,LambVol-FLBO}, but similarly to~\cite{LT}), and loops --- edges connecting a vertex to itself (contrary to all previous versions). The $n$-th component $D_d(n)$ is spanned by {\it admissible graphs} that have $n$ external vertices labeled by $1,2,\ldots,n$, and some number of non-labeled internal vertices. The external vertices can be of any non-negative valence, the internal ones should be of valence at least 3. The only condition we put on graphs -- each connected component should contain at least one external vertex (no pieces flying in air).

\vspace{0.3cm}

\begin{figure}[h]
\includegraphics[width=8cm]{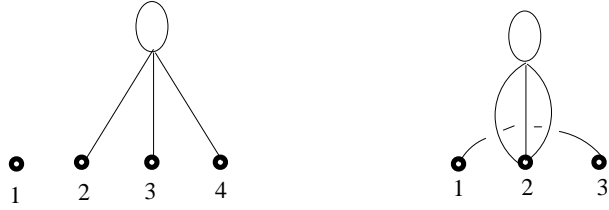}
\caption{Examples of graphs in $D_d(4)$ and $D_d(3)$.}\label{fig5}
\end{figure}


The {\it orientation set} of such a graph is the union of the set of internal vertices (those elements are considered to be of degree $-d$), and the set of edges (such elements are of degree $(d-1)$). Similarly to Section~\ref{s8} we will say that a graph is {\it oriented} if one fixes orientation of all its edges, and an ordering of its orientation set. The space $D_d(n)$ is defined to be spanned by all oriented admissible graphs  (with $n$ external vertices) modulo the orientation relations (1)-(2) similar to those of Definition~\ref{d83}.

\begin{remark}\label{r81}
For even $d$ the orientation relations kill the graphs with multiple edges, and for odd $d$  --- the graphs with loops.
\end{remark}

The differential is the sum of expansions of vertices (being dual to to the sum of contractions of edges). It diminishes the total degree by~1. One should be a little bit careful with the external vertices. An expansion of an external vertex produces one external vertex (with the same label) and one internal one.

\vspace{0.3cm}

\begin{center}
\psfrag{d}[0][0][1][0]{$\partial$}
\psfrag{p}[0][0][1][0]{$\pm$}
\includegraphics[width=12cm]{differential_operadic.eps}
\end{center}

\vspace{.2cm}

For the rule of signs see~\cite{LambVol-FLBO}, or define any reasonable one by yourself.

The operadic structure defined by compositions
$$
\circ_i\colon D_d(n)\otimes D_d(m)\to D_d(n+m-1)
$$
is dual to the cooperadic structure in~\cite{LambVol-FLBO}. If $\Gamma_1\in D_d(n)$, $\Gamma_2\in D_d(m)$ two graphs, then $\Gamma_1\circ_i\Gamma_2$ is the sum of graphs obtained by making $\Gamma_2$ very small and inserting it in the $i$-th external point of $\Gamma_1$. The edges adjacent to the external vertex $i$ in $\Gamma_1$ become adjacent to one of the vertices of $\Gamma_2$. The sum is taken by all such insertions, see Figure~\ref{fig6}.

\vspace{0.3cm}

\begin{figure}[h]
\psfrag{c2}[0][0][1][0]{$\circ_2$}
\psfrag{c3}[0][0][1][0]{$\circ_3$}
\psfrag{p}[0][0][1][0]{$\pm$}
\includegraphics[width=14cm]{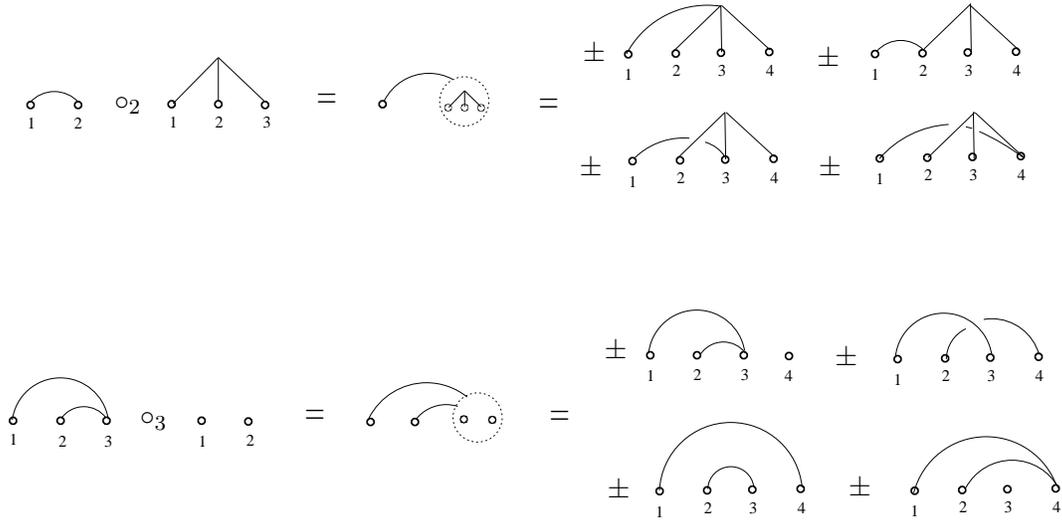}
\caption{Examples of composition}\label{fig6}
\end{figure}
\begin{proposition}\label{p91}
The assignment
$$
x_1x_2\mapsto \,
\begin{picture}(20,15)
\put(2,5){\circle*{3}}
\put(18,5){\circle*{3}}
\put(0,-1){$_1$}
\put(16,-1){$_2$}
\end{picture}\, , \qquad [x_1,x_2]\mapsto \,
\begin{picture}(20,15)
\put(2,5){\circle*{3}}
\put(18,5){\circle*{3}}
\qbezier(2,5)(10,15)(18,5)
\put(0,-1){$_1$}
\put(16,-1){$_2$}
\end{picture}\, ,
$$
where $x_1x_2,$ $[x_1,x_2]\in A_d(2)$ are the product and the bracket of the operad $A_d$ of $(d-1)$-Poisson algebras, defines an inclusion of operads
$$
\xymatrix{
A_d\,\ar@{^{(}->}[r]^\simeq&D_d
}
\eqno(\numb)\label{eq91}
$$
that turns out to be a quasi-isomorphism ($A_d$ is considered to have a zero differential).
\end{proposition}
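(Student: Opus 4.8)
The plan is to verify that the assignment described is a well-defined map of operads, and then to prove that it is a quasi-isomorphism by an explicit spectral-sequence or filtration argument on the graph complex $D_d$. First I would check that the map $A_d \hookrightarrow D_d$ is well-defined: one must confirm that the images of the product $x_1x_2$ and the bracket $[x_1,x_2]$ satisfy all the defining relations of the operad $A_d$ of $(d-1)$-Poisson algebras — associativity and commutativity (with the appropriate sign) of the product, the Jacobi identity for the bracket (which corresponds to an $IHX$-type relation that holds in $D_d$ up to orientation, and in fact up to a boundary, but here one needs it to hold on the nose since $A_d$ has zero differential — so one checks it is literally zero in $D_d(3)$ after imposing the orientation relations), and the Leibniz/Poisson compatibility $[x_1,x_2x_3]=[x_1,x_2]x_3 \pm x_2[x_1,x_3]$, which becomes an identity among three specific graphs in $D_d(3)$. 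Each of these is a finite check in a low-arity component. One also checks that the map intertwines the operadic compositions $\circ_i$, which follows from the geometric description of $\circ_i$ in $D_d$ (insert a small graph at an external vertex and redistribute incident edges): inserting the two-point graphs simply reproduces the partial compositions of $A_d$.

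Next I would prove injectivity and the quasi-isomorphism statement together. The cleanest route is to put a filtration on $D_d(n)$ by the number of internal vertices (or, dually, to filter by the "excess" $=$ number of edges minus number of internal vertices minus $n+1$, i.e. the first Betti number of the associated glued graph), and to run the associated spectral sequence. The differential $\partial$ is the sum of expansions of vertices; on the associated graded it becomes the part of $\partial$ that expands external vertices only (producing one new internal vertex), or some analogous piece, and one identifies the remaining piece as a combinatorial acyclic differential. Concretely, I expect that after filtering, the complex $D_d(n)$ retracts onto the subcomplex of graphs with no internal vertices that are "forced" — i.e. graphs where every internal vertex is trivalent and the graph, after gluing external vertices, is a trivalent graph with marked legs — and then one recognizes a presentation matching exactly the known description of the $(d-1)$-Poisson operad in terms of trees/forests on $n$ leaves. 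An alternative, perhaps more in the spirit of the paper, is to cite the dual statement: the cooperadic version of this complex computing $A_d^{\vee}$ (the operad of $(d-1)$-Poisson coalgebras) is already established in~\cite{LT} and~\cite{LambVol-FLBO}, and our $D_d$ is linearly dual to it component-wise in each fixed arity and degree (here one must be careful that the extra admissible graphs — multiple edges for odd $d$, loops for even $d$ — are killed by orientation relations, so the dualization is clean, cf. Remark~\ref{r81}), so that $A_d \hookrightarrow D_d$ is the linear dual of a known quasi-isomorphism of cooperads.

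The main obstacle I anticipate is the acyclicity step: showing that the "extra" part of the differential (the part not accounted for by the image of $A_d$) is exact, i.e. that every admissible graph with at least one internal vertex of valence $>3$, or with a "non-forced" internal structure, is either a boundary or cancels in homology. This is where the presence of loops and multiple edges must be handled with care, since these are exactly the graphs that the orientation relations partially constrain, and the contracting homotopy must be compatible with those relations and with the filtration. One standard technique is to exhibit an explicit contracting homotopy that picks out a distinguished edge to contract (for instance, contract the edge incident to the internal vertex of lowest "rank" in some chosen ordering), and check it is a chain homotopy on the associated graded; verifying the sign bookkeeping and that the homotopy is well-defined modulo orientation relations is the delicate part. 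If one instead takes the dualization route, the main obstacle shifts to carefully matching conventions (signs, degree shifts by $d-1$ per edge and $-d$ per internal vertex, the direction of the differential) with the references, which is tedious but not conceptually hard.
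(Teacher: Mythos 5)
Your second, ``alternative'' route is in fact the paper's entire proof: the paper simply observes that the assertion is dual to Theorem~9.1 of~\cite{LambVol-FLBO} and that, although the graph-complexes there are slightly different, the same argument applies. Your primary route (filtration by the number of internal vertices plus an explicit contracting homotopy on the associated graded) is a reasonable plan, but the acyclicity step you flag as the main obstacle is precisely the content of the cited theorem, and you leave it unexecuted; so as written the first route is a sketch, not a proof. One concrete error to fix in your dualization discussion: you claim the extra admissible graphs (``multiple edges for odd $d$, loops for even $d$'') are killed by the orientation relations, so that the dualization is ``clean.'' Remark~\ref{r81} says the opposite pairing: the orientation relations kill multiple edges for \emph{even} $d$ and loops for \emph{odd} $d$. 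Consequently, in each parity one of the two extra classes of graphs genuinely survives (multiple edges for odd $d$, loops for even $d$ --- the latter even carry the complexity-one generator~\eqref{eq112}), and $D_d$ is strictly larger than the complex of~\cite{LambVol-FLBO}. The quasi-isomorphism statement still holds --- the paper's position is that the proof of~\cite{LambVol-FLBO} goes through for the enlarged complex --- but your stated reason for why the comparison with the reference is painless is wrong, and an honest write-up would have to check that the extra loop/multi-edge graphs do not contribute to homology rather than assert that they are not there.
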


\begin{proof} This assertion is dual to~\cite[Theorem~9.1]{LambVol-FLBO}. The graph-complexes used in~\cite{LambVol-FLBO} are slightly different, but the proof is  the same.
\end{proof}

Notice that the operad $A_d$ contains the operad $\COMM$. Therefore the inclusion~\eqref{eq91} can be viewed as a morphism of weak bimodules over $\COMM$. Denote by ${D_d}^\bullet=\{{D_d}^n,\, n\geq 0\}=\{D_d(n),\, n\geq 0\}$ the corresponding commutative $\Sigma$-cosimplicial space.

\begin{cor}\label{c92}
Inclusion~\eqref{eq91} defines a quasi-isomorphism of complexes
$$
\xymatrix{
\Tot {A_d}^\bullet\,\ar@{^{(}->}[r]^\simeq&\Tot {D_d}^\bullet
}
$$
that respects the Hodge splitting.
\end{cor}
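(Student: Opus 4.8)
The plan is to deduce the statement directly from Proposition~\ref{p91} together with the general machinery on the Hodge splitting assembled in Sections~\ref{s3}--\ref{s4}. The key observation is that \eqref{eq91} is not merely a quasi-isomorphism of operads but, after restricting the operad structure to the sub-operad $\COMM\subset A_d\subset D_d$, a morphism of weak bimodules over $\COMM$, i.e. a morphism of commutative $\Sigma$-cosimplicial spaces in the sense of Definition~\ref{d43}(b). Indeed, $A_d$ contains $\COMM$ (the product $x_1x_2$ generates the commutative sub-operad), so both $A_d$ and $D_d$ are weak bimodules over $\COMM$ by the general principle that an operad is a weak bimodule over any of its sub-operads, and the inclusion respects all the composition maps $\overline\circ_i$, $\underline\circ_i$ by functoriality. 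Equivalently, under the dictionary of Lemma~\ref{l42}, we get a morphism of $\Gamma^{op}$-modules $A_d{}^\bullet\to D_d{}^\bullet$.

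From here the argument is purely formal. First I would recall that applying $\Tot$ to a morphism of $\Gamma^{op}$-modules yields a chain map $\Tot A_d{}^\bullet\to\Tot D_d{}^\bullet$ which commutes with each idempotent $e^{(i)}=\sum_{n}e_n^{(i)}$, because the $e_n^{(i)}$ live in $\BK[S_n]$ and the morphism is $S_n$-equivariant componentwise (this is exactly the compatibility encoded in the factorization \eqref{eq_gamma_factor}, and it is the content of the discussion following that display that such a morphism respects the Hodge splitting $\Tot X^\bullet=\oplus_i\Tot^{(i)}X^\bullet$). Hence the chain map splits as a direct sum of chain maps $\Tot^{(i)}A_d{}^\bullet\to\Tot^{(i)}D_d{}^\bullet$ for each Hodge degree $i$. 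Second, Proposition~\ref{p91} tells us the underlying map $\Tot A_d{}^\bullet\to\Tot D_d{}^\bullet$ is a quasi-isomorphism: the inclusion of operads is a quasi-isomorphism degreewise, hence a levelwise quasi-isomorphism of the cosimplicial objects, hence a quasi-isomorphism on totalizations (e.g. by the standard spectral-sequence-of-a-filtered-complex or Eilenberg--Zilber-type comparison for normalized cochain complexes). A quasi-isomorphism that is a direct sum of chain maps is a quasi-isomorphism on each summand, since homology commutes with finite (and, degreewise, arbitrary) direct sums; therefore each $\Tot^{(i)}A_d{}^\bullet\hookrightarrow\Tot^{(i)}D_d{}^\bullet$ is a quasi-isomorphism, which is precisely the assertion that the quasi-isomorphism $\Tot A_d{}^\bullet\hookrightarrow\Tot D_d{}^\bullet$ respects the Hodge splitting.

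The only point requiring a modicum of care — and the step I expect to be the main obstacle, though it is more a matter of bookkeeping than of genuine difficulty — is verifying that the inclusion \eqref{eq91} really does intertwine the two $\COMM$-weak-bimodule structures on the nose, rather than merely up to homotopy. This amounts to checking that the operadic composition maps $\circ_i$ in $D_d$, restricted so that one of the inputs is the image of $\COMM$ (i.e. a graph consisting of a single edge between two external vertices, or a disjoint external vertex), reproduce exactly the face and degeneracy-type operations coming from the $\Gamma^{op}$-structure on $A_d{}^\bullet$. This is immediate from the explicit description of $\circ_i$ in Section~\ref{s9} (inserting a tiny copy of $\Gamma_2$ at the $i$-th external vertex and redistributing the adjacent edges): inserting the single-edge graph doubles an external vertex, inserting the point graph deletes one, and these match the cofaces and codegeneracies under $\rho$ exactly as in the proof of Lemmas~\ref{l41}--\ref{l42}. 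Once this identification is in place, everything else is the formal nonsense above.
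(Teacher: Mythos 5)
Your argument is correct and is exactly the one the paper intends: the corollary is stated without an explicit proof precisely because it follows formally from Proposition~\ref{p91} together with the remark that $\COMM\subset A_d$ makes \eqref{eq91} a morphism of weak bimodules over $\COMM$, so that the induced map on totalizations commutes with the projectors $e_n^{(i)}$ and a levelwise quasi-isomorphism passes to each Hodge summand. Your additional verification that the graph-insertion compositions restricted to $\COMM$ reproduce the $\Gamma^{op}$-structure maps is the same bookkeeping implicit in the paper's definition of ${D_d}^\bullet$ as the commutative $\Sigma$-cosimplicial space associated to the operad $D_d$.
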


The complex $\Tot {D_d}^\bullet$ is a graph-complex spanned by admissible graphs whose external vertices are of positive valence. The differential is the sum of expansions of vertices. An expansion of an external vertex can produce either two external vertices, or one external vertex and one internal one:

\vspace{0.3cm}

\begin{center}
\psfrag{d}[0][0][1][0]{$\partial_{\Tot {D_d}^\bullet}$}
\psfrag{p}[0][0][1][0]{$\pm$}
\includegraphics[width=13cm]{differential_tot.eps}
\end{center}

\vspace{.3cm}

\begin{cor}\label{c93}
The graph-complex $\Tot {D_d}^\bullet$ computes the rational homology of the space $\overline{Emb}_d$ together with its Hodge splitting: for $\BK=\Q$ one has
$$
H_*(\Tot^{(i)}{D_d}^\bullet)=H_*^{(i)}(\overline{Emb}_d,\Q).
$$
\end{cor}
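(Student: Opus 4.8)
The plan is to deduce the statement formally from Corollary~\ref{c92} and Corollary-Definition~\ref{c53}, so essentially no new work is needed. Recall that Corollary-Definition~\ref{c53} identifies, for $\BK=\Q$, the homology of the $i$-th Hodge summand $\Tot^{(i)}{A_d}^\bullet$ with $H_*^{(i)}(\overline{Emb}_d,\Q)$. On the other hand, Corollary~\ref{c92} says that the operadic inclusion~\eqref{eq91}, read as a morphism of weak bimodules over $\COMM$ (that is, of commutative $\Sigma$-cosimplicial spaces), induces a quasi-isomorphism $\Tot {A_d}^\bullet\hookrightarrow\Tot {D_d}^\bullet$ compatible with the Hodge splitting. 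The first step is to make precise what ``respects the Hodge splitting'' buys us: since the map comes from a $\COMM$-equivariant morphism (Proposition~\ref{p91}), it commutes with the symmetric-group actions on the components and with the cofaces and degeneracies, hence with the Gerstenhaber--Schack projectors $e_n^{(i)}$; therefore it carries $\Tot^{(i)}{A_d}^\bullet$ into $\Tot^{(i)}{D_d}^\bullet$ and decomposes as a direct sum $\bigoplus_{i\geq 0}\bigl(\Tot^{(i)}{A_d}^\bullet\to\Tot^{(i)}{D_d}^\bullet\bigr)$.

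The second step is the elementary observation that a direct sum of chain maps is a quasi-isomorphism if and only if each summand is: homology commutes with direct sums, and a direct sum of linear maps is an isomorphism exactly when every summand is. Applying this to the quasi-isomorphism of Corollary~\ref{c92}, each restriction $\Tot^{(i)}{A_d}^\bullet\to\Tot^{(i)}{D_d}^\bullet$ is itself a quasi-isomorphism. Composing with the identification of Corollary-Definition~\ref{c53} gives
$$
H_*(\Tot^{(i)}{D_d}^\bullet)\;\cong\;H_*(\Tot^{(i)}{A_d}^\bullet)\;=\;H_*^{(i)}(\overline{Emb}_d,\Q),
$$
which is the claim. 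The explicit description of $\Tot {D_d}^\bullet$ as the graph-complex on admissible graphs with external vertices of positive valence and differential the sum of all vertex expansions is not part of the homological content here; it is obtained beforehand by unwinding the general formula for $\Tot$ of a commutative $\Sigma$-cosimplicial $dg$-space (Section~\ref{s3}) applied to ${D_d}^\bullet$, using that the cofaces come from the inclusion $\ASS\hookrightarrow D_d$ sending the binary product to a pair of disjoint external vertices and that the internal differential of each $D_d(n)$ already expands both internal and external vertices.

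I expect no genuine obstacle here. The only point that needs a moment's care is confirming that ``respects the Hodge splitting'' in Corollary~\ref{c92} is literally the assertion that the quasi-isomorphism is a direct sum of maps on the Hodge summands (not merely a filtered map), which it is, because the $e_n^{(i)}$ are honest orthogonal projectors and the chain map is equivariant for the full $\Gamma^{op}$-module structure on both sides.
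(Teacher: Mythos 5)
Your proposal is correct and follows exactly the route the paper (implicitly) takes: Corollary~\ref{c93} is stated without proof precisely because it is the formal combination of Corollary~\ref{c92} with the definition $H_*^{(i)}(\overline{Emb}_d,\Q):=H_*(\Tot^{(i)}{A_d}^\bullet)$ from Corollary-Definition~\ref{c53}, via the observation that a Hodge-graded quasi-isomorphism restricts to a quasi-isomorphism on each summand. Your care in checking that \lq\lq respects the Hodge splitting'' means a genuine direct-sum decomposition of the chain map (via equivariance for the $\Gamma^{op}$-structure and the projectors $e_n^{(i)}$) is exactly the right point to verify, and it holds.
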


The operad $D_d$ is actually an operad in the category of graded cocommutative coalgebras.\footnote{Moreover~\eqref{eq91} is a morphism of such operads.} The coalgebra structure in each component $D_d(n)$ is given by a cosuperimposing:

\vspace{0.3cm}

\begin{center}
\psfrag{D}[0][0][1][0]{$\Delta$}
\psfrag{p}[0][0][1][0]{$\pm$}
\psfrag{o}[0][0][1][0]{$\otimes$}
\includegraphics[width=13cm]{coproduct_operadic.eps}
\end{center}

\vspace{.3cm}

Let ${P_d}^n$ denote the primitive part of ${D_d}^n=D_d(n)$. The space ${P_d}^n$ is spanned by the graphs with $n$ external vertices that become connected if one removes all the external vertices together with their small vicinities.

The family of spaces ${P_d}^\bullet=\{{P_d}^n,\, n\geq 0\}$ is preserved by the commutative $\Sigma$-cosimplicial structure maps, simply because these maps respect the coalgebra structure of ${D_d}^n$, $n\geq 0$.

\begin{proposition}\label{p94}
The weak $\Sigma$-cosimplicial $dg$-vector spaces ${L_d}^\bullet$ and ${P_d}^\bullet$ are quasi-isomorphic (by a zig-zag of quasi-isomorphisms). As a consequence $\Tot {P_d}^\bullet$ computes the rational homotopy of $\overline{Emb}_d$ together with its Hodge splitting: for $\BK=\Q$ one has
$$
H_*(\Tot^{(i)}{P_d}^\bullet)=\pi_*^{(i)}(\overline{Emb}_d,\Q).
$$
\end{proposition}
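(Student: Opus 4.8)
The plan is to reduce the statement to the quasi-isomorphism $A_d^\bullet\hookrightarrow D_d^\bullet$ of Corollary~\ref{c92} and the zig-zag~\eqref{eq_zig-zag} of Section~\ref{s7}, the crucial new input being that $D_d^\bullet$ is, componentwise, a \emph{cofree} conilpotent cocommutative coalgebra. First I would unravel the cosuperimposing coproduct on $D_d(n)$ and check that a graph is primitive precisely when its ``internal part'' --- what is left after deleting all external vertices together with the edges incident to them --- is connected (or is a single edge joining two external vertices), so that an arbitrary admissible graph is the ``cofree product'' of its primitive pieces. This identifies $D_d^\bullet$ with the cofree cocommutative coalgebra $S^c(P_d^\bullet)$, as a commutative $\Sigma$-cosimplicial $dg$-cocommutative coalgebra, with the vertex-expansion differential equal to the unique coderivation extending its restriction to $P_d^\bullet$ (vertex expansions are local and connectivity preserving, hence carry one primitive piece to one primitive piece). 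In particular $D_d^\bullet$ is a functorial cofree cocommutative resolution of the coalgebra $A_d^\bullet$.

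Next I would recall, as in the proof of Proposition~\ref{p71}(ii), that the zig-zag~\eqref{eq_zig-zag} is induced by a zig-zag of differential graded $\Gamma$-modules; transposing it gives a zig-zag of weak $\Sigma$-cosimplicial $dg$-vector spaces exhibiting $L_d^\bullet$ as quasi-isomorphic to a functorial ``Koszul dual'' construction $\calC(A_d^\bullet)$ built out of the cocommutative coalgebra $A_d^\bullet$ (the cobar/Harrison construction that turns a coaugmented conilpotent simply connected cocommutative $dg$-coalgebra into a $dg$-Lie algebra). The two properties of $\calC$ that I need are: it sends quasi-isomorphisms of such coalgebras to quasi-isomorphisms; and, applied to a cofree coalgebra $S^c(V)$, it admits a natural quasi-isomorphism to the space $V$ of cogenerators. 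Granting these, and using that the inclusion $A_d^\bullet\hookrightarrow D_d^\bullet$ is a morphism of cocommutative coalgebras (the footnote after Proposition~\ref{p91}), I obtain the chain of quasi-isomorphisms
$$
L_d^\bullet\;\simeq\;\calC(A_d^\bullet)\;\xrightarrow{\ \simeq\ }\;\calC(D_d^\bullet)\;\simeq\;P_d^\bullet .
$$
Every arrow here is a morphism of weak $\Sigma$-cosimplicial spaces, hence componentwise $S_n$-equivariant, hence commutes with the Hodge projectors $e_n^{(i)}\in\BK[S_n]$; so the whole zig-zag respects the Hodge degree, which gives the first assertion.

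For the ``as a consequence'' part I would apply $\Tot$ to this zig-zag: it yields a quasi-isomorphism $\Tot P_d^\bullet\simeq\Tot L_d^\bullet$ compatible with the Hodge splitting, and by Corollary-Definition~\ref{c53} (which rests on Theorem~\ref{t21}) $\Tot L_d^\bullet$ computes $\pi_*(\overline{Emb}_d)\otimes\Q$ together with its Hodge decomposition; therefore $H_*(\Tot^{(i)}P_d^\bullet)=\pi_*^{(i)}(\overline{Emb}_d,\Q)$. Alternatively this last statement can be obtained without the cosimplicial zig-zag: the shuffle map gives a quasi-isomorphism of $dg$-coalgebras $S^c(\Tot P_d^\bullet)\to\Tot D_d^\bullet$ which respects the Hodge degree by Lemma~\ref{l72}, so $\mathrm{Prim}\,H_*(\Tot D_d^\bullet)=H_*(\Tot P_d^\bullet)$ with its Hodge grading, and combining Corollary~\ref{c92}, Corollary~\ref{c13} and Proposition~\ref{p71}(ii) identifies this with $\pi_*(\overline{Emb}_d)\otimes\Q$ Hodge-compatibly.

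The main obstacle is the second property of $\calC$ applied to $D_d^\bullet$: identifying the combinatorially defined complex $P_d^\bullet$ --- primitives of the graph coalgebra, with the vertex-expansion differential, the $\Sigma$-cosimplicial structure maps and a coherent sign convention --- with the cogenerators of $\calC(D_d^\bullet)$. This amounts to recognizing the ``internal'' part of the graph differential as the Koszul (Harrison) resolution differential of the $(d-1)$-Poisson cooperad; it is the cooperadic dual of the comparison in~\cite{LambVol-FLBO} (see Proposition~\ref{p91}) and ultimately rests on the Koszulness of the Poisson operad. Bookkeeping the coalgebra and $\Sigma$-cosimplicial structures and the signs through all the intermediate objects is the technical heart; the remaining steps are formal.
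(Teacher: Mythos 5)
Your overall strategy --- reduce everything to the coalgebra quasi-isomorphism $A_d^\bullet\hookrightarrow D_d^\bullet$ of Corollary~\ref{c92}, push it through a Harrison/Quillen-type functor $\calC$, and use cofreeness of $D_d(n)$ over $P_d(n)$ to identify $\calC(D_d^\bullet)$ with $P_d^\bullet$ --- is the right one: the paper's own proof is a one-line citation of Theorem~9.4 of~\cite{LT}, and that argument has exactly this shape. However, one step as you state it is false. The vertex-expansion differential on $D_d(n)$ is \emph{not} the coderivation cofreely extending its restriction to $P_d(n)$: expansions of external vertices can merge primitive components, so the coderivation has a nonzero quadratic (and higher) corestriction to $P_d(n)$. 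Concretely, take $d$ odd and let $\Gamma\in D_d(2)$ be the double edge joining the two external vertices; under the cosuperimposing coproduct $\Gamma$ is the symmetric square of the primitive single-edge graph. Expanding the external vertex $1$ forces both old edges onto the new internal vertex $w$ (which must have valence at least $3$), producing the graph with a single edge from vertex $1$ to $w$ and a double edge from $w$ to vertex $2$ --- a \emph{primitive} graph, nonzero modulo the orientation relations. So $\partial$ does not carry ``one primitive piece to one primitive piece.'' (The analogous statement for $AM(D_d^\bullet)$ \emph{is} true, since univalent external vertices admit no expansions; that may be the source of the slip.)

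The gap is repairable because the property of $\calC$ you actually need is weaker than the one you invoke: for a dg-coalgebra that is cofree \emph{as a graded coalgebra} on $V$, equipped with an arbitrary square-zero coderivation $\partial$, the Harrison/Quillen construction is still naturally quasi-isomorphic to the primitives $(V,\partial|_V)$; here $\partial|_V$ is automatically the linear part of $\partial$, since the primitives of any dg-coalgebra form a subcomplex. This is dual to the standard fact that a quasi-free CDGA is cofibrant, so its Andr\'e--Quillen homology is computed by the indecomposables with the linear part of the differential --- no decomposability hypothesis on $\partial$ is required. With that corrected input, your chain $L_d^\bullet\simeq\calC(A_d^\bullet)\to\calC(D_d^\bullet)\simeq P_d^\bullet$, the componentwise $S_n$-equivariance argument for compatibility with the projectors $e_n^{(i)}$, and the passage to $\Tot$ via Theorem~\ref{t21} all go through as you describe, and the identification of the internal differential with the resolution differential of the $(d-1)$-Poisson (co)operad is indeed, as you say, the technical heart carried by Proposition~\ref{p91} and~\cite{LambVol-FLBO}.
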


\begin{proof} The proof of Theorem~9.4 in~\cite{LT} gives the necessary zig-zag.
\end{proof}

\section{${D_d}^\bullet$ and ${P_d}^\bullet$ are smooth}\label{s10}
Recall Definition~\ref{d_smooth} of a smooth commutative $\Sigma$-cosimplicial $dg$-vector space.

\begin{thm}\label{t101}
The commutative $\Sigma$-cosimplicial spaces ${D_d}^\bullet$ and ${P_d}^\bullet$ are smooth. Moreover their complexes of alternative multiderivations are isomorphic to the graph-complexes described in Section~\ref{ss81}.~\footnote{We abused the notation: in Section~\ref{ss81} we denoted the defined complexes  by $AM({D_d}^\bullet)$, $AM({P_d}^\bullet)$. We did it deliberately since they are naturally isomorphic to the complexes of alternative multiderivations that we  consider in this section.}
\end{thm}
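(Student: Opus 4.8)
The plan is to prove smoothness by exhibiting an explicit contracting homotopy on the quotient complex $\Tot Y^\bullet / AM(Y^\bullet)$ (for $Y^\bullet = {D_d}^\bullet$ and for $Y^\bullet = {P_d}^\bullet$), and to identify the subcomplex $AM(Y^n)$ with the graph-complexes of Section~\ref{ss81} by a direct combinatorial matching. First I would recall the description of $\Tot {D_d}^\bullet$ from Corollary~\ref{c92}: a spanning set of admissible graphs with all external vertices of positive valence, the differential being the sum of vertex expansions. I would then unwind Definition~\ref{d_multiderivations} for this concrete model. The multiderivation condition~\eqref{eq_multi} in the cosimplicial language of $\Gamma^{op}$ says precisely that the graph, as a function of the configuration attached to external vertex $i$, behaves like a derivation; in the graph picture the product $x_i x_{i+1}$ corresponds to fusing two external vertices, so the condition forces each external vertex to be univalent, and it kills any graph in which an external vertex has valence $\geq 2$. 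Thus $AM(Y^n)$ is spanned exactly by graphs whose $n$ external vertices are all univalent; the alternative condition~\eqref{eq_alternative} identifies the $S_n$-action on the labels with the sign action, i.e. it turns the labeled external vertices into unlabeled ones with the Koszul orientation rule. This reproduces the orientation relations (1)--(2) and the "uni-$\geq 3$-valent" condition of Definition~\ref{d83}, and one checks the differential matches (an expansion of a univalent external vertex either keeps it univalent or, relevantly for $AM$, produces an internal vertex). The connected case ${P_d}^\bullet$ versus the possibly-disconnected case ${D_d}^\bullet$ is then just the primitive/free-cocommutative-coalgebra distinction already recorded after Proposition~\ref{p94}.

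For the quasi-isomorphism $AM(Y^\bullet)\hookrightarrow \Tot Y^\bullet$, the key step is a filtration argument. I would filter $\Tot {D_d}^\bullet$ by the number of internal vertices, or better by the total valence excess $\sum_{v\ \mathrm{external}}(\mathrm{val}(v)-1)$, which is non-negative and zero exactly on $AM$. The differential does not increase this quantity (a vertex expansion never raises the excess), and the associated graded piece in positive excess carries a subcomplex/quotient on which one can write down an explicit contraction: splitting off a leaf from an external vertex of valence $\geq 2$, versus merging a leaf back, are mutually inverse up to lower filtration, exactly as in the standard proof that the normalized Hochschild complex of a polynomial algebra reduces to Kähler differentials. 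Running the associated spectral sequence (or an acyclic-models / extra-degeneracy argument on the excess-positive part) collapses $\Tot Y^\bullet$ onto $AM(Y^\bullet)$. An alternative route, which I would mention, is to invoke Pirashvili's criterion \cite{Pirashvili}: show that ${D_d}^\bullet$ and ${P_d}^\bullet$ are smooth $\Gamma$-modules in his sense, whence smoothness in the weaker sense of Definition~\ref{d_smooth} follows from \cite[Theorem~4.6]{Pirashvili} as remarked there; but the hands-on homotopy seems cleaner here because the graphs give such an explicit model.

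The main obstacle I expect is bookkeeping of signs and orientations, not conceptual difficulty. One has to check that the leaf-splitting homotopy is compatible with the orientation relations (1)--(2), that it lands in the correct Koszul-signed combination, and that the identification of $AM(Y^n)$ with the Section~\ref{ss81} complexes is an isomorphism of \emph{complexes} and not merely of graded vector spaces — in particular that the two kinds of external-vertex expansion in $\Tot {D_d}^\bullet$ restrict correctly under the projector $e_n^{(n)}$ (Remark~\ref{r46}) to the single "external stays external but an internal vertex buds off" expansion of Definition~\ref{d83}. A secondary point requiring care is the loops-and-multiple-edges convention (Remark~\ref{r81}): one must verify the contraction respects the vanishing forced by orientation relations in each parity of $d$. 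Finally, for ${P_d}^\bullet$ I would note that smoothness passes from ${D_d}^\bullet$ to its cogenerators by a standard argument, since taking primitives is exact in characteristic zero and commutes with the constructions involved, so the connected case needs no separate homotopy.
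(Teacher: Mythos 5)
Your identification of the multiderivation and alternative conditions with ``all external vertices univalent, antisymmetrized'' is exactly the paper's first step, and your guiding idea for smoothness --- filter and reduce to the Hochschild complex of a polynomial algebra --- is also the paper's. But the specific mechanism you propose for the quasi-isomorphism has a genuine gap. First, the quotient $\Tot Y^\bullet/AM(Y^\bullet)$ is not the positive-excess part: the excess-zero piece is $\oplus_n M(Y^n)$, the span of graphs with univalent \emph{labelled} external vertices, which surjects onto $AM(Y^\bullet)=\oplus_n M(Y^n)\otimes_{S_n}sign_n$ with a large kernel (all non-sign-isotypic components). A leaf-splitting/leaf-merging homotopy confined to external vertices of valence $\geq 2$ cannot kill that kernel; it is killed only by boundaries coming from excess-one graphs (already for $n=2$: the symmetric combination of the two graphs with univalent external vertices is the coface image of the graph with one bivalent external vertex, which is why only the sign part survives). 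Second, the excess filtration is set up the wrong way round: every coface and every expansion of an external vertex strictly \emph{decreases} the excess, so the associated graded differential consists only of expansions of internal vertices, and the leaf moves you want to contract against live in the connecting maps, not in $E_0$.

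The paper instead filters by the number of internal vertices, so that $d_0$ is the alternated sum of cofaces; it identifies $E_0$ with $\oplus_n K_n^\vee\otimes_{S_n}M(Y^n)$, where $K_n$ is the multilinear part of the normalized Hochschild complex of $\BK[x_1,\dots,x_n]$ with trivial coefficients; and it uses that $H_*(K_n^\vee)$ is the sign representation concentrated in top degree. That last fact is precisely where the antisymmetrization --- hence $AM$ rather than $M$ --- comes from, and it is the content any contracting homotopy would have to reproduce. If you replace the excess filtration by the internal-vertex filtration and prove (or quote) the statement about $K_n^\vee$, your argument becomes the paper's. Your closing remark that ${P_d}^\bullet$ needs no separate treatment is fine in spirit, though the paper simply runs the identical spectral-sequence argument for connected graphs in parallel, with the caveat of Remark~\ref{r102} about loops in complexity~1.
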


An immediate corollary of this theorem (and of Corollary~\ref{c93}, and Proposition~\ref{p94}) is Theorem~\ref{t82}.

\begin{proof}
Let us first show that the complexes from Section~\ref{ss81} are indeed isomorphic to $AM({D_d}^\bullet)$, $AM({P_d}^\bullet)$. An element of ${D_d}^\bullet$ (resp. ${P_d}^\bullet$) is a multiderivation if and only if it is a linear combination of graphs whose all external vertices are univalent. Indeed, the equation~\eqref{eq_multi} implies that the valence of the $i$-th vertex for all the graphs in the sum is one.  On the other hand {\it alternative} means that we anti-symmetrize the external vertices, which exactly means that we put the weight $-1$ in each of them and then forget their labeling.

Now let us show that ${D_d}^\bullet$, ${P_d}^\bullet$ are indeed smooth. Denote by $M({D_d}^\bullet)=\oplus_nM({D_d}^n)$, $M({P_d}^\bullet)=\oplus_nM({P_d}^n)$
their subspaces of multiderivations. By a previous argument these subspaces are spanned by the graphs whose all external vertices are univalent. The graphs that span $M({P_d}^\bullet)$ are connected, those that span $M({D_d}^\bullet)$ might have any number of connected components.

Consider a free commutative algebra generated by $x_1,x_2,\ldots,x_n$. Let us take its normalized Hochschild complexes with coefficients in a trivial bimodule $\BK$. Let $K_n$ denote a subcomplex of this Hochschild complex spanned by the elements of degree 1 in each variable $x_i$, $i=1\ldots n$. Notice that $K_n$ is finite-dimensional. For example, $K_2$ is spanned by three elements: $x_1\otimes x_2$, $x_2\otimes x_1$, and $x_1x_2$. Let $K_n^\vee$ denote the dual of $K_n$.

In $\Tot {P_d}^\bullet$, $\Tot {D_d}^\bullet$ (and also in their subcomplexes $AM({P_d}^\bullet)$, $AM({P_d}^\bullet)$) consider the filtration by the number of internal vertices. The differential $d_0$ of the associated spectral sequence is the alternated sum of faces (external part of the differential in totalization). One can easily see that the term $E_0$ of the associated spectral sequence is isomorphic as a complex respectively to
$\oplus_{n\geq 0}K_n^\vee\otimes_{S_n}M({P_d}^n)$, $\oplus_{n\geq 0}K_n^\vee\otimes_{S_n}M(D_d^n)$, where $M({P_d}^n)$, $M({D_d}^n)$ are taken with zero differential.
%
The idea of these isomorphism is that any graph in $\Tot {P_d}^\bullet$ (resp. $\Tot {D_d}^\bullet$) can be obtained from a graph with all external vertices of valence 1 by gluing together consecutive external vertices. This correspondence is not unique that's why we take the tensor product over the symmetric group $S_n$.

The homology of $K_n^\vee$ is concentrated in the top degree and is isomorphic to the sign representation $sign_n$ of $S_n$. One can define an explicit inclusion
$$
sign_n\hookrightarrow K_n^\vee
$$
that defines a quasi-isomorphism of $dg$-$S_n$-modules. As a consequence the $E_1$ term is isomorphic to
$$
\oplus_nM({P_d}^n)\otimes_{S_n} sign_n,\qquad
\oplus_nM({D_d}^n)\otimes_{S_n} sign_n.
$$
respectively. But the above complexes are exactly $AM(D_d^\bullet)$, $AM(P_d^\bullet)$. Therefore the inclusions $AM({P_d}^\bullet)\hookrightarrow\Tot{P_d}^\bullet$, $AM({D_d}^\bullet)\hookrightarrow\Tot{D_d}^\bullet$ induce an isomorphism of the spectral sequences (associated with the above filtration) after the first term. Therefore these inclusions are quasi-isomorphisms.
\end{proof}

\begin{remark}\label{r102} Recall Remark~\ref{r81} that the loops are possible only if $d$ is even. But even in this case if we quotient out ${P_d}^\bullet$ by the graphs with loops, then ${P_d}^\bullet$ is no more smooth only in complexity~1. Indeed, the isomorphism
 $E_0\simeq \oplus_{n\geq 0}K_n^\vee\otimes_{S_n}M({P_d}^n)$  fails to be true only in complexity~1, since there is only one graph
\begin{picture}(20,15)
\put(2,5){\circle*{3}}
\put(18,5){\circle*{3}}
\qbezier(2,5)(10,15)(18,5)
\put(0,-1){$_1$}
\put(16,-1){$_2$}
\end{picture}
  in $M({P_d}^\bullet)$ that can produce a loop by gluing consecutive external vertices.
\end{remark}

\section{$AM({P_d}^\bullet)$ in small complexities}\label{s11}
In this section we present the results of computations of the homology of $AM({P_d}^\bullet)$ for complexities $j=1$, 2, and 3.

\vspace{.3cm}

\centerline{\bf Complexity $j=1$}


\subsubsection*{Odd $d$}
There is only one graph which is not canceled by the orientation relations:

$$
\begin{picture}(30,15)
\put(0,5){\line(2,1){30}}
\end{picture}
\eqno(\numb)\label{eq111}
$$

It defines a rational homotopy of $\overline{Emb}_d$ of dimension $d-3$. Its Hodge degree is 2.

\subsubsection*{Even $d$}
Again we have only one non-trivial graph:
$$
\begin{picture}(30,25)
\put(0,0){\line(2,1){15}}
\put(21,11.45){\circle{15}}
\end{picture}
\eqno(\numb)\label{eq112}
$$
that defines a rational homotopy of dimension $d-3$. Its Hodge degree is 1.

\vspace{.3cm}

\centerline{\bf Complexity $j=2$}


\subsubsection*{Odd $d$}
There are only 2 non-trivial graphs
$$
\includegraphics[width=2cm]{complexity2_odd_1.eps},
\eqno(\numb)\label{eq113}
$$
and
$$
\includegraphics[width=1.5cm]{complexity2_odd_2.eps}.
\eqno(\numb)\label{eq114}
$$
The first one has the degree $2d-6$ (its Hodge degree is 2), the second one has the degree $2d-5$ (its Hodge degree is 1).

\begin{remark}\label{r111}
The cycles~\eqref{eq111}, \eqref{eq112}, and \eqref{eq114} are exactly those that arise from the rational homotopy  of the factor $\Omega^2S^{d-1}$ in $\overline{Emb}_d=
Emb_d\times\Omega^2S^{d-1}$.
\end{remark}

\subsubsection*{Even $d$}
Recall from Remark~\ref{r102}, that starting from complexity 2 one can consider a quasi-isomorphic complex spanned by the graphs in $AM({D_{even}}^\bullet)$ without loops. In complexity 2 one has only one such graph:
$$
\includegraphics[width=1.5cm]{complexity2_even.eps}
\eqno(\numb)\label{eq115}
$$
It defines a cycle of dimension $2d-6$. its Hodge degree is 3.

\vspace{.3cm}

\centerline{\bf Complexity $j=3$}


\subsubsection*{Odd $d$}
The case of odd $d$ is harder because of the presence of multiple edges. In the Hodge degrees 4 and 3 all the graphs are trivial modulo the orientation relations. In the Hodge degree 2 one has a complex spanned by 5 graphs:

\vspace{.3cm}
\begin{center}
\psfrag{d}[0][0][1][0]{$\partial$}
\includegraphics[width=7cm]{complexity3_odd_hodge2.eps}
\end{center}
\vspace{.3cm}

Its only cycle is concentrated in the lowest degree $3d-9$ and is given by any of the 3 graphs:
$$
\includegraphics[width=9cm]{cycle_odd_hodge2.eps}
\eqno(\numb)\label{eq116}
$$
We bring attention of the reader that pairing with the dual graph-complex (whose differential is a sum of contractions of edges) is given by the following rule: if a non-trivial graph in the dual graph complex is not isomorphic to a graph $\Gamma$ in $AM({P_d}^\bullet)$ then the pairing is zero, otherwise it is the order of the group of symmetries of $\Gamma$. In particular a cycle dual to~\eqref{eq116} is given by the sum:

$$
\includegraphics[width=9cm]{dual_odd_hodge2.eps}
$$

The order of the group of symmetries for the first graph is 8, for the second and the third ones it is 4. Therefore the pairing between~\eqref{eq116} and the above cycle is~8.

\vspace{.5cm}

In Hodge degree 1,  one has a complex spanned by 9 graphs:

\vspace{.3cm}
\begin{center}
\psfrag{d}[0][0][1][0]{$\partial$}
\includegraphics[width=10cm]{complexity3_odd_hodge1.eps}
\end{center}
\vspace{.3cm}

Easy computations show that there is only one non-trivial cycle which lies in the lowest degree $3d-8$. It is given by any of 2 graphs:
$$
\includegraphics[width=5cm]{cycle_odd_hodge1.eps}
\eqno(\numb)\label{eq117}
$$
The other 2 graphs in this degree lie in the boundary. The dual cycle is the sum:
$$
\includegraphics[width=5cm]{dual_odd_hodge1.eps}
$$

\subsubsection*{Even $d$}
The case of even $d$ is easier since the graphs are without multiple edges (and also without loops due to Remark~\ref{r102}).

In Hodge degree 4 there are only 2 non-trivial graphs that cancel each other:

\vspace{.3cm}
\begin{center}
\psfrag{d}[0][0][1][0]{$\partial$}
\includegraphics[width=5cm]{complexity3_even_hodge4.eps}
\end{center}
\vspace{.3cm}

In Hodge degree 3 there is no non-trivial graphs. In Hodge degree 2 there is only 1 graph:

$$
\includegraphics[width=2.5cm]{complexity3_even_hodge2.eps}
\eqno(\numb)\label{eq118}
$$
that defines a cycle of degree 3d-9.

In Hodge degree 1  one has only 1 non-trivial graph:
$$
\includegraphics[width=2cm]{complexity3_even_hodge1.eps}
\eqno(\numb)\label{eq119}
$$
It defines a cycle of degree $3d-8$.

\begin{remark}\label{r112}
In complexity 3 the rational homotopy for both even and odd $d$ are given by the same graphs:
$$
\includegraphics[width=2.5cm]{complexity3_even_hodge2.eps}\quad \text{\raisebox{10pt}[1pt][20pt]{and} }
\quad
\includegraphics[width=1.2cm]{complexity3_even_hodge1.eps}
$$
\end{remark}

\begin{remark}\label{r113}
The computations in this section are consistent with the computations of the Euler characteristics of the Hodge splitting in rational homotopy, see Tables~1 and~3, and also with the previous computation of the rational homotopy of $\overline{Emb}_d$, see~\cite{SinhaSc,T-OS}.
\end{remark}

\section{Homotopy graph-complexes for spaces of knotted planes}\label{s12}

Theorem~\ref{t81} seems unnatural since it forgets the natural \lq\lq linear" topology of the real line $\R^1$. From this point of view Theorem~\ref{t82} seems to be even more strange and unbelievable. But the surprise is that  these complexes $AM({P_d}^\bullet)$ are naturally related to the topology of embedding spaces of higher dimensional affine spaces for which the complexes in question do seem naturally appropriate.

Let $Emb(\R^k,\R^d)$  denote the space of smooth non-singular embeddings $\R^k\hookrightarrow\R^d$ with a fixed linear behavior outside some compact subset of $\R^k$. Similarly let $Imm(\R^k,\R^d)$ denote the space of immersions with the same behavior at infinity, and let $\overline{Emb}(\R^k,\R^d)$ denote the homotopy fiber of the inclusion $Emb(\R^k,\R^d)\hookrightarrow Imm(\R^k,\R^d)$.

Let $\calE_{k,d}$ denote a graph-complex whose definition is the same as the one given for $AM({P_d}^\bullet)$ in Section~\ref{s8} with the only exception that we assign degree $-k$ instead of $-1$ to the external vertices of the uni-$\geq 3$-valent graphs. In particular $\calE_{1,d}=
AM({P_d}^\bullet)$. Up to a shift of gradings $\calE_{k,d}$ depends on the parities of $k$, and $d$ only.


\begin{conj}\label{c121}
The rational homotopy of $\overline{Emb}(\R^k,\R^d)$, $d\geq 2k+2$, is naturally isomorphic to the homology of $\calE_{k,d}$:
$$
\pi_*(\overline{Emb}(\R^k,\R^d))\otimes\Q\simeq H_*(\calE_{k,d}).
$$
\end{conj}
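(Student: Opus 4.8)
The plan is to mimic the proof strategy that works for long knots $\overline{Emb}_d = \overline{Emb}(\R^1,\R^d)$, which is exactly the content of Theorems~\ref{t82} and~\ref{t101} together with Proposition~\ref{p94}, and to verify that each ingredient survives when one replaces the real line by $\R^k$. The starting point is the Goodwillie--Weiss embedding calculus: for $d\geq 2k+2$ the Taylor tower for $\overline{Emb}(\R^k,\R^d)$ converges, and (by work of Boavida de Brito--Weiss, or Arone--Turchin) the layers are governed by a cosimplicial-type object built from configuration spaces of points in $\R^d$. For $k=1$ this is precisely Sinha's cosimplicial model ${C_d}^\bullet$; for general $k$ it is a multi-cosimplicial or $\Delta^k$-indexed object whose realization computes $\overline{Emb}(\R^k,\R^d)$ rationally. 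First I would set up this model carefully and record that, exactly as in Proposition~\ref{p51}, passing to the homotopy category makes it a commutative $\Sigma$-cosimplicial (now $k$-fold) space: the only change from the $k=1$ argument is that the external/``linear direction'' parameter space is $\R^k$ rather than $\R^1$, which is why the external vertices of the associated graph-complex should carry degree $-k$.

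Next I would establish the rational homotopy formality/collapse statement analogous to Theorem~\ref{t21}: in the range $d\geq 2k+2$ the relevant spectral sequence degenerates rationally, so $\pi_*(\overline{Emb}(\R^k,\R^d))\otimes\Q$ is computed by the (normalized, primitive part of the) totalization of the homotopy groups of the configuration-space model. This is where the hypothesis $d\geq 2k+2$ is consumed; it is also the step where one must invoke the rational formality of configuration spaces of points in $\R^d$ (Kontsevich), exactly as in the references \cite{ALTV-coformal,LT,LTV} cited for \refT{21}. Then I would introduce the operadic graph-complex: the homology of the configuration spaces is the $(d-1)$-Poisson operad $A_d$, which by Proposition~\ref{p91} includes quasi-isomorphically into the operadic graph-complex $D_d$; taking the primitive part and the appropriate totalization for a $k$-dimensional external parameter produces precisely the complex $\calE_{k,d}$ (its external vertices getting degree $-k$ from the $k$-fold cosimplicial totalization, just as the $k=1$ case gave degree $-1$). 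The smoothness argument of Theorem~\ref{t101} --- filtering by the number of internal vertices, identifying $E_0$ with $\oplus_n K_n^\vee\otimes_{S_n} M$, and using that $H_*(K_n^\vee)=sign_n$ --- goes through verbatim, since it is purely a statement about the commutative $\Sigma$-cosimplicial structure and does not see the value of $k$; this identifies the alternative-multiderivation subcomplex with the graph-complex $\calE_{k,d}$ and shows the inclusion is a quasi-isomorphism.

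The main obstacle I expect is not the algebra but the topology at the very first step: for $k\geq 2$ one does not have Sinha's clean cosimplicial model, so one must genuinely work with the multi-cosimplicial (or $\Delta^k$) configuration-space model coming from higher embedding calculus, prove that its homotopy totalization computes $\overline{Emb}(\R^k,\R^d)$ rationally in the stated range, and check that the ``commutative $\Sigma$-cosimplicial in $hoTop$'' structure of Proposition~\ref{p51} extends to this $k$-fold setting so that a Hodge-type (Loday/$\Gamma$-module) splitting is available and so that the external parameter contributes the shift by $-k$ uniformly. A secondary subtlety is checking that the rational collapse of the Taylor tower --- easy to cite for $k=1$ --- is actually available in the literature for all $k$ in the range $d\geq 2k+2$; if it is only known in a narrower range, the conjecture would have to be proved under that stronger hypothesis, which is presumably exactly why the statement is phrased as a conjecture rather than a theorem. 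Once those topological inputs are in hand, the graph-complex identification and the smoothness/alternative-multiderivation machinery of Sections~\ref{s9}--\ref{s10} apply with only cosmetic changes ($-1\rightsquigarrow -k$ on external vertices), giving $\pi_*(\overline{Emb}(\R^k,\R^d))\otimes\Q\simeq H_*(\calE_{k,d})$.
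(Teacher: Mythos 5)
This statement is \emph{Conjecture}~\ref{c121}: the paper does not prove it, and explicitly explains why it remains open, so your proposal should be judged as an attempted proof of an open problem rather than compared to an existing argument. Your overall architecture matches what the paper says the conjecture amounts to: the paper states that Conjecture~\ref{c121} is equivalent to collapse at $E^2$ of the spectral sequence coming from Goodwillie--Weiss embedding calculus for $\overline{Emb}(\R^k,\R^d)$, and that $H_*(\calE_{k,d})$ is exactly the primitive part of that $E^2$. So the ``algebraic half'' of your plan --- identifying the primitive part of the $E^2$ page with the graph-complex $\calE_{k,d}$ via the $(d-1)$-Poisson operad, the operadic graph-complex $D_d$, and the smoothness/alternative-multiderivation argument of Theorem~\ref{t101}, with external vertices now in degree $-k$ --- is consistent with what the author asserts, and your heuristic that the filtration-by-internal-vertices argument does not see the value of $k$ is reasonable.

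The genuine gap is exactly the step you flag but then treat as a citation problem: the rational collapse/degeneration at $E^2$. This is not merely ``possibly known only in a narrower range''; it is the open content of the conjecture. The paper points out that the two known collapse mechanisms both fail here: the method for long knots~\cite{LTV} is tied to Sinha's cosimplicial model and the little-cubes operad structure, which is special to $k=1$, and the method of~\cite{ALV:foHE:X} applies to $\overline{Emb}(M,\R^d)$ only for \emph{compact} $M$, whereas $\R^k$ with prescribed behavior at infinity is not covered. So your step ``establish the rational collapse statement analogous to Theorem~\ref{t21}'' has no proof and no reference to invoke; without it the chain from $\pi_*(\overline{Emb}(\R^k,\R^d))\otimes\Q$ down to the $E^2$ page is broken, and everything downstream (the graph-complex identification) computes only the $E^2$ page, not the homotopy groups. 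A secondary, smaller issue is that even the first step --- setting up the $k$-fold analogue of the commutative $\Sigma$-cosimplicial structure in $hoTop$ so that the Hodge/$\Gamma$-module formalism and the degree-$(-k)$ shift come out uniformly --- is asserted rather than constructed, but that part is plausibly routine; the degeneration is the real obstruction, and it is why the statement is a conjecture.
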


This conjecture would imply that up to a shift of gradings the rational homotopy and homology of  $\overline{Emb}(\R^k,\R^d)$, $d\geq 2k+2$, depends on the parities of $k$ and $d$ only. We stress again the fact that surprisingly this biperiodicity starts already from $k=1$!

The above conjecture is equivalent to a collapse at $E^2$ of some spectral sequence arising naturally from the Goodwillie-Weiss embedding calculus and computing the rational homology of $\overline{Emb}(\R^k,\R^d)$. To be precise $H_*(\calE_{k,d})$ is exactly the primitive part of $E^2$. Recall that besides the long knots (Theorem~\ref{t21}) this rational homology collapse result holds also for the spaces of embeddings modulo immersions 
$\overline{Emb}(M,\R^d)$ of any compact manifold into an affine space of a sufficiently high dimension $d$~\cite{ALV:foHE:X}. Unfortunately neither the method for long knots~\cite{LTV}, nor the one of~\cite{ALV:foHE:X} can be applied to the case of knotted planes.

\part{Euler characteristics of the Hodge splitting}\label{p3}

\section{Generating function of Euler characteristics}\label{s13}
Recall that the rational homology of $\overline{Emb}_d$  is computed by the complex
$$
\Tot {A_d}^\bullet=(\oplus_{n\geq 0}s^{-n}N{A_d}^n,\partial)=(\oplus_{n\geq 0}s^{-n}H_*^{Norm}({C_d}^n,\Q),\partial).
$$
The homology of ${C_d}^n$ is concentrated in the gradings that are multiples of $(d-1)$:
$$
*=j\cdot (d-1),\qquad 0\leq j\leq n-1.
$$
For the normalized part one has the restrictions:
$$
\frac n2\leq j\leq n-1.
$$
The lower bound happens for the so called \lq\lq chord diagrams", see~\cite{T-HLN}. The differential $\partial$ preserves this grading $j$ that will be called {\it complexity}. Since $\partial$ also preserves the Hodge degree, one has a double splitting:
$$
\Tot {A_d}^\bullet=\oplus_{i,j}\Tot^{(i,j)}{A_d}^\bullet=\oplus_{i,j}(\oplus_{n}s^{-n}e_i(H_{j(d-1)}^{Norm}({C_d}^n,\Q)),\partial).
$$
Here $e_i$ is the $i$-th projector of the Hodge decomposition, see Section~\ref{s3}, $s^{-n}$ denotes the $n$-fold desuspension.

Denote by $H_*^{(i,j)}(\overline{Emb}_d,\Q)=H_*(\Tot^{(i,j)}{A_d}^\bullet)$. Let $\chi_{i,j}$ denote the Euler characteristics in bigrading $(i,j)$:
$$
\chi_{i,j}=\sum_{n=j(d-3)}^{j(d-2)}(-1)^n \rank( H_n^{(i,j)}(Emb_d,\Q)),
$$
and let $F_d(x,u)$ be the corresponding generating function
$$
F_d(x,u)=\sum_{i,j}\chi_{i,j}x^iu^j.
$$
The variable $x$ is responsible for the Hodge degree and the variable $u$ is responsible for the complexity. Since up to an (even) shift of gradings the complexes $\Tot {A_d}^\bullet$ depend on the parity of $d$ only, one has that $F_d(x,u)$ are the same for $d$'s of the same parity. We will denote the corresponding generating functions by $F_{odd}(x,u)$, and $F_{even}(x,u)$.

Let $E_\ell(y)$ be the polynomial $\frac 1\ell\sum_{d|\ell}\mu(d)y^{\ell/d}$ (where $\mu(-)$ is the standard M\"obius function), and let $\Gamma(y)$ be the usual Gamma function $(y-1)!$.

\begin{thm}\label{t131}
(a)
$$
F_{odd}(x,u)=\prod_{\ell\geq 1}\frac{\Gamma(E_\ell(\frac 1u)-E_\ell(x))}{(\ell u^\ell)^{E_\ell(x)}\Gamma(E_\ell(\frac 1u))},
\eqno(\numb)\label{eq131}
$$
where each factor in the product is understood as the asymptotic expansion (see Definition~\ref{d141}) of the underlying function when $u\to +0$, and $x$ is considered as a fixed parameter.

\vspace{0.5cm}

(b)
$$
F_{even}(x,u)=\prod_{\ell\geq 1}\frac{\Gamma(-E_\ell(\frac 1u)-E_\ell(x))}{(-\ell u^\ell)^{E_\ell(x)}\Gamma(-E_\ell(\frac 1u))},
\eqno(\numb)\label{eq132}
$$
where each factor in the product is understood as the asymptotic expansion of the underlying function when $u$ is  complex  and $u^\ell\to -0$. Again $x$ is considered as a fixed parameter.
\end{thm}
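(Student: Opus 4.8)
The plan is to compute the generating function of Euler characteristics directly from the graph-complex $AM(P_d^\bullet)$ (equivalently, from the bicomplex $\Tot P_d^\bullet$), using that the Euler characteristic depends only on the underlying bigraded vector space and not on the differential. By Theorem~\ref{t82} the homotopy $\pi_*(Emb_d)\otimes\Q$ is computed by $AM(P_d^\bullet)$, a complex of connected uni-$\geq 3$-valent graphs, with the complexity recorded by the first Betti number of the glued graph and the Hodge degree by the number of univalent vertices. Since $H_*(D_d^\bullet)$ is the free cocommutative coalgebra on $H_*(P_d^\bullet)$, the ``homological'' generating function $F_d(x,u)$ for $\overline{Emb}_d$ is obtained from the ``homotopy'' generating function by a plethystic exponential; so the first reduction is to compute the contribution of a single connected graph type and then exponentiate.

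First I would set up the counting problem purely combinatorially. Fix parities of $k=1$ and $d$; a graph contributing to bidegree $(i,j)$ has $i$ univalent (external) vertices and first Betti number $j$ of the glued graph, so if it has $v$ internal vertices and $e$ edges then $e - (v+1) = j - 1$ when the external vertices are glued to a single new vertex — one must be careful with the exact relation between $e$, $v$, $i$, and the complexity $j$, and this is where the precise conventions of Section~\ref{s8} enter. The sign $(-1)^n$ in the definition of $\chi_{i,j}$, where $n$ is the homological degree, must be expressed in terms of $v$ and $e$ via the degree assignments ($-1$ per external vertex, $-d$ per internal vertex, $d-1$ per edge); together with the orientation relations (which, per Remark~\ref{r81}, already encode how automorphisms act with sign) this turns the Euler-characteristic count into a \emph{signed} sum over isomorphism classes of graphs weighted by $1/|\mathrm{Aut}|$. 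The natural tool is then a species/exponential-generating-function computation: count connected graphs with labeled half-edges, sum over the symmetric group actions using the cycle-index / Pólya theory, and the $E_\ell$ polynomials $\frac1\ell\sum_{d\mid\ell}\mu(d)y^{\ell/d}$ will appear as the standard ``necklace-counting'' / free-Lie-type corrections when passing from the symmetric power to a genuine orbit count. The Gamma function should emerge from $\prod_{m\geq 1}(1 + (\text{something})/m)$-type products, i.e. from summing geometric-like series in the number of edges incident at a vertex; recall $\Gamma(y)^{-1} = y\prod_{m\geq1}(1+y/m)e^{-y/m}$, so an infinite product over internal-vertex valences or over edge multiplicities naturally reassembles into ratios of Gamma values, which also explains why the identity must be read as an \emph{asymptotic expansion} as $u\to+0$ rather than a convergent product.

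The cleanest route is probably to reduce everything to a known generating-function identity. Since $AM(P_d^\bullet)$ is (up to reindexing) the graph-complex computing the homology of the outer space / $\mathrm{Out}(F_n)$ with univalent ``legs'' added (Remark~\ref{r83}), and since Euler characteristics of such decorated commutative graph-complexes have generating functions expressible through Gamma functions (this is the circle of Kontsevich / Willwacher-type computations), I would (i) recall the generating function for the Euler characteristic of the hairy graph-complex with hairs weighted by $x$ and loop-order weighted by $u$, (ii) identify the variable change coming from our degree conventions (which flips the role of $u$ versus $1/u$ and introduces the $(\ell u^\ell)^{E_\ell(x)}$ normalization from the $n$-fold desuspension $s^{-n}$), and (iii) separate the two parities: for odd $d$ edges have odd degree so multiple edges survive but loops die, for even $d$ the reverse, which is exactly the source of the sign difference between $E_\ell(\tfrac1u)$ and $-E_\ell(\tfrac1u)$ and between the two regimes $u\to+0$ and $u^\ell\to-0$ in~\eqref{eq131}--\eqref{eq132}. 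Alternatively, a self-contained approach: stratify graphs by their ``core'' (the graph after contracting all univalent-adjacent edges) and show the legs contribute a factor that resums, for each core-vertex, into a $\Gamma$-ratio.

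\textbf{Main obstacle.} The hard part will be twofold: keeping the signs and the automorphism weights consistent throughout the Pólya-theory computation — in particular checking that the orientation relations (1)--(2) of Definition~\ref{d83} make graphs with orientation-reversing automorphisms contribute $0$ exactly as needed, so that the naive $1/|\mathrm{Aut}|$-weighted count is correct — and justifying rigorously the passage to the asymptotic expansion: the product $\prod_{\ell\ge1}$ does not converge, so one must define $F_{odd}(x,u)$ and $F_{even}(x,u)$ as formal power series in $u$ (with polynomial-in-$x$ coefficients, after fixing $i$) and prove that the combinatorial sum equals, coefficient by coefficient in $u^j$, the asymptotic expansion of the stated infinite product of Gamma-ratios. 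This is the content of Definition~\ref{d141} referenced in the statement, and matching the two expansions term-by-term — presumably via a logarithmic derivative / Stirling-type estimate of $\log\Gamma(E_\ell(1/u)-E_\ell(x)) - \log\Gamma(E_\ell(1/u))$ as $u\to0$ — is where the real analytic work lies; the combinatorics, while intricate, is essentially bookkeeping once the species setup is fixed.
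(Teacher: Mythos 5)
Your strategy --- enumerate the uni-$\geq 3$-valent graphs of $AM({P_d}^\bullet)$ with signs and automorphism weights via P\'olya/species theory, then pass from the homotopy to the homology generating function --- is genuinely different from the paper's proof, which never touches the graph-complexes. The paper computes the Euler characteristic of the complex $\Tot {A_d}^\bullet$ itself (which is legitimate since the Euler characteristic ignores the differential), by combining Lehrer's character formula for the $S_n$-action on $H_*(C_d(n))$ (Proposition~\ref{p:Sn_conf_spaces}), the cycle index of the normalized part, and Hanlon's cycle index for the Hodge projectors $e_n^{(i)}$, paired through the standard formula for $\dim Hom(V,W)^{S_n}$ in terms of cycle index sums. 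On that route the polynomials $E_\ell$ appear for free as exponents in Lehrer's and Hanlon's formulas, and the whole theorem reduces to one identity per factor $\ell$ (Proposition~\ref{p:factor}).

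As written, however, your proposal has two genuine gaps. First, the central combinatorial computation is not carried out, and the suggested shortcut --- invoking ``known'' Gamma-function formulas for Euler characteristics of hairy graph-complexes --- is circular: formulae~\eqref{eq131}--\eqref{eq132} \emph{are} that generating function, so they cannot be cited as input. Producing the $\Gamma$-ratios and the $E_\ell$'s from a signed, $1/|\mathrm{Aut}|$-weighted count of graphs with unlabeled legs, multiple edges and loops, while verifying that the orientation relations of Definition~\ref{d83} kill exactly the graphs with sign-reversing automorphisms, is the entire content of the theorem, not ``bookkeeping.'' Second, the asymptotic-expansion issue is not resolved by a Stirling estimate alone: the device that actually makes the identification possible in the paper is Lemma~\ref{l142}, which shows that the asymptotic expansion of $\Gamma(\frac 1u-x)\big/\bigl(u^x\Gamma(\frac 1u)\bigr)$ as $u\to +0$ has coefficients that are \emph{polynomials} in $x$; since the left-hand side is also polynomial in $x$ in each $u$-degree (finiteness of the normalized complex in each complexity), the identity need only be checked at negative integer $x$, where the $\Gamma$-ratio collapses to the finite product $(1+u)(1+2u)\cdots(1+(n-1)u)$. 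Without this (or an equivalent) polynomiality-and-specialization argument, ``matching the two expansions term by term'' is a restatement of the claim rather than a proof. A minor further point: the passage from homotopy to homology Euler characteristics is $F_d(x,u)=\prod_{i,j}(1-x^iu^j)^{-\chi^\pi_{ij}}$ (Lemma~\ref{l161}), not a plain plethystic exponential, because the parity of the total homological degree enters $\chi_{ij}$.
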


In the next section we will give a better understanding of this formula. We want to warn the reader that the series corresponding to each factor can be divergent depending on $x$. We mention that both $F_{odd}(x,u)$, $F_{even}(x,u)$ have the form $\sum_{j=0}^{+\infty}P_j(x)u^j$, for $P_j(x)$, $j=0,1,2,\ldots$, being a sequence of polynomials. For small complexities $j$ one has
$$
F_{odd}(x,u)=1+x^2u+(x^4+x^2-x)u^2+(x^6+x^4-x^3+x^2-x)u^3+(x^8+x^6-x^5+3x^4-3x^3+x^2-x)u^4+\ldots,
$$
$$
F_{even}(x,u)=1-xu+x^3u^2+(-x^4-x^2+x)u^3+(x^6+x^3-x^2)u^4+\ldots,
$$
see Tables 2 and 4.

\section{Understanding formulae~\eqref{eq131}-\eqref{eq132}}\label{s14}

\subsection{Looking at the first factor}\label{s141}

\begin{definition}\label{d141}
A function $f(u)$ is said to have an {\it asymptotic expansion} $\sum_{j=0}^{+\infty}a_ju^j$ when $u\to +0$ if for any $n\geq 0$ one has
$$
f(u)=\sum_{j=0}^na_ju^j + o(u^n),
$$
when $u\to +0$.
\end{definition}

Notice that the series $\sum_{j=0}^{+\infty}a_ju^j$ is considered as a formal series which is not necessary convergent, or even if it is convergent it is not supposed to converge to $F(u)$.

\vspace{.5cm}

Now consider the first factor
$$
\frac{\Gamma(\frac 1u-x)}{u^x\Gamma(\frac 1u)}
\eqno(\numb)\label{eq141}
$$
of the product~\eqref{eq131}. Variable $x$ is a parameter. Let $x$ be a positive integer $n$. Applying the identity $\Gamma(z+1)=z\Gamma(z)$, we obtain:
$$
\frac{\Gamma(\frac 1u - n)}{u^n\Gamma(\frac 1u)}=\frac 1{(1-u)(1-2u)\ldots (1-nu)}=\sum_j\gamma_j(n)u^j,
\eqno(\numb)\label{eq142}
$$
where $\gamma_j(n)=\sum_{1\leq i_1\leq\ldots\leq i_j\leq n}i_1i_2\ldots i_j$. It is easy to see that $\gamma_j(n)$ is a polynomial function of $n$. Now let $x$ be a negative integer $-n$. Similarly we obtain
$$
\frac{\Gamma(\frac 1u +n)}{u^{-n}\Gamma(\frac 1u)}=(1+u)(1+2u)\ldots (1+(n-1)u)=\sum_{j=0}^{+\infty}\tilde \gamma_j(n) u^j,
\eqno(\numb)\label{eq143}
$$
where $\tilde\gamma_j(n)=\sum_{1\leq i_1<\ldots<i_j\leq n-1}i_1\ldots i_j$ again are  polynomials of $n$.

\begin{lemma}\label{l142}
The polynomials $\gamma_j(x)$, $\tilde\gamma_j(x)$, $j=0,1,2,\ldots$, are related to each other:
$$
\tilde \gamma_j(x)=\gamma_j(-x).
$$
Moreover the function~\eqref{eq141} for any real parameter $x$ has the  asymptotic expansion
$$
\sum_{j=0}^{+\infty}\gamma_j(x)u^j,
$$
when $u\to +0$.
\end{lemma}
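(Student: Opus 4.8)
The plan is to prove the polynomial identity $\tilde\gamma_j(x) = \gamma_j(-x)$ first, as a statement about polynomials, and then deduce the asymptotic expansion claim from it together with the two special-case computations already carried out in \eqref{eq142} and \eqref{eq143}. For the identity: both $\gamma_j$ and $\tilde\gamma_j$ are a priori defined only at positive integer arguments via the elementary symmetric-type sums
$$
\gamma_j(n)=\sum_{1\leq i_1\leq\ldots\leq i_j\leq n}i_1\cdots i_j,
\qquad
\tilde\gamma_j(n)=\sum_{1\leq i_1<\ldots<i_j\leq n-1}i_1\cdots i_j,
$$
but the text has already noted that each agrees with a (unique) polynomial in $n$ there, so it suffices to check $\tilde\gamma_j(n)=\gamma_j(-n)$ for all positive integers $n$ (infinitely many points force equality of the polynomials). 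The cleanest route is generating functions: from \eqref{eq142} the polynomials $\gamma_j(n)$ are the Taylor coefficients in $u$ of
$$
G_n(u):=\prod_{i=1}^{n}\frac{1}{1-iu},
$$
while from \eqref{eq143} the $\tilde\gamma_j(n)$ are the coefficients of
$$
\widetilde G_n(u):=\prod_{i=1}^{n-1}(1+iu).
$$
Now substitute $u\mapsto -u$ and replace the formal parameter $n$ by $-n$ in $G$: one has the formal identity of rational functions
$$
G_{-n}(-u)=\prod_{i=1}^{-n}\frac{1}{1+iu},
$$
which, interpreting the product over a negative range in the standard telescoping way (equivalently, using $\prod_{i=1}^{n-1}(1+iu)=1/\prod_{i=0}^{-(n-1)} \frac1{1+iu}$ applied to the functional equation of the Gamma function that produced \eqref{eq142}--\eqref{eq143} in the first place), collapses precisely to $\widetilde G_n(u)$. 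Matching coefficients of $u^j$ gives $\gamma_j(-n)=\tilde\gamma_j(n)$ for every positive integer $n$, hence the polynomial identity.

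For the asymptotic-expansion statement, fix a real parameter $x$. The function \eqref{eq141}, namely $f(u)=\Gamma(u^{-1}-x)\big/\big(u^{x}\Gamma(u^{-1})\big)$, is analytic in $u$ for $u>0$ small (the argument $u^{-1}$ of $\Gamma$ tends to $+\infty$, away from the poles), so it has a genuine asymptotic expansion as $u\to+0$; I must identify its coefficients. The standard tool is the uniform asymptotic expansion of the ratio of two Gamma functions with a large argument: for $z\to+\infty$,
$$
\frac{\Gamma(z-x)}{\Gamma(z)}\sim z^{-x}\Big(1+\sum_{k\geq 1}\frac{c_k(x)}{z^{k}}\Big),
$$
where the $c_k(x)$ are universal polynomials in $x$ (expressible through generalized Bernoulli/Nørlund polynomials). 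Setting $z=u^{-1}$ turns the right-hand side into $u^{x}\big(1+\sum_{k\geq1}c_k(x)u^{k}\big)$, so $f(u)\sim 1+\sum_{k\geq1}c_k(x)u^{k}$, an asymptotic series whose coefficients are polynomials in $x$. Call these coefficient polynomials $b_j(x)$ (with $b_0\equiv1$). It remains to show $b_j=\gamma_j$ as polynomials. But when $x$ is a positive integer $n$, \eqref{eq142} shows $f(u)=\sum_j\gamma_j(n)u^j$ is literally a convergent power series equal to $f(u)$, hence its coefficients are also \emph{the} asymptotic coefficients, so $b_j(n)=\gamma_j(n)$ for all positive integers $n$; since $b_j$ and $\gamma_j$ are polynomials agreeing at infinitely many points, $b_j=\gamma_j$. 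This completes the identification of the asymptotic expansion of \eqref{eq141} as $\sum_{j\geq0}\gamma_j(x)u^j$ for arbitrary real $x$.

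The main obstacle I anticipate is purely a matter of care rather than depth: making the ``product over a negative/empty range'' manipulation in the first part rigorous, i.e. ensuring that the formal substitution $n\mapsto -n$ into the rational function $G_n(u)$ is legitimate and really does reproduce $\widetilde G_n(u)$ rather than something off by a sign or an index shift. The safe way is to avoid substituting into $G_n$ directly and instead argue entirely at the level of the Gamma-function functional equation $\Gamma(z+1)=z\Gamma(z)$ that was used to derive \eqref{eq142} and \eqref{eq143}: both \eqref{eq142} and \eqref{eq143} are instances of the single meromorphic function \eqref{eq141} evaluated at $x=n$ and $x=-n$ respectively, and since \eqref{eq141} is one fixed function of $(x,u)$, the two finite products must be the ``same'' expansion read off at opposite integer values — which is exactly $\tilde\gamma_j(n)=\gamma_j(-n)$ once one knows (from the second part, or from a direct elementary check) that the coefficients are polynomial in the parameter. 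Everything else — the existence of the $c_k(x)$ expansion for $\Gamma(z-x)/\Gamma(z)$, and the ``polynomial agreeing at infinitely many integers'' principle — is standard.
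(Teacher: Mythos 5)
Your proposal is correct and follows essentially the same route as the paper: everything reduces to showing that \eqref{eq141} admits an asymptotic expansion as $u\to+0$ whose coefficients are \emph{polynomials} in $x$, after which both claims follow by evaluating those polynomials at positive and negative integers using \eqref{eq142}--\eqref{eq143}. The only difference is that where you cite the standard expansion of $\Gamma(z-x)/\Gamma(z)$ with polynomial coefficients, the paper derives it directly from the generalized Stirling formula; and your first, generating-function argument for $\tilde\gamma_j(x)=\gamma_j(-x)$ via a ``product over a negative range'' is indeed the shaky step, but the fallback you yourself identify (deduce it from the polynomiality of the asymptotic coefficients) is exactly what the paper does.
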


\begin{proof}
It is sufficient to show that the function~\eqref{eq141} has an asymptotic expansion of the form $\sum_{j=0}^{+\infty}\lambda_j(x)u^j$ when $u\to +0$, where $\lambda_j(x)$ are polynomials of $x$. Since a polynomial is uniquely determined by its values on the set of positive (resp. negative) integers, the result will follow.

By the generalized Stirling formula~\cite[p.~24]{Artin}
$$
\Gamma(z)=\sqrt{2\pi}z^{z-\frac 12}e^{-z+\nu(z)},
\eqno(\numb)\label{eq144}
$$
where $\nu(z)$ has the asymptotic expansion at $z\to+\infty$:
$$
\sum_{j=1}^{+\infty}h_j\frac 1{z^{2j-1}}
\eqno(\numb)\label{eq145},
$$
where $h_j=2(-1)^{j-1}(2j-2)!\sum_{i=1}^{+\infty}\frac 1{(2\pi i)^{2j-2}}=(-1)^{j-1}\frac{B_j}{2j(2j-1)}$, where $B_j$ are the Bernoulli numbers:
 $$
 B_1=1/6,\quad B_2=1/30,\quad B_3=1/42,\quad B_4=1/30,\quad B_5=5/66,\quad \text{etc.}
 $$
 The formal series~\eqref{eq145} is divergent for any complex $z$, since the coefficients $h_j$ has a faster then exponential growth.

Applying~\eqref{eq144}, we get
\begin{multline}
\frac{\Gamma(\frac 1u-x)}{u^x\Gamma(\frac 1u)}=e^x\cdot (1-ux)^\frac 1u\cdot (1-xu)^{-x-\frac 12}\cdot e^{\nu(\frac 1u-x)-\nu(\frac 1u)}=\\
e^{\left(\frac{x^2u}2-\frac{x^3u^2}3+\frac{x^4u^3}4-\ldots\right)}\cdot (1-xu)^{-x-\frac 12}\cdot e^{\nu(\frac 1u-x)-\nu(\frac 1u)}.
\label{eq145'}
\end{multline}
Both the first and the second factors have the asymptotic expansion of the form $\sum_jf_j(x)u^j$ with $f_j(x)$ being some polynomials of $x$. In such situation we will say that a function (of two variables $x$, $u$) has a  {\it polynomial asymptotic expansion}. The asymptotic expansion~\eqref{eq145} of $\nu(z)$ implies the following asymptotic expansion of $\nu(\frac 1u-x)-\nu(\frac 1u)$:
$$
\sum_{j=1}^{+\infty}h_j\frac 1{(\frac 1u-x)^{2j-1}}-\sum_{j=1}^{+\infty}h_j\frac 1{\left(\frac 1u\right)^{2j-1}}=
\sum_{j=1}^{+\infty}h_j\frac{u^{2j-1}}{(1-xu)^{2j-1}}-\sum_{j=1}^{+\infty}h_ju^{2j-1},
$$
which can be rewritten as $\sum_{j=1}^{+\infty}\rho_j(x)u^j$ for some polynomials $\rho_j(x)$. As a consequence $e^{\nu(\frac 1u-x)-\nu(\frac 1u)}$ has also a polynomial asymptotic expansion when $u\to +0$.
\end{proof}

\begin{notation}\label{n143}
We will denote by
$$
\Gamma(x,u)=\sum_{j=0}^{+\infty}\gamma_j(x)u^j
\eqno(\numb)\label{eq146}
$$
the asymptotic expansion  of $\frac{\Gamma(\frac 1u-x)}{u^x\Gamma(\frac 1u)}$ when $u\to +0$.
\end{notation}

Our next goal is to show that the series $\Gamma(x,u)$ does not have nice convergency properties when $x$ is not an integer.

\begin{proposition}\label{P:divergence}
For any $x\in\BC\setminus\BZ$ the series $\Gamma(x,u)$ has zero radius of convergence in $u$.
\end{proposition}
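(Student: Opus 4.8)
The plan is to localize the divergence in the remainder of Stirling's series and then exhibit a factorially growing subsequence among the coefficients $\gamma_j(x)$ of $\Gamma(x,u)$ (see~\eqref{eq146}). Recall from the proof of Lemma~\ref{l142}, formula~\eqref{eq145'}, that
$$
\frac{\Gamma(\frac1u-x)}{u^x\Gamma(\frac1u)}=A(u)\,B(u)\,e^{\,\nu(\frac1u-x)-\nu(\frac1u)},
$$
where $A(u)=e^x(1-xu)^{1/u}$ and $B(u)=(1-xu)^{-x-\frac12}$ are analytic at $u=0$ with $A(0)=B(0)=1$, their Taylor series converging for $|u|<1/|x|$ (note $x\neq0$ since $x\notin\BZ$). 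Write $\tilde\mu(u)=\sum_{j\ge1}\rho_j(x)u^j$ for the asymptotic expansion of $\nu(\frac1u-x)-\nu(\frac1u)$; then $\Gamma(x,u)$ equals, as a formal power series, $\big(\text{Taylor series of }A(u)B(u)\big)\cdot e^{\tilde\mu(u)}$. Since $AB$ is a convergent power series not vanishing at the origin, $1/(AB)$ is again convergent there, so $\Gamma(x,u)$ has positive radius of convergence iff $e^{\tilde\mu(u)}$ does, and the latter holds iff $\tilde\mu(u)$ does (were $e^{\tilde\mu}$ to sum to an analytic $h$ near $0$ with $h(0)=1$, then $\log h$ would be analytic and equal $\tilde\mu$ termwise). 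Hence it suffices to prove that $\tilde\mu(u)$ has zero radius of convergence for every $x\notin\BZ$.

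To that end I would make the coefficients $\rho_j(x)$ explicit. Substituting the expansion~\eqref{eq145} of $\nu$ (as in the proof of Lemma~\ref{l142}) gives the formal identity $\tilde\mu(u)=\sum_{k\ge1}h_k\,u^{2k-1}\big((1-xu)^{-(2k-1)}-1\big)$, each summand starting in degree $2k$, so that for an even index $j=2K$
$$
\rho_{2K}(x)=\sum_{m=0}^{K-1}h_{K-m}\binom{2K-1}{2m+1}x^{2m+1}.
$$
Inserting the closed form $h_k=(-1)^{k+1}\dfrac{2(2k-2)!\,\zeta(2k)}{(2\pi)^{2k}}$ (so $|h_k|\sim 2(2k-2)!/(2\pi)^{2k}$, consistent with the growth of the $B_j$ quoted after~\eqref{eq145}) and simplifying the binomial factor, this becomes
$$
\rho_{2K}(x)=(-1)^{K+1}\frac{2(2K-1)!}{(2\pi)^{2K}}\sum_{m=0}^{K-1}\frac{(-1)^m(2\pi)^{2m}\,\zeta(2K-2m)\,x^{2m+1}}{(2m+1)!}.
$$

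Finally I would let $K\to\infty$. Every argument $2K-2m$ occurring here is $\ge2$, so each term of the inner sum is bounded by $\zeta(2)(2\pi)^{2m}|x|^{2m+1}/(2m+1)!$, and $\sum_{m\ge0}\zeta(2)(2\pi)^{2m}|x|^{2m+1}/(2m+1)!=\zeta(2)\sinh(2\pi|x|)/(2\pi)<\infty$; since $\zeta(2K-2m)\to1$ for each fixed $m$, dominated convergence gives that the inner sum tends, as $K\to\infty$, to
$$
x\sum_{m\ge0}\frac{(-1)^m(2\pi x)^{2m}}{(2m+1)!}=\frac{\sin(2\pi x)}{2\pi}.
$$
Hence $\rho_{2K}(x)\sim(-1)^{K+1}\dfrac{(2K-1)!\,\sin(2\pi x)}{(2\pi)^{2K-1}}$ as $K\to\infty$. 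When $x\in\BC\setminus\BZ$ we have $\sin(2\pi x)\neq0$, so $|\rho_{2K}(x)|\ge c\,(2K-1)!/(2\pi)^{2K-1}$ for some $c>0$ and all large $K$; by Stirling $|\rho_{2K}(x)|^{1/(2K)}\to+\infty$, whence $\limsup_j|\rho_j(x)|^{1/j}=+\infty$ and $\tilde\mu(u)$, hence $\Gamma(x,u)$, has radius of convergence $0$. (This also makes transparent why the statement must fail for $x\in\BZ$: then $\sin(2\pi x)=0$, this leading subsequence drops in order, and by~\eqref{eq142} the series in fact converges with radius $1/|x|$.)

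I expect the only genuinely nontrivial points to be the interchange of limit and summation in the last step — handled by the uniform bound above, which works precisely because no exceptional low-$k$ terms have to be separated out — and the bookkeeping passing from~\eqref{eq145} to the explicit formula for $\rho_{2K}(x)$. All the conceptual content sits in the factorization of the first paragraph, which pins the divergence of $\Gamma(x,u)$ on the exponentiated remainder $e^{\tilde\mu}$ of Stirling's series, and in the elementary identity $\sum_{m\ge0}(-1)^m(2\pi x)^{2m}/(2m+1)!=\sin(2\pi x)/(2\pi x)$, whose zero set away from $0$ is exactly $\BZ$.
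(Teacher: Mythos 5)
There is a genuine gap, and it sits exactly where you put the weight of the argument. Your leading asymptotics for the even-index coefficients come out proportional to $\sin(2\pi x)$, but the zero set of $\sin(2\pi x)$ in $\BC$ is $\tfrac12\BZ$, not $\BZ$: your claim \lq\lq when $x\in\BC\setminus\BZ$ we have $\sin(2\pi x)\neq0$'' is false for every half-integer, e.g.\ $x=\tfrac12$. For such $x$ the limit of your inner sum vanishes, so the estimate $|\rho_{2K}(x)|\ge c\,(2K-1)!/(2\pi)^{2K}$ is not established and the proof gives nothing --- and the failure is not cosmetic: writing $(2\pi)^{2m}/i^{2K-2m}=(2\pi i)^{2m}/i^{2K}$ in the full sum over $i$ shows that at $x=\tfrac12$ the limiting contribution of \emph{every} $i$ is $\sin(\pi i)/(2\pi i)=0$, so the even subsequence really does drop below the claimed order there. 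Yet the proposition is true at $x=\tfrac12$: in the paper's generating function $\bigl(e^{(x+1)t}-1\bigr)/(e^t-1)$ the numerator at $t=2\pi i$ equals $e^{2\pi ix}-1=2ie^{i\pi x}\sin(\pi x)$, which is nonzero precisely for $x\notin\BZ$ (note $\sin(\pi x)$, not $\sin(2\pi x)$). The same slip appears in your closing sentence: the zero set of $\sin(2\pi x)/(2\pi x)$ away from the origin is $\tfrac12\BZ\setminus\{0\}$, which strictly contains $\BZ\setminus\{0\}$.

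Everything else in your argument is sound (the reduction to the Stirling remainder $\tilde\mu$, the formula $\rho_{2K}(x)=\sum_{m=0}^{K-1}h_{K-m}\binom{2K-1}{2m+1}x^{2m+1}$, the dominated-convergence step), and the gap is repairable inside your own framework: run the identical computation on the \emph{odd}-index coefficients. One gets $\rho_{2K+1}(x)=\sum_{m=0}^{K-1}h_{K-m}\binom{2K}{2m+2}x^{2m+2}$ and, by the same limit interchange, $\rho_{2K+1}(x)\sim(-1)^{K+1}\,2(2K)!\bigl(1-\cos(2\pi x)\bigr)/(2\pi)^{2K+2}$. Since $1-\cos(2\pi x)=2\sin^2(\pi x)$ vanishes exactly on $\BZ$, the odd subsequence supplies the required factorial growth for \emph{all} $x\in\BC\setminus\BZ$, covering the half-integers the even subsequence misses. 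With that substitution the proof is complete and genuinely different from the paper's, which instead writes $\ln\Gamma(x,u)=\sum_kS_k(x)u^k/k$, interpolates the polynomials $S_k$ from their values $\sum_{i=0}^n i^k$ at positive integers via~\eqref{eq142}, and reads the factorial growth of $S_k(x)$ off the poles of $\sum_kS_k(x)t^k/k!=(e^{(x+1)t}-1)/(e^t-1)$ at $t=\pm2\pi i$; your route makes the divergence visibly live in the exponentiated Stirling tail at the cost of more bookkeeping. Two minor points: the printed formula for $h_j$ after~\eqref{eq145} has a typo ($(2\pi i)^{2j-2}$ should be $(2\pi i)^{2j}$; the version you used is the correct one), and carrying the constants through gives $\rho_{2K}(x)\sim(-1)^{K+1}(2K-1)!\sin(2\pi x)\big/\bigl(\pi(2\pi)^{2K}\bigr)$, so your denominator $(2\pi)^{2K-1}$ is off by a harmless factor of $2\pi^2$.
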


This result is classical: it is mentioned in~\cite{Frame}. The argument below is a modification of an argument given in~\cite{Frame}, which we give  for a completeness of exposition. Notice that the main reason for it is that the function $\frac{\Gamma(\frac 1u-x)}{u^x\Gamma(\frac 1u)}$ is not holomorphic in a neighborhood of 0 if $x$ is not an integer. Indeed, this function has poles at $u=\frac 1{x-n},$ $n\in\BN$, concentrating at $u=0$, and moreover its Riemann surface has a ramification at $u=0$ due to the factor $u^x$ in the denominator.

\begin{proof}
Consider the series
$$
\ln\Gamma(x,u)=\sum_{k=1}^{+\infty} \frac{S_k(x)}ku^k.
$$
It has zero radius of convergence if and only if $\Gamma(x,u)$ has zero radius of convergence. By Lemma~\ref{l142}, $S_k(x)$ are polynomials in $x$. We have to show that
$$
\varlimsup_{k\to +\infty}\sqrt[k]{S_k(x)}=+\infty
\eqno(\numb)\label{eq:radius}
$$
for any $x\in\BC/\BZ$. It follows from~\eqref{eq142} that for a positive integer $x=n$, one has $S_k(n)=\sum_{i=0}^ni^k$ for $k\geq 1$, which in its turn implies:
$$
S(x,t)=\sum_{k=0}^{+\infty}\frac{S_k(x)t^k}{k!}=\frac{e^{(x+1)t}-1}{e^t-1}.
$$
When $x$ is not integer the above function has poles at $t=\pm 2\pi i$, which means that the radius of convergence of $S(x,t)$ in $t$ is $2\pi$. Therefore
$$
\varlimsup_{k\to +\infty}\sqrt[k]{\frac{S_k(x)}{k!}}=\frac 1{2\pi}.
$$
And equation~\eqref{eq:radius} is proved.
\end{proof}

\subsection{Understanding all factors of \eqref{eq131}-\eqref{eq132}}\label{ss142}
It turns out that all the factors of~\eqref{eq131}-\eqref{eq132} can be easily expressed via the first one considered in the previous subsection.
Consider an arbitrary factor of~\eqref{eq131}. We are interested in its asymptotic expansion when $u\to +0$. We can rewrite it as follows:
$$
\frac{\Gamma(E_\ell(\frac 1u)-E_\ell(x))}{(\ell u^\ell)^{E_\ell(x)}\Gamma(E_\ell(\frac 1u))}=
\frac{\Gamma(E_\ell(\frac 1u)-E_\ell(x))}{(E_\ell(\frac 1u))^{-E_\ell(x)}\Gamma(E_\ell(\frac 1u))} \cdot
\frac 1 {\left(\ell u^\ell E_\ell\left(\frac 1u\right)\right)^{E_\ell(x)}}
\eqno(\numb)\label{eq1414}
$$
The first factor has the asymptotic expansion $\Gamma\left(E_\ell(x),\frac 1 {E_\ell(1/u)}\right)$ (see Notation~\ref{n143}). The second factor is holomorphic in a neighborhood of $u=0$ (for all complex $x$).

Let $F_\ell(y)$ denote the polynomial
$$
F_\ell(u)=\ell u^\ell E_\ell\left(1/u\right)= \sum_{d|\ell}\mu(d)u^{\ell-\ell/d}=1-u^{\ell-\ell/p_1}-u^{\ell-\ell/p_2}+u^{\ell-\ell/{p_1p_2}}+\ldots,
$$
where $p_1$, $p_2$ are first two prime factors of $\ell$.

We can rewrite~\eqref{eq131} and~\eqref{eq132} as follows:

\begin{lemma}\label{l147}
$$
\text{(i) }\quad
F_{odd}(x,u)=\frac{\prod_{\ell\geq 1}\Gamma\left(E_\ell(x),\frac 1{E_\ell\left(\frac 1u\right)}\right)}{\prod_{\ell\geq 1}\left[F_\ell(u)\right]^
{E_\ell(x)}}=\frac{\prod_{\ell\geq 1}\Gamma\left(E_\ell(x),\frac{\ell u^\ell}{F_\ell\left(u\right)}\right)}{\prod_{\ell\geq 1}\left[F_\ell(u)\right]^
{E_\ell(x)}},
$$

$$
\text{(ii) }\quad
F_{even}(x,u)=\frac{\prod_{\ell\geq 1}\Gamma\left(E_\ell(x),-\frac 1{E_\ell\left(\frac 1u\right)}\right)}{\prod_{\ell\geq 1}\left[F_\ell(u)\right]^
{E_\ell(x)}}=\frac{\prod_{\ell\geq 1}\Gamma\left(E_\ell(x),-\frac{\ell u^\ell}{F_\ell\left(u\right)}\right)}{\prod_{\ell\geq 1}\left[F_\ell(u)\right]^
{E_\ell(x)}}.
$$
\end{lemma}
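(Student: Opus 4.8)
The plan is to show that the two displayed formulae in the lemma are nothing but a regrouping of the factors in Theorem~\ref{t131}, using the factorization~\eqref{eq1414} of a single factor together with Notation~\ref{n143}. The key point is that each factor of the product~\eqref{eq131} has already been decomposed in~\eqref{eq1414} into (i) a ``$\Gamma$-type'' piece whose asymptotic expansion is $\Gamma\!\left(E_\ell(x),\frac 1{E_\ell(1/u)}\right)$ and (ii) a piece $\bigl(\ell u^\ell E_\ell(1/u)\bigr)^{-E_\ell(x)}=F_\ell(u)^{-E_\ell(x)}$ which is holomorphic near $u=0$. So the first equality in part (i) is obtained simply by taking the product over $\ell\geq 1$ of the factorization~\eqref{eq1414}, pulling the holomorphic pieces into the denominator, and recording that $F_\ell(u)=\ell u^\ell E_\ell(1/u)$ by definition.

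For the second equality in part (i) I would observe that, inside the asymptotic expansion $\Gamma(E_\ell(x),\,\cdot\,)$, the argument $\frac 1{E_\ell(1/u)}$ can be rewritten as $\frac{\ell u^\ell}{\ell u^\ell E_\ell(1/u)}=\frac{\ell u^\ell}{F_\ell(u)}$. Since $F_\ell(u)=1+O(u)$ is invertible as a power series in $u$, the expression $\frac{\ell u^\ell}{F_\ell(u)}$ is a well-defined power series in $u$ with no constant term, and substituting it into the formal series $\Gamma(x,u)=\sum_j\gamma_j(x)u^j$ of~\eqref{eq146} produces exactly the same asymptotic expansion in $u$ as substituting $\frac 1{E_\ell(1/u)}$; this is just the statement that composing the asymptotic expansion of a function with an analytic change of the small variable yields the asymptotic expansion of the composite. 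Hence both forms of the numerator agree as asymptotic expansions when $u\to+0$.

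Part (ii) is proved identically, starting from~\eqref{eq132} instead of~\eqref{eq131}: the analogue of~\eqref{eq1414} gives
$$
\frac{\Gamma(-E_\ell(\frac 1u)-E_\ell(x))}{(-\ell u^\ell)^{E_\ell(x)}\Gamma(-E_\ell(\frac 1u))}
=\frac{\Gamma(-E_\ell(\frac 1u)-E_\ell(x))}{(-E_\ell(\frac 1u))^{-E_\ell(x)}\Gamma(-E_\ell(\frac 1u))}\cdot\frac 1{\bigl(\ell u^\ell E_\ell(1/u)\bigr)^{E_\ell(x)}},
$$
where the first factor has asymptotic expansion $\Gamma\!\left(E_\ell(x),-\frac 1{E_\ell(1/u)}\right)$ by Notation~\ref{n143} (note the sign is absorbed by replacing $\frac 1u$ by $-\frac 1u$, i.e. $u$ by a parameter with $u^\ell\to-0$ as in the hypothesis of Theorem~\ref{t131}(b)), and the second factor is again $F_\ell(u)^{-E_\ell(x)}$, holomorphic at $u=0$. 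Taking the product over $\ell$ and rewriting the argument $-\frac 1{E_\ell(1/u)}=-\frac{\ell u^\ell}{F_\ell(u)}$ gives both stated forms.

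The only mildly delicate point — and the one I would state carefully rather than gloss over — is the legitimacy of the substitution $\frac 1{E_\ell(1/u)}\rightsquigarrow\frac{\ell u^\ell}{F_\ell(u)}$ inside an \emph{asymptotic} (not necessarily convergent) expansion, and the compatibility of the infinite product of asymptotic expansions with the infinite product of the underlying functions. For the substitution one uses that $\Gamma(x,u)$ is, coefficient by coefficient, a polynomial in $x$ (Lemma~\ref{l142}) and that $\frac{\ell u^\ell}{F_\ell(u)}$ is a formal power series in $u$ with zero constant term, so the composite is a well-defined formal power series whose truncations approximate the function to the required order by the chain rule for asymptotic expansions. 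For the infinite product, one notes that the $\ell$-th factor contributes only terms of order $u^{\ell}$ and higher (since $E_\ell(x)$ has no constant term for $\ell\geq 2$, and for $\ell=1$ the holomorphic denominator $F_1(u)=1$ contributes nothing), so modulo any fixed power $u^N$ only finitely many factors are relevant and the rearrangement is finite. I expect this bookkeeping to be the main obstacle, but it is routine once the order-of-vanishing estimate on each factor is written down.
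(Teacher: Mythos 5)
Your proposal is correct and follows exactly the paper's route: the proof given in the text is simply ``(i) follows from~\eqref{eq1414}; (ii) is analogous, using that $u^\ell\to-0$ implies $-\frac 1{E_\ell(1/u)}\to+0$,'' i.e.\ the regrouping of each factor via~\eqref{eq1414} and Notation~\ref{n143} that you describe. The extra care you take with the substitution $\frac 1{E_\ell(1/u)}\rightsquigarrow\frac{\ell u^\ell}{F_\ell(u)}$ inside the asymptotic expansion and with the formal convergence of the infinite product is sound and merely makes explicit what the paper leaves implicit.
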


\begin{proof}
(i) follows from~\eqref{eq1414}. (ii) is analogous --- we only mention that we use that $u^\ell\to -0$ implies $-\frac 1{E_\ell\left(\frac 1u\right)}\to +0$.
\end{proof}

\section{Proof of Theorem~\ref{t131}}\label{s15}

The proof of Theorem~\ref{t131} is based on the character computations of the symmetric group action on the homology of configuration spaces~\cite{Lehrer}, and on the components of the Hodge decomposition~\cite{Hanlon}.

\subsection{Character computations for symmetric sequences}\label{ss151}
The main field, which we denote by $\BK$, is as usual of characteristic zero. In this section we introduce some standard notation which will be used in the sequel.

For each permutation $\sigma\in S_n$ define $Z(\sigma)$, the {\it cycle indicator} of $\sigma$, by
$$
Z(\sigma)=\prod_\ell a_\ell^{j_\ell(\sigma)},
$$
where $j_\ell(\sigma)$  is the number of $\ell$-cycles of $\sigma$ and where $a_1$, $a_2$, $a_3$, $\ldots$ is an infinite family of commuting variables.

\begin{remark}\label{rk:cycle_ind}
Notice that $Z(\sigma')=Z(\sigma)$ for $\sigma'\in S_{n'}$, $\sigma\in S_n$ if and only if $n'=n$ and moreover $\sigma'$ is conjugate to $\sigma$. For $\sigma$ with $Z(\sigma)=\prod_\ell a_\ell^{j_\ell(\sigma)}$, there are exactly $\frac{n!}{\prod_\ell(\ell^{j_\ell}j_\ell !)}$ elements $\sigma'$ conjugate to $\sigma$.
\end{remark}

Let $\rho^V\colon S_n\to GL(V)$ be a representation of $S_n$. Define $Z_V(a_1,a_2,\ldots)$ the {\it cycle index} of $V$, by
$$
Z_V(a_1,a_2,\ldots)=\frac 1{n!}\sum_{\sigma\in S_n}\tr \rho^V(\sigma)\cdot Z(\sigma).
$$
Similarly for a symmetric sequence $W=\{W(n),n\geq 0\}$ --- sequence of $S_n$-modules $W(n)$, $n=0,1,2,\ldots$, we define its {\it cycle index sum} $Z_W$ by
$$
Z_W(a_1,a_2,\ldots)=\sum_{n=0}^{+\infty}Z_{W(n)}(a_1,a_2,\ldots).
$$

\begin{definition}\label{def:tensor}
The external tensor product of two symmetric sequences $V$, and $W$ is  a symmetric sequence $V\hat\otimes W$ given by
$$
V\hat\otimes W(n):=\bigoplus_{i=0}^n\mathrm{Ind}^{S_n}_{S_i\times S_{n-i}}V(i)\otimes W(n-i)=\bigoplus_{i=0}^n\bigl(V(i)\otimes W(n-i)\bigr)\otimes_{S_i\times S_{n-i}}\BK[S_n].
$$
\end{definition}

\begin{prop}\label{prop:tensor}
For any finite symmetric sequences (finite in each component) $V$ and $W$, one has:
$$
Z_{V\hat\otimes W}(a_1,a_2,\ldots)=Z_V(a_1,a_2,\ldots)\cdot Z_W(a_1,a_2,\ldots).
$$
\end{prop}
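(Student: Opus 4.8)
The plan is to reduce the identity to a concrete model of the induction product together with a bookkeeping of traces. First I would fix $n$ and, for a representation $M$ of $S_i\times S_{n-i}$, realize $\mathrm{Ind}^{S_n}_{S_i\times S_{n-i}}M$ as the direct sum $\bigoplus_{A}M_A$ indexed by the subsets $A\subseteq\{1,\dots,n\}$ of cardinality $i$, where each $M_A$ is a copy of $M$ and $\sigma\in S_n$ carries $M_A$ isomorphically onto $M_{\sigma A}$, acting on $M_A$ through the pair $(\sigma|_A,\sigma|_{A^c})\in S_A\times S_{A^c}\cong S_i\times S_{n-i}$ whenever $\sigma A=A$. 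Applying this to $M=V(i)\otimes W(n-i)$ and summing over $i$, only the summands $A$ with $\sigma A=A$ contribute to the trace, so
$$
\tr\bigl(\sigma\mid (V\hat\otimes W)(n)\bigr)=\sum_{\substack{A\subseteq\{1,\dots,n\}\\ \sigma A=A}}\tr\bigl(\sigma|_A\mid V(|A|)\bigr)\cdot\tr\bigl(\sigma|_{A^c}\mid W(n-|A|)\bigr).
$$

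Second, I would use the elementary remark that if $\sigma A=A$ then $A$ is a union of cycles of $\sigma$, so the cycle indicator factors as $Z(\sigma)=Z(\sigma|_A)\,Z(\sigma|_{A^c})$. Substituting this and the trace formula into $\frac1{n!}\sum_{\sigma\in S_n}\tr(\sigma\mid(V\hat\otimes W)(n))\,Z(\sigma)$, and using that the permutations $\sigma\in S_n$ with $\sigma A=A$ are in bijection with $S_A\times S_{A^c}$, one obtains
$$
\sum_{i=0}^n\frac1{n!}\sum_{|A|=i}\Bigl(\sum_{\tau\in S_A}\tr(\tau\mid V(i))\,Z(\tau)\Bigr)\Bigl(\sum_{\upsilon\in S_{A^c}}\tr(\upsilon\mid W(n-i))\,Z(\upsilon)\Bigr).
$$

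Third, since the inner double sum does not depend on which $A$ of cardinality $i$ is chosen, the sum over such $A$ contributes the factor $\binom{n}{i}$, and $\frac1{n!}\binom{n}{i}=\frac1{i!\,(n-i)!}$, so the whole expression collapses to $\sum_{i=0}^n Z_{V(i)}\,Z_{W(n-i)}$. Summing over $n\ge 0$ then yields the Cauchy product $Z_{V\hat\otimes W}=\bigl(\sum_i Z_{V(i)}\bigr)\bigl(\sum_j Z_{W(j)}\bigr)=Z_V\cdot Z_W$. The finiteness hypothesis on $V$ and $W$ is used only to guarantee that all the cycle index sums are well-defined formal power series in the variables $a_\ell$ and that the rearrangements above are legitimate. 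The single point requiring genuine care is the concrete description of the induced representation and the ensuing trace computation in the first step; everything afterwards is formal manipulation, so I do not expect a real obstacle.
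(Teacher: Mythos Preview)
Your argument is correct and follows essentially the same route as the paper: the paper invokes the Frobenius-type identity $\frac{1}{|N|}\sum_{\sigma\in N}\tr\rho^V(\sigma)Z(\sigma)=\frac{1}{|S_n|}\sum_{\sigma\in S_n}\tr\rho^{\mathrm{Ind}^{S_n}_N V}(\sigma)Z(\sigma)$ for $N=S_i\times S_{n-i}$ (citing Feit), whereas you prove exactly this special case by hand via the coset model of induction. The remaining factorization $Z(\sigma)=Z(\sigma|_A)Z(\sigma|_{A^c})$ and the Cauchy-product bookkeeping are the same in both treatments.
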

\begin{proof}
This result is standard, the idea is that if $N$ is a subgroup of $S_n$, and $V$ is a representation of $N$, then
$$
\frac 1{|N|}\sum_{\sigma\in N} \tr\rho^V(\sigma)Z(\sigma)=\frac 1{|S_n|}\sum_{\sigma\in S_n}\tr\rho^{\mathrm{Ind}^{S_n}_NV}(\sigma)Z(\sigma),
$$
see~\cite{Feit}. The right hand-side is exactly $Z_{\mathrm{Ind}^{S_n}_N V}(a_1,a_2,\ldots)$.
\end{proof}

Another important property is given by the following lemma:

\begin{lemma}\label{l:hom}
Let $V$ and $W$ be two $S_n$-modules, then
$$
\dim { Hom}(V,W)^{S_n}=\bigl(Z_V(a_\ell\leftarrow\partial/{\partial a_\ell},\,\ell\in\BN)\,\,Z_W(a_\ell\leftarrow \ell a_\ell,\,\ell\in\BN)\bigr)\Bigr|_{\substack{a_\ell=0,\\ \ell\in\BN}}.
$$
\end{lemma}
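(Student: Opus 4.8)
The plan is to reduce the statement to the classical formula $\dim \mathrm{Hom}(V,W)^{S_n} = \langle \chi_V, \chi_W\rangle$ (the inner product of characters), and then to recognize both the pairing and the two substitution operations in terms of the power-sum variables $a_\ell$. First I would recall that for $S_n$-modules $V$, $W$ one has $\dim\mathrm{Hom}(V,W)^{S_n} = \frac{1}{n!}\sum_{\sigma\in S_n}\overline{\chi_V(\sigma)}\,\chi_W(\sigma) = \frac{1}{n!}\sum_{\sigma\in S_n}\chi_V(\sigma)\chi_W(\sigma)$, since the characters are real (rational, even) for symmetric groups. The cycle index $Z_V(a_1,a_2,\ldots) = \frac{1}{n!}\sum_{\sigma} \tr\rho^V(\sigma)\,Z(\sigma)$ is, after the classical identification $a_\ell \leftrightarrow p_\ell$ (the $\ell$-th power sum), exactly the Frobenius characteristic $\mathrm{ch}(V)$ written in the power-sum basis. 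So the content of the lemma is that the bilinear operation ``substitute $a_\ell \mapsto \partial/\partial a_\ell$ in the first factor, $a_\ell \mapsto \ell a_\ell$ in the second factor, multiply, then set all $a_\ell=0$'' computes the Hall inner product on symmetric functions.

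The key computation is therefore a single monomial check. Write $Z_V = \sum_\lambda c_\lambda \prod_\ell a_\ell^{m_\ell(\lambda)}$ and $Z_W = \sum_\mu d_\mu \prod_\ell a_\ell^{m_\ell(\mu)}$, the sums being over partitions $\lambda,\mu$ of $n$ with $m_\ell$ denoting multiplicity of the part $\ell$. Applying $a_\ell \mapsto \partial/\partial a_\ell$ to the $\lambda$-term turns $\prod_\ell a_\ell^{m_\ell(\lambda)}$ into the differential operator $\prod_\ell (\partial/\partial a_\ell)^{m_\ell(\lambda)}$; applying $a_\ell \mapsto \ell a_\ell$ to the $\mu$-term gives $\prod_\ell \ell^{m_\ell(\mu)} a_\ell^{m_\ell(\mu)}$. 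Now apply the operator to the monomial and then evaluate at $a_\ell=0$: the result is nonzero precisely when $m_\ell(\lambda)=m_\ell(\mu)$ for all $\ell$, i.e. $\lambda=\mu$, in which case one picks up $\prod_\ell m_\ell(\lambda)!\cdot \ell^{m_\ell(\lambda)} = z_\lambda$, the order of the centralizer of a permutation of cycle type $\lambda$. Hence the whole expression equals $\sum_\lambda c_\lambda d_\lambda z_\lambda$. On the other hand, grouping the sum $\frac{1}{n!}\sum_\sigma \chi_V(\sigma)\chi_W(\sigma)$ by conjugacy classes and using Remark~\ref{rk:cycle_ind} (a class of cycle type $\lambda$ has $n!/z_\lambda$ elements, and $c_\lambda = \chi_V(\sigma)/z_\lambda$ for $\sigma$ of type $\lambda$, similarly for $d_\lambda$) gives $\frac{1}{n!}\sum_\lambda \frac{n!}{z_\lambda}\,(z_\lambda c_\lambda)(z_\lambda d_\lambda) = \sum_\lambda z_\lambda c_\lambda d_\lambda$. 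The two expressions agree, which proves the lemma.

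I expect the main (and only real) subtlety to be bookkeeping of the combinatorial constant: making sure the factor produced by ``differentiate a monomial in the $a_\ell$ and evaluate at zero'' is exactly $\prod_\ell m_\ell!$, and that this combines with the $\prod_\ell \ell^{m_\ell}$ coming from the $a_\ell \mapsto \ell a_\ell$ substitution to reproduce $z_\lambda = \prod_\ell \ell^{m_\ell} m_\ell!$ — the same constant appearing in Remark~\ref{rk:cycle_ind} as $\prod_\ell(\ell^{j_\ell} j_\ell!)$. Once this matching is verified the two sides are manifestly equal term by term in the partition $\lambda$, so no estimates or convergence issues arise (everything is a finite sum since $V,W$ are finite-dimensional). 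A remark I would add is that this is the standard ``polarized'' form of the Hall inner product $\langle p_\lambda, p_\mu\rangle = z_\lambda\delta_{\lambda\mu}$, realized on the generating-function level by the adjointness of multiplication by $a_\ell$ and $\ell\,\partial/\partial a_\ell$.
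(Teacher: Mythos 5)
Your proof is correct and follows essentially the same route as the paper: reduce to $\dim\mathrm{Hom}(V,W)^{S_n}=\frac 1{n!}\sum_\sigma\chi_V(\sigma)\chi_W(\sigma)$ (using that $S_n$-elements are conjugate to their inverses, equivalently that the characters are real), then verify term by term over conjugacy classes via Remark~\ref{rk:cycle_ind}. The paper leaves the final monomial bookkeeping as a ``direct computation''; you have carried it out explicitly and the constants ($\prod_\ell \ell^{m_\ell}m_\ell!=z_\lambda$) match.
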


In the above formula $Z_V(a_\ell\leftarrow\partial/{\partial a_\ell},\,\ell\in\BN)$ is a differential operator, which is applied to
$Z_W(a_\ell\leftarrow \ell a_\ell,\,\ell\in\BN)$. And at the end we take all the variables $a_\ell,$ $\ell\in\BN$, to be zero.\footnote{It is easy to see that by a linear change of variables the right-hand side of the above formula is equal to
$$
\bigl(Z_V(a_\ell\leftarrow\ell\partial/{\partial a_\ell},\,\ell\in\BN)\,\,Z_W(a_1,a_2,\ldots)\bigr)\Bigl|_{\substack{a_\ell=0,\\ \ell\in\BN}},
$$
or more symmetrically to
$$
\left(Z_V(a_\ell\leftarrow\sqrt{\ell}\partial/{\partial a_\ell},\,\ell\in\BN)\,\,Z_W(a_\ell\leftarrow \sqrt{\ell} a_\ell,\,\ell\in\BN)\right)\Bigr|_{\substack{a_\ell=0,\\ \ell\in\BN}}.
$$
}

\begin{proof}
This follows from the formula
$$
\dim\,{Hom}(V,W)^G=\frac 1{|G|}\sum_{g\in G} \tr\rho^V(\sigma)\cdot \tr\rho^W(\sigma^{-1}),
$$
that holds for any finite group $G$ and any its finite-dimensional representations $V$, $W$.

Since in the symmetric group any element is conjugate to its inverse,  one has
$$
\dim\,{Hom}(V,W)^{S_n}=\frac 1{n!} \sum_{g\in S_n} tr\rho^V(\sigma)\cdot tr\rho^W(\sigma).
$$
The rest follows by direct computation from Remark~\ref{rk:cycle_ind}.
\end{proof}

In case $V=\oplus_iV_i$, $W=\oplus_iW_i$ are graded $S_n$-modules, and $\dim{ Hom}(V,W)^{S_n}$ is the graded dimension:
$$
\dim{Hom}(V,W)^{S_n}=\sum_{i,j\in\BZ}\dim {Hom}(V_i,W_j)^{S_n}z^{j-i},
$$
and $Z_V$, $Z_W$ are {\it graded} cycle indices:
\begin{align*}
Z_V(z;a_1,a_2,\ldots)=&\sum_{i\in \BZ}Z_{V_i}(a_1,a_2,\ldots)z^i,\\
Z_W(z;a_1,a_2,\ldots)=&\sum_{i\in \BZ}Z_{W_i}(a_1,a_2,\ldots)z^i.
\end{align*}
Then
$$
\dim\,{ Hom}(V,W)^{S_n}=Z_V(1/z;a_\ell\leftarrow\partial/{\partial a_\ell},\,\ell\in\BN)\,\,Z_W(z;a_\ell\leftarrow \ell a_\ell,\,\ell\in\BN)\Bigl|_{\substack{a_\ell=0,\\ \ell\in\BN}}.
$$

\begin{cor}\label{c:hom} Let  $V=\{V(n),n\geq 0\}$, $W=\{W(n),n\geq 0\}$ be a pair of symmetric sequences of graded $S_n$-modules. Then
\begin{multline}
\dim Hom(V,W)= \dim \left(\oplus_nHom(V(n),W(n))^{S_n}\right) =\\
=Z_V(1/z;a_\ell\leftarrow\partial/{\partial a_\ell},\,\ell\in\BN)\,\,Z_W(z;a_\ell\leftarrow \ell a_\ell,\,\ell\in\BN)\Bigl|_{\substack{a_\ell=0,\\ \ell\in\BN}}.
\end{multline}
\end{cor}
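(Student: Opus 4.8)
The plan is to reduce Corollary~\ref{c:hom} to Lemma~\ref{l:hom} by assembling the component-wise statements over all $n$ and keeping track of the grading.

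First I would unwind the definitions. The symmetric sequences $V=\{V(n)\}$ and $W=\{W(n)\}$ consist of graded $S_n$-modules, so the natural object to compute is $\dim\bigl(\oplus_n Hom(V(n),W(n))^{S_n}\bigr)$, where ``$\dim$'' here means the graded dimension with the variable $z^{j-i}$ recording the difference of gradings, exactly as in the graded version of Lemma~\ref{l:hom} stated just above the corollary. By definition of the cycle index sum, $Z_V(z;a_1,a_2,\ldots)=\sum_n Z_{V(n)}(z;a_1,a_2,\ldots)$ and similarly for $W$, so the generating-function identity I want is simply the sum over $n$ of the identities already proved for each fixed pair $(V(n),W(n))$.

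The key step is then to observe that the substitution-and-evaluation operation appearing on the right-hand side is ``diagonal'' in $n$: applying the differential operator $Z_{V(n)}(1/z;a_\ell\leftarrow\partial/\partial a_\ell)$ to $Z_{W(m)}(z;a_\ell\leftarrow\ell a_\ell)$ and then setting all $a_\ell=0$ produces zero unless $m=n$, because a monomial $\prod_\ell a_\ell^{j_\ell}$ coming from a partition of $n$ is annihilated by $\prod_\ell(\partial/\partial a_\ell)^{k_\ell}$ at $a_\ell=0$ unless $\sum_\ell \ell k_\ell=\sum_\ell \ell j_\ell$, i.e.\ unless $n=m$ (cf.\ Remark~\ref{rk:cycle_ind}, where $Z(\sigma)$ determines $n=\sum_\ell \ell j_\ell(\sigma)$). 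Hence
$$
Z_V\bigl(1/z;a_\ell\leftarrow\partial/\partial a_\ell\bigr)\,Z_W\bigl(z;a_\ell\leftarrow\ell a_\ell\bigr)\Bigl|_{a_\ell=0}
=\sum_{n\geq 0} Z_{V(n)}\bigl(1/z;a_\ell\leftarrow\partial/\partial a_\ell\bigr)\,Z_{W(n)}\bigl(z;a_\ell\leftarrow\ell a_\ell\bigr)\Bigl|_{a_\ell=0},
$$
and each summand equals $\dim Hom(V(n),W(n))^{S_n}$ by the graded form of Lemma~\ref{l:hom}. Summing over $n$ gives exactly the claimed formula, and the first equality in the corollary, $\dim Hom(V,W)=\dim\bigl(\oplus_n Hom(V(n),W(n))^{S_n}\bigr)$, is just the definition of morphisms of symmetric sequences.

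The main obstacle — really the only point requiring care — is the orthogonality/diagonality argument: one must check that the cross terms with $m\neq n$ genuinely vanish and, in the graded setting, that the bookkeeping of $z$ (with $1/z$ on the $V$-side and $z$ on the $W$-side, so that a class in degree $i$ of $V$ paired against degree $j$ of $W$ contributes $z^{j-i}$) matches the graded-$Hom$ convention used in Lemma~\ref{l:hom}. Both are routine once the monomial-by-monomial pairing $\langle \prod_\ell a_\ell^{j_\ell},\prod_\ell a_\ell^{k_\ell}\rangle:=\prod_\ell \ell^{k_\ell}k_\ell!\,\delta_{j_\ell k_\ell}$ induced by $a_\ell\leftarrow\partial/\partial a_\ell$ and $a_\ell\leftarrow\ell a_\ell$ is made explicit, and convergence is not an issue because $V,W$ are finite in each component so the sums and substitutions are term-by-term well defined.
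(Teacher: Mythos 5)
Your proposal is correct and is essentially the argument the paper intends (the paper states the corollary without proof, as an immediate consequence of the graded form of Lemma~\ref{l:hom}): apply the lemma componentwise and note that the pairing of $Z_{V(n)}$ against $Z_{W(m)}$ vanishes for $n\neq m$ because $\prod_\ell(\partial/\partial a_\ell)^{j_\ell}$ applied to $\prod_\ell a_\ell^{k_\ell}$ and evaluated at $a_\ell=0$ is nonzero only when $j_\ell=k_\ell$ for all $\ell$, forcing $\sum_\ell \ell j_\ell=\sum_\ell \ell k_\ell$. The grading bookkeeping with $1/z$ on the $V$-side is also handled exactly as in the paper's graded version of the lemma.
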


Later on we will also consider bigraded symmetric sequences. Similarly in our computations we will add one more variable $x$ or $u$ responsible for the second grading.

\subsection{Symmetric sequences ${A_d}^\bullet$, $N{A_d}^\bullet$}\label{ss152}

The symmetric group action on the homology of configuration spaces $C_d(n)=F(n,\R^d)$ is well studied~\cite{CT:RCCS,Lehrer,LO:ASGCH}.

\begin{prop}\label{p:Sn_conf_spaces}
The graded cycle index sum for the symmetric sequence
$$
{A_d}^\bullet=\left\{{A_d}^n|\, n\geq 0\right\}=\left\{H_*(C_d(n),\BK)|\, n\geq 0\right\}
$$
is given by the following formula:
$$
Z_{{A_d}^\bullet}(z;a_1,a_2,\ldots)=\prod_{\ell=1}^{+\infty}\left(1+(-1)^{d}(-z)^{(d-1)\ell}a_\ell\right)^{(-1)^{d}E_\ell\left(\frac 1{(-z)^{d-1}}\right)},
$$
where $E_\ell(y)=\frac 1\ell\sum_{d|\ell}\mu(d)y^{\frac d\ell}.$
\end{prop}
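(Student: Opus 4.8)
The plan is to reduce the statement to the known description of $H_*(F(n,\BR^d),\BK)$ as an $S_n$-representation and then to perform a plethystic/exponential-generating-function bookkeeping. First I would recall that, by the classical computation of Arnold, Cohen, and Lehrer--Solomon (see \cite{Lehrer}), the homology $H_*(F(n,\BR^d),\BK)$ is, as a graded $S_n$-module, a version of the Orlik--Solomon algebra: it is spanned by products of ``Arnold classes'' $\omega_{ij}$ of degree $d-1$, one for each pair, subject to the Arnold relations, and there is a well-known decomposition of its $S_n$-character into a sum over set partitions, with the block of size $\ell$ contributing the Whitney-homology of the partition lattice, whose character is (up to sign) $E_\ell$ evaluated appropriately --- this is exactly where the polynomials $E_\ell(y)=\frac1\ell\sum_{d\mid \ell}\mu(d)y^{\ell/d}$ enter, as they are the cycle-index of the ``necklace''/free-Lie-type pieces governing the top Whitney homology of $\Pi_\ell$. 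I would state this as: the cycle index sum of $H_*(F(\bullet,\BR^d))$ is the plethystic exponential of the ``connected'' part, and the connected part in arity $\ell$ has character $(-1)^{d-1}E_\ell\bigl(\tfrac1{(-z)^{d-1}}\bigr)$ placed in homological degree $(d-1)(\ell-1)$, with an appropriate overall sign.

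The second step is the translation of ``plethystic exponential of the connected part'' into the claimed infinite product. Here I would use that for a symmetric sequence built as a free (co)commutative construction on connected pieces $c_\ell$ sitting in arity $\ell$, the cycle index sum is $\prod_{\ell\ge1}(\text{local factor in }a_\ell)$, and when the connected piece in arity $\ell$ is an ``exponent $e_\ell$'' worth of the one-dimensional sign/trivial-twisted representation, the factor is $(1+\varepsilon a_\ell)^{e_\ell}$ with $\varepsilon$ recording the parity of the degree shift. Plugging in $e_\ell=(-1)^d E_\ell\bigl(\tfrac1{(-z)^{d-1}}\bigr)$ and $\varepsilon = (-1)^d(-z)^{(d-1)\ell}$ yields exactly the stated formula. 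I would double-check the signs and the power of $(-z)$ by specializing: setting $a_\ell\mapsto 0$ for $\ell\ge 2$ and keeping $a_1$ should recover the Poincaré series of $F(n,\BR^d)$ (a product $\prod_{k=1}^{n-1}(1+k(-z)^{d-1})$ up to the convention of signs in the Euler characteristic), which pins down all conventions; and the coefficient of $a_1a_2\cdots$ type monomials against small $n$ should match the $S_n$-character of $H_*(F(n,\BR^d))$ computed directly, e.g.\ $n=2,3$.

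The main obstacle I anticipate is purely a matter of conventions rather than of substance: getting the three interlocking signs right --- the homological sign $(-z)^{d-1}$ versus $z^{d-1}$ coming from how the Euler-characteristic grading variable is normalized, the global $(-1)^d$ prefactor on the exponent, and the parity $(-1)^d$ sitting inside $1+(-1)^d(\cdots)a_\ell$ which encodes whether the degree $(d-1)\ell$ class behaves as an even or odd generator of the free (co)commutative algebra (so that the ``exponential'' is a genuine product over $\ell$ of binomial-type factors rather than something needing separate even/odd treatment). I would handle this by fixing one reference case, odd $d$ with small $n$, verifying the formula there by hand using Lehrer's character table, and then checking that the even-$d$ case is obtained by the expected $d\mapsto d+1$ parity flip; the fact (used repeatedly in the paper) that everything depends only on the parity of $d$ up to an even shift of gradings is the consistency check that the signs have been assembled correctly. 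Modulo this sign bookkeeping, the proof is a citation of \cite{Lehrer} for the $S_n$-module structure combined with the standard fact (Prop.~\ref{prop:tensor}) that cycle indices multiply under the relevant induction product, applied to the free-(co)commutative decomposition of the Orlik--Solomon algebra.
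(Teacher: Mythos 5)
Your proposal is correct and follows essentially the same route as the paper, whose entire proof consists of quoting Lehrer's Theorem~B (the $S_n$-character of $H_*(F(n,\BR^d))$ as a product over the cycles of a permutation) and converting it into a cycle index sum via the conjugacy-class count of Remark~\ref{rk:cycle_ind}. The one caveat is that your phrase ``an exponent $e_\ell$ worth of the one-dimensional representation'' is not literally what the connected (free-Lie/Whitney homology) piece in arity $\ell$ is --- it is induced from a primitive character of the cyclic subgroup of order $\ell$, which is where the M\"obius sum in $E_\ell$ comes from --- but this is precisely the content of Lehrer's formula, and the small-$n$ consistency checks you propose would catch any slip in that bookkeeping.
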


\begin{proof} It is an easy consequence of~\cite[Theorem~B]{Lehrer} and Remark~\ref{rk:cycle_ind}.
\end{proof}

We failed to find this formula in the literature, however one can find similar formulae in the study of $S_n$-modules closely related to the homology of configuration spaces~\cite{CHR:PEO,Hanlon}.

\vspace{.2cm}

Let us add another variable $u$ that will be responsible for the complexity, which is the homology degree divided by $(d-1)$. The $u$-degree is the $z$-degree divided by $(d-1)$:
$$
Z_{{A_d}^\bullet}(z,u;a_1,a_2,\ldots)=\prod_{\ell=1}^{+\infty}\left(1+(-1)^{d}((-z)^{(d-1)}u)^\ell a_\ell\right)^{(-1)^{d}E_\ell\left(\frac 1{(-z)^{d-1}u}\right)}.
$$

Consider the symmetric sequence $N{A_d}^\bullet=\left\{N{A_d}^n|\, n\geq 0\right\}=\{H_*^{Norm}(C_d(n),\BK)|\, n\geq 0\}$. It is easy to see that
\begin{equation}
{A_d}^n\simeq\bigoplus_{i=0}^n\mathrm{Ind}^{S_n}_{S_i\times S_{n-i}}{NA_d}^i,
\label{eq:norm}
\end{equation}
Where the $S_i$-module $N{A_d}^i$ is considered as an $S_i\times S_{n-i}$-module being acted on trivially by the second factor $S_{n-i}$.

According to Definition~\ref{def:tensor} formula~\eqref{eq:norm} means that
$$
{A_d}^\bullet\simeq N{A_d}^\bullet\hat \otimes I,
$$
where $I=\{I(n),n\geq 0\}$ is a sequence of trivial 1-dimensional representations. Using Remark~\ref{rk:cycle_ind} it is easy to check that
$$
Z_I(a_1,a_2,\ldots)=\prod_{\ell\geq 1}e^\frac {a_\ell}\ell.
$$
It follows from Proposition~\ref{prop:tensor}
\begin{equation}
Z_{{NA_d}^\bullet}(z,u;a_1,a_2,\ldots)=\prod_{\ell=1}^{+\infty}e^{-\frac {a_\ell}\ell}\left(1+(-1)^{d}((-z)^{(d-1)}u)^\ell a_\ell\right)^{(-1)^{d}E_\ell\left(\frac 1{(-z)^{d-1}u}\right)}.
\label{eq:h_norm}
\end{equation}

\subsection{Symmetric sequence of the Hodge decomposition}\label{ss153}
To recall in the $n$-th component the Hodge decomposition is described by means of the projectors $e_n^{(i)}\in\BK[S_n],$ $i=1\ldots n$ ($i=0$ if $n=0$). Consider the symmetric sequence $\chi(-)=\{\chi(n)|\, n\geq 0\}$:
$$
\chi(n)=\oplus_ie_n^{(i)}\cdot\BK[S_n]
\eqno(\numb)\label{eq:sym_seq_Hodge}
$$
of graded (by $i$) $S_n$-modules. It was shown by Hanlon~\cite[equation~(6.1)]{Hanlon} that the graded cycle index sum of $\chi(-)$ is given by the following formula:
$$
Z_{\chi(-)}=\prod_\ell(1+(-1)^\ell a_\ell)^{-E_\ell(x)},
\eqno(\numb)\label{eq:index_sum_hodge}
$$
where the variable $x$  is responsible for the Hodge degree $i$, and $E_\ell(x)=\frac 1\ell\sum_{d|\ell}\mu(d)x^{\ell/d}$.

\subsection{Proof of Theorem~\ref{t131}}\label{s:proof}
First notice that Corollary~\ref{c:hom} together with the formulae~(\ref{eq:h_norm}), and~(\ref{eq:index_sum_hodge})  produce the following formula for the generating function $\Phi(x,u,z)$ of the dimensions of the complex computing $H_*(\overline{Emb}_d,\Q)$, $d\geq 4$:
\begin{multline}
\Phi(x,u,z)=\\
\scriptstyle
\left.\left(\prod_{\ell=1}^{+\infty}\left(1+(-1/z)^{\ell}\partial/\partial a_\ell\right)^{-E_\ell\left(x\right)}
\prod_{\ell=1}^{+\infty}e^{-a_\ell}\left(1+(-1)^{d}\ell((-z)^{(d-1)}u)^\ell a_\ell\right)^{(-1)^{d}E_\ell\left(\frac 1{(-z)^{d-1}u}\right)}\right)\right|_{\substack{a_\ell=0\\ \ell\in\BN}}=\\
\scriptstyle
\prod_{\ell=1}^{+\infty}\left.\left(\left(1+(-1/z)^{\ell}\partial/\partial a_\ell \right)^{-E_\ell\left(x\right)}
e^{-a_\ell}\left(1+(-1)^{d}\ell((-z)^{(d-1)}u)^\ell a_\ell\right)^{(-1)^{d}E_\ell\left(\frac 1{(-z)^{d-1}u}\right)}\right)\right|_{a_\ell=0}.
\end{multline}

Since $F(x,u)=\Phi(x,u,-1)$ we get
$$
F(x,u)=
\prod_{\ell=1}^{+\infty}\left.\left(\left(1+\partial/\partial a \right)^{-E_\ell(x)}
e^{-a}\left(1+(-1)^{d}\ell u^\ell a\right)^{(-1)^{d}E_\ell\left(\frac 1{u}\right)}\right)\right|_{a=0}.
$$
Notice that in the above formula we replaced $a_\ell$ by $a$. We could do so because each factor uses only  one variable $a_\ell$ which is anyway taken to be zero.

Theorem~\ref{t131} follows immediately from the following proposition:

\begin{prop}\label{p:factor}
\begin{multline}
\left.\left(\left(1+\partial/\partial a \right)^{-E_\ell(x)}
e^{-a}\left(1+(-1)^{d}\ell u^\ell a\right)^{(-1)^{d}E_\ell\left(\frac 1{u}\right)}\right)\right|_{a=0}=\\
=\frac{\Gamma((-1)^{d-1}E_\ell(\frac 1u)-E_\ell(x))}{\bigl( (-1)^{d-1} \ell u^\ell\bigr)^{E_\ell(x)}\Gamma((-1)^{d-1}E_\ell(\frac 1u))},
\end{multline}
where each factor of the right-hand side is understood as its formal asymptotic behavior when $(-1)^{d-1}u^\ell\to +0$.
\end{prop}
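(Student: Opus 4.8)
The plan is to reduce the left-hand side to a single integral/generating-function identity and then recognize it as a ratio of Gamma functions via the substitution used in Section~\ref{s141}. First I would set $\alpha=(-1)^{d-1}E_\ell(x)$ and $\beta=(-1)^{d-1}E_\ell(1/u)$ and $v=(-1)^{d-1}u^\ell$, so that the quantity to evaluate becomes
$$
\left.\left(\left(1+\partial/\partial a\right)^{\alpha\cdot(-1)^d\cdot(-1)}e^{-a}\left(1-\ell v a\right)^{-\beta}\right)\right|_{a=0},
$$
i.e. up to harmless sign bookkeeping we must show
$$
\left.\left(\left(1+\partial_a\right)^{-\alpha}e^{-a}(1-\ell v a)^{-\beta}\right)\right|_{a=0}=\frac{\Gamma(\beta-\alpha)}{(\ell v)^{\alpha}\,\Gamma(\beta)},
$$
where the right side is read as the asymptotic expansion $\Gamma(\alpha, \ell v/\text{(something)})$ in the sense of Notation~\ref{n143} when $v\to +0$. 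The key structural fact is that the translation operator $(1+\partial_a)^{-\alpha}$ acts on analytic functions, when followed by evaluation at $a=0$, as a Laplace-type average: for suitable $g$,
$$
\left.\left((1+\partial_a)^{-\alpha}g\right)\right|_{a=0}=\frac{1}{\Gamma(\alpha)}\int_0^{\infty}t^{\alpha-1}e^{-t}\,g(t)\,dt,
$$
valid as an identity of formal/asymptotic series in the relevant small parameter. I would justify this by expanding $g$ as a power series and checking it term by term against the integral $\int_0^\infty t^{\alpha-1}e^{-t}t^k\,dt=\Gamma(\alpha+k)=(\alpha)_k\Gamma(\alpha)$, which reproduces exactly the binomial coefficients generated by $(1+\partial_a)^{-\alpha}$ acting on $a^k$.

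With this in hand, applying the operator to $g(a)=e^{-a}(1-\ell v a)^{-\beta}$ gives, after the shift $t\mapsto t$ cancelling the $e^{-a}$ against part of the kernel,
$$
\frac{1}{\Gamma(\alpha)}\int_0^\infty t^{\alpha-1}e^{-t}e^{-t}(1-\ell v t)^{-\beta}\,dt
$$
— wait, more carefully: the $e^{-a}$ combines with $e^{-t}$ to give $e^{-2t}$, which is not quite what I want, so instead I would absorb the $e^{-a}$ first by writing $(1+\partial_a)^{-\alpha}e^{-a}h(a)\big|_0 = e^{-0}\cdot(\partial_a)^{-\alpha}\!\!\ldots$; the clean route is to note $(1+\partial_a)^{-\alpha}$ applied to $e^{-a}h(a)$ equals $e^{-a}$ times $(\partial_a)^{-\alpha}$ applied to $h$ — i.e. the $e^{-a}$ twist conjugates $(1+\partial_a)$ into $\partial_a$. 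Then evaluation at $0$ produces $\frac{1}{\Gamma(\alpha)}\int_0^\infty t^{\alpha-1}h(t)\,dt$ with $h(t)=(1-\ell v t)^{-\beta}$, and the substitution $s=\ell v t$ yields
$$
\frac{1}{(\ell v)^\alpha\Gamma(\alpha)}\int_0^\infty s^{\alpha-1}(1-s)^{-\beta}\,ds,
$$
which — interpreted as a formal Mellin/Beta integral (continued past its naive domain of convergence, exactly as the theorem's hypothesis "formal asymptotic behavior" licenses) — equals $\frac{B(\alpha,\beta-\alpha)}{(\ell v)^\alpha}=\frac{\Gamma(\alpha)\Gamma(\beta-\alpha)}{(\ell v)^\alpha\Gamma(\beta)}\cdot\frac{1}{\Gamma(\alpha)}=\frac{\Gamma(\beta-\alpha)}{(\ell v)^\alpha\Gamma(\beta)}$. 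Unwinding the signs $\alpha=E_\ell(x)$, $\beta=(-1)^{d-1}E_\ell(1/u)$, $\ell v=(-1)^{d-1}\ell u^\ell$ gives precisely the right-hand side of the Proposition.

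The main obstacle, and where I would spend the most care, is making all of this rigorous at the level of asymptotic expansions rather than convergent integrals: the series $(1-\ell v t)^{-\beta}=\sum_k \binom{-\beta}{k}(-\ell v)^k t^k$ has the Beta integral $\int_0^\infty t^{\alpha-1+k}e^{-t}\,dt=\Gamma(\alpha+k)$ growing factorially in $k$, so the resulting series in $v$ is exactly of the divergent type analyzed in Proposition~\ref{P:divergence}, and one must verify that the formal operator identity $(1+\partial_a)^{-\alpha}$ matches the asymptotic expansion $\Gamma(\alpha,\cdot)$ coefficient by coefficient. Concretely I would: (i) prove the operator-to-Laplace-integral identity as a formal power series identity in $a$ (no convergence needed); (ii) verify, using the $e^{-a}$-conjugation $e^a(1+\partial_a)e^{-a}=\partial_a$, that the $e^{-a}$ factor turns $(1+\partial_a)^{-\alpha}$ into $\partial_a^{-\alpha}$ acting on the remaining factor, hence into the pure Gamma-kernel $\frac{1}{\Gamma(\alpha)}\int_0^\infty t^{\alpha-1}(\cdot)\,dt$ at $a=0$; (iii) expand $(1-\ell v t)^{-\beta}$ in powers of $v$, integrate termwise against $\frac{t^{\alpha-1}e^{-t}}{\Gamma(\alpha)}$ — but in fact against $\frac{t^{\alpha-1}}{\Gamma(\alpha)}$ with a convergence factor, so that each coefficient is a polynomial in $\alpha,\beta$; and (iv) compare with the expansion of $\frac{\Gamma(\beta-\alpha)}{\Gamma(\beta)}=\frac{1}{(\beta-1)(\beta-2)\cdots}$-type product obtained in \eqref{eq142}–\eqref{eq143} when $\alpha$ is a positive (resp. negative) integer, invoking Lemma~\ref{l142} and the principle that a polynomial identity holding on all positive integers holds identically. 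Steps (i) and (ii) are formal algebra; step (iv) is where the "formal asymptotic behavior" caveat in the statement does its work, and that bookkeeping — together with tracking the $(-1)^d$ signs through $E_\ell(1/u)$ versus $E_\ell(1/((-z)^{d-1}u))$ at $z=-1$ — is the only genuinely delicate part.
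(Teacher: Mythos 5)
Your route is essentially the paper's alternative proof (Section~\ref{ss155}): represent $(1+\partial/\partial a)^{-\alpha}$, followed by evaluation at $a=0$, as a Gamma-kernel integral and convert it to a Beta integral. But your integral representation has a reflection error. Writing $(1+\partial_a)^{-\alpha}=\frac 1{\Gamma(\alpha)}\int_0^\infty s^{\alpha-1}e^{-s}e^{-s\partial_a}\,ds$ and using $e^{-s\partial_a}g(a)=g(a-s)$, evaluation at $a=0$ gives $\frac 1{\Gamma(\alpha)}\int_0^\infty s^{\alpha-1}e^{-s}g(-s)\,ds$: the argument of $g$ must be $-s$, not $s$ (check on $g=a^k$: the operator gives $(-1)^k\alpha(\alpha+1)\cdots(\alpha+k-1)$, while your kernel gives $+\alpha(\alpha+1)\cdots(\alpha+k-1)$). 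This is exactly why Lemma~\ref{l:dif_operator} integrates over $(-\infty,0]$. With the correct reflection, $e^{-a}(1-\ell v a)^{-\beta}$ becomes $e^{s}(1+\ell v s)^{-\beta}$, the $e^{s}$ cancels the kernel's $e^{-s}$, and you land on $\frac 1{(\ell v)^\alpha\Gamma(\alpha)}\int_0^\infty r^{\alpha-1}(1+r)^{-\beta}\,dr=\frac{\Gamma(\beta-\alpha)}{(\ell v)^\alpha\Gamma(\beta)}$ --- an honest, convergent Beta integral, with no singularity on the contour and no need to ``continue past the naive domain of convergence''. Your $(1-s)^{-\beta}$ with a pole at $s=1$ is an artifact of the sign error, and the appeal to formal continuation there is covering for a wrong formula rather than a genuine subtlety.

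The genuine analytic subtlety is elsewhere, and you have not resolved it: the statement is an identity of asymptotic expansions in $u$, so one must expand $(1+\ell v s)^{-\beta}$ (where $\beta=(-1)^{d-1}E_\ell(1/u)$ itself depends on $u$) as a series in $u$ and integrate termwise over the whole ray; that series converges only for $|s|<1/|\ell v|$, so termwise integration is not justified. The paper flags precisely this as a ``serious analytical gap'' of the integral proof and indicates the repair (splitting the integral at $1/(\ell u^\ell)$). Your escape hatch (iv) --- both sides are $\sum_j f_j(x)u^j$ with $f_j$ polynomial in $x$, so it suffices to check at integer $x$ --- is exactly the paper's actual proof, and it works; but you state it without supplying the verification at integers, which is the whole content of that argument: one needs the inductive identity $(1+\partial/\partial a)^{n}\,e^{-a}(1-ua)^{-1/u}=\Gamma(-n,u)\,e^{-a}(1-ua)^{-1/u-n}$, with $\Gamma(-n,u)=(1+u)(1+2u)\cdots(1+(n-1)u)$ as in \eqref{eq143} and Lemma~\ref{l142}, from which $a=0$ gives the claim for $x=-n$ and polynomiality does the rest. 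As written, neither of your two threads closes the proof.
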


\begin{proof}
For simplicity consider the case of odd $d$, and $\ell=1$. Other cases are absolutely analogous. In this situation the left-hand side becomes
$$
\left.\left(\left(1+\partial/\partial a \right)^{-x}
e^{-a}\left(1- u a\right)^{-\frac 1u}\right)\right|_{a=0},
\eqno(\numb)\label{eq15*1}
$$
the right-hand side is
$$
\frac{\Gamma\left(\frac 1u-x\right)}{u^x\Gamma\left(\frac 1u\right)}=\Gamma(x,u).
\eqno(\numb)\label{eq15*2}
$$
Notice that both~\eqref{eq15*1}, and~\eqref{eq15*2} have the form $\sum_jf_j(x)u^j$, where $f_j(x)$ are some polynomials. Indeed, \eqref{eq15*1} has this form because the normalized complex $\Tot {A_d}^\bullet$ is finite in each complexity $j$, the expression~\eqref{eq15*2} has this form by Lemma~\ref{l142}. We conclude that it suffices to check the equality when $x$ is any negative integer number: $x=-n$. In this case
$$
\Gamma(-n,u)=(1+u)(1+2u)\ldots(1+(n-1)u).
$$
One can also prove by induction over $n$ that
$$
\left(\left(1+\partial/\partial a \right)^{n}
e^{-a}\left(1- u a\right)^{-\frac 1u}\right)=\Gamma(-n,u)\cdot e^{-a}(1-ua)^{-\frac 1u-n}.
$$
Taking $a=0$ implies the result.
\end{proof}

\subsection{Alternative proof of Proposition~\ref{p:factor}}\label{ss155}
There is another proof which makes more natural the appearance of the Gamma function. The proof is more technical, so we give only its idea. It uses the following lemma:

\begin{lemma}\label{l:dif_operator}
Let $X$ be a complex number with a positive real part, and $f(a)$ be any polynomial, then
$$
\left.(1+\partial/\partial a)^{-X}f(a)\right|_{a=0}=\frac 1{\Gamma(X)}\int_{-\infty}^0(-a)^{X-1}e^af(a)\,da.\footnote{This formula was obtained by using Fourier transform which permitted to rewrite the differential operator $(1+\partial/\partial a)^{-X}$ as an integral operator.}
$$

\end{lemma}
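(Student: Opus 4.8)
The plan is to verify the identity first on monomials $f(a)=a^m$, $m\geq 0$, and then extend by linearity. The key point is that both sides are linear in $f$, so it suffices to treat $f(a)=a^m$, and for a monomial both sides can be computed in closed form. On the left-hand side, I would expand $(1+\partial/\partial a)^{-X}$ as the binomial series $\sum_{j\geq 0}\binom{-X}{j}\partial^j/\partial a^j$; applied to $a^m$ and then evaluated at $a=0$, only the term $j=m$ survives, giving $\binom{-X}{m}\,m!=(-1)^m X(X+1)\cdots(X+m-1)=(-1)^m\,\Gamma(X+m)/\Gamma(X)$. On the right-hand side, the substitution $a=-t$ turns the integral into $\frac{1}{\Gamma(X)}\int_0^{+\infty} t^{X-1}e^{-t}(-t)^m\,dt=\frac{(-1)^m}{\Gamma(X)}\int_0^{+\infty}t^{X+m-1}e^{-t}\,dt=(-1)^m\,\Gamma(X+m)/\Gamma(X)$, using the standard integral representation of the Gamma function, valid since $\mathrm{Re}(X)>0$ guarantees convergence at $t=0$. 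The two expressions agree, which proves the identity for all polynomials $f$.

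There are two small technical points to address. First, the binomial expansion of $(1+\partial/\partial a)^{-X}$ is an infinite series of operators, but when applied to a polynomial $f$ of degree $m$ it is really a finite sum (all terms with $j>m$ annihilate $f$), so no convergence issue arises on the operator side; one should phrase $(1+\partial/\partial a)^{-X}$ on polynomials precisely this way, i.e.\ as $\sum_{j=0}^{\deg f}\binom{-X}{j}\partial^j/\partial a^j$. Second, one must check that the improper integral $\int_{-\infty}^0(-a)^{X-1}e^a f(a)\,da$ converges: near $a=0$ the integrand behaves like $(-a)^{\mathrm{Re}(X)-1}$, which is integrable precisely because $\mathrm{Re}(X)>0$, and as $a\to-\infty$ the factor $e^a$ forces rapid decay against the polynomial growth of $(-a)^{X-1}f(a)$.

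I do not expect any serious obstacle here; the identity is essentially a repackaging of $\Gamma(X+m)=X(X+1)\cdots(X+m-1)\Gamma(X)$ together with Euler's integral for $\Gamma$. The only place demanding a little care is making the meaning of the operator $(1+\partial/\partial a)^{-X}$ unambiguous on the space of polynomials and noting that this is exactly the sense in which it is used in Proposition~\ref{p:factor} (where it is ultimately composed with $e^{-a}(1+(-1)^d\ell u^\ell a)^{(-1)^d E_\ell(1/u)}$ and evaluated at $a=0$, the outer factors again being treated as formal/asymptotic series in $u$). Once the monomial computation is in place, the general statement follows immediately by linearity, and the integral representation can then be fed into the alternative derivation of the Gamma-function factor sketched after the lemma.
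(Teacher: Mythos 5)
Your proposal is correct and follows exactly the paper's argument: the paper's proof is the single observation that for $f(a)=a^n$ both sides equal $(-1)^nX(X+1)\cdots(X+n-1)$, which is precisely the monomial computation you carry out (with the binomial expansion of $(1+\partial/\partial a)^{-X}$ on one side and Euler's integral for $\Gamma(X+n)$ on the other), extended by linearity. Your added remarks on the finiteness of the operator series on polynomials and the convergence of the integral for $\mathrm{Re}(X)>0$ are sound and simply make explicit what the paper leaves implicit.
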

To prove the lemma we notice that for $f(a)=a^n$ both sides are equal to $(-1)^nX(X+1)(X+2)\ldots (X+n-1)$.

\vspace{.5cm}

Now we apply the above lemma for $X=E_\ell(x)$ and taking instead of $f(a)$ the generating function of a sequence of polynomials
$\sum_jf_j(a)u^j=e^{-a}\left(1-(-1)^{d-1}\ell u^\ell a\right)^{-(-1)^{d-1}E_\ell\left(\frac 1{u}\right)}$:
$$
\left.\left(\left(1+\partial/\partial a \right)^{-E_\ell(x)}
e^{-a}\left(1-(-1)^{d-1}\ell u^\ell a\right)^{-(-1)^{d-1}E_\ell\left(\frac 1{u}\right)}\right)\right|_{a=0}=
$$
$$
\frac 1{\Gamma(E_\ell(x))}\int_{-\infty}^0(-a)^{E_\ell(x)-1}\left(1-(-1)^{d-1}\ell u^\ell a\right)^{-(-1)^{d-1}E_\ell\left(\frac 1{u}\right)}\, da.
\eqno(\numb)\label{eq15:integral}
$$
Assuming that $u$ is a small complex number such that $(-1)^{d-1}u^\ell$ is real positive we make a  change of variables
$$
a=-\frac{(-1)^{d-1}}{\ell u^\ell}\cdot\frac t{1-t},
$$
that gives that the above expression is equal to the following:
$$
\frac 1{\left((-1)^{d-1}\ell u^\ell\right)^{E_\ell(x)}\Gamma(E_\ell(x))}\int_0^1t^{E_\ell(x)-1}
(1-t)^{(-1)^{d-1}E_\ell\left(\frac 1{u}\right)-E_\ell(x)-1}\, dt=
$$
$$
=\frac 1{\left((-1)^{d-1}\ell u^\ell\right)^{E_\ell(x)}\Gamma(E_\ell(x))}\times\frac{\Gamma(E_\ell(x))\cdot
\Gamma((-1)^{d-1}E_\ell(\frac 1u)-E_\ell(x))}
{\Gamma((-1)^{d-1}E_\ell(\frac 1u))}=
$$
$$
=
\frac{\Gamma((-1)^{d-1}E_\ell(\frac 1u)-E_\ell(x))}{\bigl( (-1)^{d-1} \ell u^\ell\bigr)^{E_\ell(x)}\Gamma((-1)^{d-1}E_\ell(\frac 1u))}.
$$
However the proof has a serious analytical gap since the series $\sum_jf_j(a)u^j$ does not converge to $e^{-a}\left(1-(-1)^{d-1}\ell u^\ell a\right)^{-(-1)^{d-1}E_\ell\left(\frac 1{u}\right)}$ when $|a|>\frac 1{|\ell u^\ell|}$. To make this proof work we need to split the integral~\eqref{eq15:integral} as a sum $\int_{-\infty}^{-\frac{(-1)^{d-1}}{\ell u^\ell}}+\int_{-\frac{(-1)^{d-1}}{\ell u^\ell}}^0$. It is easy to see that the first integral has zero asymptotic expansion with respect to $u$ when $(-1)^{d-1}u^\ell\to +0$. The second integral can now be replaced by a series of integrals, whose expansion in $u$ has to be studied.

\section{Results of computations}\label{s16}
In the Appendix the results of computations of the Euler characteristics are presented. Let $h_{ijk}$ denote the rank of the $(i,j)$-component of $H_k(\overline{Emb}_d,\Q)$. Similarly let $\pi_{ijk}$ denote the rank of the $(i,j)$-component of $\pi_k(\overline{Emb}_d)\otimes\Q$. The homotopy Euler characteristics will be denoted by $\chi_{ij}^\pi$:
$$
\chi_{ij}^\pi=\sum_{k}(-1)^k\pi_{ijk}.
$$

The following lemma describes how the homotopy Euler characteristics can be obtained from the homology Euler characteristics.

\begin{lemma}\label{l161}
$$
F_d(x,u)=\sum_{ij}\chi_{ij}x^iu^j=\prod_{ij}\frac 1{(1-x^iu^j)^{\chi_{ij}^\pi}}.
\eqno(\numb)\label{eq161}
$$
\end{lemma}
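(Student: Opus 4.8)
The plan is to deduce~\eqref{eq161} from the structure theorem for the bicommutative Hopf algebra $H_*(\overline{Emb}_d,\Q)$, keeping careful track of the Hodge and complexity gradings, and then to carry out a routine generating-function manipulation.

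First I would fix the algebraic input. By Corollaries~\ref{c12} and~\ref{c13}, $H_*(\overline{Emb}_d,\Q)$ is a connected bicommutative graded Hopf algebra of finite type over $\Q$ which is, as a graded algebra, free graded-commutative on $W:=Prim\bigl(H_*(\overline{Emb}_d,\Q)\bigr)$ (equivalently, cofree graded-cocommutative on $W$; see the discussion following Proposition~\ref{p71}). As a graded vector space it is therefore $S(W)=\mathrm{Sym}(W^{\mathrm{ev}})\otimes\Lambda(W^{\mathrm{odd}})$, where $W^{\mathrm{ev}},W^{\mathrm{odd}}$ are the even- and odd-homological-degree parts of $W$. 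Now the coproduct on $H_*(\overline{Emb}_d,\Q)$, computed from $\Tot{A_d}^\bullet$, preserves the Hodge degree (Proposition~\ref{p71}(i) gives $\Delta\,\Tot^{(k)}{A_d}^\bullet\subseteq\bigoplus_{i=0}^k\Tot^{(i)}{A_d}^\bullet\otimes\Tot^{(k-i)}{A_d}^\bullet$) and also the complexity (it is assembled, via the Eilenberg--Mac Lane map of Lemma~\ref{l72}, from a degreewise coproduct, both of which preserve the homological degree and hence the complexity). Consequently $W=\ker\bar\Delta$ (the reduced coproduct) is a sub-bigraded vector space, the isomorphism $H_*(\overline{Emb}_d,\Q)\cong S(W)$ may be taken to respect all three gradings at once (homological degree, Hodge degree $i$, complexity $j$), and by Proposition~\ref{p71}(ii) one has $W\cong\pi_*(\overline{Emb}_d)\otimes\Q$ as such; in particular $\dim W^{(i,j)}_k=\pi_{ijk}$.

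Next I would compute the $(i,j)$-graded Euler characteristic of $S(W)$. Note that all nontrivial bidegrees $(i,j)$ have $i,j\ge1$ except for the unit $(0,0)$, since $e_n^{(i)}=0$ unless $1\le i\le n$ and $N{A_d}^n$ carries no classes of complexity $0$ for $n\ge1$; this ensures that, for a tri-graded finite-type vector space $V$, the assignment $\chi_V(x,u):=\sum_{i,j}\bigl(\sum_k(-1)^k\dim V^{(i,j)}_k\bigr)x^iu^j$ is a well-defined element of $\Q[[x,u]]$ (each inner sum is finite and each monomial $x^au^b$ receives finitely many contributions). The invariant $\chi$ is additive on direct sums and multiplicative on tensor products (graded K\"unneth in the homological grading, additivity in the other two gradings), while for a one-dimensional $V$ placed in tri-degree $(i,j,k)$ one has $\chi_{\mathrm{Sym}(V)}(x,u)=1/(1-x^iu^j)$ if $k$ is even and $\chi_{\Lambda(V)}(x,u)=1-x^iu^j$ if $k$ is odd. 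Applying this over the decomposition $S(W)=\bigotimes_v\mathrm{Sym}(\Q v)\otimes\bigotimes_w\Lambda(\Q w)$ for a tri-homogeneous basis of $W$, and regrouping by bidegree (the $\mathrm{Sym}$ factors in bidegree $(i,j)$ contributing the exponent $-\sum_{k\ \mathrm{even}}\pi_{ijk}$, the $\Lambda$ factors the exponent $+\sum_{k\ \mathrm{odd}}\pi_{ijk}$), gives
\[
\chi_{S(W)}(x,u)=\prod_{i,j}(1-x^iu^j)^{-\sum_k(-1)^k\pi_{ijk}}=\prod_{i,j}\frac{1}{(1-x^iu^j)^{\chi_{ij}^\pi}},
\]
the product converging in $\Q[[x,u]]$ for the same finiteness reason.

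Finally, by the definitions of $\chi_{ij}$ and of $F_d$ one has $\chi_{H_*(\overline{Emb}_d,\Q)}(x,u)=\sum_{i,j}\bigl(\sum_k(-1)^kh_{ijk}\bigr)x^iu^j=\sum_{i,j}\chi_{ij}x^iu^j=F_d(x,u)$, so combining with the grading-compatible isomorphism $H_*(\overline{Emb}_d,\Q)\cong S(W)$ from the first step yields $F_d(x,u)=\prod_{ij}(1-x^iu^j)^{-\chi_{ij}^\pi}$, which is~\eqref{eq161}. The one place I expect to need genuine care is the claim that the free/cofree decomposition $H_*(\overline{Emb}_d,\Q)\cong S(W)$ can be chosen to respect the Hodge and complexity gradings simultaneously with the homological one --- the homological grading is Corollary~\ref{c13}, and the Hodge and complexity gradings come from Proposition~\ref{p71} and the coproduct remarks above --- since only then does the abstract dimension count translate faithfully into the $x,u$-generating function; everything after that is bookkeeping with generating functions.
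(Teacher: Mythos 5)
Your proposal is correct and follows essentially the same route as the paper: the paper's proof likewise invokes Proposition~\ref{p71} to get the tri-graded Poincar\'e series identity $\sum_{ijk}h_{ijk}x^iu^jz^k=\prod_{k\ \mathrm{odd}}(1+x^iu^jz^k)^{\pi_{ijk}}\big/\prod_{k\ \mathrm{even}}(1-x^iu^jz^k)^{\pi_{ijk}}$ coming from the free bicommutative Hopf algebra structure on the primitives $\cong\pi_*(\overline{Emb}_d)\otimes\Q$, and then sets $z=-1$. You merely perform the $z=-1$ specialization factor by factor and spell out the finiteness bookkeeping, which the paper leaves implicit.
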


\begin{proof}
By Proposition~\ref{p71} one has
$$
\sum_{ijk}h_{ijk}x^iu^jz^k=\prod_{{ij}\atop {k \text{ odd}}} (1+x^iu^jz^k)^{\pi_{ijk}}\left/
{\prod_{{ij}\atop {k \text{ even}}} (1-x^iu^jz^k)^{\pi_{ijk}}}\right..
$$
Taking $z=-1$, we obtain that the left-hand side is $F(x,u)=\sum_{ij}\chi_{ij}x^iu^j$, and the right-hand side is
$$
\prod_{{ij}\atop {k \text{ odd}}} (1-x^iu^j)^{\pi_{ijk}}\left/
{\prod_{{ij}\atop {k \text{ even}}} (1-x^iu^j)^{\pi_{ijk}}}=\prod_{ij}\frac 1{(1-x^iu^j)^{\chi_{ij}^\pi}}\right. .
$$
\end{proof}

We used the formula~\eqref{eq161} to fill the Tables~1 and~3 in the Appendix.
The column \lq\lq total" in the tables states for the sum of absolute values of $\chi^\pi_{ij}$ for a fixed complexity $j$:
$$
total=\sum_i |\chi^\pi_{ij}|.
$$
It gives a lower bound estimate for the rank of rational homotopy in a given complexity $j$. We did not make this column for the homology tables since a better lower bound estimate of the homology rank in a given complexity can be obtained using the Hodge decomposition in homotopy.

It is interesting to compare the first table with the table of primitive elements in the bialgebra of chord diagrams~\cite{Kneissler} which we copy below:
\vspace{.1cm}

\begin{center}
\small
\begin{tabular}{||c|c|c|c|c|c|c|c|}
\hline
$\mathrm{rk}_{ij}$&$i=2$&$i=4$&$i=6$&$i=8$&$i=10$&$i=12$&total\\
\hline
$j=1$&1&&&&&&1\\
\hline
$j=2$&1&&&&&&1\\
\hline
$j=3$&1&&&&&&1\\
\hline
$j=4$&1&1&&&&&2\\
\hline
$j=5$&2&1&&&&&3\\
\hline
$j=6$&2&2&1&&&&5\\
\hline
$j=7$&3&3&2&&&&8\\
\hline
$j=8$&4&4&3&1&&&12\\
\hline
$j=9$&5&6&5&2&&&18\\
\hline
$j=10$&6&8&8&4&1&&27\\
\hline
$j=11$&8&10&11&8&2&&39\\
\hline
$j=12$&9&13&15&12&5&1&55\\
\hline
\end{tabular}
\end{center}

\vspace{.2cm}

To recall Bar-Natan~\cite{BarNatan} described the space of primitives of the bialgebra of chord diagrams as the space of uni-trivalent graphs modulo $STU$ and $IHX$ relations (Theorem~\ref{t81}). The latter  space has a natural double grading. The first grading  complexity $j$ - for any graph it is the first Betti  number of the graph obtained by gluing all the univalent vertices together - this grading corresponds to the number of chords in  chord diagrams. The second grading is the number $i$ of univalent vertices. It turns out that the last grading is exactly our Hodge degree, see Theorem~\ref{t82}. In our terms the above table describes the rank of the $j(d-3)$-dimensional rational homotopy
$\pi_{j(d-3)}^{(i,j)}(\overline{Emb}_d,\Q)$ ($d$ being odd) in complexity $j$ and Hodge degree $i$.
Notice that there is no non-trivial generators in odd Hodge degree. From the point of view of knot theory this means that up to the order 12  Vassiliev invariants are orientation insensitive. It rises the question wether Vassiliev invariants can distinguish a knot from its inverse. More generally looking at Table~1 we can ask wether even cycles are all in even Hodge degrees and all odd cycles are all in odd Hodge degrees? Comparing the above table with Table~1 we can see that there must be at least one odd cycle in complexity 10 and of Hodge degree 2. Indeed, in this bigrading one has $\chi^\pi_{2,10}=5$, but the rank of the primitives of the bialgebra of chord diagrams is 6. Even more dramatically it turns out that the sign of $\chi^\pi_{ij}$ can be different from $(-1)^i$. The first counter example appears in complexity 20:
$$
\chi^\pi_{1,20}=12>0,
$$
see Table~1.

It would be interesting to understand the geometrical reason for this phenomenon of sign alternation for small complexities. Notice that this happens only when $d$ is odd (in other Tables~3-4 the signs of entries look rather random). One should also try to compute the Table~1 for higher complexities $j$ to check wether this almost alternation of signs keeps take place or completely disappears. (Our computer equipment could do it only up to $j=23$). As a conclusion one should say that these results give some optimism for finding Vassiliev invariants that can distinguish orientation of a knot. Personally I would try with the complexity 22 and Hodge degree 5!

\section{Exponential growth of the homology and homotopy  of $\overline{Emb}_d$ or taking $F(\pm 1,u)$}\label{s17}
 One can get a lower bound estimation for the rank of the homology groups in a given complexity $j$ by taking $x=\pm 1$ in the formula for $F(x,u)=\sum_{i,j}\chi_{ij}x^iu^j$. We notice first that
\begin{center}
\begin{tabular}{ccc}
$E_\ell(1)=
\begin{cases} 1,& \text{if $\ell=1$}\\
0,& \ell\geq 2;
\end{cases}
$
&\qquad\qquad
&
$
E_\ell(-1)=
\begin{cases} (-1)^\ell,& \text{if $\ell=1$ or $2$}\\
0,& \ell\geq 3.
\end{cases}
$
\end{tabular}
\end{center}

This means that for $x=1$ only the first factor of the product~\eqref{eq131}-\eqref{eq132} can be different from 1; and for $x=-1$ only first two factors can differ from 1.


 Easy computations show that
 $$F_{odd}(1,u)=\frac 1{1-u},\qquad F_{odd}(-1,u)=\frac 1{1-u-2u^2}.$$

 $$F_{even}(1,u)=\frac 1{1+u},\qquad F_{even}(-1,u)=\frac 1{1-u+2u^2}.$$

From the above formulas we will derive the following result.

\begin{thm}\label{th:exp_compl_homol}
The rank of the rational homology of $\overline{Emb}_d$ in a given complexity $j$ grows at least exponentially with $j$.
\end{thm}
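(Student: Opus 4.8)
The plan is to extract a lower bound on the total rank of the rational homology of $\overline{Emb}_d$ in complexity $j$ directly from the generating function $F_d(x,u)=\sum_{i,j}\chi_{ij}x^iu^j$, by specializing the Hodge variable to $x=-1$. For $\epsilon\in\{+1,-1\}$ the coefficient of $u^j$ in $F_d(\epsilon,u)$ equals
$$
[u^j]\,F_d(\epsilon,u)=\sum_i\epsilon^i\chi_{ij}=\sum_{i,k}(-1)^k\,\epsilon^i\,h_{ijk},
$$
so, writing $R_j:=\sum_{i,k}h_{ijk}$ for the total rank of $H_*(\overline{Emb}_d,\Q)$ in complexity $j$, we get $R_j\ge\bigl|[u^j]F_d(\epsilon,u)\bigr|$ for $\epsilon=\pm1$. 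It therefore suffices to find one sign $\epsilon$ for which the Taylor coefficients of $F_d(\epsilon,u)$ grow exponentially in $j$. The choice $\epsilon=+1$ is useless here, since $F_{odd}(1,u)=\frac1{1-u}$ and $F_{even}(1,u)=\frac1{1+u}$ have coefficients $\pm1$; the choice $\epsilon=-1$ works, as $F_{odd}(-1,u)=\frac1{1-u-2u^2}$ and $F_{even}(-1,u)=\frac1{1-u+2u^2}$ both have denominators whose roots have modulus strictly less than $1$.

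For odd $d$ the argument closes immediately. Since $1-u-2u^2=(1-2u)(1+u)$, a partial-fraction expansion gives $[u^j]F_{odd}(-1,u)=\tfrac13\bigl(2^{\,j+1}+(-1)^j\bigr)$, whence $R_j\ge\tfrac13\bigl(2^{\,j+1}-1\bigr)$ for every $j$, so the rank grows at least like $2^{\,j}$.

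For even $d$ the denominator $1-u+2u^2$ has the two complex-conjugate roots $\tfrac{1\pm i\sqrt7}{4}$, each of modulus $1/\sqrt2$, so the coefficients $a_j:=[u^j]F_{even}(-1,u)$ obey the recursion $a_j=a_{j-1}-2a_{j-2}$ with $a_0=a_1=1$. Here the Euler characteristics $a_j$ alternate in sign and individual $a_j$ may be small, so a single complexity need not witness the growth; this is the one genuinely delicate point. It is resolved by passing to the companion matrix $M=\left(\begin{smallmatrix}0&1\\-2&1\end{smallmatrix}\right)$, which has $\det M=2$ and both eigenvalues of modulus $\sqrt2$: over $\R$ it is conjugate to $\sqrt2$ times a rotation, so $\bigl\|(a_j,a_{j+1})\bigr\|\ge c\,(\sqrt2)^{\,j}$ for a fixed $c>0$, i.e.\ two consecutive Euler characteristics cannot both be small. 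Hence $\max(R_j,R_{j+1})\ge\max(|a_j|,|a_{j+1}|)\ge\tfrac{c}{\sqrt2}(\sqrt2)^{\,j}$, which gives $\limsup_jR_j^{1/j}\ge\sqrt2>1$, i.e.\ exponential growth of the homology rank with the complexity.

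The same scheme yields the homotopy statement (Theorem~\ref{th:exp_compl_homot}): one replaces $F_d$ by the homotopy generating function $\prod_{ij}(1-x^iu^j)^{-\chi_{ij}^\pi}$ of Lemma~\ref{l161}, takes logarithms, and again evaluates at $x=-1$; summing these estimates over a band of complexities produces the cumulative bound of Theorem~\ref{th:cumulative}. The only real obstacle throughout is the even case, where the sign oscillation of the Euler characteristics forces the two-step (companion-matrix) argument in place of a direct complexity-by-complexity estimate.
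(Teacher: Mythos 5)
Your reduction to the generating function and the odd-$d$ case coincide with the paper's proof: both specialize to $x=-1$, factor $1-u-2u^2=(1+u)(1-2u)$, and read off growth $\approx\tfrac23 2^j$ from the partial fractions. The divergence is in the even case. The paper invokes Baker's theorem on linear forms in logarithms to show that $\bigl|(j+1)\theta-n\bigr|$ (with $e^{i\pi\theta}=\tfrac{1+i\sqrt7}{2\sqrt2}$, which is checked not to be a root of unity) cannot be too small, and thereby obtains a lower bound $|a_j|>C_1 2^{j/2}/j^{C_2}$ for \emph{every} $j$. Your companion-matrix argument is a genuinely different and much more elementary route — it needs nothing beyond the fact that $M=\left(\begin{smallmatrix}0&1\\-2&1\end{smallmatrix}\right)$ is real-conjugate to $\sqrt2$ times a rotation — but it buys strictly less: it shows only that two consecutive coefficients cannot both be small, i.e.\ $\max(|a_j|,|a_{j+1}|)\ge c(\sqrt2)^j$, hence $\limsup_j R_j^{1/j}\ge\sqrt2$.

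That loss matters for the statement as written. Theorem~\ref{th:exp_compl_homol} asserts exponential growth of the rank \emph{in a given complexity} $j$, i.e.\ a lower bound $R_j\ge C\lambda^j$ holding for each $j$, and your even-$d$ argument leaves open the possibility that $R_j$ is small along a subsequence of density $1/2$. (Your weaker conclusion does still suffice for Theorem~\ref{th:cumulative}, since cumulative ranks are monotone, but not for the theorem itself; and the same issue would recur in your sketch of Theorem~\ref{th:exp_compl_homot}, where the paper again needs the Baker-type two-sided bounds~\eqref{eq17:bounds} on the coefficients $B_j$.) To close the gap you either need a quantitative non-resonance statement for the angle $\theta$ — which is exactly what Baker's theorem supplies, at the cost of the polynomial factor $j^{-C_2}$ — or some independent input forcing $|a_j|$ itself, not just the norm of the pair $(a_j,a_{j+1})$, to be large for every $j$.
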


\begin{proof}
Consider first the case when $d$ is odd. The formula $F_{odd}(-1,u)=\frac 1{1-u-2u^2}=\frac 1{(1+u)(1-2u)}=\frac {1/3}{1+u}+\frac{2/3}{1-2u}$ implies at least exponential growth $\approx \frac 23 2^j$ of the rank of the homology groups in complexity $j$ in this case.

Similarly in the case of even $d$ we have $F_{even}(-1,u)=\frac 1{1-u+2u^2}=\frac 1{(1-\frac{1+\sqrt{-7}}{2}u)(1-\frac{1-\sqrt{-7}}{2}u)}=\sum_ja_ju^j$ with $a_j=\frac 2{\sqrt{7}}\mathrm{Im} \left(\frac{1+\sqrt{-7}}{2}\right)^{j+1}$. Using Baker's theorem~\cite{Baker1,Baker2} one can get that $a_j$ has also exponential growth,\footnote{I am gratefull to C.~Pinner for the argument that follows.} more precisely for some $C_1>0$ and $C_2>0$ one has
$$
|a_j|>C_1\left|\frac{1+\sqrt{-7}}{2}\right|^{j}\Bigr/j^{C_2}=C_1\frac{2^{j/2}}{j^{C_2}}.
$$ 
Indeed, Baker's theorem can be formulated as follows~\cite[III]{Baker1}.

\begin{theorem*}
{\rm(A. Baker~\cite[III]{Baker1})}
If $\alpha_1$, $\alpha_2$, $\ldots$, $\alpha_n$, and $\beta_0$, $\beta_1$, $\beta_2$, $\ldots$, $\beta_n$ are algebraic of degree at most $D$ and heights at most $A$ and $B$ (assuming that $B\geq 2$) respectively, then
$$
\Lambda=\beta_0+\beta_1\log\alpha_1+\ldots+\beta_n\log\alpha_n
$$
has $\Lambda=0$  or $|\Lambda|>B^{-C}$ where $C$ is a constant depending only on $n$, $D$, and $A$.
\end{theorem*}

To recall the {\it height} of an algebraic number is the maximum of the absolute values of the relatively prime integer coefficients in the minimal  defining polynomial.

\vspace{.2cm}

We will need this theorem only when $n=2$ (and also when $n=3$ for the proof of Theorem~\ref{th:exp_compl_homot}).

Take $\alpha_1=\frac{1+\sqrt{7}i}{2\sqrt{2}}=e^{i\pi\theta}$, where $0<\theta<\frac 12$, and $\alpha_2=-1=e^{i\pi}$. So one has $\log\alpha_1=i\pi\theta$, $\log\alpha_2=i\pi$. One has
\begin{multline*}
|a_j|=\left|\frac 2{\sqrt{7}}\mathrm{Im} \left(\frac{1+i\sqrt{7}}{2}\right)^{j+1}\right|=\frac 2{\sqrt{7}} \sqrt{2}^{\,j+1}|\sin(\pi (j+1)\theta)|=\\
=\frac 2{\sqrt{7}} \sqrt{2}^{\,j+1} \sin\pi||(j+1)\theta||\geq\frac 2{\sqrt{7}} \sqrt{2}^{\,j+1}\frac 2\pi \pi||(j+1)\theta||,
\end{multline*}
where $||x||$ denote the distance to the nearest integer. Let $n=\lfloor (j+1)\theta\rfloor$ or $\lfloor (j+1)\theta\rfloor+1$ be this nearest integer. Obviously, $n\leq j+1$ since $\theta<\frac 12$. The right-hand side is
$$
\frac 4{\pi\sqrt{7}} \sqrt{2}^{\,j+1} |\pi((j+1)\theta-n)|=  \frac 4 {\pi\sqrt{7}} \sqrt{2}^{\,j+1} |(j+1)\log\alpha_1-n\log\alpha_2|>
\frac 4 {\sqrt{7}\pi} \sqrt{2}^{\,j+1} (j+1)^{-C}.
$$
The last inequality uses Baker's theorem. The only thing we need to check is that $(j+1)\theta\neq n$, or in other words that $\alpha_1$ is not a root of unity. But $\alpha_1$ is not even an algebraic integer since its minimal polynomial is
$$
\prod \left( x\pm\left(\frac{1\pm \sqrt{7}i}{2\sqrt{2}}\right)\right)=x^4+\frac 32x^2+1.
$$
\end{proof}

Similar result holds for the rational homotopy of these spaces.

\begin{thm}\label{th:exp_compl_homot}
The rank of the rational homotopy of $\overline{Emb}_d$ in a given complexity $j$ grows at least exponentially with $j$.
\end{thm}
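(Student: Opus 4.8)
The plan is to bound the rank of $\pi_*(\hemb)\otimes\Q$ in complexity $j$ from below by the quantity $\mathrm{total}_j:=\sum_i|\chi_{ij}^\pi|$, and then to show this grows at least exponentially. The first reduction is immediate: since $\chi_{ij}^\pi=\sum_k(-1)^k\pi_{ijk}$, one has $|\chi_{ij}^\pi|\le\sum_k\pi_{ijk}$, so the rank $\sum_{i,k}\pi_{ijk}$ of rational homotopy in complexity $j$ is at least $\mathrm{total}_j$; and for either sign $\bigl|\sum_i(\pm1)^i\chi_{ij}^\pi\bigr|\le\mathrm{total}_j$. Hence it is enough to produce an exponential lower bound for the $u^j$-coefficient of $G(x,u):=\sum_{ij}\chi_{ij}^\pi x^iu^j$ at $x=-1$.

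To get at $G(-1,u)$ I would use the known closed forms of $F_d(\pm1,u)$. Taking logarithms in the product formula of Lemma~\ref{l161} gives $\log F_d(x,u)=\sum_{k\ge1}\frac1k G(x^k,u^k)$, and M\"obius inversion yields
$$
G(x,u)=\sum_{k\ge1}\frac{\mu(k)}{k}\log F_d(x^k,u^k).
$$
Setting $x=-1$ and splitting according to the parity of $k$, the even-$k$ terms involve only $F_d(1,\cdot)$ — which is $\frac1{1-u}$ for odd $d$ and $\frac1{1+u}$ for even $d$ — and contribute only $O(1)$ to the $u^n$-coefficient; the odd-$k$ terms with $k>1$ are exponentially smaller than the dominant $k=1$ term $\log F_d(-1,u)$. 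For odd $d$, $F_{odd}(-1,u)=\frac1{1-u-2u^2}$, so $\log F_{odd}(-1,u)=-\log(1+u)-\log(1-2u)$ has $u^n$-coefficient $\frac{2^n+(-1)^n}{n}$; collecting everything, the $u^j$-coefficient of $G(-1,u)$ equals $\frac{2^j}{j}+O(2^{j/3})$, whence $\mathrm{total}_j\ge\frac{2^j}{2j}$ for all large $j$, and the theorem follows in the odd case.

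For even $d$, $F_{even}(-1,u)=\frac1{1-u+2u^2}=\frac1{(1-\rho u)(1-\bar\rho u)}$ with $\rho=\frac{1+i\sqrt7}{2}$, $|\rho|=\sqrt2$, so $\log F_{even}(-1,u)$ has $u^n$-coefficient $\frac{\rho^n+\bar\rho^n}{n}$, and the same computation gives that the $u^j$-coefficient of $G(-1,u)$ is $\frac{\rho^j+\bar\rho^j}{j}+O(2^{j/6})$, where $\rho^j+\bar\rho^j=2^{j/2+1}\cos(j\psi)$, $\psi=\arg\rho$. Everything now hinges on a lower bound $|\cos(j\psi)|\gg j^{-C}$, i.e., that $2j\psi$ stays a polynomial distance away from the integer multiples of $\pi$; this is exactly where Baker's theorem on linear forms in logarithms is needed, applied — as in the proof of Theorem~\ref{th:exp_compl_homol}, but now to a form in the logarithms of $\rho$, $\bar\rho$ and $-1$ (hence three logarithms) — together with the verification that $\rho/\sqrt2$ is not a root of unity. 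The latter is checked as in that proof: $(\rho/\sqrt2)^2=\frac{-3+i\sqrt7}{4}$ satisfies the non-cyclotomic polynomial $2x^2+3x+2$. Granting this, $|\rho^j+\bar\rho^j|\gg 2^{j/2}j^{-C}$, which dominates the $O(2^{j/6})$ error, so $\mathrm{total}_j\gg 2^{j/2}j^{-C-1}$ and the rank of $\pi_*(\hemb)\otimes\Q$ in complexity $j$ grows at least exponentially.

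The main obstacle is the even-$d$ case: establishing the Diophantine lower bound $|\cos(j\psi)|\gg j^{-C}$ via Baker's theorem (with the attendant non-root-of-unity check), together with the — routine but slightly delicate — bookkeeping needed to confirm that the subdominant terms produced by the M\"obius inversion and by composite values of $j$ never swamp the main term $\rho^j+\bar\rho^j$. The odd-$d$ case is essentially elementary once the plethystic logarithm is written down.
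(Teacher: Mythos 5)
Your proof is correct and follows essentially the same route as the paper: lower-bound the homotopy rank in complexity $j$ by a signed sum of the $\chi_{ij}^\pi$ extracted from $F_d(-1,u)$ via M\"obius inversion, with the dominant pole of $\frac 1{1-u-2u^2}$ (resp. $\frac 1{1-u+2u^2}$) giving the exponential main term and Baker's theorem supplying the Diophantine lower bound on $|\rho^j+\bar\rho^j|$ in the even case. The only cosmetic difference is that you M\"obius-invert the plethystic logarithm of Lemma~\ref{l161} directly, so you track $\sum_i(-1)^i\chi_{ij}^\pi=\chi_{E,j}^\pi-\chi_{O,j}^\pi$ and never need the Scannell--Sinha vanishing of Lemma~\ref{l:euler_homot_compl}, whereas the paper uses that vanishing plus a logarithmic derivative to isolate $\chi_{E,j}^\pi$; the two quantities differ by $\chi_j^\pi$, which is zero in the relevant range anyway.
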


The proof is based on a few observations. Let
$$
\chi_j=\sum_i \chi_{ij}, \qquad \chi_j^\pi=\sum_i\chi_{ij}^\pi
$$
denote the Euler characteristic of the homology, resp. homotopy of $\overline{Emb}_d$ in complexity $j$. One has
$$
F_d(1,u)=\sum_j\chi_ju^j=\prod_j\frac 1{(1-u^j)^{\chi_j^\pi}}.
$$
The last equality is a consequence of Lemma~\ref{l161}. For the proof of Theorem~\ref{th:exp_compl_homot} we will need the following lemma.

\begin{lemma}[Scannell, Sinha~\cite{SinhaSc}]\label{l:euler_homot_compl}
The Euler characteristic of the rational homotopy of  $\overline{Emb}_d$ in complexity $j$ for odd $d$ is
$$
\chi_j^\pi=
\begin{cases}
1,&\text{if $j=1$;}\\
0,&j\geq 2;
\end{cases}
$$
for even $d$ is
$$
\chi_j^\pi=
\begin{cases}
(-1)^j,&\text{if $j=1$ or $2$;}\\
0,&j\geq 3.
\end{cases}
$$
\end{lemma}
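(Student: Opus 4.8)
The plan is to extract $\chi_j^\pi$ from the already-computed generating functions $F_{odd}(1,u)$ and $F_{even}(1,u)$ by inverting the product formula of Lemma~\ref{l161}. Recall that setting $x=1$ in~\eqref{eq161} and grouping the Hodge-degree index $i$ gives
$$
F_d(1,u)=\prod_{ij}\frac 1{(1-u^j)^{\chi_{ij}^\pi}}=\prod_{j}\frac 1{(1-u^j)^{\chi_j^\pi}},
$$
where $\chi_j^\pi=\sum_i\chi_{ij}^\pi$. So it suffices to read off the exponents in the unique factorization of $F_d(1,u)$ of the form $\prod_{j\geq 1}(1-u^j)^{-e_j}$ into cyclotomic-type factors, and to check $e_j=\chi_j^\pi$ matches the claimed values.

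First I would record the two evaluations already established in Section~\ref{s17} (as consequences of Theorem~\ref{t131} together with the observation $E_\ell(1)=0$ for $\ell\geq 2$):
$$
F_{odd}(1,u)=\frac 1{1-u},\qquad F_{even}(1,u)=\frac 1{1+u}.
$$
For odd $d$ this is already in the target form $\prod_j(1-u^j)^{-e_j}$ with $e_1=1$ and $e_j=0$ for $j\geq 2$, giving $\chi_j^\pi=1$ for $j=1$ and $0$ otherwise. For even $d$ I would rewrite $\frac 1{1+u}=\frac{1-u}{1-u^2}=(1-u)^{-(-1)}(1-u^2)^{-1}=(1-u)(1-u^2)^{-1}$, i.e. $e_1=-1$, $e_2=1$, and $e_j=0$ for $j\geq 3$; this is exactly $\chi_1^\pi=(-1)^1$, $\chi_2^\pi=(-1)^2$, $\chi_j^\pi=0$ for $j\geq 3$. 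The only subtlety is the uniqueness of such a factorization, which follows by comparing coefficients recursively (or by taking $\log$ and using Möbius inversion, since $-\log\bigl(\prod_j(1-u^j)^{-e_j}\bigr)=\sum_j e_j\log(1-u^j)=-\sum_{n\geq1}\bigl(\sum_{j\mid n} j\,e_j/n\bigr)u^n$ up to the usual rearrangement, so the $e_j$ are determined).

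An alternative, more self-contained route — which avoids quoting~\cite{SinhaSc} entirely and instead re-derives it — is to compute $\chi_j^\pi$ directly from $F_d(1,u)$ via the standard ``homotopy Euler characteristic'' inversion: if $F(u)=\sum_j\chi_j u^j=\prod_j(1-u^j)^{-\chi_j^\pi}$ then $\sum_{n\geq1}\bigl(\sum_{d\mid n}d\,\chi_{n/d}^\pi\bigr)u^n=u\frac{F'(u)}{F(u)}$, and one solves for $\chi_n^\pi$ by Möbius inversion after extracting the power-sum side; plugging in $F_{odd}(1,u)=1/(1-u)$ gives $u F'/F=u/(1-u)=\sum_{n\geq1}u^n$, forcing $\sum_{d\mid n}d\,\chi_{n/d}^\pi=1$ for all $n$, whence $\chi_1^\pi=1$ and inductively $\chi_n^\pi=0$ for $n\geq2$; similarly for $F_{even}(1,u)=1/(1+u)$ one gets $uF'/F=-u/(1+u)=\sum_{n\geq1}(-1)^n u^n$, so $\sum_{d\mid n}d\,\chi_{n/d}^\pi=(-1)^n$, forcing $\chi_1^\pi=-1$, $\chi_2^\pi=1$, and $\chi_n^\pi=0$ for $n\geq3$. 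I expect the main (very minor) obstacle to be purely bookkeeping: making sure the evaluation $F_d(1,u)$ genuinely collapses to the stated closed form, i.e. justifying that every factor of~\eqref{eq131}--\eqref{eq132} with $\ell\geq 2$ becomes $1$ at $x=1$ because its exponent $E_\ell(1)$ vanishes and its base is a unit power series — this is exactly the computation already carried out at the start of Section~\ref{s17}, so no new input is required.
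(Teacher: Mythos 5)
Your proposal is correct and is essentially the paper's own argument: the proof in Section~\ref{s17} simply evaluates $F_{odd}(1,u)=\frac 1{1-u}=\frac 1{(1-u)^1}$ and $F_{even}(1,u)=\frac 1{1+u}=\frac 1{(1-u)^{-1}}\cdot\frac 1{(1-u^2)^1}$ and reads off the exponents via Lemma~\ref{l161}, exactly as you do. Your extra remarks on the uniqueness of the factorization and the M\"obius-inversion variant are sound but not needed beyond what the paper records.
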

\begin{proof}
The first assertion is true since
$$
F_{odd}(1,u)=\frac 1{1-u}=\frac 1{(1-u)^1}.
$$
The second one is true since
$$
F_{even}(1,u)=\frac 1{1+u}=\frac 1{(1-u)^{-1}}\cdot \frac 1{(1-u^2)^1}.
$$
\end{proof}

\begin{proof}[Proof of Theorem~\ref{th:exp_compl_homot}]

Denote by $\chi_{E,j}^\pi$, resp. $\chi_{O,j}^\pi$ the sum of Euler characteristics over even, resp. odd Hodge degrees:
$$
\chi_{E,j}^\pi=\sum_{i \text{ even}}\chi_{i,j}^\pi, \qquad \chi_{O,j}^\pi=\sum_{i \text{ odd}}\chi_{i,j}^\pi.
$$
It follows from Lemma~\ref{l161} that
$$
F_d(-1,u)=\prod_i\frac{(1+u^j)^{-\chi_{O,j}^\pi}}{(1-u^j)^{\chi_{E,j}^\pi}}.
$$
Since $\chi_{E,j}^\pi+\chi_{O,j}^\pi=\chi_j^\pi=0$ for $j\geq 2$ in case of odd $d$ (and for $j\geq 3$ in case of even $d$, see Lemma~\ref{l:euler_homot_compl}), one has:
$$
F_{odd}(-1,u)=\frac 1{1-u}\prod_{j\geq 2}\left(\frac{1+u^j}{1-u^j}\right)^{\chi_{E,j}^\pi}=\frac 1{1-u-2u^2}=\frac 1{(1+u)(1-2u)}.
$$
(We used that $\chi_{E,1}^\pi=1$ and $\chi_{O,1}^\pi=0$, see the first row of Table~1.)
Or, equivalently,
$$
\prod_{j\geq 1}\left(\frac{1+u^j}{1-u^j}\right)^{\chi_{E,j}^\pi}=\frac 1{1-2u}.
$$
Applying logarithmic derivative, and multiplying each side by $u$, one has:
$$
\sum_{j\geq 1} 2j\chi_{E,j}^\pi\left(\frac {u^j}{1-u^{2j}}\right)=\frac {2u}{1-2u}.
$$
Therefore,
$$
\sum_{{k|n}\atop {\text{$k$ odd}}}2\frac nk\chi_{E,\frac nk}^\pi=2^n.
$$
Using M\"obius transformation one obtains:
$$
\chi_{E,j}^\pi=\frac 1{2j}\sum_{{k|j}\atop {\text{$k$ odd}}}\mu(k)2^{j/k}=\frac 1{2j}2^j+\frac 1{2j}\sum_{{k|j}\atop {\text{$k>1$ odd}}}\mu(k)2^{j/k}.
$$
The above formula implies that, in case of odd $d$, $\chi_{E,j}^\pi$ has asymptotics ${2^j}/{2j}$. Indeed, since the number of divisors is always less then the number itself, the absolute value of the right summand  is less then $\frac 12 2^{j/2}$ which is infinitely small compared to ${2^j}/{2j}$. Since the rank of rational homotopy in complexity $j$ is greater then $\chi_{E,j}^\pi$, one gets the result of the theorem for odd $d$.

The case of even $d$ is obtained in a similar way. We have
$$
F_{even}(-1,u)=\frac {1+u}1\cdot \frac {(1+u^2)^{-1}}{1}\cdot \prod_{j\geq 3}\left(\frac{1+u^j}{1-u^j}\right)^{\chi_{E,j}^\pi}=\frac 1{1-u+2u^2}
$$
(see the first and the second rows of Table~3.).
Equivalently,
$$
\prod_{j\geq 1}\left(\frac{1+u^j}{1-u^j}\right)^{\chi_{E,j}^\pi}=\frac{1+u^2}{(1+u)(1-u+2u^2)}.
$$
Taking logarithmic derivative and multiplying each side by $u$, one obtains:
$$
\sum_{j\geq 1} 2j\chi_{E,j}^\pi\left(\frac {u^j}{1-u^{2j}}\right)=\frac{2u^2}{1+u^2}-\frac {u}{1+u}+\frac{u-4u^2}{1-u+2u^2}=\sum_jB_ju^j.
$$
Using an argument similar to the proof of Theorem~\ref{th:exp_compl_homol}, one can show that the sequence $|B_j|$ starting from some $j$ is bounded by
$$
\frac{C_12^{j/2}}{j^\alpha}<|B_j|<C_22^{j/2}
\eqno(\numb)\label{eq17:bounds}
$$
for some positive constants $C_1$, $C_2$, $\alpha$. Using M\"obius transformation, one has
$$
\chi_{E,j}^\pi=\frac 1{2j}\sum_{{k|j}\atop {\text{$k$ odd}}}\mu(k)B_{j/k}=\frac 1{2j}B_j+\frac 1{2j}\sum_{{k|j}\atop {\text{$k>1$ odd}}}\mu(k)B_{j/k}.
$$
Using the lower bound of~\eqref{eq17:bounds} for the first summand, and the upper bound of~\eqref{eq17:bounds} to estimate the second one, we see that $\chi_{E,j}^\pi$ in case of even $d$ again has an exponential growth.
\end{proof}

An immediate consequence of Theorems~\ref{th:exp_compl_homol}, \ref{th:exp_compl_homot} is the following.

\begin{thm}\label{th:cumulative}
The cumulative ranks of the rational homology and of rational homotopy
$$
\rank(H_{\leq n}(\overline{Emb}_d,\Q)), \qquad \rank(\pi_{\leq n} (\overline{Emb}_d)\otimes\Q)
$$
grow at least exponentially with $n$.
\end{thm}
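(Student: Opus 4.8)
The plan is to deduce the statement formally from Theorems~\ref{th:exp_compl_homol} and~\ref{th:exp_compl_homot}, together with the elementary observation that, in a fixed complexity~$j$, the rational homology and homotopy of $\overline{Emb}_d$ are concentrated in a range of degrees growing only linearly in~$j$.

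First I would recall that the complexes $\Tot {A_d}^\bullet$ and $\Tot {L_d}^\bullet$ (equivalently, the graph models $\Tot {D_d}^\bullet$, $\Tot {P_d}^\bullet$, or $AM({D_d}^\bullet)$, $AM({P_d}^\bullet)$) split as direct sums indexed by the complexity $j\geq 0$, since the differential preserves~$j$. Hence so do their homologies, which by Theorems~\ref{t21} and~\ref{t82} compute $H_*(\overline{Emb}_d,\Q)$ and $\pi_*(\overline{Emb}_d)\otimes\Q$; the complexity-$j$ summands split further by Hodge degree as $\bigoplus_i H_*^{(i,j)}(\overline{Emb}_d,\Q)$ and $\bigoplus_i\pi_*^{(i,j)}(\overline{Emb}_d)\otimes\Q$, and the ranks over distinct~$j$ add up. As recalled in Section~\ref{s13}, the group $H_n^{(i,j)}(\overline{Emb}_d,\Q)$, and likewise $\pi_n^{(i,j)}(\overline{Emb}_d)\otimes\Q$, vanishes unless $j(d-3)\leq n\leq j(d-2)$; in particular every rational homology or homotopy class of complexity~$j$ lies in degree $\leq j(d-2)$.

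Next, fix $n$ and set $j=\lfloor n/(d-2)\rfloor$. Since all complexity-$j$ classes sit in degrees $\leq j(d-2)\leq n$, the splitting above gives
$$
\rank\bigl(H_{\leq n}(\overline{Emb}_d,\Q)\bigr)\ \geq\ \sum_i\rank\bigl(H_*^{(i,j)}(\overline{Emb}_d,\Q)\bigr),
$$
and similarly for $\pi_{\leq n}$. By Theorems~\ref{th:exp_compl_homol} and~\ref{th:exp_compl_homot} the right-hand sides are bounded below by $C\beta^{j}/j^{\alpha}$ for suitable constants $C>0$, $\beta>1$, $\alpha\geq 0$ depending only on the parity of~$d$ (for $H_*$ with odd~$d$ one may take $\beta=2$, $\alpha=0$, from $F_{odd}(-1,u)=1/(1-u-2u^2)$; in the remaining cases the proofs of those theorems supply a bound of this shape with $\beta=\sqrt2$ and some $\alpha\geq0$). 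Substituting $j=\lfloor n/(d-2)\rfloor$ yields $\rank(H_{\leq n}(\overline{Emb}_d,\Q))\geq C'\gamma^{n}/n^{\alpha}$ with $\gamma=\beta^{1/(d-2)}>1$, and the same bound for $\rank(\pi_{\leq n}(\overline{Emb}_d)\otimes\Q)$; in particular $\varliminf_{n\to\infty}\sqrt[n]{\rank(H_{\leq n})}\geq\gamma>1$, which is precisely ``at least exponential'' growth.

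The argument is essentially a bookkeeping reduction, so there is no genuine obstacle. The only point deserving care is the exact meaning of ``at least exponential'': in the even case, and for homotopy, the per-complexity lower bounds of Theorems~\ref{th:exp_compl_homol}--\ref{th:exp_compl_homot} carry a subexponential factor $j^{-\alpha}$, so the cleanest formulation of the conclusion is $\varliminf_{n\to\infty}\sqrt[n]{\rank(-)}>1$ (equivalently, $\rank(-)\geq b^{n}$ for $n$ large, for any fixed $b$ with $1<b<\gamma$), which the above argument delivers at once.
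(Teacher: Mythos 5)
Your argument is correct and is essentially the paper's own proof: both fix $j=\lfloor n/(d-2)\rfloor$, use the degree bound from Section~\ref{s13} to conclude that all of the complexity-$j$ homology/homotopy lies in degrees $\leq n$, and then invoke Theorems~\ref{th:exp_compl_homol} and~\ref{th:exp_compl_homot}. Your extra care about the subexponential factors and the precise meaning of ``at least exponential'' is a harmless elaboration of the same reduction.
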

\begin{proof}
The idea is that the homology/homotopy in complexity $j$ has the total degree less then $j(d-2)$, see Section~\ref{s13}. Therefore for a given $n$ the cumulative homology/homotopy is sure to contain the whole homology/homotopy of complexity $\lfloor \frac n{d-2}\rfloor$.
\end{proof}

\section*{Acknowledgement}
The author would like to thank first M.~Kontsevich, short discussions with whom were always the most enlightening. He is also grateful to G.~Arone, A.~Giaquinto, P.~Lambrechts, J.-L.~Loday, F.~Patras, M.~Ronco, P.~Salvatore, M.~Vigu\'e-Poirrier for discussions and communications, and to C.~Pinner who provided  a proof of Theorem~\ref{th:exp_compl_homol}. The author is also grateful to G.~Arone, P.~Lambrechts, P.~Salvatore and to their institutions University of Virginia, Universit\'e Catholique de Louvain, and Universita di Roma Tor Vergata for hospitality. This work was partially written during the stay at the Max Planck Institute of Mathematics, that the author also thanks for its hospitality.

\newpage

\appendix

\section{Tables}

\begin{table}[h!]

{\tiny \begin{center}
\begin{tabular}{|c|c|c|c|c|c|c|c|c|c|c|c|c|c|c|c|c|c|c|c|c|c|c|c|c|}
\hline
&\multicolumn{23}{|c|}{Hodge degree $i$}&\\
\hline
$j$ & 1 & 2 & 3 & 4 & 5 & 6 & 7 & 8 & 9 & 10 & 11 & 12 & 13 & 14 & 15 & 16 & 17 & 18 & 19 & 20 & 21 & 22 & 23 & total \\
\hline
1 & \begin{turn}{80}{}\end{turn} & \begin{turn}{80}{1}\end{turn} & \begin{turn}{80}{}\end{turn} & \begin{turn}{80}{}\end{turn} & \begin{turn}{80}{}\end{turn} & \begin{turn}{80}{}\end{turn} & \begin{turn}{80}{}\end{turn} & \begin{turn}{80}{}\end{turn} & \begin{turn}{80}{}\end{turn} & \begin{turn}{80}{}\end{turn} & \begin{turn}{80}{}\end{turn} & \begin{turn}{80}{}\end{turn} & \begin{turn}{80}{}\end{turn} & \begin{turn}{80}{}\end{turn} & \begin{turn}{80}{}\end{turn} & \begin{turn}{80}{}\end{turn} & \begin{turn}{80}{}\end{turn} & \begin{turn}{80}{}\end{turn} & \begin{turn}{80}{}\end{turn} & \begin{turn}{80}{}\end{turn} & \begin{turn}{80}{}\end{turn} & \begin{turn}{80}{}\end{turn} & \begin{turn}{80}{}\end{turn} & 1 \\
\hline
2 & \begin{turn}{80}{-1}\end{turn} & \begin{turn}{80}{1}\end{turn} & \begin{turn}{80}{}\end{turn} & \begin{turn}{80}{}\end{turn} & \begin{turn}{80}{}\end{turn} & \begin{turn}{80}{}\end{turn} & \begin{turn}{80}{}\end{turn} & \begin{turn}{80}{}\end{turn} & \begin{turn}{80}{}\end{turn} & \begin{turn}{80}{}\end{turn} & \begin{turn}{80}{}\end{turn} & \begin{turn}{80}{}\end{turn} & \begin{turn}{80}{}\end{turn} & \begin{turn}{80}{}\end{turn} & \begin{turn}{80}{}\end{turn} & \begin{turn}{80}{}\end{turn} & \begin{turn}{80}{}\end{turn} & \begin{turn}{80}{}\end{turn} & \begin{turn}{80}{}\end{turn} & \begin{turn}{80}{}\end{turn} & \begin{turn}{80}{}\end{turn} & \begin{turn}{80}{}\end{turn} & \begin{turn}{80}{}\end{turn} & 2 \\
\hline
3 & \begin{turn}{80}{-1}\end{turn} & \begin{turn}{80}{1}\end{turn} & \begin{turn}{80}{}\end{turn} & \begin{turn}{80}{}\end{turn} & \begin{turn}{80}{}\end{turn} & \begin{turn}{80}{}\end{turn} & \begin{turn}{80}{}\end{turn} & \begin{turn}{80}{}\end{turn} & \begin{turn}{80}{}\end{turn} & \begin{turn}{80}{}\end{turn} & \begin{turn}{80}{}\end{turn} & \begin{turn}{80}{}\end{turn} & \begin{turn}{80}{}\end{turn} & \begin{turn}{80}{}\end{turn} & \begin{turn}{80}{}\end{turn} & \begin{turn}{80}{}\end{turn} & \begin{turn}{80}{}\end{turn} & \begin{turn}{80}{}\end{turn} & \begin{turn}{80}{}\end{turn} & \begin{turn}{80}{}\end{turn} & \begin{turn}{80}{}\end{turn} & \begin{turn}{80}{}\end{turn} & \begin{turn}{80}{}\end{turn} & 2 \\
\hline
4 & \begin{turn}{80}{-1}\end{turn} & \begin{turn}{80}{1}\end{turn} & \begin{turn}{80}{-1}\end{turn} & \begin{turn}{80}{1}\end{turn} & \begin{turn}{80}{}\end{turn} & \begin{turn}{80}{}\end{turn} & \begin{turn}{80}{}\end{turn} & \begin{turn}{80}{}\end{turn} & \begin{turn}{80}{}\end{turn} & \begin{turn}{80}{}\end{turn} & \begin{turn}{80}{}\end{turn} & \begin{turn}{80}{}\end{turn} & \begin{turn}{80}{}\end{turn} & \begin{turn}{80}{}\end{turn} & \begin{turn}{80}{}\end{turn} & \begin{turn}{80}{}\end{turn} & \begin{turn}{80}{}\end{turn} & \begin{turn}{80}{}\end{turn} & \begin{turn}{80}{}\end{turn} & \begin{turn}{80}{}\end{turn} & \begin{turn}{80}{}\end{turn} & \begin{turn}{80}{}\end{turn} & \begin{turn}{80}{}\end{turn} & 4 \\
\hline
5 & \begin{turn}{80}{-2}\end{turn} & \begin{turn}{80}{2}\end{turn} & \begin{turn}{80}{-1}\end{turn} & \begin{turn}{80}{1}\end{turn} & \begin{turn}{80}{}\end{turn} & \begin{turn}{80}{}\end{turn} & \begin{turn}{80}{}\end{turn} & \begin{turn}{80}{}\end{turn} & \begin{turn}{80}{}\end{turn} & \begin{turn}{80}{}\end{turn} & \begin{turn}{80}{}\end{turn} & \begin{turn}{80}{}\end{turn} & \begin{turn}{80}{}\end{turn} & \begin{turn}{80}{}\end{turn} & \begin{turn}{80}{}\end{turn} & \begin{turn}{80}{}\end{turn} & \begin{turn}{80}{}\end{turn} & \begin{turn}{80}{}\end{turn} & \begin{turn}{80}{}\end{turn} & \begin{turn}{80}{}\end{turn} & \begin{turn}{80}{}\end{turn} & \begin{turn}{80}{}\end{turn} & \begin{turn}{80}{}\end{turn} & 6 \\
\hline
6 & \begin{turn}{80}{-1}\end{turn} & \begin{turn}{80}{2}\end{turn} & \begin{turn}{80}{-3}\end{turn} & \begin{turn}{80}{2}\end{turn} & \begin{turn}{80}{-1}\end{turn} & \begin{turn}{80}{1}\end{turn} & \begin{turn}{80}{}\end{turn} & \begin{turn}{80}{}\end{turn} & \begin{turn}{80}{}\end{turn} & \begin{turn}{80}{}\end{turn} & \begin{turn}{80}{}\end{turn} & \begin{turn}{80}{}\end{turn} & \begin{turn}{80}{}\end{turn} & \begin{turn}{80}{}\end{turn} & \begin{turn}{80}{}\end{turn} & \begin{turn}{80}{}\end{turn} & \begin{turn}{80}{}\end{turn} & \begin{turn}{80}{}\end{turn} & \begin{turn}{80}{}\end{turn} & \begin{turn}{80}{}\end{turn} & \begin{turn}{80}{}\end{turn} & \begin{turn}{80}{}\end{turn} & \begin{turn}{80}{}\end{turn} & 10 \\
\hline
7 & \begin{turn}{80}{-2}\end{turn} & \begin{turn}{80}{3}\end{turn} & \begin{turn}{80}{-4}\end{turn} & \begin{turn}{80}{4}\end{turn} & \begin{turn}{80}{-3}\end{turn} & \begin{turn}{80}{2}\end{turn} & \begin{turn}{80}{}\end{turn} & \begin{turn}{80}{}\end{turn} & \begin{turn}{80}{}\end{turn} & \begin{turn}{80}{}\end{turn} & \begin{turn}{80}{}\end{turn} & \begin{turn}{80}{}\end{turn} & \begin{turn}{80}{}\end{turn} & \begin{turn}{80}{}\end{turn} & \begin{turn}{80}{}\end{turn} & \begin{turn}{80}{}\end{turn} & \begin{turn}{80}{}\end{turn} & \begin{turn}{80}{}\end{turn} & \begin{turn}{80}{}\end{turn} & \begin{turn}{80}{}\end{turn} & \begin{turn}{80}{}\end{turn} & \begin{turn}{80}{}\end{turn} & \begin{turn}{80}{}\end{turn} & 18 \\
\hline
8 & \begin{turn}{80}{-2}\end{turn} & \begin{turn}{80}{4}\end{turn} & \begin{turn}{80}{-6}\end{turn} & \begin{turn}{80}{7}\end{turn} & \begin{turn}{80}{-6}\end{turn} & \begin{turn}{80}{4}\end{turn} & \begin{turn}{80}{-2}\end{turn} & \begin{turn}{80}{1}\end{turn} & \begin{turn}{80}{}\end{turn} & \begin{turn}{80}{}\end{turn} & \begin{turn}{80}{}\end{turn} & \begin{turn}{80}{}\end{turn} & \begin{turn}{80}{}\end{turn} & \begin{turn}{80}{}\end{turn} & \begin{turn}{80}{}\end{turn} & \begin{turn}{80}{}\end{turn} & \begin{turn}{80}{}\end{turn} & \begin{turn}{80}{}\end{turn} & \begin{turn}{80}{}\end{turn} & \begin{turn}{80}{}\end{turn} & \begin{turn}{80}{}\end{turn} & \begin{turn}{80}{}\end{turn} & \begin{turn}{80}{}\end{turn} & 32 \\
\hline
9 & \begin{turn}{80}{-2}\end{turn} & \begin{turn}{80}{5}\end{turn} & \begin{turn}{80}{-10}\end{turn} & \begin{turn}{80}{12}\end{turn} & \begin{turn}{80}{-11}\end{turn} & \begin{turn}{80}{9}\end{turn} & \begin{turn}{80}{-5}\end{turn} & \begin{turn}{80}{2}\end{turn} & \begin{turn}{80}{}\end{turn} & \begin{turn}{80}{}\end{turn} & \begin{turn}{80}{}\end{turn} & \begin{turn}{80}{}\end{turn} & \begin{turn}{80}{}\end{turn} & \begin{turn}{80}{}\end{turn} & \begin{turn}{80}{}\end{turn} & \begin{turn}{80}{}\end{turn} & \begin{turn}{80}{}\end{turn} & \begin{turn}{80}{}\end{turn} & \begin{turn}{80}{}\end{turn} & \begin{turn}{80}{}\end{turn} & \begin{turn}{80}{}\end{turn} & \begin{turn}{80}{}\end{turn} & \begin{turn}{80}{}\end{turn} & 56 \\
\hline
10 & \begin{turn}{80}{-1}\end{turn} & \begin{turn}{80}{5}\end{turn} & \begin{turn}{80}{-14}\end{turn} & \begin{turn}{80}{20}\end{turn} & \begin{turn}{80}{-22}\end{turn} & \begin{turn}{80}{19}\end{turn} & \begin{turn}{80}{-12}\end{turn} & \begin{turn}{80}{6}\end{turn} & \begin{turn}{80}{-2}\end{turn} & \begin{turn}{80}{1}\end{turn} & \begin{turn}{80}{}\end{turn} & \begin{turn}{80}{}\end{turn} & \begin{turn}{80}{}\end{turn} & \begin{turn}{80}{}\end{turn} & \begin{turn}{80}{}\end{turn} & \begin{turn}{80}{}\end{turn} & \begin{turn}{80}{}\end{turn} & \begin{turn}{80}{}\end{turn} & \begin{turn}{80}{}\end{turn} & \begin{turn}{80}{}\end{turn} & \begin{turn}{80}{}\end{turn} & \begin{turn}{80}{}\end{turn} & \begin{turn}{80}{}\end{turn} & 102 \\
\hline
11 & \begin{turn}{80}{-2}\end{turn} & \begin{turn}{80}{7}\end{turn} & \begin{turn}{80}{-17}\end{turn} & \begin{turn}{80}{30}\end{turn} & \begin{turn}{80}{-39}\end{turn} & \begin{turn}{80}{38}\end{turn} & \begin{turn}{80}{-29}\end{turn} & \begin{turn}{80}{16}\end{turn} & \begin{turn}{80}{-6}\end{turn} & \begin{turn}{80}{2}\end{turn} & \begin{turn}{80}{}\end{turn} & \begin{turn}{80}{}\end{turn} & \begin{turn}{80}{}\end{turn} & \begin{turn}{80}{}\end{turn} & \begin{turn}{80}{}\end{turn} & \begin{turn}{80}{}\end{turn} & \begin{turn}{80}{}\end{turn} & \begin{turn}{80}{}\end{turn} & \begin{turn}{80}{}\end{turn} & \begin{turn}{80}{}\end{turn} & \begin{turn}{80}{}\end{turn} & \begin{turn}{80}{}\end{turn} & \begin{turn}{80}{}\end{turn} & 186 \\
\hline
12 & \begin{turn}{80}{}\end{turn} & \begin{turn}{80}{5}\end{turn} & \begin{turn}{80}{-22}\end{turn} & \begin{turn}{80}{45}\end{turn} & \begin{turn}{80}{-66}\end{turn} & \begin{turn}{80}{72}\end{turn} & \begin{turn}{80}{-60}\end{turn} & \begin{turn}{80}{40}\end{turn} & \begin{turn}{80}{-20}\end{turn} & \begin{turn}{80}{7}\end{turn} & \begin{turn}{80}{-2}\end{turn} & \begin{turn}{80}{1}\end{turn} & \begin{turn}{80}{}\end{turn} & \begin{turn}{80}{}\end{turn} & \begin{turn}{80}{}\end{turn} & \begin{turn}{80}{}\end{turn} & \begin{turn}{80}{}\end{turn} & \begin{turn}{80}{}\end{turn} & \begin{turn}{80}{}\end{turn} & \begin{turn}{80}{}\end{turn} & \begin{turn}{80}{}\end{turn} & \begin{turn}{80}{}\end{turn} & \begin{turn}{80}{}\end{turn} & 340 \\
\hline
13 & \begin{turn}{80}{}\end{turn} & \begin{turn}{80}{4}\end{turn} & \begin{turn}{80}{-25}\end{turn} & \begin{turn}{80}{60}\end{turn} & \begin{turn}{80}{-104}\end{turn} & \begin{turn}{80}{133}\end{turn} & \begin{turn}{80}{-125}\end{turn} & \begin{turn}{80}{91}\end{turn} & \begin{turn}{80}{-52}\end{turn} & \begin{turn}{80}{24}\end{turn} & \begin{turn}{80}{-9}\end{turn} & \begin{turn}{80}{3}\end{turn} & \begin{turn}{80}{}\end{turn} & \begin{turn}{80}{}\end{turn} & \begin{turn}{80}{}\end{turn} & \begin{turn}{80}{}\end{turn} & \begin{turn}{80}{}\end{turn} & \begin{turn}{80}{}\end{turn} & \begin{turn}{80}{}\end{turn} & \begin{turn}{80}{}\end{turn} & \begin{turn}{80}{}\end{turn} & \begin{turn}{80}{}\end{turn} & \begin{turn}{80}{}\end{turn} & 630 \\
\hline
14 & \begin{turn}{80}{-1}\end{turn} & \begin{turn}{80}{2}\end{turn} & \begin{turn}{80}{-22}\end{turn} & \begin{turn}{80}{79}\end{turn} & \begin{turn}{80}{-155}\end{turn} & \begin{turn}{80}{221}\end{turn} & \begin{turn}{80}{-244}\end{turn} & \begin{turn}{80}{203}\end{turn} & \begin{turn}{80}{-130}\end{turn} & \begin{turn}{80}{68}\end{turn} & \begin{turn}{80}{-30}\end{turn} & \begin{turn}{80}{11}\end{turn} & \begin{turn}{80}{-3}\end{turn} & \begin{turn}{80}{1}\end{turn} & \begin{turn}{80}{}\end{turn} & \begin{turn}{80}{}\end{turn} & \begin{turn}{80}{}\end{turn} & \begin{turn}{80}{}\end{turn} & \begin{turn}{80}{}\end{turn} & \begin{turn}{80}{}\end{turn} & \begin{turn}{80}{}\end{turn} & \begin{turn}{80}{}\end{turn} & \begin{turn}{80}{}\end{turn} & 1170 \\
\hline
15 & \begin{turn}{80}{-1}\end{turn} & \begin{turn}{80}{3}\end{turn} & \begin{turn}{80}{-17}\end{turn} & \begin{turn}{80}{81}\end{turn} & \begin{turn}{80}{-217}\end{turn} & \begin{turn}{80}{368}\end{turn} & \begin{turn}{80}{-445}\end{turn} & \begin{turn}{80}{413}\end{turn} & \begin{turn}{80}{-308}\end{turn} & \begin{turn}{80}{186}\end{turn} & \begin{turn}{80}{-91}\end{turn} & \begin{turn}{80}{37}\end{turn} & \begin{turn}{80}{-12}\end{turn} & \begin{turn}{80}{3}\end{turn} & \begin{turn}{80}{}\end{turn} & \begin{turn}{80}{}\end{turn} & \begin{turn}{80}{}\end{turn} & \begin{turn}{80}{}\end{turn} & \begin{turn}{80}{}\end{turn} & \begin{turn}{80}{}\end{turn} & \begin{turn}{80}{}\end{turn} & \begin{turn}{80}{}\end{turn} & \begin{turn}{80}{}\end{turn} & 2182 \\
\hline
16 & \begin{turn}{80}{-4}\end{turn} & \begin{turn}{80}{3}\end{turn} & \begin{turn}{80}{-12}\end{turn} & \begin{turn}{80}{83}\end{turn} & \begin{turn}{80}{-275}\end{turn} & \begin{turn}{80}{549}\end{turn} & \begin{turn}{80}{-769}\end{turn} & \begin{turn}{80}{823}\end{turn} & \begin{turn}{80}{-685}\end{turn} & \begin{turn}{80}{455}\end{turn} & \begin{turn}{80}{-255}\end{turn} & \begin{turn}{80}{121}\end{turn} & \begin{turn}{80}{-45}\end{turn} & \begin{turn}{80}{13}\end{turn} & \begin{turn}{80}{-3}\end{turn} & \begin{turn}{80}{1}\end{turn} & \begin{turn}{80}{}\end{turn} & \begin{turn}{80}{}\end{turn} & \begin{turn}{80}{}\end{turn} & \begin{turn}{80}{}\end{turn} & \begin{turn}{80}{}\end{turn} & \begin{turn}{80}{}\end{turn} & \begin{turn}{80}{}\end{turn} & 4096 \\
\hline
17 & \begin{turn}{80}{-18}\end{turn} & \begin{turn}{80}{19}\end{turn} & \begin{turn}{80}{-12}\end{turn} & \begin{turn}{80}{79}\end{turn} & \begin{turn}{80}{-307}\end{turn} & \begin{turn}{80}{751}\end{turn} & \begin{turn}{80}{-1258}\end{turn} & \begin{turn}{80}{1528}\end{turn} & \begin{turn}{80}{-1422}\end{turn} & \begin{turn}{80}{1071}\end{turn} & \begin{turn}{80}{-672}\end{turn} & \begin{turn}{80}{351}\end{turn} & \begin{turn}{80}{-152}\end{turn} & \begin{turn}{80}{53}\end{turn} & \begin{turn}{80}{-14}\end{turn} & \begin{turn}{80}{3}\end{turn} & \begin{turn}{80}{}\end{turn} & \begin{turn}{80}{}\end{turn} & \begin{turn}{80}{}\end{turn} & \begin{turn}{80}{}\end{turn} & \begin{turn}{80}{}\end{turn} & \begin{turn}{80}{}\end{turn} & \begin{turn}{80}{}\end{turn} & 7710 \\
\hline
18 & \begin{turn}{80}{-20}\end{turn} & \begin{turn}{80}{59}\end{turn} & \begin{turn}{80}{-83}\end{turn} & \begin{turn}{80}{65}\end{turn} & \begin{turn}{80}{-257}\end{turn} & \begin{turn}{80}{964}\end{turn} & \begin{turn}{80}{-1943}\end{turn} & \begin{turn}{80}{2651}\end{turn} & \begin{turn}{80}{-2781}\end{turn} & \begin{turn}{80}{2369}\end{turn} & \begin{turn}{80}{-1666}\end{turn} & \begin{turn}{80}{969}\end{turn} & \begin{turn}{80}{-465}\end{turn} & \begin{turn}{80}{186}\end{turn} & \begin{turn}{80}{-62}\end{turn} & \begin{turn}{80}{16}\end{turn} & \begin{turn}{80}{-3}\end{turn} & \begin{turn}{80}{1}\end{turn} & \begin{turn}{80}{}\end{turn} & \begin{turn}{80}{}\end{turn} & \begin{turn}{80}{}\end{turn} & \begin{turn}{80}{}\end{turn} & \begin{turn}{80}{}\end{turn} & 14560 \\
\hline
19 & \begin{turn}{80}{-13}\end{turn} & \begin{turn}{80}{124}\end{turn} & \begin{turn}{80}{-188}\end{turn} & \begin{turn}{80}{59}\end{turn} & \begin{turn}{80}{-298}\end{turn} & \begin{turn}{80}{1234}\end{turn} & \begin{turn}{80}{-2646}\end{turn} & \begin{turn}{80}{4224}\end{turn} & \begin{turn}{80}{-5203}\end{turn} & \begin{turn}{80}{4983}\end{turn} & \begin{turn}{80}{-3850}\end{turn} & \begin{turn}{80}{2486}\end{turn} & \begin{turn}{80}{-1353}\end{turn} & \begin{turn}{80}{613}\end{turn} & \begin{turn}{80}{-228}\end{turn} & \begin{turn}{80}{70}\end{turn} & \begin{turn}{80}{-18}\end{turn} & \begin{turn}{80}{4}\end{turn} & \begin{turn}{80}{}\end{turn} & \begin{turn}{80}{}\end{turn} & \begin{turn}{80}{}\end{turn} & \begin{turn}{80}{}\end{turn} & \begin{turn}{80}{}\end{turn} & 27594 \\
\hline
20 & \begin{turn}{80}{12}\end{turn} & \begin{turn}{80}{115}\end{turn} & \begin{turn}{80}{-225}\end{turn} & \begin{turn}{80}{442}\end{turn} & \begin{turn}{80}{-807}\end{turn} & \begin{turn}{80}{1202}\end{turn} & \begin{turn}{80}{-3068}\end{turn} & \begin{turn}{80}{6527}\end{turn} & \begin{turn}{80}{-9208}\end{turn} & \begin{turn}{80}{9707}\end{turn} & \begin{turn}{80}{-8379}\end{turn} & \begin{turn}{80}{6075}\end{turn} & \begin{turn}{80}{-3672}\end{turn} & \begin{turn}{80}{1847}\end{turn} & \begin{turn}{80}{-781}\end{turn} & \begin{turn}{80}{278}\end{turn} & \begin{turn}{80}{-82}\end{turn} & \begin{turn}{80}{20}\end{turn} & \begin{turn}{80}{-4}\end{turn} & \begin{turn}{80}{1}\end{turn} & \begin{turn}{80}{}\end{turn} & \begin{turn}{80}{}\end{turn} & \begin{turn}{80}{}\end{turn} & 52452 \\
\hline
21 & \begin{turn}{80}{158}\end{turn} & \begin{turn}{80}{-281}\end{turn} & \begin{turn}{80}{-607}\end{turn} & \begin{turn}{80}{1998}\end{turn} & \begin{turn}{80}{-1171}\end{turn} & \begin{turn}{80}{378}\end{turn} & \begin{turn}{80}{-4068}\end{turn} & \begin{turn}{80}{9921}\end{turn} & \begin{turn}{80}{-14656}\end{turn} & \begin{turn}{80}{17558}\end{turn} & \begin{turn}{80}{-17437}\end{turn} & \begin{turn}{80}{14053}\end{turn} & \begin{turn}{80}{-9307}\end{turn} & \begin{turn}{80}{5204}\end{turn} & \begin{turn}{80}{-2483}\end{turn} & \begin{turn}{80}{999}\end{turn} & \begin{turn}{80}{-336}\end{turn} & \begin{turn}{80}{95}\end{turn} & \begin{turn}{80}{-22}\end{turn} & \begin{turn}{80}{4}\end{turn} & \begin{turn}{80}{}\end{turn} & \begin{turn}{80}{}\end{turn} & \begin{turn}{80}{}\end{turn} & 100736 \\
\hline
22 & \begin{turn}{80}{638}\end{turn} & \begin{turn}{80}{-457}\end{turn} & \begin{turn}{80}{-2294}\end{turn} & \begin{turn}{80}{2080}\end{turn} & \begin{turn}{80}{613}\end{turn} & \begin{turn}{80}{3026}\end{turn} & \begin{turn}{80}{-8531}\end{turn} & \begin{turn}{80}{11888}\end{turn} & \begin{turn}{80}{-20247}\end{turn} & \begin{turn}{80}{30923}\end{turn} & \begin{turn}{80}{-34589}\end{turn} & \begin{turn}{80}{30247}\end{turn} & \begin{turn}{80}{-22186}\end{turn} & \begin{turn}{80}{13900}\end{turn} & \begin{turn}{80}{-7367}\end{turn} & \begin{turn}{80}{3287}\end{turn} & \begin{turn}{80}{-1248}\end{turn} & \begin{turn}{80}{406}\end{turn} & \begin{turn}{80}{-110}\end{turn} & \begin{turn}{80}{24}\end{turn} & \begin{turn}{80}{-4}\end{turn} & \begin{turn}{80}{1}\end{turn} & \begin{turn}{80}{}\end{turn} & 194066 \\
\hline
23 & \begin{turn}{80}{480}\end{turn} & \begin{turn}{80}{1706}\end{turn} & \begin{turn}{80}{967}\end{turn} & \begin{turn}{80}{-7614}\end{turn} & \begin{turn}{80}{-6392}\end{turn} & \begin{turn}{80}{20835}\end{turn} & \begin{turn}{80}{-8447}\end{turn} & \begin{turn}{80}{5974}\end{turn} & \begin{turn}{80}{-31163}\end{turn} & \begin{turn}{80}{54026}\end{turn} & \begin{turn}{80}{-61977}\end{turn} & \begin{turn}{80}{60522}\end{turn} & \begin{turn}{80}{-50681}\end{turn} & \begin{turn}{80}{35146}\end{turn} & \begin{turn}{80}{-20337}\end{turn} & \begin{turn}{80}{10068}\end{turn} & \begin{turn}{80}{-4302}\end{turn} & \begin{turn}{80}{1570}\end{turn} & \begin{turn}{80}{-484}\end{turn} & \begin{turn}{80}{124}\end{turn} & \begin{turn}{80}{-25}\end{turn} & \begin{turn}{80}{4}\end{turn} & \begin{turn}{80}{}\end{turn} & 382844 \\
\hline
\end{tabular}
\end{center}}
\vspace{.4cm}
\caption{Table of Euler characteristics $\chi_{ij}^\pi$ by complexity $j$ and Hodge degree $i$ of $\pi_*(\overline{Emb}_d)\otimes\BQ$ for odd $d$.}
\end{table}

\begin{table}[h!]

\begin{center}
\tiny
\begin{tabular}{|c|c|c|c|c|c|c|c|c|c|c|c|c|c|c|c|c|c|c|c|c|c|c|c|}\hline &\multicolumn{23}{|c|}{Hodge degree $i$}\\
\hline
$j$ & 1 & 2 & 3 & 4 & 5 & 6 & 7 & 8 & 9 & 10 & 11 & 12 & 13 & 14 & 15 & 16 & 17 & 18 & 19 & 20 & 21 & 22 & 23 \\
\hline
1 & \begin{turn}{80}{}\end{turn} & \begin{turn}{80}{1}\end{turn} & \begin{turn}{80}{}\end{turn} & \begin{turn}{80}{}\end{turn} & \begin{turn}{80}{}\end{turn} & \begin{turn}{80}{}\end{turn} & \begin{turn}{80}{}\end{turn} & \begin{turn}{80}{}\end{turn} & \begin{turn}{80}{}\end{turn} & \begin{turn}{80}{}\end{turn} & \begin{turn}{80}{}\end{turn} & \begin{turn}{80}{}\end{turn} & \begin{turn}{80}{}\end{turn} & \begin{turn}{80}{}\end{turn} & \begin{turn}{80}{}\end{turn} & \begin{turn}{80}{}\end{turn} & \begin{turn}{80}{}\end{turn} & \begin{turn}{80}{}\end{turn} & \begin{turn}{80}{}\end{turn} & \begin{turn}{80}{}\end{turn} & \begin{turn}{80}{}\end{turn} & \begin{turn}{80}{}\end{turn} & \begin{turn}{80}{}\end{turn}  \\
\hline
2 & \begin{turn}{80}{-1}\end{turn} & \begin{turn}{80}{1}\end{turn} & \begin{turn}{80}{}\end{turn} & \begin{turn}{80}{1}\end{turn} & \begin{turn}{80}{}\end{turn} & \begin{turn}{80}{}\end{turn} & \begin{turn}{80}{}\end{turn} & \begin{turn}{80}{}\end{turn} & \begin{turn}{80}{}\end{turn} & \begin{turn}{80}{}\end{turn} & \begin{turn}{80}{}\end{turn} & \begin{turn}{80}{}\end{turn} & \begin{turn}{80}{}\end{turn} & \begin{turn}{80}{}\end{turn} & \begin{turn}{80}{}\end{turn} & \begin{turn}{80}{}\end{turn} & \begin{turn}{80}{}\end{turn} & \begin{turn}{80}{}\end{turn} & \begin{turn}{80}{}\end{turn} & \begin{turn}{80}{}\end{turn} & \begin{turn}{80}{}\end{turn} & \begin{turn}{80}{}\end{turn} & \begin{turn}{80}{}\end{turn}  \\
\hline
3 & \begin{turn}{80}{-1}\end{turn} & \begin{turn}{80}{1}\end{turn} & \begin{turn}{80}{-1}\end{turn} & \begin{turn}{80}{1}\end{turn} & \begin{turn}{80}{}\end{turn} & \begin{turn}{80}{1}\end{turn} & \begin{turn}{80}{}\end{turn} & \begin{turn}{80}{}\end{turn} & \begin{turn}{80}{}\end{turn} & \begin{turn}{80}{}\end{turn} & \begin{turn}{80}{}\end{turn} & \begin{turn}{80}{}\end{turn} & \begin{turn}{80}{}\end{turn} & \begin{turn}{80}{}\end{turn} & \begin{turn}{80}{}\end{turn} & \begin{turn}{80}{}\end{turn} & \begin{turn}{80}{}\end{turn} & \begin{turn}{80}{}\end{turn} & \begin{turn}{80}{}\end{turn} & \begin{turn}{80}{}\end{turn} & \begin{turn}{80}{}\end{turn} & \begin{turn}{80}{}\end{turn} & \begin{turn}{80}{}\end{turn}  \\
\hline
4 & \begin{turn}{80}{-1}\end{turn} & \begin{turn}{80}{1}\end{turn} & \begin{turn}{80}{-3}\end{turn} & \begin{turn}{80}{3}\end{turn} & \begin{turn}{80}{-1}\end{turn} & \begin{turn}{80}{1}\end{turn} & \begin{turn}{80}{}\end{turn} & \begin{turn}{80}{1}\end{turn} & \begin{turn}{80}{}\end{turn} & \begin{turn}{80}{}\end{turn} & \begin{turn}{80}{}\end{turn} & \begin{turn}{80}{}\end{turn} & \begin{turn}{80}{}\end{turn} & \begin{turn}{80}{}\end{turn} & \begin{turn}{80}{}\end{turn} & \begin{turn}{80}{}\end{turn} & \begin{turn}{80}{}\end{turn} & \begin{turn}{80}{}\end{turn} & \begin{turn}{80}{}\end{turn} & \begin{turn}{80}{}\end{turn} & \begin{turn}{80}{}\end{turn} & \begin{turn}{80}{}\end{turn} & \begin{turn}{80}{}\end{turn}  \\
\hline
5 & \begin{turn}{80}{-2}\end{turn} & \begin{turn}{80}{3}\end{turn} & \begin{turn}{80}{-4}\end{turn} & \begin{turn}{80}{3}\end{turn} & \begin{turn}{80}{-3}\end{turn} & \begin{turn}{80}{3}\end{turn} & \begin{turn}{80}{-1}\end{turn} & \begin{turn}{80}{1}\end{turn} & \begin{turn}{80}{}\end{turn} & \begin{turn}{80}{1}\end{turn} & \begin{turn}{80}{}\end{turn} & \begin{turn}{80}{}\end{turn} & \begin{turn}{80}{}\end{turn} & \begin{turn}{80}{}\end{turn} & \begin{turn}{80}{}\end{turn} & \begin{turn}{80}{}\end{turn} & \begin{turn}{80}{}\end{turn} & \begin{turn}{80}{}\end{turn} & \begin{turn}{80}{}\end{turn} & \begin{turn}{80}{}\end{turn} & \begin{turn}{80}{}\end{turn} & \begin{turn}{80}{}\end{turn} & \begin{turn}{80}{}\end{turn}  \\
\hline
6 & \begin{turn}{80}{-1}\end{turn} & \begin{turn}{80}{3}\end{turn} & \begin{turn}{80}{-8}\end{turn} & \begin{turn}{80}{8}\end{turn} & \begin{turn}{80}{-8}\end{turn} & \begin{turn}{80}{6}\end{turn} & \begin{turn}{80}{-3}\end{turn} & \begin{turn}{80}{3}\end{turn} & \begin{turn}{80}{-1}\end{turn} & \begin{turn}{80}{1}\end{turn} & \begin{turn}{80}{}\end{turn} & \begin{turn}{80}{1}\end{turn} & \begin{turn}{80}{}\end{turn} & \begin{turn}{80}{}\end{turn} & \begin{turn}{80}{}\end{turn} & \begin{turn}{80}{}\end{turn} & \begin{turn}{80}{}\end{turn} & \begin{turn}{80}{}\end{turn} & \begin{turn}{80}{}\end{turn} & \begin{turn}{80}{}\end{turn} & \begin{turn}{80}{}\end{turn} & \begin{turn}{80}{}\end{turn} & \begin{turn}{80}{}\end{turn}  \\
\hline
7 & \begin{turn}{80}{-2}\end{turn} & \begin{turn}{80}{6}\end{turn} & \begin{turn}{80}{-11}\end{turn} & \begin{turn}{80}{13}\end{turn} & \begin{turn}{80}{-17}\end{turn} & \begin{turn}{80}{13}\end{turn} & \begin{turn}{80}{-8}\end{turn} & \begin{turn}{80}{6}\end{turn} & \begin{turn}{80}{-3}\end{turn} & \begin{turn}{80}{3}\end{turn} & \begin{turn}{80}{-1}\end{turn} & \begin{turn}{80}{1}\end{turn} & \begin{turn}{80}{}\end{turn} & \begin{turn}{80}{1}\end{turn} & \begin{turn}{80}{}\end{turn} & \begin{turn}{80}{}\end{turn} & \begin{turn}{80}{}\end{turn} & \begin{turn}{80}{}\end{turn} & \begin{turn}{80}{}\end{turn} & \begin{turn}{80}{}\end{turn} & \begin{turn}{80}{}\end{turn} & \begin{turn}{80}{}\end{turn} & \begin{turn}{80}{}\end{turn}  \\
\hline
8 & \begin{turn}{80}{-2}\end{turn} & \begin{turn}{80}{7}\end{turn} & \begin{turn}{80}{-16}\end{turn} & \begin{turn}{80}{25}\end{turn} & \begin{turn}{80}{-30}\end{turn} & \begin{turn}{80}{25}\end{turn} & \begin{turn}{80}{-25}\end{turn} & \begin{turn}{80}{18}\end{turn} & \begin{turn}{80}{-8}\end{turn} & \begin{turn}{80}{6}\end{turn} & \begin{turn}{80}{-3}\end{turn} & \begin{turn}{80}{3}\end{turn} & \begin{turn}{80}{-1}\end{turn} & \begin{turn}{80}{1}\end{turn} & \begin{turn}{80}{}\end{turn} & \begin{turn}{80}{1}\end{turn} & \begin{turn}{80}{}\end{turn} & \begin{turn}{80}{}\end{turn} & \begin{turn}{80}{}\end{turn} & \begin{turn}{80}{}\end{turn} & \begin{turn}{80}{}\end{turn} & \begin{turn}{80}{}\end{turn} & \begin{turn}{80}{}\end{turn}  \\
\hline
9 & \begin{turn}{80}{-2}\end{turn} & \begin{turn}{80}{10}\end{turn} & \begin{turn}{80}{-25}\end{turn} & \begin{turn}{80}{41}\end{turn} & \begin{turn}{80}{-55}\end{turn} & \begin{turn}{80}{57}\end{turn} & \begin{turn}{80}{-51}\end{turn} & \begin{turn}{80}{34}\end{turn} & \begin{turn}{80}{-25}\end{turn} & \begin{turn}{80}{18}\end{turn} & \begin{turn}{80}{-8}\end{turn} & \begin{turn}{80}{6}\end{turn} & \begin{turn}{80}{-3}\end{turn} & \begin{turn}{80}{3}\end{turn} & \begin{turn}{80}{-1}\end{turn} & \begin{turn}{80}{1}\end{turn} & \begin{turn}{80}{}\end{turn} & \begin{turn}{80}{1}\end{turn} & \begin{turn}{80}{}\end{turn} & \begin{turn}{80}{}\end{turn} & \begin{turn}{80}{}\end{turn} & \begin{turn}{80}{}\end{turn} & \begin{turn}{80}{}\end{turn}  \\
\hline
10 & \begin{turn}{80}{-1}\end{turn} & \begin{turn}{80}{11}\end{turn} & \begin{turn}{80}{-36}\end{turn} & \begin{turn}{80}{67}\end{turn} & \begin{turn}{80}{-95}\end{turn} & \begin{turn}{80}{108}\end{turn} & \begin{turn}{80}{-107}\end{turn} & \begin{turn}{80}{86}\end{turn} & \begin{turn}{80}{-65}\end{turn} & \begin{turn}{80}{41}\end{turn} & \begin{turn}{80}{-25}\end{turn} & \begin{turn}{80}{18}\end{turn} & \begin{turn}{80}{-8}\end{turn} & \begin{turn}{80}{6}\end{turn} & \begin{turn}{80}{-3}\end{turn} & \begin{turn}{80}{3}\end{turn} & \begin{turn}{80}{-1}\end{turn} & \begin{turn}{80}{1}\end{turn} & \begin{turn}{80}{}\end{turn} & \begin{turn}{80}{1}\end{turn} & \begin{turn}{80}{}\end{turn} & \begin{turn}{80}{}\end{turn} & \begin{turn}{80}{}\end{turn}  \\
\hline
11 & \begin{turn}{80}{-2}\end{turn} & \begin{turn}{80}{15}\end{turn} & \begin{turn}{80}{-45}\end{turn} & \begin{turn}{80}{101}\end{turn} & \begin{turn}{80}{-166}\end{turn} & \begin{turn}{80}{207}\end{turn} & \begin{turn}{80}{-217}\end{turn} & \begin{turn}{80}{188}\end{turn} & \begin{turn}{80}{-150}\end{turn} & \begin{turn}{80}{102}\end{turn} & \begin{turn}{80}{-65}\end{turn} & \begin{turn}{80}{41}\end{turn} & \begin{turn}{80}{-25}\end{turn} & \begin{turn}{80}{18}\end{turn} & \begin{turn}{80}{-8}\end{turn} & \begin{turn}{80}{6}\end{turn} & \begin{turn}{80}{-3}\end{turn} & \begin{turn}{80}{3}\end{turn} & \begin{turn}{80}{-1}\end{turn} & \begin{turn}{80}{1}\end{turn} & \begin{turn}{80}{}\end{turn} & \begin{turn}{80}{1}\end{turn} & \begin{turn}{80}{}\end{turn}  \\
\hline
12 & \begin{turn}{80}{}\end{turn} & \begin{turn}{80}{14}\end{turn} & \begin{turn}{80}{-61}\end{turn} & \begin{turn}{80}{153}\end{turn} & \begin{turn}{80}{-267}\end{turn} & \begin{turn}{80}{367}\end{turn} & \begin{turn}{80}{-422}\end{turn} & \begin{turn}{80}{405}\end{turn} & \begin{turn}{80}{-340}\end{turn} & \begin{turn}{80}{244}\end{turn} & \begin{turn}{80}{-173}\end{turn} & \begin{turn}{80}{113}\end{turn} & \begin{turn}{80}{-65}\end{turn} & \begin{turn}{80}{41}\end{turn} & \begin{turn}{80}{-25}\end{turn} & \begin{turn}{80}{18}\end{turn} & \begin{turn}{80}{-8}\end{turn} & \begin{turn}{80}{6}\end{turn} & \begin{turn}{80}{-3}\end{turn} & \begin{turn}{80}{3}\end{turn} & \begin{turn}{80}{-1}\end{turn} & \begin{turn}{80}{1}\end{turn} & \begin{turn}{80}{}\end{turn}  \\
\hline
13 & \begin{turn}{80}{}\end{turn} & \begin{turn}{80}{15}\end{turn} & \begin{turn}{80}{-74}\end{turn} & \begin{turn}{80}{210}\end{turn} & \begin{turn}{80}{-421}\end{turn} & \begin{turn}{80}{648}\end{turn} & \begin{turn}{80}{-795}\end{turn} & \begin{turn}{80}{820}\end{turn} & \begin{turn}{80}{-743}\end{turn} & \begin{turn}{80}{584}\end{turn} & \begin{turn}{80}{-422}\end{turn} & \begin{turn}{80}{271}\end{turn} & \begin{turn}{80}{-173}\end{turn} & \begin{turn}{80}{113}\end{turn} & \begin{turn}{80}{-65}\end{turn} & \begin{turn}{80}{41}\end{turn} & \begin{turn}{80}{-25}\end{turn} & \begin{turn}{80}{18}\end{turn} & \begin{turn}{80}{-8}\end{turn} & \begin{turn}{80}{6}\end{turn} & \begin{turn}{80}{-3}\end{turn} & \begin{turn}{80}{3}\end{turn} & \begin{turn}{80}{-1}\end{turn}  \\
\hline
14 & \begin{turn}{80}{-1}\end{turn} & \begin{turn}{80}{12}\end{turn} & \begin{turn}{80}{-83}\end{turn} & \begin{turn}{80}{290}\end{turn} & \begin{turn}{80}{-633}\end{turn} & \begin{turn}{80}{1063}\end{turn} & \begin{turn}{80}{-1449}\end{turn} & \begin{turn}{80}{1629}\end{turn} & \begin{turn}{80}{-1557}\end{turn} & \begin{turn}{80}{1307}\end{turn} & \begin{turn}{80}{-1003}\end{turn} & \begin{turn}{80}{692}\end{turn} & \begin{turn}{80}{-460}\end{turn} & \begin{turn}{80}{286}\end{turn} & \begin{turn}{80}{-173}\end{turn} & \begin{turn}{80}{113}\end{turn} & \begin{turn}{80}{-65}\end{turn} & \begin{turn}{80}{41}\end{turn} & \begin{turn}{80}{-25}\end{turn} & \begin{turn}{80}{18}\end{turn} & \begin{turn}{80}{-8}\end{turn} & \begin{turn}{80}{6}\end{turn} & \begin{turn}{80}{-3}\end{turn}  \\
\hline
15 & \begin{turn}{80}{-1}\end{turn} & \begin{turn}{80}{13}\end{turn} & \begin{turn}{80}{-88}\end{turn} & \begin{turn}{80}{355}\end{turn} & \begin{turn}{80}{-919}\end{turn} & \begin{turn}{80}{1730}\end{turn} & \begin{turn}{80}{-2529}\end{turn} & \begin{turn}{80}{3065}\end{turn} & \begin{turn}{80}{-3190}\end{turn} & \begin{turn}{80}{2882}\end{turn} & \begin{turn}{80}{-2308}\end{turn} & \begin{turn}{80}{1673}\end{turn} & \begin{turn}{80}{-1152}\end{turn} & \begin{turn}{80}{736}\end{turn} & \begin{turn}{80}{-460}\end{turn} & \begin{turn}{80}{286}\end{turn} & \begin{turn}{80}{-173}\end{turn} & \begin{turn}{80}{113}\end{turn} & \begin{turn}{80}{-65}\end{turn} & \begin{turn}{80}{41}\end{turn} & \begin{turn}{80}{-25}\end{turn} & \begin{turn}{80}{18}\end{turn} & \begin{turn}{80}{-8}\end{turn}  \\
\hline
16 & \begin{turn}{80}{-4}\end{turn} & \begin{turn}{80}{14}\end{turn} & \begin{turn}{80}{-97}\end{turn} & \begin{turn}{80}{435}\end{turn} & \begin{turn}{80}{-1256}\end{turn} & \begin{turn}{80}{2628}\end{turn} & \begin{turn}{80}{-4264}\end{turn} & \begin{turn}{80}{5662}\end{turn} & \begin{turn}{80}{-6288}\end{turn} & \begin{turn}{80}{6027}\end{turn} & \begin{turn}{80}{-5164}\end{turn} & \begin{turn}{80}{3991}\end{turn} & \begin{turn}{80}{-2826}\end{turn} & \begin{turn}{80}{1862}\end{turn} & \begin{turn}{80}{-1211}\end{turn} & \begin{turn}{80}{758}\end{turn} & \begin{turn}{80}{-460}\end{turn} & \begin{turn}{80}{286}\end{turn} & \begin{turn}{80}{-173}\end{turn} & \begin{turn}{80}{113}\end{turn} & \begin{turn}{80}{-65}\end{turn} & \begin{turn}{80}{41}\end{turn} & \begin{turn}{80}{-25}\end{turn}  \\
\hline
17 & \begin{turn}{80}{-18}\end{turn} & \begin{turn}{80}{29}\end{turn} & \begin{turn}{80}{-105}\end{turn} & \begin{turn}{80}{510}\end{turn} & \begin{turn}{80}{-1646}\end{turn} & \begin{turn}{80}{3858}\end{turn} & \begin{turn}{80}{-6917}\end{turn} & \begin{turn}{80}{9960}\end{turn} & \begin{turn}{80}{-11959}\end{turn} & \begin{turn}{80}{12375}\end{turn} & \begin{turn}{80}{-11226}\end{turn} & \begin{turn}{80}{9101}\end{turn} & \begin{turn}{80}{-6791}\end{turn} & \begin{turn}{80}{4701}\end{turn} & \begin{turn}{80}{-3082}\end{turn} & \begin{turn}{80}{1930}\end{turn} & \begin{turn}{80}{-1211}\end{turn} & \begin{turn}{80}{758}\end{turn} & \begin{turn}{80}{-460}\end{turn} & \begin{turn}{80}{286}\end{turn} & \begin{turn}{80}{-173}\end{turn} & \begin{turn}{80}{113}\end{turn} & \begin{turn}{80}{-65}\end{turn}  \\
\hline
18 & \begin{turn}{80}{-20}\end{turn} & \begin{turn}{80}{72}\end{turn} & \begin{turn}{80}{-205}\end{turn} & \begin{turn}{80}{600}\end{turn} & \begin{turn}{80}{-2016}\end{turn} & \begin{turn}{80}{5418}\end{turn} & \begin{turn}{80}{-10793}\end{turn} & \begin{turn}{80}{16931}\end{turn} & \begin{turn}{80}{-21970}\end{turn} & \begin{turn}{80}{24380}\end{turn} & \begin{turn}{80}{-23612}\end{turn} & \begin{turn}{80}{20356}\end{turn} & \begin{turn}{80}{-15886}\end{turn} & \begin{turn}{80}{11412}\end{turn} & \begin{turn}{80}{-7761}\end{turn} & \begin{turn}{80}{5026}\end{turn} & \begin{turn}{80}{-3172}\end{turn} & \begin{turn}{80}{1960}\end{turn} & \begin{turn}{80}{-1211}\end{turn} & \begin{turn}{80}{758}\end{turn} & \begin{turn}{80}{-460}\end{turn} & \begin{turn}{80}{286}\end{turn} & \begin{turn}{80}{-173}\end{turn}  \\
\hline
19 & \begin{turn}{80}{-13}\end{turn} & \begin{turn}{80}{155}\end{turn} & \begin{turn}{80}{-348}\end{turn} & \begin{turn}{80}{726}\end{turn} & \begin{turn}{80}{-2645}\end{turn} & \begin{turn}{80}{7467}\end{turn} & \begin{turn}{80}{-15913}\end{turn} & \begin{turn}{80}{27513}\end{turn} & \begin{turn}{80}{-39089}\end{turn} & \begin{turn}{80}{46618}\end{turn} & \begin{turn}{80}{-48051}\end{turn} & \begin{turn}{80}{43930}\end{turn} & \begin{turn}{80}{-36222}\end{turn} & \begin{turn}{80}{27330}\end{turn} & \begin{turn}{80}{-19176}\end{turn} & \begin{turn}{80}{12706}\end{turn} & \begin{turn}{80}{-8187}\end{turn} & \begin{turn}{80}{5131}\end{turn} & \begin{turn}{80}{-3172}\end{turn} & \begin{turn}{80}{1960}\end{turn} & \begin{turn}{80}{-1211}\end{turn} & \begin{turn}{80}{758}\end{turn} & \begin{turn}{80}{-460}\end{turn}  \\
\hline
20 & \begin{turn}{80}{12}\end{turn} & \begin{turn}{80}{164}\end{turn} & \begin{turn}{80}{-461}\end{turn} & \begin{turn}{80}{1415}\end{turn} & \begin{turn}{80}{-3897}\end{turn} & \begin{turn}{80}{9561}\end{turn} & \begin{turn}{80}{-22616}\end{turn} & \begin{turn}{80}{43567}\end{turn} & \begin{turn}{80}{-66750}\end{turn} & \begin{turn}{80}{85597}\end{turn} & \begin{turn}{80}{-94842}\end{turn} & \begin{turn}{80}{92332}\end{turn} & \begin{turn}{80}{-80218}\end{turn} & \begin{turn}{80}{63394}\end{turn} & \begin{turn}{80}{-46401}\end{turn} & \begin{turn}{80}{31900}\end{turn} & \begin{turn}{80}{-20912}\end{turn} & \begin{turn}{80}{13236}\end{turn} & \begin{turn}{80}{-8322}\end{turn} & \begin{turn}{80}{5173}\end{turn} & \begin{turn}{80}{-3172}\end{turn} & \begin{turn}{80}{1960}\end{turn} & \begin{turn}{80}{-1211}\end{turn}  \\
\hline
21 & \begin{turn}{80}{158}\end{turn} & \begin{turn}{80}{-217}\end{turn} & \begin{turn}{80}{-956}\end{turn} & \begin{turn}{80}{3381}\end{turn} & \begin{turn}{80}{-5196}\end{turn} & \begin{turn}{80}{11975}\end{turn} & \begin{turn}{80}{-32592}\end{turn} & \begin{turn}{80}{66403}\end{turn} & \begin{turn}{80}{-109135}\end{turn} & \begin{turn}{80}{152157}\end{turn} & \begin{turn}{80}{-181121}\end{turn} & \begin{turn}{80}{187515}\end{turn} & \begin{turn}{80}{-172652}\end{turn} & \begin{turn}{80}{143782}\end{turn} & \begin{turn}{80}{-109800}\end{turn} & \begin{turn}{80}{78122}\end{turn} & \begin{turn}{80}{-52716}\end{turn} & \begin{turn}{80}{34181}\end{turn} & \begin{turn}{80}{-21600}\end{turn} & \begin{turn}{80}{13392}\end{turn} & \begin{turn}{80}{-8322}\end{turn} & \begin{turn}{80}{5173}\end{turn} & \begin{turn}{80}{-3172}\end{turn}  \\
\hline
22 & \begin{turn}{80}{638}\end{turn} & \begin{turn}{80}{-391}\end{turn} & \begin{turn}{80}{-2652}\end{turn} & \begin{turn}{80}{3573}\end{turn} & \begin{turn}{80}{-5265}\end{turn} & \begin{turn}{80}{20670}\end{turn} & \begin{turn}{80}{-48875}\end{turn} & \begin{turn}{80}{94966}\end{turn} & \begin{turn}{80}{-171890}\end{turn} & \begin{turn}{80}{262183}\end{turn} & \begin{turn}{80}{-334105}\end{turn} & \begin{turn}{80}{368926}\end{turn} & \begin{turn}{80}{-360657}\end{turn} & \begin{turn}{80}{316730}\end{turn} & \begin{turn}{80}{-253621}\end{turn} & \begin{turn}{80}{187909}\end{turn} & \begin{turn}{80}{-130721}\end{turn} & \begin{turn}{80}{86694}\end{turn} & \begin{turn}{80}{-55715}\end{turn} & \begin{turn}{80}{35034}\end{turn} & \begin{turn}{80}{-21798}\end{turn} & \begin{turn}{80}{13448}\end{turn} & \begin{turn}{80}{-8322}\end{turn}  \\
\hline
23 & \begin{turn}{80}{480}\end{turn} & \begin{turn}{80}{1619}\end{turn} & \begin{turn}{80}{1446}\end{turn} & \begin{turn}{80}{-5648}\end{turn} & \begin{turn}{80}{-17669}\end{turn} & \begin{turn}{80}{45575}\end{turn} & \begin{turn}{80}{-61584}\end{turn} & \begin{turn}{80}{130666}\end{turn} & \begin{turn}{80}{-271978}\end{turn} & \begin{turn}{80}{436507}\end{turn} & \begin{turn}{80}{-591664}\end{turn} & \begin{turn}{80}{702289}\end{turn} & \begin{turn}{80}{-731813}\end{turn} & \begin{turn}{80}{678875}\end{turn} & \begin{turn}{80}{-570539}\end{turn} & \begin{turn}{80}{441262}\end{turn} & \begin{turn}{80}{-318509}\end{turn} & \begin{turn}{80}{217419}\end{turn} & \begin{turn}{80}{-142337}\end{turn} & \begin{turn}{80}{90568}\end{turn} & \begin{turn}{80}{-56797}\end{turn} & \begin{turn}{80}{35263}\end{turn} & \begin{turn}{80}{-21798}\end{turn}  \\
\hline
\end{tabular}
\end{center}

\vspace{.4cm}
\caption{Table of Euler characteristics $\chi_{ij}$ by complexity $j$ and Hodge degree $i$ of $H_*(\overline{Emb}_d,\BQ)$ for odd $d$.}
\end{table}

\begin{table}[h!]

{\tiny \begin{center}
\begin{tabular}{|c|c|c|c|c|c|c|c|c|c|c|c|c|c|c|c|c|c|c|c|c|c|c|c|c|}\hline &\multicolumn{23}{|c|}{Hodge degree $i$}&\\
\hline
$j$ & 1 & 2 & 3 & 4 & 5 & 6 & 7 & 8 & 9 & 10 & 11 & 12 & 13 & 14 & 15 & 16 & 17 & 18 & 19 & 20 & 21 & 22 & 23 & total \\
\hline
1 & \begin{turn}{80}{-1}\end{turn} & \begin{turn}{80}{}\end{turn} & \begin{turn}{80}{}\end{turn} & \begin{turn}{80}{}\end{turn} & \begin{turn}{80}{}\end{turn} & \begin{turn}{80}{}\end{turn} & \begin{turn}{80}{}\end{turn} & \begin{turn}{80}{}\end{turn} & \begin{turn}{80}{}\end{turn} & \begin{turn}{80}{}\end{turn} & \begin{turn}{80}{}\end{turn} & \begin{turn}{80}{}\end{turn} & \begin{turn}{80}{}\end{turn} & \begin{turn}{80}{}\end{turn} & \begin{turn}{80}{}\end{turn} & \begin{turn}{80}{}\end{turn} & \begin{turn}{80}{}\end{turn} & \begin{turn}{80}{}\end{turn} & \begin{turn}{80}{}\end{turn} & \begin{turn}{80}{}\end{turn} & \begin{turn}{80}{}\end{turn} & \begin{turn}{80}{}\end{turn} & \begin{turn}{80}{}\end{turn} & 1 \\
\hline
2 & \begin{turn}{80}{}\end{turn} & \begin{turn}{80}{}\end{turn} & \begin{turn}{80}{1}\end{turn} & \begin{turn}{80}{}\end{turn} & \begin{turn}{80}{}\end{turn} & \begin{turn}{80}{}\end{turn} & \begin{turn}{80}{}\end{turn} & \begin{turn}{80}{}\end{turn} & \begin{turn}{80}{}\end{turn} & \begin{turn}{80}{}\end{turn} & \begin{turn}{80}{}\end{turn} & \begin{turn}{80}{}\end{turn} & \begin{turn}{80}{}\end{turn} & \begin{turn}{80}{}\end{turn} & \begin{turn}{80}{}\end{turn} & \begin{turn}{80}{}\end{turn} & \begin{turn}{80}{}\end{turn} & \begin{turn}{80}{}\end{turn} & \begin{turn}{80}{}\end{turn} & \begin{turn}{80}{}\end{turn} & \begin{turn}{80}{}\end{turn} & \begin{turn}{80}{}\end{turn} & \begin{turn}{80}{}\end{turn} & 1 \\
\hline
3 & \begin{turn}{80}{1}\end{turn} & \begin{turn}{80}{-1}\end{turn} & \begin{turn}{80}{}\end{turn} & \begin{turn}{80}{}\end{turn} & \begin{turn}{80}{}\end{turn} & \begin{turn}{80}{}\end{turn} & \begin{turn}{80}{}\end{turn} & \begin{turn}{80}{}\end{turn} & \begin{turn}{80}{}\end{turn} & \begin{turn}{80}{}\end{turn} & \begin{turn}{80}{}\end{turn} & \begin{turn}{80}{}\end{turn} & \begin{turn}{80}{}\end{turn} & \begin{turn}{80}{}\end{turn} & \begin{turn}{80}{}\end{turn} & \begin{turn}{80}{}\end{turn} & \begin{turn}{80}{}\end{turn} & \begin{turn}{80}{}\end{turn} & \begin{turn}{80}{}\end{turn} & \begin{turn}{80}{}\end{turn} & \begin{turn}{80}{}\end{turn} & \begin{turn}{80}{}\end{turn} & \begin{turn}{80}{}\end{turn} & 2 \\
\hline
4 & \begin{turn}{80}{}\end{turn} & \begin{turn}{80}{}\end{turn} & \begin{turn}{80}{}\end{turn} & \begin{turn}{80}{}\end{turn} & \begin{turn}{80}{}\end{turn} & \begin{turn}{80}{}\end{turn} & \begin{turn}{80}{}\end{turn} & \begin{turn}{80}{}\end{turn} & \begin{turn}{80}{}\end{turn} & \begin{turn}{80}{}\end{turn} & \begin{turn}{80}{}\end{turn} & \begin{turn}{80}{}\end{turn} & \begin{turn}{80}{}\end{turn} & \begin{turn}{80}{}\end{turn} & \begin{turn}{80}{}\end{turn} & \begin{turn}{80}{}\end{turn} & \begin{turn}{80}{}\end{turn} & \begin{turn}{80}{}\end{turn} & \begin{turn}{80}{}\end{turn} & \begin{turn}{80}{}\end{turn} & \begin{turn}{80}{}\end{turn} & \begin{turn}{80}{}\end{turn} & \begin{turn}{80}{}\end{turn} & 0 \\
\hline
5 & \begin{turn}{80}{1}\end{turn} & \begin{turn}{80}{}\end{turn} & \begin{turn}{80}{-1}\end{turn} & \begin{turn}{80}{1}\end{turn} & \begin{turn}{80}{-1}\end{turn} & \begin{turn}{80}{}\end{turn} & \begin{turn}{80}{}\end{turn} & \begin{turn}{80}{}\end{turn} & \begin{turn}{80}{}\end{turn} & \begin{turn}{80}{}\end{turn} & \begin{turn}{80}{}\end{turn} & \begin{turn}{80}{}\end{turn} & \begin{turn}{80}{}\end{turn} & \begin{turn}{80}{}\end{turn} & \begin{turn}{80}{}\end{turn} & \begin{turn}{80}{}\end{turn} & \begin{turn}{80}{}\end{turn} & \begin{turn}{80}{}\end{turn} & \begin{turn}{80}{}\end{turn} & \begin{turn}{80}{}\end{turn} & \begin{turn}{80}{}\end{turn} & \begin{turn}{80}{}\end{turn} & \begin{turn}{80}{}\end{turn} & 4 \\
\hline
6 & \begin{turn}{80}{-1}\end{turn} & \begin{turn}{80}{1}\end{turn} & \begin{turn}{80}{}\end{turn} & \begin{turn}{80}{}\end{turn} & \begin{turn}{80}{}\end{turn} & \begin{turn}{80}{}\end{turn} & \begin{turn}{80}{}\end{turn} & \begin{turn}{80}{}\end{turn} & \begin{turn}{80}{}\end{turn} & \begin{turn}{80}{}\end{turn} & \begin{turn}{80}{}\end{turn} & \begin{turn}{80}{}\end{turn} & \begin{turn}{80}{}\end{turn} & \begin{turn}{80}{}\end{turn} & \begin{turn}{80}{}\end{turn} & \begin{turn}{80}{}\end{turn} & \begin{turn}{80}{}\end{turn} & \begin{turn}{80}{}\end{turn} & \begin{turn}{80}{}\end{turn} & \begin{turn}{80}{}\end{turn} & \begin{turn}{80}{}\end{turn} & \begin{turn}{80}{}\end{turn} & \begin{turn}{80}{}\end{turn} & 2 \\
\hline
7 & \begin{turn}{80}{1}\end{turn} & \begin{turn}{80}{}\end{turn} & \begin{turn}{80}{-1}\end{turn} & \begin{turn}{80}{}\end{turn} & \begin{turn}{80}{1}\end{turn} & \begin{turn}{80}{-1}\end{turn} & \begin{turn}{80}{}\end{turn} & \begin{turn}{80}{}\end{turn} & \begin{turn}{80}{}\end{turn} & \begin{turn}{80}{}\end{turn} & \begin{turn}{80}{}\end{turn} & \begin{turn}{80}{}\end{turn} & \begin{turn}{80}{}\end{turn} & \begin{turn}{80}{}\end{turn} & \begin{turn}{80}{}\end{turn} & \begin{turn}{80}{}\end{turn} & \begin{turn}{80}{}\end{turn} & \begin{turn}{80}{}\end{turn} & \begin{turn}{80}{}\end{turn} & \begin{turn}{80}{}\end{turn} & \begin{turn}{80}{}\end{turn} & \begin{turn}{80}{}\end{turn} & \begin{turn}{80}{}\end{turn} & 4 \\
\hline
8 & \begin{turn}{80}{-1}\end{turn} & \begin{turn}{80}{1}\end{turn} & \begin{turn}{80}{1}\end{turn} & \begin{turn}{80}{-2}\end{turn} & \begin{turn}{80}{2}\end{turn} & \begin{turn}{80}{-1}\end{turn} & \begin{turn}{80}{}\end{turn} & \begin{turn}{80}{}\end{turn} & \begin{turn}{80}{}\end{turn} & \begin{turn}{80}{}\end{turn} & \begin{turn}{80}{}\end{turn} & \begin{turn}{80}{}\end{turn} & \begin{turn}{80}{}\end{turn} & \begin{turn}{80}{}\end{turn} & \begin{turn}{80}{}\end{turn} & \begin{turn}{80}{}\end{turn} & \begin{turn}{80}{}\end{turn} & \begin{turn}{80}{}\end{turn} & \begin{turn}{80}{}\end{turn} & \begin{turn}{80}{}\end{turn} & \begin{turn}{80}{}\end{turn} & \begin{turn}{80}{}\end{turn} & \begin{turn}{80}{}\end{turn} & 8 \\
\hline
9 & \begin{turn}{80}{}\end{turn} & \begin{turn}{80}{}\end{turn} & \begin{turn}{80}{2}\end{turn} & \begin{turn}{80}{-1}\end{turn} & \begin{turn}{80}{}\end{turn} & \begin{turn}{80}{}\end{turn} & \begin{turn}{80}{-1}\end{turn} & \begin{turn}{80}{1}\end{turn} & \begin{turn}{80}{-1}\end{turn} & \begin{turn}{80}{}\end{turn} & \begin{turn}{80}{}\end{turn} & \begin{turn}{80}{}\end{turn} & \begin{turn}{80}{}\end{turn} & \begin{turn}{80}{}\end{turn} & \begin{turn}{80}{}\end{turn} & \begin{turn}{80}{}\end{turn} & \begin{turn}{80}{}\end{turn} & \begin{turn}{80}{}\end{turn} & \begin{turn}{80}{}\end{turn} & \begin{turn}{80}{}\end{turn} & \begin{turn}{80}{}\end{turn} & \begin{turn}{80}{}\end{turn} & \begin{turn}{80}{}\end{turn} & 6 \\
\hline
10 & \begin{turn}{80}{-2}\end{turn} & \begin{turn}{80}{}\end{turn} & \begin{turn}{80}{3}\end{turn} & \begin{turn}{80}{-2}\end{turn} & \begin{turn}{80}{}\end{turn} & \begin{turn}{80}{3}\end{turn} & \begin{turn}{80}{-4}\end{turn} & \begin{turn}{80}{2}\end{turn} & \begin{turn}{80}{}\end{turn} & \begin{turn}{80}{}\end{turn} & \begin{turn}{80}{}\end{turn} & \begin{turn}{80}{}\end{turn} & \begin{turn}{80}{}\end{turn} & \begin{turn}{80}{}\end{turn} & \begin{turn}{80}{}\end{turn} & \begin{turn}{80}{}\end{turn} & \begin{turn}{80}{}\end{turn} & \begin{turn}{80}{}\end{turn} & \begin{turn}{80}{}\end{turn} & \begin{turn}{80}{}\end{turn} & \begin{turn}{80}{}\end{turn} & \begin{turn}{80}{}\end{turn} & \begin{turn}{80}{}\end{turn} & 16 \\
\hline
11 & \begin{turn}{80}{2}\end{turn} & \begin{turn}{80}{-3}\end{turn} & \begin{turn}{80}{1}\end{turn} & \begin{turn}{80}{3}\end{turn} & \begin{turn}{80}{-6}\end{turn} & \begin{turn}{80}{5}\end{turn} & \begin{turn}{80}{-1}\end{turn} & \begin{turn}{80}{-1}\end{turn} & \begin{turn}{80}{1}\end{turn} & \begin{turn}{80}{-1}\end{turn} & \begin{turn}{80}{}\end{turn} & \begin{turn}{80}{}\end{turn} & \begin{turn}{80}{}\end{turn} & \begin{turn}{80}{}\end{turn} & \begin{turn}{80}{}\end{turn} & \begin{turn}{80}{}\end{turn} & \begin{turn}{80}{}\end{turn} & \begin{turn}{80}{}\end{turn} & \begin{turn}{80}{}\end{turn} & \begin{turn}{80}{}\end{turn} & \begin{turn}{80}{}\end{turn} & \begin{turn}{80}{}\end{turn} & \begin{turn}{80}{}\end{turn} & 24 \\
\hline
12 & \begin{turn}{80}{}\end{turn} & \begin{turn}{80}{-1}\end{turn} & \begin{turn}{80}{}\end{turn} & \begin{turn}{80}{4}\end{turn} & \begin{turn}{80}{-6}\end{turn} & \begin{turn}{80}{4}\end{turn} & \begin{turn}{80}{2}\end{turn} & \begin{turn}{80}{-6}\end{turn} & \begin{turn}{80}{5}\end{turn} & \begin{turn}{80}{-3}\end{turn} & \begin{turn}{80}{1}\end{turn} & \begin{turn}{80}{}\end{turn} & \begin{turn}{80}{}\end{turn} & \begin{turn}{80}{}\end{turn} & \begin{turn}{80}{}\end{turn} & \begin{turn}{80}{}\end{turn} & \begin{turn}{80}{}\end{turn} & \begin{turn}{80}{}\end{turn} & \begin{turn}{80}{}\end{turn} & \begin{turn}{80}{}\end{turn} & \begin{turn}{80}{}\end{turn} & \begin{turn}{80}{}\end{turn} & \begin{turn}{80}{}\end{turn} & 32 \\
\hline
13 & \begin{turn}{80}{3}\end{turn} & \begin{turn}{80}{-2}\end{turn} & \begin{turn}{80}{-3}\end{turn} & \begin{turn}{80}{12}\end{turn} & \begin{turn}{80}{-10}\end{turn} & \begin{turn}{80}{-6}\end{turn} & \begin{turn}{80}{15}\end{turn} & \begin{turn}{80}{-13}\end{turn} & \begin{turn}{80}{5}\end{turn} & \begin{turn}{80}{1}\end{turn} & \begin{turn}{80}{-2}\end{turn} & \begin{turn}{80}{1}\end{turn} & \begin{turn}{80}{-1}\end{turn} & \begin{turn}{80}{}\end{turn} & \begin{turn}{80}{}\end{turn} & \begin{turn}{80}{}\end{turn} & \begin{turn}{80}{}\end{turn} & \begin{turn}{80}{}\end{turn} & \begin{turn}{80}{}\end{turn} & \begin{turn}{80}{}\end{turn} & \begin{turn}{80}{}\end{turn} & \begin{turn}{80}{}\end{turn} & \begin{turn}{80}{}\end{turn} & 74 \\
\hline
14 & \begin{turn}{80}{}\end{turn} & \begin{turn}{80}{3}\end{turn} & \begin{turn}{80}{-13}\end{turn} & \begin{turn}{80}{8}\end{turn} & \begin{turn}{80}{10}\end{turn} & \begin{turn}{80}{-20}\end{turn} & \begin{turn}{80}{19}\end{turn} & \begin{turn}{80}{-8}\end{turn} & \begin{turn}{80}{-4}\end{turn} & \begin{turn}{80}{10}\end{turn} & \begin{turn}{80}{-8}\end{turn} & \begin{turn}{80}{4}\end{turn} & \begin{turn}{80}{-1}\end{turn} & \begin{turn}{80}{}\end{turn} & \begin{turn}{80}{}\end{turn} & \begin{turn}{80}{}\end{turn} & \begin{turn}{80}{}\end{turn} & \begin{turn}{80}{}\end{turn} & \begin{turn}{80}{}\end{turn} & \begin{turn}{80}{}\end{turn} & \begin{turn}{80}{}\end{turn} & \begin{turn}{80}{}\end{turn} & \begin{turn}{80}{}\end{turn} & 108 \\
\hline
15 & \begin{turn}{80}{}\end{turn} & \begin{turn}{80}{5}\end{turn} & \begin{turn}{80}{-15}\end{turn} & \begin{turn}{80}{5}\end{turn} & \begin{turn}{80}{23}\end{turn} & \begin{turn}{80}{-36}\end{turn} & \begin{turn}{80}{23}\end{turn} & \begin{turn}{80}{11}\end{turn} & \begin{turn}{80}{-32}\end{turn} & \begin{turn}{80}{25}\end{turn} & \begin{turn}{80}{-11}\end{turn} & \begin{turn}{80}{1}\end{turn} & \begin{turn}{80}{3}\end{turn} & \begin{turn}{80}{-2}\end{turn} & \begin{turn}{80}{}\end{turn} & \begin{turn}{80}{}\end{turn} & \begin{turn}{80}{}\end{turn} & \begin{turn}{80}{}\end{turn} & \begin{turn}{80}{}\end{turn} & \begin{turn}{80}{}\end{turn} & \begin{turn}{80}{}\end{turn} & \begin{turn}{80}{}\end{turn} & \begin{turn}{80}{}\end{turn} & 192 \\
\hline
16 & \begin{turn}{80}{-7}\end{turn} & \begin{turn}{80}{18}\end{turn} & \begin{turn}{80}{-9}\end{turn} & \begin{turn}{80}{-30}\end{turn} & \begin{turn}{80}{59}\end{turn} & \begin{turn}{80}{-38}\end{turn} & \begin{turn}{80}{-21}\end{turn} & \begin{turn}{80}{68}\end{turn} & \begin{turn}{80}{-59}\end{turn} & \begin{turn}{80}{17}\end{turn} & \begin{turn}{80}{11}\end{turn} & \begin{turn}{80}{-16}\end{turn} & \begin{turn}{80}{11}\end{turn} & \begin{turn}{80}{-5}\end{turn} & \begin{turn}{80}{1}\end{turn} & \begin{turn}{80}{}\end{turn} & \begin{turn}{80}{}\end{turn} & \begin{turn}{80}{}\end{turn} & \begin{turn}{80}{}\end{turn} & \begin{turn}{80}{}\end{turn} & \begin{turn}{80}{}\end{turn} & \begin{turn}{80}{}\end{turn} & \begin{turn}{80}{}\end{turn} & 370 \\
\hline
17 & \begin{turn}{80}{-14}\end{turn} & \begin{turn}{80}{19}\end{turn} & \begin{turn}{80}{23}\end{turn} & \begin{turn}{80}{-82}\end{turn} & \begin{turn}{80}{64}\end{turn} & \begin{turn}{80}{34}\end{turn} & \begin{turn}{80}{-100}\end{turn} & \begin{turn}{80}{96}\end{turn} & \begin{turn}{80}{-43}\end{turn} & \begin{turn}{80}{-21}\end{turn} & \begin{turn}{80}{56}\end{turn} & \begin{turn}{80}{-49}\end{turn} & \begin{turn}{80}{21}\end{turn} & \begin{turn}{80}{-2}\end{turn} & \begin{turn}{80}{-3}\end{turn} & \begin{turn}{80}{2}\end{turn} & \begin{turn}{80}{-1}\end{turn} & \begin{turn}{80}{}\end{turn} & \begin{turn}{80}{}\end{turn} & \begin{turn}{80}{}\end{turn} & \begin{turn}{80}{}\end{turn} & \begin{turn}{80}{}\end{turn} & \begin{turn}{80}{}\end{turn} & 630 \\
\hline
18 & \begin{turn}{80}{-16}\end{turn} & \begin{turn}{80}{-1}\end{turn} & \begin{turn}{80}{52}\end{turn} & \begin{turn}{80}{-120}\end{turn} & \begin{turn}{80}{78}\end{turn} & \begin{turn}{80}{152}\end{turn} & \begin{turn}{80}{-268}\end{turn} & \begin{turn}{80}{122}\end{turn} & \begin{turn}{80}{85}\end{turn} & \begin{turn}{80}{-168}\end{turn} & \begin{turn}{80}{126}\end{turn} & \begin{turn}{80}{-45}\end{turn} & \begin{turn}{80}{-12}\end{turn} & \begin{turn}{80}{26}\end{turn} & \begin{turn}{80}{-16}\end{turn} & \begin{turn}{80}{6}\end{turn} & \begin{turn}{80}{-1}\end{turn} & \begin{turn}{80}{}\end{turn} & \begin{turn}{80}{}\end{turn} & \begin{turn}{80}{}\end{turn} & \begin{turn}{80}{}\end{turn} & \begin{turn}{80}{}\end{turn} & \begin{turn}{80}{}\end{turn} & 1294 \\
\hline
19 & \begin{turn}{80}{-12}\end{turn} & \begin{turn}{80}{-88}\end{turn} & \begin{turn}{80}{176}\end{turn} & \begin{turn}{80}{-8}\end{turn} & \begin{turn}{80}{-186}\end{turn} & \begin{turn}{80}{290}\end{turn} & \begin{turn}{80}{-265}\end{turn} & \begin{turn}{80}{-30}\end{turn} & \begin{turn}{80}{346}\end{turn} & \begin{turn}{80}{-339}\end{turn} & \begin{turn}{80}{100}\end{turn} & \begin{turn}{80}{76}\end{turn} & \begin{turn}{80}{-108}\end{turn} & \begin{turn}{80}{75}\end{turn} & \begin{turn}{80}{-33}\end{turn} & \begin{turn}{80}{5}\end{turn} & \begin{turn}{80}{3}\end{turn} & \begin{turn}{80}{-2}\end{turn} & \begin{turn}{80}{}\end{turn} & \begin{turn}{80}{}\end{turn} & \begin{turn}{80}{}\end{turn} & \begin{turn}{80}{}\end{turn} & \begin{turn}{80}{}\end{turn} & 2142 \\
\hline
20 & \begin{turn}{80}{7}\end{turn} & \begin{turn}{80}{-167}\end{turn} & \begin{turn}{80}{393}\end{turn} & \begin{turn}{80}{145}\end{turn} & \begin{turn}{80}{-937}\end{turn} & \begin{turn}{80}{558}\end{turn} & \begin{turn}{80}{327}\end{turn} & \begin{turn}{80}{-611}\end{turn} & \begin{turn}{80}{531}\end{turn} & \begin{turn}{80}{-312}\end{turn} & \begin{turn}{80}{-98}\end{turn} & \begin{turn}{80}{363}\end{turn} & \begin{turn}{80}{-294}\end{turn} & \begin{turn}{80}{101}\end{turn} & \begin{turn}{80}{17}\end{turn} & \begin{turn}{80}{-40}\end{turn} & \begin{turn}{80}{23}\end{turn} & \begin{turn}{80}{-7}\end{turn} & \begin{turn}{80}{1}\end{turn} & \begin{turn}{80}{}\end{turn} & \begin{turn}{80}{}\end{turn} & \begin{turn}{80}{}\end{turn} & \begin{turn}{80}{}\end{turn} & 4932 \\
\hline
21 & \begin{turn}{80}{168}\end{turn} & \begin{turn}{80}{-37}\end{turn} & \begin{turn}{80}{13}\end{turn} & \begin{turn}{80}{-108}\end{turn} & \begin{turn}{80}{-1151}\end{turn} & \begin{turn}{80}{1472}\end{turn} & \begin{turn}{80}{1007}\end{turn} & \begin{turn}{80}{-2404}\end{turn} & \begin{turn}{80}{871}\end{turn} & \begin{turn}{80}{718}\end{turn} & \begin{turn}{80}{-984}\end{turn} & \begin{turn}{80}{667}\end{turn} & \begin{turn}{80}{-238}\end{turn} & \begin{turn}{80}{-105}\end{turn} & \begin{turn}{80}{206}\end{turn} & \begin{turn}{80}{-132}\end{turn} & \begin{turn}{80}{45}\end{turn} & \begin{turn}{80}{-6}\end{turn} & \begin{turn}{80}{-3}\end{turn} & \begin{turn}{80}{2}\end{turn} & \begin{turn}{80}{-1}\end{turn} & \begin{turn}{80}{}\end{turn} & \begin{turn}{80}{}\end{turn} & 10338 \\
\hline
22 & \begin{turn}{80}{638}\end{turn} & \begin{turn}{80}{-241}\end{turn} & \begin{turn}{80}{-2676}\end{turn} & \begin{turn}{80}{1806}\end{turn} & \begin{turn}{80}{2506}\end{turn} & \begin{turn}{80}{-1378}\end{turn} & \begin{turn}{80}{-349}\end{turn} & \begin{turn}{80}{-2171}\end{turn} & \begin{turn}{80}{1510}\end{turn} & \begin{turn}{80}{2159}\end{turn} & \begin{turn}{80}{-2672}\end{turn} & \begin{turn}{80}{707}\end{turn} & \begin{turn}{80}{580}\end{turn} & \begin{turn}{80}{-748}\end{turn} & \begin{turn}{80}{492}\end{turn} & \begin{turn}{80}{-187}\end{turn} & \begin{turn}{80}{-6}\end{turn} & \begin{turn}{80}{53}\end{turn} & \begin{turn}{80}{-31}\end{turn} & \begin{turn}{80}{9}\end{turn} & \begin{turn}{80}{-1}\end{turn} & \begin{turn}{80}{}\end{turn} & \begin{turn}{80}{}\end{turn} & 20920 \\
\hline
23 & \begin{turn}{80}{468}\end{turn} & \begin{turn}{80}{-2644}\end{turn} & \begin{turn}{80}{-2607}\end{turn} & \begin{turn}{80}{12686}\end{turn} & \begin{turn}{80}{1016}\end{turn} & \begin{turn}{80}{-18755}\end{turn} & \begin{turn}{80}{5351}\end{turn} & \begin{turn}{80}{8867}\end{turn} & \begin{turn}{80}{-4274}\end{turn} & \begin{turn}{80}{1079}\end{turn} & \begin{turn}{80}{-2353}\end{turn} & \begin{turn}{80}{228}\end{turn} & \begin{turn}{80}{2389}\end{turn} & \begin{turn}{80}{-2042}\end{turn} & \begin{turn}{80}{537}\end{turn} & \begin{turn}{80}{261}\end{turn} & \begin{turn}{80}{-351}\end{turn} & \begin{turn}{80}{201}\end{turn} & \begin{turn}{80}{-66}\end{turn} & \begin{turn}{80}{8}\end{turn} & \begin{turn}{80}{3}\end{turn} & \begin{turn}{80}{-2}\end{turn} & \begin{turn}{80}{}\end{turn} & 66188 \\
\hline
\end{tabular}
\end{center}}
\vspace{.4cm}
\caption{Table of Euler characteristics $\chi_{ij}^\pi$ by complexity $j$ and Hodge degree $i$ of $\pi_*(\overline{Emb}_d)\otimes\BQ$ for even $d$.}
\end{table}

\begin{table}[h!]

\begin{center}
\tiny
\begin{tabular}{|c|c|c|c|c|c|c|c|c|c|c|c|c|c|c|c|c|c|c|c|c|c|c|c|}\hline &\multicolumn{23}{|c|}{Hodge degree $i$}\\
\hline
$j$ & 1 & 2 & 3 & 4 & 5 & 6 & 7 & 8 & 9 & 10 & 11 & 12 & 13 & 14 & 15 & 16 & 17 & 18 & 19 & 20 & 21 & 22 & 23 \\
\hline
1 & \begin{turn}{80}{-1}\end{turn} & \begin{turn}{80}{}\end{turn} & \begin{turn}{80}{}\end{turn} & \begin{turn}{80}{}\end{turn} & \begin{turn}{80}{}\end{turn} & \begin{turn}{80}{}\end{turn} & \begin{turn}{80}{}\end{turn} & \begin{turn}{80}{}\end{turn} & \begin{turn}{80}{}\end{turn} & \begin{turn}{80}{}\end{turn} & \begin{turn}{80}{}\end{turn} & \begin{turn}{80}{}\end{turn} & \begin{turn}{80}{}\end{turn} & \begin{turn}{80}{}\end{turn} & \begin{turn}{80}{}\end{turn} & \begin{turn}{80}{}\end{turn} & \begin{turn}{80}{}\end{turn} & \begin{turn}{80}{}\end{turn} & \begin{turn}{80}{}\end{turn} & \begin{turn}{80}{}\end{turn} & \begin{turn}{80}{}\end{turn} & \begin{turn}{80}{}\end{turn} & \begin{turn}{80}{}\end{turn}  \\
\hline
2 & \begin{turn}{80}{}\end{turn} & \begin{turn}{80}{}\end{turn} & \begin{turn}{80}{1}\end{turn} & \begin{turn}{80}{}\end{turn} & \begin{turn}{80}{}\end{turn} & \begin{turn}{80}{}\end{turn} & \begin{turn}{80}{}\end{turn} & \begin{turn}{80}{}\end{turn} & \begin{turn}{80}{}\end{turn} & \begin{turn}{80}{}\end{turn} & \begin{turn}{80}{}\end{turn} & \begin{turn}{80}{}\end{turn} & \begin{turn}{80}{}\end{turn} & \begin{turn}{80}{}\end{turn} & \begin{turn}{80}{}\end{turn} & \begin{turn}{80}{}\end{turn} & \begin{turn}{80}{}\end{turn} & \begin{turn}{80}{}\end{turn} & \begin{turn}{80}{}\end{turn} & \begin{turn}{80}{}\end{turn} & \begin{turn}{80}{}\end{turn} & \begin{turn}{80}{}\end{turn} & \begin{turn}{80}{}\end{turn}  \\
\hline
3 & \begin{turn}{80}{1}\end{turn} & \begin{turn}{80}{-1}\end{turn} & \begin{turn}{80}{}\end{turn} & \begin{turn}{80}{-1}\end{turn} & \begin{turn}{80}{}\end{turn} & \begin{turn}{80}{}\end{turn} & \begin{turn}{80}{}\end{turn} & \begin{turn}{80}{}\end{turn} & \begin{turn}{80}{}\end{turn} & \begin{turn}{80}{}\end{turn} & \begin{turn}{80}{}\end{turn} & \begin{turn}{80}{}\end{turn} & \begin{turn}{80}{}\end{turn} & \begin{turn}{80}{}\end{turn} & \begin{turn}{80}{}\end{turn} & \begin{turn}{80}{}\end{turn} & \begin{turn}{80}{}\end{turn} & \begin{turn}{80}{}\end{turn} & \begin{turn}{80}{}\end{turn} & \begin{turn}{80}{}\end{turn} & \begin{turn}{80}{}\end{turn} & \begin{turn}{80}{}\end{turn} & \begin{turn}{80}{}\end{turn}  \\
\hline
4 & \begin{turn}{80}{}\end{turn} & \begin{turn}{80}{-1}\end{turn} & \begin{turn}{80}{1}\end{turn} & \begin{turn}{80}{}\end{turn} & \begin{turn}{80}{}\end{turn} & \begin{turn}{80}{1}\end{turn} & \begin{turn}{80}{}\end{turn} & \begin{turn}{80}{}\end{turn} & \begin{turn}{80}{}\end{turn} & \begin{turn}{80}{}\end{turn} & \begin{turn}{80}{}\end{turn} & \begin{turn}{80}{}\end{turn} & \begin{turn}{80}{}\end{turn} & \begin{turn}{80}{}\end{turn} & \begin{turn}{80}{}\end{turn} & \begin{turn}{80}{}\end{turn} & \begin{turn}{80}{}\end{turn} & \begin{turn}{80}{}\end{turn} & \begin{turn}{80}{}\end{turn} & \begin{turn}{80}{}\end{turn} & \begin{turn}{80}{}\end{turn} & \begin{turn}{80}{}\end{turn} & \begin{turn}{80}{}\end{turn}  \\
\hline
5 & \begin{turn}{80}{1}\end{turn} & \begin{turn}{80}{}\end{turn} & \begin{turn}{80}{-1}\end{turn} & \begin{turn}{80}{2}\end{turn} & \begin{turn}{80}{-2}\end{turn} & \begin{turn}{80}{}\end{turn} & \begin{turn}{80}{-1}\end{turn} & \begin{turn}{80}{}\end{turn} & \begin{turn}{80}{}\end{turn} & \begin{turn}{80}{}\end{turn} & \begin{turn}{80}{}\end{turn} & \begin{turn}{80}{}\end{turn} & \begin{turn}{80}{}\end{turn} & \begin{turn}{80}{}\end{turn} & \begin{turn}{80}{}\end{turn} & \begin{turn}{80}{}\end{turn} & \begin{turn}{80}{}\end{turn} & \begin{turn}{80}{}\end{turn} & \begin{turn}{80}{}\end{turn} & \begin{turn}{80}{}\end{turn} & \begin{turn}{80}{}\end{turn} & \begin{turn}{80}{}\end{turn} & \begin{turn}{80}{}\end{turn}  \\
\hline
6 & \begin{turn}{80}{-1}\end{turn} & \begin{turn}{80}{1}\end{turn} & \begin{turn}{80}{-1}\end{turn} & \begin{turn}{80}{1}\end{turn} & \begin{turn}{80}{-2}\end{turn} & \begin{turn}{80}{2}\end{turn} & \begin{turn}{80}{}\end{turn} & \begin{turn}{80}{}\end{turn} & \begin{turn}{80}{1}\end{turn} & \begin{turn}{80}{}\end{turn} & \begin{turn}{80}{}\end{turn} & \begin{turn}{80}{}\end{turn} & \begin{turn}{80}{}\end{turn} & \begin{turn}{80}{}\end{turn} & \begin{turn}{80}{}\end{turn} & \begin{turn}{80}{}\end{turn} & \begin{turn}{80}{}\end{turn} & \begin{turn}{80}{}\end{turn} & \begin{turn}{80}{}\end{turn} & \begin{turn}{80}{}\end{turn} & \begin{turn}{80}{}\end{turn} & \begin{turn}{80}{}\end{turn} & \begin{turn}{80}{}\end{turn}  \\
\hline
7 & \begin{turn}{80}{1}\end{turn} & \begin{turn}{80}{1}\end{turn} & \begin{turn}{80}{-3}\end{turn} & \begin{turn}{80}{2}\end{turn} & \begin{turn}{80}{1}\end{turn} & \begin{turn}{80}{-2}\end{turn} & \begin{turn}{80}{2}\end{turn} & \begin{turn}{80}{-2}\end{turn} & \begin{turn}{80}{}\end{turn} & \begin{turn}{80}{-1}\end{turn} & \begin{turn}{80}{}\end{turn} & \begin{turn}{80}{}\end{turn} & \begin{turn}{80}{}\end{turn} & \begin{turn}{80}{}\end{turn} & \begin{turn}{80}{}\end{turn} & \begin{turn}{80}{}\end{turn} & \begin{turn}{80}{}\end{turn} & \begin{turn}{80}{}\end{turn} & \begin{turn}{80}{}\end{turn} & \begin{turn}{80}{}\end{turn} & \begin{turn}{80}{}\end{turn} & \begin{turn}{80}{}\end{turn} & \begin{turn}{80}{}\end{turn}  \\
\hline
8 & \begin{turn}{80}{-1}\end{turn} & \begin{turn}{80}{1}\end{turn} & \begin{turn}{80}{}\end{turn} & \begin{turn}{80}{-3}\end{turn} & \begin{turn}{80}{5}\end{turn} & \begin{turn}{80}{-5}\end{turn} & \begin{turn}{80}{3}\end{turn} & \begin{turn}{80}{-2}\end{turn} & \begin{turn}{80}{2}\end{turn} & \begin{turn}{80}{}\end{turn} & \begin{turn}{80}{}\end{turn} & \begin{turn}{80}{1}\end{turn} & \begin{turn}{80}{}\end{turn} & \begin{turn}{80}{}\end{turn} & \begin{turn}{80}{}\end{turn} & \begin{turn}{80}{}\end{turn} & \begin{turn}{80}{}\end{turn} & \begin{turn}{80}{}\end{turn} & \begin{turn}{80}{}\end{turn} & \begin{turn}{80}{}\end{turn} & \begin{turn}{80}{}\end{turn} & \begin{turn}{80}{}\end{turn} & \begin{turn}{80}{}\end{turn}  \\
\hline
9 & \begin{turn}{80}{}\end{turn} & \begin{turn}{80}{}\end{turn} & \begin{turn}{80}{3}\end{turn} & \begin{turn}{80}{-2}\end{turn} & \begin{turn}{80}{4}\end{turn} & \begin{turn}{80}{-7}\end{turn} & \begin{turn}{80}{4}\end{turn} & \begin{turn}{80}{1}\end{turn} & \begin{turn}{80}{-3}\end{turn} & \begin{turn}{80}{2}\end{turn} & \begin{turn}{80}{-2}\end{turn} & \begin{turn}{80}{}\end{turn} & \begin{turn}{80}{-1}\end{turn} & \begin{turn}{80}{}\end{turn} & \begin{turn}{80}{}\end{turn} & \begin{turn}{80}{}\end{turn} & \begin{turn}{80}{}\end{turn} & \begin{turn}{80}{}\end{turn} & \begin{turn}{80}{}\end{turn} & \begin{turn}{80}{}\end{turn} & \begin{turn}{80}{}\end{turn} & \begin{turn}{80}{}\end{turn} & \begin{turn}{80}{}\end{turn}  \\
\hline
10 & \begin{turn}{80}{-2}\end{turn} & \begin{turn}{80}{2}\end{turn} & \begin{turn}{80}{3}\end{turn} & \begin{turn}{80}{-10}\end{turn} & \begin{turn}{80}{6}\end{turn} & \begin{turn}{80}{3}\end{turn} & \begin{turn}{80}{-10}\end{turn} & \begin{turn}{80}{11}\end{turn} & \begin{turn}{80}{-7}\end{turn} & \begin{turn}{80}{4}\end{turn} & \begin{turn}{80}{-2}\end{turn} & \begin{turn}{80}{2}\end{turn} & \begin{turn}{80}{}\end{turn} & \begin{turn}{80}{}\end{turn} & \begin{turn}{80}{1}\end{turn} & \begin{turn}{80}{}\end{turn} & \begin{turn}{80}{}\end{turn} & \begin{turn}{80}{}\end{turn} & \begin{turn}{80}{}\end{turn} & \begin{turn}{80}{}\end{turn} & \begin{turn}{80}{}\end{turn} & \begin{turn}{80}{}\end{turn} & \begin{turn}{80}{}\end{turn}  \\
\hline
11 & \begin{turn}{80}{2}\end{turn} & \begin{turn}{80}{-3}\end{turn} & \begin{turn}{80}{3}\end{turn} & \begin{turn}{80}{1}\end{turn} & \begin{turn}{80}{-8}\end{turn} & \begin{turn}{80}{14}\end{turn} & \begin{turn}{80}{-12}\end{turn} & \begin{turn}{80}{12}\end{turn} & \begin{turn}{80}{-11}\end{turn} & \begin{turn}{80}{4}\end{turn} & \begin{turn}{80}{1}\end{turn} & \begin{turn}{80}{-3}\end{turn} & \begin{turn}{80}{2}\end{turn} & \begin{turn}{80}{-2}\end{turn} & \begin{turn}{80}{}\end{turn} & \begin{turn}{80}{-1}\end{turn} & \begin{turn}{80}{}\end{turn} & \begin{turn}{80}{}\end{turn} & \begin{turn}{80}{}\end{turn} & \begin{turn}{80}{}\end{turn} & \begin{turn}{80}{}\end{turn} & \begin{turn}{80}{}\end{turn} & \begin{turn}{80}{}\end{turn}  \\
\hline
12 & \begin{turn}{80}{}\end{turn} & \begin{turn}{80}{-2}\end{turn} & \begin{turn}{80}{3}\end{turn} & \begin{turn}{80}{1}\end{turn} & \begin{turn}{80}{-11}\end{turn} & \begin{turn}{80}{21}\end{turn} & \begin{turn}{80}{-23}\end{turn} & \begin{turn}{80}{6}\end{turn} & \begin{turn}{80}{11}\end{turn} & \begin{turn}{80}{-18}\end{turn} & \begin{turn}{80}{15}\end{turn} & \begin{turn}{80}{-7}\end{turn} & \begin{turn}{80}{4}\end{turn} & \begin{turn}{80}{-2}\end{turn} & \begin{turn}{80}{2}\end{turn} & \begin{turn}{80}{}\end{turn} & \begin{turn}{80}{}\end{turn} & \begin{turn}{80}{1}\end{turn} & \begin{turn}{80}{}\end{turn} & \begin{turn}{80}{}\end{turn} & \begin{turn}{80}{}\end{turn} & \begin{turn}{80}{}\end{turn} & \begin{turn}{80}{}\end{turn}  \\
\hline
13 & \begin{turn}{80}{3}\end{turn} & \begin{turn}{80}{-6}\end{turn} & \begin{turn}{80}{3}\end{turn} & \begin{turn}{80}{20}\end{turn} & \begin{turn}{80}{-33}\end{turn} & \begin{turn}{80}{14}\end{turn} & \begin{turn}{80}{15}\end{turn} & \begin{turn}{80}{-36}\end{turn} & \begin{turn}{80}{41}\end{turn} & \begin{turn}{80}{-28}\end{turn} & \begin{turn}{80}{19}\end{turn} & \begin{turn}{80}{-13}\end{turn} & \begin{turn}{80}{3}\end{turn} & \begin{turn}{80}{1}\end{turn} & \begin{turn}{80}{-3}\end{turn} & \begin{turn}{80}{2}\end{turn} & \begin{turn}{80}{-2}\end{turn} & \begin{turn}{80}{}\end{turn} & \begin{turn}{80}{-1}\end{turn} & \begin{turn}{80}{}\end{turn} & \begin{turn}{80}{}\end{turn} & \begin{turn}{80}{}\end{turn} & \begin{turn}{80}{}\end{turn}  \\
\hline
14 & \begin{turn}{80}{}\end{turn} & \begin{turn}{80}{4}\end{turn} & \begin{turn}{80}{-16}\end{turn} & \begin{turn}{80}{15}\end{turn} & \begin{turn}{80}{-5}\end{turn} & \begin{turn}{80}{-16}\end{turn} & \begin{turn}{80}{45}\end{turn} & \begin{turn}{80}{-59}\end{turn} & \begin{turn}{80}{51}\end{turn} & \begin{turn}{80}{-30}\end{turn} & \begin{turn}{80}{}\end{turn} & \begin{turn}{80}{19}\end{turn} & \begin{turn}{80}{-22}\end{turn} & \begin{turn}{80}{17}\end{turn} & \begin{turn}{80}{-7}\end{turn} & \begin{turn}{80}{4}\end{turn} & \begin{turn}{80}{-2}\end{turn} & \begin{turn}{80}{2}\end{turn} & \begin{turn}{80}{}\end{turn} & \begin{turn}{80}{}\end{turn} & \begin{turn}{80}{1}\end{turn} & \begin{turn}{80}{}\end{turn} & \begin{turn}{80}{}\end{turn}  \\
\hline
15 & \begin{turn}{80}{}\end{turn} & \begin{turn}{80}{2}\end{turn} & \begin{turn}{80}{-20}\end{turn} & \begin{turn}{80}{33}\end{turn} & \begin{turn}{80}{3}\end{turn} & \begin{turn}{80}{-60}\end{turn} & \begin{turn}{80}{105}\end{turn} & \begin{turn}{80}{-83}\end{turn} & \begin{turn}{80}{2}\end{turn} & \begin{turn}{80}{65}\end{turn} & \begin{turn}{80}{-88}\end{turn} & \begin{turn}{80}{72}\end{turn} & \begin{turn}{80}{-38}\end{turn} & \begin{turn}{80}{20}\end{turn} & \begin{turn}{80}{-14}\end{turn} & \begin{turn}{80}{3}\end{turn} & \begin{turn}{80}{1}\end{turn} & \begin{turn}{80}{-3}\end{turn} & \begin{turn}{80}{2}\end{turn} & \begin{turn}{80}{-2}\end{turn} & \begin{turn}{80}{}\end{turn} & \begin{turn}{80}{-1}\end{turn} & \begin{turn}{80}{}\end{turn}  \\
\hline
16 & \begin{turn}{80}{-7}\end{turn} & \begin{turn}{80}{25}\end{turn} & \begin{turn}{80}{-27}\end{turn} & \begin{turn}{80}{-11}\end{turn} & \begin{turn}{80}{88}\end{turn} & \begin{turn}{80}{-139}\end{turn} & \begin{turn}{80}{67}\end{turn} & \begin{turn}{80}{60}\end{turn} & \begin{turn}{80}{-147}\end{turn} & \begin{turn}{80}{165}\end{turn} & \begin{turn}{80}{-127}\end{turn} & \begin{turn}{80}{75}\end{turn} & \begin{turn}{80}{-30}\end{turn} & \begin{turn}{80}{-9}\end{turn} & \begin{turn}{80}{25}\end{turn} & \begin{turn}{80}{-22}\end{turn} & \begin{turn}{80}{17}\end{turn} & \begin{turn}{80}{-7}\end{turn} & \begin{turn}{80}{4}\end{turn} & \begin{turn}{80}{-2}\end{turn} & \begin{turn}{80}{2}\end{turn} & \begin{turn}{80}{}\end{turn} & \begin{turn}{80}{}\end{turn}  \\
\hline
17 & \begin{turn}{80}{-14}\end{turn} & \begin{turn}{80}{22}\end{turn} & \begin{turn}{80}{10}\end{turn} & \begin{turn}{80}{-86}\end{turn} & \begin{turn}{80}{126}\end{turn} & \begin{turn}{80}{-74}\end{turn} & \begin{turn}{80}{-33}\end{turn} & \begin{turn}{80}{190}\end{turn} & \begin{turn}{80}{-295}\end{turn} & \begin{turn}{80}{257}\end{turn} & \begin{turn}{80}{-91}\end{turn} & \begin{turn}{80}{-79}\end{turn} & \begin{turn}{80}{148}\end{turn} & \begin{turn}{80}{-135}\end{turn} & \begin{turn}{80}{90}\end{turn} & \begin{turn}{80}{-42}\end{turn} & \begin{turn}{80}{19}\end{turn} & \begin{turn}{80}{-14}\end{turn} & \begin{turn}{80}{3}\end{turn} & \begin{turn}{80}{1}\end{turn} & \begin{turn}{80}{-3}\end{turn} & \begin{turn}{80}{2}\end{turn} & \begin{turn}{80}{-2}\end{turn}  \\
\hline
18 & \begin{turn}{80}{-16}\end{turn} & \begin{turn}{80}{20}\end{turn} & \begin{turn}{80}{31}\end{turn} & \begin{turn}{80}{-188}\end{turn} & \begin{turn}{80}{266}\end{turn} & \begin{turn}{80}{29}\end{turn} & \begin{turn}{80}{-425}\end{turn} & \begin{turn}{80}{526}\end{turn} & \begin{turn}{80}{-326}\end{turn} & \begin{turn}{80}{-42}\end{turn} & \begin{turn}{80}{332}\end{turn} & \begin{turn}{80}{-413}\end{turn} & \begin{turn}{80}{330}\end{turn} & \begin{turn}{80}{-188}\end{turn} & \begin{turn}{80}{80}\end{turn} & \begin{turn}{80}{-22}\end{turn} & \begin{turn}{80}{-13}\end{turn} & \begin{turn}{80}{27}\end{turn} & \begin{turn}{80}{-22}\end{turn} & \begin{turn}{80}{17}\end{turn} & \begin{turn}{80}{-7}\end{turn} & \begin{turn}{80}{4}\end{turn} & \begin{turn}{80}{-2}\end{turn}  \\
\hline
19 & \begin{turn}{80}{-12}\end{turn} & \begin{turn}{80}{-84}\end{turn} & \begin{turn}{80}{216}\end{turn} & \begin{turn}{80}{-136}\end{turn} & \begin{turn}{80}{-38}\end{turn} & \begin{turn}{80}{368}\end{turn} & \begin{turn}{80}{-761}\end{turn} & \begin{turn}{80}{556}\end{turn} & \begin{turn}{80}{184}\end{turn} & \begin{turn}{80}{-721}\end{turn} & \begin{turn}{80}{814}\end{turn} & \begin{turn}{80}{-622}\end{turn} & \begin{turn}{80}{318}\end{turn} & \begin{turn}{80}{-4}\end{turn} & \begin{turn}{80}{-193}\end{turn} & \begin{turn}{80}{219}\end{turn} & \begin{turn}{80}{-161}\end{turn} & \begin{turn}{80}{94}\end{turn} & \begin{turn}{80}{-43}\end{turn} & \begin{turn}{80}{19}\end{turn} & \begin{turn}{80}{-14}\end{turn} & \begin{turn}{80}{3}\end{turn} & \begin{turn}{80}{1}\end{turn}  \\
\hline
20 & \begin{turn}{80}{7}\end{turn} & \begin{turn}{80}{-165}\end{turn} & \begin{turn}{80}{519}\end{turn} & \begin{turn}{80}{-77}\end{turn} & \begin{turn}{80}{-991}\end{turn} & \begin{turn}{80}{1031}\end{turn} & \begin{turn}{80}{-444}\end{turn} & \begin{turn}{80}{-27}\end{turn} & \begin{turn}{80}{951}\end{turn} & \begin{turn}{80}{-1739}\end{turn} & \begin{turn}{80}{1332}\end{turn} & \begin{turn}{80}{-232}\end{turn} & \begin{turn}{80}{-616}\end{turn} & \begin{turn}{80}{871}\end{turn} & \begin{turn}{80}{-725}\end{turn} & \begin{turn}{80}{452}\end{turn} & \begin{turn}{80}{-210}\end{turn} & \begin{turn}{80}{73}\end{turn} & \begin{turn}{80}{-16}\end{turn} & \begin{turn}{80}{-13}\end{turn} & \begin{turn}{80}{27}\end{turn} & \begin{turn}{80}{-22}\end{turn} & \begin{turn}{80}{17}\end{turn}  \\
\hline
21 & \begin{turn}{80}{168}\end{turn} & \begin{turn}{80}{-74}\end{turn} & \begin{turn}{80}{245}\end{turn} & \begin{turn}{80}{-495}\end{turn} & \begin{turn}{80}{-1705}\end{turn} & \begin{turn}{80}{3209}\end{turn} & \begin{turn}{80}{117}\end{turn} & \begin{turn}{80}{-3475}\end{turn} & \begin{turn}{80}{3050}\end{turn} & \begin{turn}{80}{-1416}\end{turn} & \begin{turn}{80}{-96}\end{turn} & \begin{turn}{80}{1624}\end{turn} & \begin{turn}{80}{-2223}\end{turn} & \begin{turn}{80}{1691}\end{turn} & \begin{turn}{80}{-832}\end{turn} & \begin{turn}{80}{214}\end{turn} & \begin{turn}{80}{141}\end{turn} & \begin{turn}{80}{-285}\end{turn} & \begin{turn}{80}{255}\end{turn} & \begin{turn}{80}{-169}\end{turn} & \begin{turn}{80}{93}\end{turn} & \begin{turn}{80}{-43}\end{turn} & \begin{turn}{80}{19}\end{turn}  \\
\hline
22 & \begin{turn}{80}{638}\end{turn} & \begin{turn}{80}{-425}\end{turn} & \begin{turn}{80}{-2710}\end{turn} & \begin{turn}{80}{2145}\end{turn} & \begin{turn}{80}{2011}\end{turn} & \begin{turn}{80}{404}\end{turn} & \begin{turn}{80}{-1311}\end{turn} & \begin{turn}{80}{-5578}\end{turn} & \begin{turn}{80}{6395}\end{turn} & \begin{turn}{80}{632}\end{turn} & \begin{turn}{80}{-4769}\end{turn} & \begin{turn}{80}{4616}\end{turn} & \begin{turn}{80}{-3318}\end{turn} & \begin{turn}{80}{1372}\end{turn} & \begin{turn}{80}{602}\end{turn} & \begin{turn}{80}{-1566}\end{turn} & \begin{turn}{80}{1455}\end{turn} & \begin{turn}{80}{-944}\end{turn} & \begin{turn}{80}{498}\end{turn} & \begin{turn}{80}{-209}\end{turn} & \begin{turn}{80}{69}\end{turn} & \begin{turn}{80}{-13}\end{turn} & \begin{turn}{80}{-13}\end{turn}  \\
\hline
23 & \begin{turn}{80}{468}\end{turn} & \begin{turn}{80}{-3290}\end{turn} & \begin{turn}{80}{-2544}\end{turn} & \begin{turn}{80}{16300}\end{turn} & \begin{turn}{80}{-1620}\end{turn} & \begin{turn}{80}{-21938}\end{turn} & \begin{turn}{80}{8761}\end{turn} & \begin{turn}{80}{5388}\end{turn} & \begin{turn}{80}{1424}\end{turn} & \begin{turn}{80}{1676}\end{turn} & \begin{turn}{80}{-11716}\end{turn} & \begin{turn}{80}{9157}\end{turn} & \begin{turn}{80}{-435}\end{turn} & \begin{turn}{80}{-4214}\end{turn} & \begin{turn}{80}{4986}\end{turn} & \begin{turn}{80}{-3974}\end{turn} & \begin{turn}{80}{2252}\end{turn} & \begin{turn}{80}{-794}\end{turn} & \begin{turn}{80}{35}\end{turn} & \begin{turn}{80}{259}\end{turn} & \begin{turn}{80}{-329}\end{turn} & \begin{turn}{80}{263}\end{turn} & \begin{turn}{80}{-171}\end{turn}  \\
\hline
\end{tabular}
\end{center}

\vspace{.4cm}
\caption{Table of Euler characteristics $\chi_{ij}$ by complexity $j$ and Hodge degree $i$ of $H_*(\overline{Emb}_d,\BQ)$ for even $d$.}
\end{table}


\end{document}